\documentclass[10pt]{article}
\usepackage{caption}
\usepackage{epsf,epsfig,amsfonts,amsgen,amsmath,amstext,amsbsy,amsopn,amsthm
}
\usepackage{ebezier,eepic}
\usepackage{color}
\usepackage{multirow}
\usepackage{mathrsfs}
\usepackage{tikz}
\usepackage{enumerate}
\usepackage{enumitem}
\usepackage{url}
\usepackage{pgf,tikz,pgfplots}
\pgfplotsset{compat=1.18}
\usepackage{mathrsfs}
\usepackage{amssymb}

\usepackage{mathtools}
 
 \usepackage{algorithm}
\usepackage{algpseudocode}
\usepackage{mdframed}

\usepackage{centernot}
\usepackage[mathlines,left]{lineno}
\usetikzlibrary{arrows}

\newtheorem{dfn}{Definition}[section]
\newtheorem{thm}[dfn]{Theorem}
\newtheorem{lem}[dfn]{Lemma}

\newtheorem{conjecture}[dfn]{Conjecture}

\newtheorem{Claim}[dfn]{Claim}

\newcommand{\aA}{{\mathcal{A}}} 
\newcommand{\dD}{{\mathcal{D}}} 
\newcommand{\eE}{{\mathcal{E}}} 
\newcommand{\fF}{{\mathcal{F}}} 
\newcommand{\gG}{{\mathcal{G}}}
\newcommand{\hH}{{\mathcal{H}}}
\newcommand{\kK}{{\mathcal{K}}}
\newcommand{\pP}{{\mathcal{P}}}

\newcommand{\rR}{{\mathcal{R}}}

\newcommand{\fR}{{\mathfrak{R}}}  
\newcommand{\fr}{{\mathfrak{r}}}  

\def\CN{\mathrm{CN}}

\def\cd{\mathrm{cd}}

\let\svthefootnote\thefootnote
\newcommand\blankfootnote[1]{
	\let\thefootnote\relax\footnotetext{#1}
	\let\thefootnote\svthefootnote
}

\newcommand{\1}{{\uppercase\expandafter{\romannumeral1}}}
\newcommand{\2}{{\uppercase\expandafter{\romannumeral2}}}
\newcommand{\3}{{\uppercase\expandafter{\romannumeral3}}}
\newcommand{\4}{{\uppercase\expandafter{\romannumeral4}}}

\usetikzlibrary{arrows.meta,calc,fit,positioning,shapes.geometric}
\definecolor{edgeblue}{RGB}{0,82,164}
\definecolor{deletebrick}{RGB}{210,45,45}
\definecolor{restoreolive}{RGB}{0,145,82}
\definecolor{compatviolet}{RGB}{132,35,190}
\definecolor{incompatorange}{RGB}{230,105,0}
\definecolor{neutralstroke}{RGB}{45,45,45}

\tikzset{
  xvertex/.style={circle,draw=neutralstroke,fill=black!1,
    minimum size=6.2mm,inner sep=0pt,font=\small},
  yvertex/.style={circle,draw=neutralstroke,fill=black!6,
    minimum size=5.8mm,inner sep=0pt,font=\small},
  pairvertex/.style={circle,draw=edgeblue,fill=edgeblue!8,
    minimum size=7.2mm,inner sep=0pt,font=\small},
  live/.style={draw=edgeblue,line width=0.92pt},
  deleted/.style={draw=deletebrick,densely dashed,line width=0.90pt},
  available/.style={draw=restoreolive,densely dotted,line width=1.10pt},
  relation/.style={draw=incompatorange,line width=1.10pt},
  compat/.style={draw=compatviolet,double=white,double distance=0.55pt,
    line width=0.78pt},
  incompat/.style={draw=incompatorange,line width=1.10pt},
  protected/.style={draw=restoreolive,densely dotted,line width=1.0pt},
  paneltitle/.style={font=\small\bfseries,align=center},
  note/.style={font=\scriptsize,align=center},
  arrow/.style={-{Latex[length=2.2mm,width=1.5mm]},draw=neutralstroke,
    line width=0.82pt},
  bracebox/.style={draw=black!35,rounded corners=2pt,inner sep=3pt}
}

\begin{document}

	\title{Extremal hypergraphs without generalized $4$-cycles}
	
	\author{
		Hao Huang\thanks{Department of Mathematics, National University of Singapore, 119076, Singapore. Research supported by a start-up grant at NUS and an MOE Academic Research Fund (AcRF) Tier 1 grant A-8003627. Email: {\tt huanghao@nus.edu.sg}.
        }
		~~~~
		Jie Ma\thanks{School of Mathematical Sciences, University of Science and Technology of China, Hefei, Anhui 230026, and Yau Mathematical Sciences Center, Tsinghua University, Beijing 100084, China. Research supported by National Key Research and Development Program of China 2023YFA1010201 and National Natural Science Foundation of China grant 12125106. Email: {\tt jiema@ustc.edu.cn}.
        }
		~~~~
		Tianchi Yang\thanks{School of Mathematics, Georgia Institute of Technology, Atlanta, GA 30332, US. Email: {\tt tyang439@gatech.edu}.
		}
	}
	
	\date{\today}
	\maketitle

\begin{abstract}
In 1977, Erd\H{o}s posed the problem of determining the maximum number $f_r(n)$ of edges in an $n$-vertex $r$-uniform hypergraph in which all disjoint pairs of edges have distinct unions.
F\"uredi later conjectured that, for every fixed $r\ge 4$ and all sufficiently large $n$, $f_r(n)=\binom{n-1}{r-1}+\lfloor \frac{n-1}{r}\rfloor$.  
In this paper, we prove this conjecture and determine all extremal configurations.  
Our proof combines a stability theorem for such dense hypergraphs with a delicate deletion argument applied to an associated bipartite $3$-graph.
The stability theorem also resolves a conjecture of Mubayi.
\end{abstract}

\section{Introduction}
In this paper, we study a hypergraph Tur\'an problem for generalized $4$-cycles, posed by Erd\H{o}s~\cite{Er77} nearly half a century ago.
Given an integer $r\geq 2$, a \emph{generalized $4$-cycle} is an $r$-uniform hypergraph (or $r$-graph, for short) consisting of four distinct $r$-sets $A, B, C, D$ such that $A \cup B = C \cup D$ and $A \cap B = C \cap D = \emptyset$.
Throughout, we denote by $\mathcal{C}_4^r$ the family of all $r$-uniform generalized $4$-cycles.
Erd\H{o}s~\cite{Er77} asked for the maximum number $f_r(n)$ of edges in an $n$-vertex $r$-graph containing no member of $\mathcal{C}_4^r$.
The case $r = 2$ corresponds to the Tur\'an problem for the $4$-cycle, one of the cornerstone problems of extremal graph theory.
It is well known that $f_2(n) = \left(\frac{1}{2} + o(1)\right) n^{3/2}$; see \cite{Br66,ERS66,KST54} for the classical results, and \cite{MaYa23} for more refined bounds.

The cases $r\geq 3$ have also received considerable attention.
It was noted in~\cite{Er77} that Bollob\'as and Erd\H{o}s (unpublished) proved $f_3(n)=O(n^2)$.
In 1984, F\"uredi~\cite{Fu84} established in a breakthrough work that $f_r(n)<\frac72\binom{n}{r-1}$ for all $n\geq 2r$.
His proof introduced an elegant graph removal lemma, which has since become an influential tool in many subsequent works.
F\"uredi \cite{Fu84} also gave the lower bound $f_r(n)\ge \binom{n-1}{r-1}+\left\lfloor\frac{n-1}{r}\right\rfloor$ for all $r\ge 3$, via the $r$-graph obtained from an $r$-uniform full star by adding a maximum matching among the vertices other than the center (see the precise construction (C1) we will describe soon). 
The case $r=3$ is more intricate: F\"uredi \cite{Fu84} gave a slightly better lower bound, using the $3$-graph built from a Steiner system $S(n,5,2)$ by replacing each block with the complete $3$-graph $K_5^3$ on its vertices.
This yields $f_3(n)\ge \binom{n}{2}$ whenever $n\equiv 1,5\pmod{20}$.
Motivated by these constructions, F\"uredi~\cite{Fu84} made the following famous conjecture, which was later collected by Erd\H{o}s himself in his problem list~\cite{Er97} and is now known as the Erd\H{o}s--F\"uredi conjecture~\cite{CG}.

\begin{conjecture}[Erd\H{o}s--F\"uredi]\label{conj:EF}
For all sufficiently large $n$, $f_3(n)\le \binom{n}{2}$.  Moreover, for every fixed $r\ge 4$ and all sufficiently large $n$,
$f_r(n)=\binom{n-1}{r-1}+\left\lfloor\frac{n-1}{r}\right\rfloor.$
\end{conjecture}

Twenty years later, Mubayi and Verstra\"ete \cite{MuVe04} improved F\"uredi's upper bound to $f_r(n)\le 3\binom{n}{r-1}+O(n^{r-2})$ for all $r\geq 3$, through a more refined analysis of the approach in \cite{Fu84}.
Pikhurko and Verstra\"ete \cite{PiVe09} further strengthened this to $f_r(n)/\binom{n}{r-1}\le \min\{1+2/\sqrt r,\,7/4\}+o(1)$; in particular, the constant tends to $1$ as $r\to\infty$.
They extended the graph removal lemma of \cite{Fu84} from the bipartite setting to general graphs, combining it with a randomized technique.

\medskip

Our main result confirms the Erd\H{o}s--F\"uredi conjecture for every $r\ge 4$, and moreover determines all extremal configurations.

\begin{thm}\label{thm:extremal}
For every fixed $r\ge 4$ and all sufficiently large $n$,
\[
        f_r(n)=
        \binom{n-1}{r-1}+\left\lfloor\frac{n-1}{r}\right\rfloor.
\]
Moreover, every extremal $r$-graph is isomorphic to one of the following two constructions:
\begin{itemize}
    \item[(C1).] The $r$-graph with vertex set $[n]$ and edge set
    \[
    \left\{\,A\in \binom{[n]}{r}: n\in A\,\right\}
    \cup
    \left\{\{kr+1,kr+2,\dots,kr+r\}: 0\le k\le \left\lfloor\frac{n-1}{r}\right\rfloor-1\right\}.
    \]
    \item[(C2).] When $r\mid n$, the $r$-graph obtained from (C1) by replacing the edge
    $\{n-r+1,\dots,n\}$ with $\{n-r,\dots,n-1\}$.
\end{itemize}
\end{thm}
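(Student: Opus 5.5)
The plan has two stages: a stability theorem, then an exact deletion argument around a single high-degree vertex. Throughout, $\hH$ is a $\mathcal{C}_4^r$-free $r$-graph on $[n]$ with
\[
e(\hH)\ge \binom{n-1}{r-1}+\left\lfloor\frac{n-1}{r}\right\rfloor .
\]

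\textbf{Stability.} First I will show that $\hH$ has a vertex $v$ whose link $\hH_v$ misses only $o(n^{r-1})$ of the $(r-1)$-subsets of $[n]\setminus\{v\}$. I would follow the Füredi / Pikhurko--Verstraëte scheme.
\begin{itemize}
\item For each edge, assign a designated $(r-1)$-subset, or equivalently a pair structure between edges and $(r-1)$-shadows.
\item Use the graph removal lemma to show that, after deleting $o(n^{r-1})$ edges, each $(r-1)$-set is essentially "owned" by at most one edge.
\item This gives $e(\hH)\le (1+o(1))\binom{n}{r-1}$. Making this tight, with the loss counted carefully, should force the owned shadows to be concentrated at a single vertex.
\end{itemize}
Concretely, I expect that any two edges sharing $r-2$ vertices $S$ constrain the links of $S$ heavily. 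For $r\ge 4$, a counting argument over links of $(r-2)$-sets should show that almost every edge contains a common vertex $v$. This is presumably the stability theorem the abstract mentions (resolving Mubayi's conjecture), and I would invoke it as an earlier result.

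\textbf{Structure of the link.} Let $\hH_0$ be the edges containing $v$ and $\hH_1$ those avoiding $v$. Two disjoint edges $A,B\in\hH_1$ with $A\cup B=C\cup D$, where $C,D$ form another partition, give a $\mathcal{C}_4^r$ if $C,D\in\hH$. Disjoint edges containing $v$ cannot both occur, so the key configurations are these: $E\in\hH_1$ together with a missing set, and pairs of edges of $\hH_1$. Specifically, if $E,E'\in \hH_1$ intersect in a suitable way, then many partitions of $(E\cup E'\cup\{v\})$-type sets give forbidden cycles using link edges.

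The cleanest case is an edge $E\in\hH_1$ and an $(r-1)$-set $T$ disjoint from $E\cup\{v\}$ with $T\in\hH_v$. Then $E\cup T\cup\{v\}$ has size $2r$. For every $x\in E$, the partition
\[
\bigl(\{v\}\cup E\setminus\{x\},\ T\cup\{x\}\bigr)
\]
would be a $\mathcal{C}_4^r$ together with $(E,\ T\cup\{v\})$. So each $E\in \hH_1$ and each such $T$ force, for every $x\in E$, that $T\cup\{x\}\notin\hH$ or $(E\setminus\{x\})\in$ missing link. I plan to encode this as an auxiliary bipartite 3-graph: one side is the $(r-1)$-sets missing from $\hH_v$, the other is $\hH_1$. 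The goal is a charging scheme in which each edge of $\hH_1$ beyond a matching forces at least one extra missing link set, charged injectively.

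\textbf{Deletion argument.} Write
\[
e(\hH)=\binom{n-1}{r-1}-m+|\hH_1|,
\]
where $m$ is the number of missing link sets. The aim is to prove $|\hH_1|-m\le\lfloor (n-1)/r\rfloor$, with equality only in (C1) or (C2).
\begin{itemize}
\item If $\hH_1$ is a matching, then $m=0$ is possible and $|\hH_1|\le\lfloor (n-1)/r\rfloor$.
\item If two edges of $\hH_1$ intersect, say $E\cap E'\ne\emptyset$, choose $x\in E\setminus E'$ and $y\in E'\setminus E$ so that $(E\setminus\{x\})\cup\{y\}$-type sets give partitions. Then either the relevant link sets are missing, or $v$-edges complete a cycle.
\end{itemize}
I would then iteratively delete: remove an edge of $\hH_1$ together with the missing link sets it is responsible for. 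This requires a "delicate" accounting showing each deleted $\hH_1$-edge pays for itself. The stability step ensures $m=o(n^{r-1})$, so $\hH_1$ has few high-overlap structures.

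\textbf{Main obstacle.} I expect the hardest step to be making this charging injective when many edges of $\hH_1$ share a missing link set. I would first try to prove that every missing $(r-1)$-set is charged at most $O(1)$ times while every non-matching edge generates $\Omega(n)$ missing sets. Sufficiently large $n$ then wins.

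\textbf{Equality.} For the extremal characterization, equality forces $m=0$ and $\hH_1$ a perfect-as-possible matching on $[n]\setminus\{v\}$. When $r\mid n$, there is one exceptional configuration. In it, $v$ lies in an $\hH_1$-type edge and exactly one link set is missing while one more edge is gained, which gives (C2). I would verify this by a direct check of which single missing set can be compensated.
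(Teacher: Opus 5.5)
Your stability stage is a placeholder rather than an argument. The F\"uredi/Pikhurko--Verstra\"ete removal method previously gave only $f_r(n)\le(\min\{1+2/\sqrt r,7/4\}+o(1))\binom{n}{r-1}$. Even the asymptotic bound $(1+o(1))\binom{n}{r-1}$ is new in this paper, and ``making it tight forces the owned shadows onto one vertex'' has no mechanism behind it. Most of the paper's work is here:
Lemma~\ref{lem:one-root}, a potential-function deletion using Frankl's bound for intersecting families of covering number at least three, after which every $\CN(x,y)$ is an $(r-1)$-star;
Lemma~\ref{lem:CN-stability}, which projects to the bipartite $3$-graph of triples $Axy$, uses a Baranyai decomposition and the deletion Lemma~\ref{lem:3-graph-deletion}, and then a coloring argument to align the local centers;
and Lemma~\ref{lem:G-H-U-C4}.
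You may take Theorem~\ref{thm:stability} as an input, but nothing in your sketch supplies it.

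Even granting stability, the exact stage has a genuine gap, precisely at the obstacle you flag. Stability only gives $m=o(n^{r-1})$, and for $n\ll m\ll n^{r-1}$ you offer no way to control how many $\mathcal H_1$-edges share a missing set. Your proposed fix is that every non-matching edge generates $\Omega(n)$ missing link sets, each charged $O(1)$ times. This is false: in (C2) the extra edge $\{n-r,\dots,n-1\}$ meets the matching edge $\{n-2r+1,\dots,n-r\}$ in one vertex and costs exactly one missing set. Only intersections of size at least two cost $\Omega(n)$. Also, in (C2) the center lies in no $\mathcal H_1$-edge: one star edge is missing and one edge avoiding the center is added.

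The idea you are missing is a two-stage argument. First, a global count rules out $m>n\log n$. Let $\mathcal Q_{r-1}$ be the missing link sets and, for $i\le r-2$, let $\mathcal Q_i$ be the $i$-sets lying in at most about half of their possible star edges. Lemma~\ref{lem:N-Q-int} shows that $N_{\mathcal H_1}(S)\setminus\mathcal Q_t$ is intersecting for every $(r-t)$-set $S$. Lemma~\ref{lem:G-H-U-C4} then double counts Berge paths $(S,\{T,T'\})$ with disjoint $T,T'\in\mathcal Q_{r-2}$. Edges inside $U=\mathcal Q_1$ are handled by an $r$-partite peeling and F\"uredi's crude bound $3.5\binom{|U|}{r-1}$.

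Second, $m\le n\log n$ gives $\mathcal Q_i=\emptyset$ for $i\le r-3$, so every $i$-set avoiding the center with $3\le i\le r-1$ lies in at most one $\mathcal H_1$-edge. Counting pairs $(S,T)$ with $|S|=2$, $T\in\mathcal Q_{r-2}$, $S\cup T\in\mathcal H_1$ then forces $m<\sqrt n$ and $\mathcal Q_{r-2}=\emptyset$. Only at this point is an exact charge available, and it is per incidence rather than per edge. Each non-center vertex lies in at most one $\mathcal H_1$-edge whose other $r-1$ vertices form a present link set, and each missing link set lies in at most one $\mathcal H_1$-edge. Hence $r|\mathcal H_1|\le m+n-1$, and combined with $|\mathcal H_1|\ge m+\lfloor (n-1)/r\rfloor$ this gives $m\le 1$.
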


The proof of this result relies on two key lemmas, which we state in the next section (Lemmas~\ref{lem:one-root} and~\ref{lem:CN-stability}).
Together, these lemmas give a two-step scheme for approximating the star structure of the extremal configuration.
Conceptually, the proof proceeds in two rounds of uniformity reduction.
We first project the original $r$-graph to a bipartite $3$-graph whose edges are the triples $Axy$ with $|A|=r-2$ and $A\cup\{x,y\}$ an $r$-edge.
We then use Baranyai's theorem (decomposing all $(r-2)$-sets into perfect matchings) to partition it into balanced bipartite $3$-graphs and then analyze each of these $3$-graphs via its $2$-uniform link graphs, where a graph-deletion argument can be applied.
The assumption $r\geq 4$ is essential at several points, including the nontrivial decomposition of the $(r-2)$-sets.
For an overview of the proof, we refer the reader to Section~\ref{sec:ProofOverview}.

These lemmas also yield a stability result for dense $\mathcal C_4^r$-free $r$-graphs.
Motivated by his earlier work on the generalized triangle, Mubayi~\cite{Mu07} conjectured that an analogous phenomenon holds for generalized $4$-cycles in $r$-graphs with $r\geq 4$.
In particular, our stability result confirms his conjecture as follows.

\begin{thm}\label{thm:stability}
Fix $r\ge 4$.  For every $\delta>0$, there exist $\epsilon>0$ and $n_0$ such that, for all $n\ge n_0$, every $n$-vertex $\mathcal C_4^r$-free $r$-graph with at least $(1-\epsilon)\binom{n-1}{r-1}$ edges contains a vertex $v$ that belongs to at least
$(1-\delta)\binom{n-1}{r-1}$ edges.
\end{thm}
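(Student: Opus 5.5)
The plan is to derive Theorem~\ref{thm:stability} from the two key lemmas announced above, Lemmas~\ref{lem:one-root} and~\ref{lem:CN-stability}, via the projection to a bipartite $3$-graph.

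\textbf{Projection.} Let $\hH$ be an $n$-vertex $\mathcal C_4^r$-free $r$-graph with at least $(1-\epsilon)\binom{n-1}{r-1}$ edges. Define the bipartite $3$-graph $\gG$ with parts $\binom{[n]}{r-2}$ and $[n]$: it consists of all triples $Axy$ with $|A|=r-2$ and $A\cup\{x,y\}\in\hH$. Each $r$-edge yields exactly $\binom{r}{2}$ triples.

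\textbf{Reduction to links.} By Baranyai's theorem (for $r-2\mid n$, after discarding at most $r$ vertices, which costs $O(n^{r-2})$ edges), partition $\binom{[n]}{r-2}$ into perfect matchings $M_1,\dots,M_t$, where $t=\binom{n-1}{r-3}$. Each $M_i$ induces a balanced bipartite $3$-graph $\gG_i$. For $A,B\in M_i$ (disjoint sets), the $\mathcal C_4^r$-freeness of $\hH$ says the following. The link graphs $L_A=\{xy: A\cup\{x,y\}\in \hH\}$ and $L_B$ cannot both contain a pair of disjoint edges $xy\in L_A$, $zw\in L_B$ together with $xz\in L_B$, $yw\in L_A$ or similar configurations, when all the vertices are outside $A\cup B$. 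This is the hypothesis format of Lemma~\ref{lem:CN-stability}, which I assume gives the following. Either $\sum_{A\in M_i}|L_A|$ is at most $(1-\eta)\frac{n}{r-2}\binom{n}{2}\cdot\frac{2}{r(r-1)}\cdot(\ldots)$, i.e., noticeably below the star density; or almost all links $L_A$, $A\in M_i$, are nearly stars centered at a common vertex $v_i$. In the latter case Lemma~\ref{lem:one-root} upgrades this to "almost all edges of $\gG_i$ contain the root $v_i$."

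\textbf{Averaging and gluing.} Since $\sum_i e(\gG_i)=\binom r2 e(\hH)\ge(1-\epsilon)\binom r2\binom{n-1}{r-1}$, which matches the star's count up to $1-\epsilon$, averaging shows that all but an $O(\sqrt\epsilon)$ fraction of the $M_i$ are in the near-extremal case, each with a root $v_i$. It remains to show that the roots coincide for most $i$. Matchings $M_i$, $M_j$ with different roots would give two dense families of $r$-edges, one containing $v_i$ and the other $v_j$. Using that most $(r-2)$-sets appear in links of both types, one finds a generalized $4$-cycle: take $A\cup\{v_i,x\}$ and $B\cup\{v_i,y\}$ against $A'\cup\{v_j,\cdot\}$ splittings. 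Alternatively, one can argue directly in $\hH$: the edges through $v_i$ number about $\binom{n-1}{r-1}$ for each root used by many matchings, and two such vertices $u,v$ with $\deg u,\deg v\ge c\binom{n-1}{r-1}$ create many $C_4$'s by pairing $e\ni u$, $f\ni v$ disjoint with $e\cup f$ re-split as $(e-u+v)\cup(f-v+u)$. Once a single root $v$ is shared by a $1-O(\sqrt\epsilon)$ fraction of matchings, $v$ lies in at least $(1-\delta)\binom{n-1}{r-1}$ edges of $\hH$, since each edge through $v$ is counted by the triples $A v y$.

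\textbf{Main obstacle.} The hard step is the gluing of roots across different matchings. If it fails, I would try the direct approach in $\hH$: show that the re-split $(e-u+v, f-v+u)$ is present in $\hH$ for a positive fraction of pairs. This follows from near-completeness of the links, and yields a contradiction with $\mathcal C_4^r$-freeness.
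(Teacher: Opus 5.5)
\textbf{There is a genuine gap.} Your argument rests on lemma statements that are not the ones in the paper, and the step that does the real work is missing.

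\emph{What the lemmas actually say.} Lemma~\ref{lem:one-root} only says that after deleting at most $4n^{r-1-\delta}$ edges, every $\CN(x,y)$ becomes an $(r-1)$-star. Lemma~\ref{lem:CN-stability} says that such a dense $r$-graph has a vertex $u$ and a set $W$ with $|W|\ge(1-420\epsilon r)n$ such that $\CN(x,y)$ is a star centered at $u$ for all $x,y\in W$. Neither lemma gives a per-matching dichotomy. Neither says that ``almost all edges of $\gG_i$ contain $v_i$''. That assertion is essentially the theorem itself restricted to one Baranyai piece, so the proposal is circular exactly where the difficulty lies.

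In the paper, the per-piece information (Lemma~\ref{lem:cluster-S-set-W}) concerns only where the local centers $\fr(x,y)$ lie. The Baranyai decomposition is internal to the proof of Lemma~\ref{lem:CN-stability}. At the level of the theorem you should apply Lemma~\ref{lem:one-root} to the $r$-graph, then Lemma~\ref{lem:CN-stability} to the result, both as black boxes.

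\emph{The gluing step.} It has the same problem. The claim that two vertices of degree $\ge c\binom{n-1}{r-1}$ create a $\mathcal C_4^r$ presupposes the large degrees you are trying to prove. Nothing guarantees that the re-split edges $e-u+v$ and $f-v+u$ are present; the ``near-completeness of links'' you invoke is never established.

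\emph{What is missing even with the lemmas used correctly.} A common center $u$ on $W$ carries no degree information by itself, because the stars may be empty. In a partial Steiner system, where every $(r-1)$-set lies in at most one edge, every $\CN(x,y)$ is empty. So density must be used a second time, via a star-deficit argument.

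Use the paper's notation, not yours: write the $r$-graph on $[n]$ as $\gG\cup\hH$, with $\gG$ the maximal star at $u$ and $\hH$ the remaining edges. Let $m=\binom{n-1}{r-1}-|\gG|$ and suppose $m\ge 2\epsilon r\binom{n-1}{r-1}$. Then $|\hH|\ge(1-\frac1{2r})m$, and $U=[n]\setminus(W\cup\{u\})$ satisfies $|U|^{r-1}\le m/(12r)$ for small $\epsilon$.

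\begin{itemize}
\item The center condition gives that every $(r-1)$-set $S\not\ni u$ has at most one $\hH$-neighbour in $W$ (Lemma~\ref{lem:U-set}).
\item Lemma~\ref{lem:G-H-U-C4} then double counts Berge $2$-paths $(S,\{T,T'\})$ in $\hH$, where $T,T'$ are disjoint members of $\mathcal Q_{r-2}$ (the $(r-2)$-sets missing about half of their star edges).
\item Upper bound: $\CN_{\hH}(T,T')$ is a star or a triangle, which together with the $U$-condition gives at most $r^{4r}n^{-2}m^2(|U|+3)$ such paths.
\item Lower bound: the intersecting structure of $N_{\hH}(v)\setminus\mathcal Q_{r-1}$ (Lemma~\ref{lem:N-Q-int}), combined with F\"uredi's bound applied to $\hH'[U]$, gives at least $m^2/(40r^3n)$ such paths.
\item Comparing the two bounds forces $|U|=\Omega(n/r^{4r+3})$. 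This contradicts $|U|\le 420\epsilon r n$, so $m\le n\log n$.
\end{itemize}

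Some argument of this kind, converting aligned local centers plus density into a bound on the star deficit, is indispensable, and it is absent from your proposal.
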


\medskip

The rest of the paper is organized as follows.
In Section~2, we introduce the notation, state the two main lemmas (namely Lemmas~\ref{lem:one-root} and \ref{lem:CN-stability}), and give a more detailed proof overview.
In Section~\ref{sec:main}, we derive Theorems~\ref{thm:extremal} and \ref{thm:stability} from these lemmas.
In Section~\ref{sec:intersecting-families}, we prove  Lemma~\ref{lem:one-root} by showing that after deleting few edges, the common neighborhood of every pair of vertices becomes an $(r-1)$-uniform star.
In Section~\ref{sec:stability-com}, we show that in a dense $\mathcal C_4^r$-free $r$-graph, the aforementioned ``local stars'' have essentially the same center, using
a crucial deletion lemma whose proof is given in Section~\ref{sec:remove-diag}.
Sections~\ref{sec:stability-com} and~\ref{sec:remove-diag} together prove Lemma~\ref{lem:CN-stability}.
Finally, Section~7 contains some remarks and further questions.

\section{Notation, key lemmas, and proof overview}\label{sec:ProofOverview}
\paragraph{Notation.} Throughout this paper, we regard $r$ as a fixed positive integer, and all asymptotic notation is taken as $n\to\infty$.
For a positive integer $n$, let $[n]=\{1,2,\ldots,n\}$, and write $\binom{[n]}{r}$ for the family of all $r$-element subsets of $[n]$.
An $r$-uniform hypergraph (or {\it $r$-graph}, for short) on vertex set $[n]$ is a family $\fF\subseteq \binom{[n]}{r}$; its number of edges is denoted by $|\fF|$.

Let $\fF$ be an $r$-graph with vertex set $[n]$.
For a set $T\subseteq [n]$ with $|T|<r$, the {\it link} of $T$ in $\fF$ is
\[
 N_{\fF}(T)=\{S\subseteq [n]\setminus T: |S|=r-|T| \text{ and } S\cup T\in \fF\},
\]
and we write $d_{\fF}(T)=|N_{\fF}(T)|$.
When $T=\{v\}$, we also write $N_{\fF}(v)$, or $\fF_v$, for the link of $v$.
A family of sets is called a \emph{star} if either the family is empty, or there is a vertex contained in every member of the family. In the latter case such a vertex is called a \emph{center}.  If all members have size $k$, we also call it a $k$-uniform star (or $k$-star, for short).

If $X_1,\ldots,X_t \subseteq V(\fF)$ are sets of the same size smaller than $r$, we write
\[
 \CN_{\fF}(X_1,\ldots,X_t)
 =N_{\fF}(X_1)\cap\cdots\cap N_{\fF}(X_t)
\]
for their {\it common neighborhood}, and set
\[
 \cd_{\fF}(X_1,\ldots,X_t)=|\CN_{\fF}(X_1,\ldots,X_t)|.
\]
In particular, for vertices $x,y$, the family $\CN_{\fF}(x,y)$ consists of all $(r-1)$-sets which, together with each of $x$ and $y$, form edges of $\fF$.
Let $\mathcal A$ be a family of subsets of a ground set $V$. 
A \emph{cover} of $\mathcal A$ is a subset of $V$ having nonempty intersection with every member of $\mathcal A$. The \emph{covering number} of $\mathcal A$ is the minimum size of such a cover.
We say the family $\mathcal A$ is \emph{intersecting} if any two members intersect.

\paragraph{Key Lemmas.}
We now state two technical lemmas that are key to the proofs of our main results.
The first one provides a ``local starification'' statement: after deleting a small number of edges, all common neighborhoods of pairs of vertices become stars.

\begin{lem} \label{lem:one-root}
Let $r\ge 4$ be an integer, let $\delta\in (0,1/6)$, and $n$ be a sufficiently large integer compared with $r$ and $\delta$. 
Then any $n$-vertex $\mathcal{C}_4^r$-free $r$-graph $\fF$ contains a spanning subgraph $\gG$ such that $|\gG|\geq |\fF| - 4n^{r-1-\delta}$ and for any two vertices $x$ and $y$, $\CN_{\gG}(x,y)$ is an $(r-1)$-star. 
\end{lem}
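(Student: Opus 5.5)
The plan rests on one observation: $\mathcal C_4^r$-freeness forces every common neighbourhood to be intersecting. Indeed, suppose $A,B\in\CN_{\fF}(x,y)$ were disjoint. Then the four edges $A\cup\{x\}$, $B\cup\{y\}$, $A\cup\{y\}$, $B\cup\{x\}$ are distinct. They form two disjoint pairs with the same union $A\cup B\cup\{x,y\}$, which is a generalized $4$-cycle. So each $\CN_{\fF}(x,y)$ is an intersecting $(r-1)$-family.

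Deleting edges only shrinks common neighbourhoods, and any subfamily of a star is a star. So it suffices to choose, for each pair $\{x,y\}$, a vertex $c_{xy}$ maximizing the number of members of $\CN_{\fF}(x,y)$ through it. Let $\mathcal B_{xy}$ be the members of $\CN_{\fF}(x,y)$ avoiding $c_{xy}$. For every $A\in\mathcal B_{xy}$ we delete the single edge $A\cup\{x\}$. This kills $A$ from $\CN(x,y)$, so after all deletions every $\CN_{\gG}(x,y)$ is contained in the star at $c_{xy}$. The number of deleted edges is at most $\sum_{\{x,y\}}|\mathcal B_{xy}|$, so the whole task is to bound
\[
\sum_{\{x,y\}}|\mathcal B_{xy}|\le 4n^{r-1-\delta}.
\]

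We split pairs by the size of $\mathcal B_{xy}$, with threshold $n^{r-3-\delta}$.
\begin{itemize}
\item Pairs with $|\mathcal B_{xy}|\le n^{r-3-\delta}$ contribute at most $\binom n2 n^{r-3-\delta}\le n^{r-1-\delta}/2$ in total.
\item For any pair with $\mathcal B_{xy}\neq\emptyset$, $\CN(x,y)$ is intersecting but not a star. By Hilton--Milner, or more precisely the bound $O_r(n^{r-3})$ for intersecting families of covering number at least $2$, we get $|\mathcal B_{xy}|\le|\CN(x,y)|\le r^2n^{r-3}$.
\end{itemize}
It therefore remains to show that the number of \emph{heavy} pairs, those with $|\mathcal B_{xy}|>n^{r-3-\delta}$, is at most $n^{2-\delta}$.

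For a heavy pair, the stability version of Hilton--Milner (Frankl) says the following. Intersecting families of covering number at least $3$ have size $O(n^{r-4})\ll n^{r-3-\delta}$ when $\delta<1$. So a heavy $\CN(x,y)$ has a $2$-cover $\{u,v\}=\{u_{xy},v_{xy}\}$. Moreover, a positive fraction of its members are of the form $\{u\}\cup S$ and $\{v\}\cup S'$, with many of these sets avoiding the other cover vertex; this uses $|\mathcal B_{xy}|$ large. In particular $\CN(x,y)$ contains $\Omega(n^{r-3-\delta})$ pairs of members $A\ni u$, $B\ni v$ with $|A\cap B|$ small.

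Next we derive a contradiction from too many heavy pairs, via a second $\mathcal C_4^r$ configuration that mixes two heavy pairs. Say $xy$ and $xy'$ are heavy with the same cover $\{u,v\}$ and common sets in their neighbourhoods. Then edges $x\cup A$, $y\cup B$, $y'\cup\cdots$ can be rearranged into a generalized $4$-cycle. We would make this precise by counting, for a fixed $(r-3)$-set $T$ and vertices $u,v$, the auxiliary graph $H_{T,u,v}$ on pairs $\{x,y\}$ with $T\cup\{u\}, T\cup\{v\}\in\CN(x,y)$. The target is to show each such graph is $C_4$-free, or has bounded degree. Then $\sum|H_{T,u,v}|=O(n^{r-3}\cdot n^{3/2})$ edges in total. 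Double counting against the $\Omega(n^{r-3-\delta})$ witnesses per heavy pair then bounds the number of heavy pairs by $O(n^{3/2+\delta})\le n^{2-\delta}$, using $\delta<1/6$; here $n$ is large and the $r^2$ constant is absorbed. Summing the two contributions gives at most $4n^{r-1-\delta}$ deleted edges.

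The main obstacle is this last step: extracting a genuine generalized $4$-cycle from two or more heavy pairs sharing structure, where distinctness and disjointness of the four $r$-sets must be checked. If the $C_4$-freeness of $H_{T,u,v}$ fails, I would first try to make this step work with a weaker degree bound obtained from a supersaturation count. The hypothesis $\delta<1/6$ strongly suggests that the intended exponent bookkeeping is of this $n^{3/2}$ type.
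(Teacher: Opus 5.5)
There is a genuine gap. The inequality you reduce everything to, $\sum_{\{x,y\}}|\mathcal B_{xy}|\le 4n^{r-1-\delta}$, is false in general. So is the intermediate claim that there are at most $n^{2-\delta}$ heavy pairs.

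Take three vertices $a,b,c$ and let $\mathcal F_{abc}=\{e\in\binom{[n]}{r}:|e\cap\{a,b,c\}|\ge 2\}$. Any two edges meet, so $\mathcal F_{abc}$ is $\mathcal C_4^r$-free. For $x,y\notin\{a,b,c\}$, $\CN(x,y)$ is the family of all $(r-1)$-sets avoiding $x,y$ that contain at least two of $a,b,c$. Its maximum-degree vertex is one of $a,b,c$, say $a$. Then $\mathcal B_{xy}$ consists of all members containing $b,c$ but not $a$, so $|\mathcal B_{xy}|=\binom{n-5}{r-3}\gg n^{r-3-\delta}$. Hence all $\binom{n-3}{2}$ such pairs are heavy, and $\sum|\mathcal B_{xy}|=\Theta(n^{r-1})$.

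The lemma itself is trivial for this graph, since $|\mathcal F_{abc}|=O(n^{r-2})$. What fails is your accounting: it charges the edge $\{x\}\cup A$ once for every $y$ with $A\in\mathcal B_{xy}$, here about $n$ times.

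The same example defeats your final step. Take $T$ containing $c$ but not $a,b$, with $|T|=r-2$ so that $T\cup\{a\}$ and $T\cup\{b\}$ are $(r-1)$-sets; your $T$ has size $r-3$, which does not type-check. Then $H_{T,a,b}$ is complete on the remaining vertices, so it is neither $C_4$-free nor of bounded degree. Moreover, your total $O(n^{r-3}\cdot n^{3/2})$ omits the $\Theta(n^2)$ choices of $(u,v)$.

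The missing idea is that deletions must be chosen globally, so that one deleted edge repairs many pairs at once. The counting must then be done in a cleaned subgraph rather than in $\fF$ itself. The paper takes a spanning subgraph $\gG$ with the fewest edges subject to $|\gG|\ge g(\gG)$, where $g(\gG)=|\fF|-3n^{r-1-\delta}+(|\mathcal P_1(\gG)|+2|\mathcal P_2(\gG)|+3|\mathcal P_3(\gG)|)n^{r-3-\delta}$.

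Minimality means that deleting fewer than $n^{r-3-\delta}$ edges cannot lower any pair's covering number. This has four consequences:
\begin{itemize}
\item covering number at least three is ruled out, by Frankl's bound;
\item every covering-two pair has three degenerate roots, with each member of its common neighbourhood containing two of them;
\item at least two of these roots are heavy vertices;
\item $|\mathcal P_2(\gG)|<100n^{1+5\delta}$, since otherwise deleting the at most $n^{r-3}$ edges through some $xab$ would resolve more than $n^{\delta}$ pairs at once and contradict minimality.
\end{itemize}
Only then are the common neighbourhoods of the remaining covering-two pairs deleted outright, at cost $O(n^{r-2+5\delta})$. This is where $\delta<1/6$ enters, since it gives $r-2+5\delta<r-1-\delta$; it does not come from an $n^{3/2}$-type count.
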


The second lemma asserts that, in a dense $r$-graph, if the common neighborhoods of all pairs of vertices are already stars, then almost all of these ``local stars'' have a common center.

\begin{lem} \label{lem:CN-stability}
Let $r\ge 4$ be an integer, let $\epsilon\in (0, 1/420r)$, and $n$ be a sufficiently large integer compared with $r$ and $\epsilon$. 
Suppose $\gG$ is an $n$-vertex $\mathcal C_4^r$-free $r$-graph such that $|\gG|\ge (1-\epsilon)\binom{n-1}{r-1}$ and for any two vertices $x$ and $y$, $\CN_{\gG}(x,y)$ is an $(r-1)$-star. Then there exists a vertex $u$ and a subset $W\subseteq V(\gG)$ such that $|W|\ge (1-420\epsilon r)n$ and for any two vertices $x,y\in W$,
$\CN_{\gG}(x,y)$ is an $(r-1)$-star centered at $u$. 
\end{lem}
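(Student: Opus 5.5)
The plan is to combine a global double count with the local star structure. For every pair $x,y$ with $\CN_{\gG}(x,y)\neq\emptyset$, fix a center $c(x,y)$. Every member of $\CN_{\gG}(x,y)$ then contains $c(x,y)$ and avoids $x,y$, so
\[
\cd_{\gG}(x,y)\le \binom{n-3}{r-2}.
\]
For an $(r-1)$-set $S$, let $d(S)=d_{\gG}(S)$ be the number of vertices $x$ with $S\cup\{x\}\in\gG$. Double counting gives
\[
\sum_{\{x,y\}}\cd_{\gG}(x,y)=\sum_{S}\binom{d(S)}{2},\qquad \sum_S d(S)=r|\gG|\ge (1-\epsilon)\,r\binom{n-1}{r-1}.
\]
The first step is a lower bound on $\sum_S\binom{d(S)}{2}$ of order $(1-O(\epsilon r))\binom{n}{2}\binom{n-3}{r-2}$. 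This matches the upper bound $\binom n2\binom{n-3}{r-2}$ obtained from the display above, and is what happens in the star, where $d(S)\approx n$ for the $\binom{n-1}{r-2}$ sets $S\ni u$ and $d(S)=1$ otherwise.

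Convexity alone does not give this lower bound, since the average of $d(S)$ is only about $r$. I would instead use the star structure to split the degree sum. Each $S$ with $d(S)\ge 2$ and each pair $x,y\in N(S)$ forces $c(x,y)\in S$. Fix $x$ and count the link $\gG_x$. Sets $S\in\gG_x$ with $d(S)=1$ are ``private'' to $x$, and the set of $S$ with $d(S)=1$ has total size at most $\binom{n}{r-1}/\Theta(n)\cdot O(1)$ if we can show that
\[
\#\{S: d(S)\ge 2\}\le (1+O(\epsilon r))\binom{n-1}{r-2}.
\]
Granting this, the degree sum $\approx n\binom{n-1}{r-2}$ is concentrated on about $\binom{n-1}{r-2}$ sets, and Jensen on those sets yields the desired $\sum\binom{d(S)}2$.

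To bound the number of sets with $d(S)\ge 2$, I would assign each such $S$ to the pair $\{x,y\}\subseteq N(S)$ and the vertex $c(x,y)\in S$, and use $\mathcal C_4^r$-freeness. If $S,T$ are disjoint with $S,T\in \CN(x,y)$, then $\{x\}\cup S,\{y\}\cup T,\{x\}\cup T,\{y\}\cup S$ form a generalized $4$-cycle. This is exactly why the common neighbourhoods are intersecting, and it should bound how many centers are used. This is the step I expect to be the main obstacle, and where the constant $420$ will come from.

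Once the sum is nearly maximal, all but an $O(\epsilon r)$ fraction of pairs satisfy
\[
\cd(x,y)\ge (1-\sqrt{\epsilon})\binom{n-3}{r-2}.
\]
For such a pair, $\CN(x,y)$ is almost the full star at $c(x,y)$; call these pairs good. The second step shows that the centers are consistent: there is no triangle $x,y,z$ of good pairs with $c(x,y)=u\ne v=c(x,z)$. For this I would use the near-fullness to choose $(r-2)$-sets $P,Q$ disjoint from everything relevant, such that
\[
\{x,u\}\cup P,\quad \{z,v\}\cup Q,\quad \{x,v\}\cup Q,\quad \{z,u\}\cup P
\]
are all edges. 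The last of these needs $\{u\}\cup P\in N(z)$, which I would obtain from the good pair $\{y,z\}$ (casing on $c(y,z)\in\{u,v\}$ or neither), giving a generalized $4$-cycle.

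Finally, consider the graph of good pairs, which has at least $(1-O(\epsilon r))\binom n2$ edges, and colour each edge by its center. By the consistency step, every good triangle is monochromatic. Since a graph this dense has most edges in many triangles, a standard cleaning gives a colour $u$ and a set $W$ with $|W|\ge(1-420\epsilon r)n$ such that all pairs in $W$ are good with center $u$. Pairs in $W$ that are not good can be handled by deleting one endpoint; their number is small, so the resulting loss is absorbed into $W$. Keeping track of the losses at each step produces the explicit constant $420r$.
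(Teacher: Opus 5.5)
Your first step is not a technical obstacle: it is the whole content of the lemma, and the proposal gives no argument for it. The bound $\#\{S:d(S)\ge 2\}\le(1+O(\epsilon r))\binom{n-1}{r-2}$ already implies the asymptotic Erd\H{o}s--F\"uredi bound. Sets with $d(S)=1$ contribute at most $\binom{n}{r-1}$ to $\sum_S d(S)$, and every $d(S)\le n$, so
\[
r|\gG|=\sum_S d(S)\le \binom{n}{r-1}+n\,(1+O(\epsilon r))\binom{n-1}{r-2}=\bigl(r+O(\epsilon r^2)\bigr)\binom{n-1}{r-1}+O(n^{r-2}).
\]
Hence every $\mathcal C_4^r$-free $\gG$ with star common neighbourhoods has $|\gG|\le (1+O(\epsilon r))\binom{n-1}{r-1}$ for every fixed $\epsilon$. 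Together with Lemma~\ref{lem:one-root} this gives $f_r(n)\le(1+o(1))\binom{n}{r-1}$, whereas the best earlier constant was $\min\{1+2/\sqrt r,7/4\}$.

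The only use of $\mathcal C_4^r$-freeness you indicate is that $\CN_{\gG}(x,y)$ has no two disjoint members. That is already contained in the star hypothesis. By Lemma~\ref{lem:one-root}, the star hypothesis can be imposed on any $\mathcal C_4^r$-free graph at a cost of $o(n^{r-1})$ edges, so it cannot by itself give an asymptotically sharp count. Per-pair information of this kind bounds $\sum_{\{x,y\}}\cd_{\gG}(x,y)=\sum_S\binom{d(S)}{2}$ only from above, and says nothing about how many distinct centres occur.

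The paper gets past exactly this point in three moves. It projects to the bipartite $3$-graph $\hH(\binom{[n]}{r-2},[n])$. It splits $\binom{[n]}{r-2}$ via Baranyai's theorem into blocks of bounded compatibility degree, with a random relabelling so that $\sum_i|\alpha(\hH_i)|=O_r(n^{r-3})$. It then proves a new deletion lemma (Lemma~\ref{lem:3-graph-deletion}): after deleting about $2\binom n2$ triples per block, every pair of $[n]$ has a neighbourhood that is compatible or a two-element $\beta$-pair, hence of size at most $r-2$. Density then forces most pairs to have degree exactly $r-2$ in a quarter of the blocks, and the colouring/Zarankiewicz argument extracts the centre. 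Your proposal has no substitute for this.

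The later steps also need repair, though they become routine once you know that most common neighbourhoods are large.
\begin{itemize}
\item \textbf{Markov threshold.} With threshold $(1-\sqrt\epsilon)\binom{n-3}{r-2}$, Markov leaves an $O(r\sqrt{\epsilon})$ fraction of bad pairs, not $O(\epsilon r)$. Use a threshold $(\frac12+\eta)\binom{n-3}{r-2}$ for a fixed $\eta>0$.
\item \textbf{Consistency step.} Your edges $\{x,u\}\cup P$ and $\{z,u\}\cup P$ put $\{u\}\cup P$ into $\CN_{\gG}(x,z)$, whose centre is $v\notin\{u\}\cup P$. So the real requirement is just $\{u\}\cup P\in N(z)$, which you obtain only when $c(y,z)\in\{u,v\}$ (after relabelling). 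The case of three distinct centres genuinely occurs locally. The $r$-sets containing exactly two of $x,y,z$ form a $\mathcal C_4^r$-free family with full common neighbourhoods and $c(x,y)=z$, $c(y,z)=x$, $c(z,x)=y$; this is the $\beta$-triangle phenomenon of Lemma~\ref{clm:H-properties}.
\item \textbf{A working replacement.} Use the cycle $\{x,v\}\cup P_1,\{y,u\}\cup P_2,\{x,v\}\cup P_3,\{y,u\}\cup P_4$ with $P_1\sqcup P_2=P_3\sqcup P_4$. It exists whenever $N(x)$ and $N(y)$ contain a $(\frac12+\eta)$-fraction of the stars at $v\ne u$ respectively, and $\{x,y\}\cap\{u,v\}=\emptyset$. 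Combined with discarding vertices of good degree at most $2$, this makes the centre constant on the good pairs through each remaining vertex.
\item \textbf{Cleanup.} Deleting one endpoint of every bad pair may remove a vertex cover of size $\Theta(n)$. Instead, take $W$ to be the set of vertices whose links contain a $(\frac12+\eta)$-fraction of the star at $u$; any two such vertices have common neighbourhood centred at $u$.
\end{itemize}
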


We remark that a half-way proof of Lemma~\ref{lem:CN-stability} already suffices to give the upper bound $|\gG|\le (1+o(1))\binom{n}{r-1}$; see the implication for \eqref{equ:upbd-G}.
Together with Lemma~\ref{lem:one-root}, this implies that \( f_r(n)=(1+o(1))\binom{n}{r-1}.\)

\paragraph{Proof Overview.}
We now sketch the main steps of the proof in more detail.
The overall strategy is to approximate the star structure of the extremal configuration using the two key lemmas stated above. 
Let $\fF$ be an $n$-vertex $\mathcal{C}_4^r$-free $r$-graph with $|\fF|\geq (1-o(1))\binom{n-1}{r-1}$.
The argument proceeds in two steps:
\begin{itemize}
    \item Lemma~\ref{lem:one-root} allows us to delete a small number of edges from $\fF$ to obtain a subgraph $\gG$ in which the common neighborhood \(\CN_{\gG}(x,y)\) of any two vertices $x$ and $y$ forms an $(r-1)$-star.  
    \item Lemma~\ref{lem:CN-stability} then shows that most of these stars \(\CN_{\gG}(x,y)\) share a common center.
\end{itemize}
In Section~\ref{sec:main}, we show that Lemmas~\ref{lem:one-root} and~\ref{lem:CN-stability} together suffice to derive Theorems~\ref{thm:extremal} and~\ref{thm:stability}.
To see this, we apply these lemmas to $\mathcal{F}$ to obtain a common center $v$, and then decompose $\mathcal{F}$ into a large star centered at $v$ together with the remaining edges.
The key observation is that a Berge path of length two formed by two non-star edges would, together with the large star, very likely create a generalized $4$-cycle.
A double-counting argument for these Berge paths then shows that almost all edges of $\fF$ lie in the star, which proves the stability result (Theorem~\ref{thm:stability}). 
For the extremal result (Theorem~\ref{thm:extremal}), the same argument is refined to show that the number of non-star edges is at most \(1\), yielding precisely the constructions \((\mathrm{C1})\) and \((\mathrm{C2})\).

The proof of Lemma~\ref{lem:one-root} (given in Section~\ref{sec:intersecting-families}) rests on structural properties of intersecting families. Since $\fF$ is $\mathcal C_4^r$-free, every common neighborhood $\CN_{\fF}(x,y)$ is an intersecting $(r-1)$-graph, so our aim is to delete few edges and make each a star (i.e., of covering number one). To this end, we design a potential function that selects a dense subgraph $\fF_1\subseteq\fF$ favoring pairs whose common neighborhoods have small covering number. We show that in $\fF_1$, no common neighborhood has covering number at least three and that pairs of covering number two are sparse and highly structured. Removing the edges associated with these pairs leaves a subgraph $\gG$ in which every common neighborhood is an $(r-1)$-star.

The proof of Lemma~\ref{lem:CN-stability} is the technical core of the paper and occupies Sections~\ref{sec:stability-com} and~\ref{sec:remove-diag}. We encode each edge of the \(r\)-graph $\gG$ as a triple \(Axy\), where \(A\in \binom{[n]}{r-2}\) and \(x,y\in [n]\), and regard the resulting object as the so-called bipartite \(3\)-graph $\hH=
\hH(\aA,\mathcal{B})$ with the two parts \(\aA:=\binom{[n]}{r-2}\) and \(\mathcal{B}:=[n]\). 
We say two vertices $A, A'\in \aA$ are compatible if $A\cap A'\neq \emptyset$.
Using Baranyai's theorem, we randomly partition \(\aA=\binom{[n]}{r-2}\) into subsets $\aA_i$ of size $n$, in each of which every vertex has bounded compatibility degree.

A technically involved yet crucial ingredient is a deletion lemma on bipartite $3$-graphs (Lemma~\ref{lem:3-graph-deletion}). 
The lemma relies on an abstract framework associated with the
compatibility relation defined above.
Very roughly speaking, its proof builds on modifications of the graph-deletion lemmas of F\"uredi \cite{Fu84} and Pikhurko and Verstra\"ete \cite{PiVe09}, which are applied to each of the link graphs while accommodating several additional restrictions tailored to our purposes. 
We refer the reader to the beginning of Section~\ref{sec:remove-diag} for a proof sketch.

We then apply Lemma~\ref{lem:3-graph-deletion} to each induced bipartite $3$-graph $\hH_i:=\hH(\aA_i,\mathcal{B})$.\footnote{It is important for our quantitative estimates that we take $|\aA_i|=|\mathcal{B}|=n$.}
It removes relatively few triples and leaves a simpler structure: for almost every pair of vertices \(x,y\in \mathcal{B}\), its common neighborhood (i.e., an $(r-2)$-graph) is either compatible (i.e., an intersecting family) or belongs to a small exceptional configuration. 
The proof of Section~\ref{sec:stability-com} then combines this structure with delicate density arguments to locate a positive fraction of the $\hH_i$'s. Each such $\hH_i$ contains a compatible subfamily $\mathcal{S}\subseteq \aA_i$ of size $r-2$ such that for most pairs $\{x,y\}$ in $\mathcal{B}$, their common neighborhood in $\hH_i$ is a star whose center lies in every member of $\mathcal{S}$; see Lemma~\ref{lem:cluster-S-set-W}.
Finally, we show that there exist two such $\hH_i$'s with a particular property that forces a single shared center for the common neighborhoods in $\gG$ of all pairs of vertices in a subset of size $(1-o(1))n$.
This establishes Lemma~\ref{lem:CN-stability}.

\section{From the key lemmas to the main results}\label{sec:main}
Assuming Lemmas~\ref{lem:one-root} and~\ref{lem:CN-stability}, we now show how the stability (Theorem~\ref{thm:stability}) and the exact extremal result (Theorem~\ref{thm:extremal}) follow.
The key intermediate step is a deficit lemma: once a large set of local stars has a common center, the missing part of the full star must be small.

Throughout this section, let $\mathcal{F}$ denote a $\mathcal C_4^r$-free $r$-graph with vertex set $[n]$.
We partition $\mathcal{F}$ into two edge-disjoint subgraphs $\mathcal{G}$ and $\mathcal{H}$,
where $\mathcal{G}$ is the maximal $r$-star (say with center $n$) in $\mathcal{F}$, and $\mathcal{H}$ consists of the remaining edges of $\mathcal{F}$.
In the applications below, the center $n$ will be chosen to be the common center supplied by Lemma~\ref{lem:CN-stability}; this choice determines the partition $\mathcal{F}=\mathcal{G}\cup\mathcal{H}$.
Let $|\mathcal{G}|=\binom{n-1}{r-1}-m$. Note that $\mathcal{H}\subseteq \binom{[n-1]}{r}$.

We first introduce the notation needed for the key technical lemma (Lemma~\ref{lem:G-H-U-C4}),
which shows that under mild conditions $\mathcal{G}$ is close to a full $r$-star, equivalently that $m$ is small.
Together with Lemmas~\ref{lem:one-root} and~\ref{lem:CN-stability}, this lemma yields the stability result (Theorem~\ref{thm:stability}) and is then used to establish Theorem~\ref{thm:extremal}.

\subsection{Preliminaries: the $i$-graphs $\mathcal{Q}_i$ and related intersecting families}\label{subsec:Qi}
In this subsection, we introduce an auxiliary $i$-graph $\mathcal{Q}_i$ for each $1\leq i\leq r-1$ and record the properties needed later.
These auxiliary hypergraphs record which star edges are missing or sparse; they will be used repeatedly in the counting arguments below.
To be precise, let
\[\mathcal{Q}_{r-1}=\left\{T\in \binom{[n-1]}{r-1}:T\cup\{n\}\notin \mathcal{G} \right\}\]
and for $1\le i\le r-2$, let
\[\mathcal{Q}_i=\left\{T\in \binom{[n-1]}{i}:d_{\mathcal{G}}(T)\leq (1/2+n^{-0.1})\binom{n-1}{r-i-1} \right\}.\]
Note that $|\mathcal{Q}_{r-1}|= \binom{n-1}{r-1}-|\mathcal{G}|=m$,
and for $1\le i\le r-2$, every set $T\in \mathcal{Q}_i$ has $d_{\mathcal{Q}_{r-1}}(T)=\binom{n-1}{r-i-1}-d_\mathcal{G}(T)\geq (\frac{1}{2}-n^{-0.1})\binom{n-1}{r-i-1}$.
Hence, we obtain
\begin{equation}\label{equ:Q-r-1}
	|\mathcal{Q}_{i}|\leq \frac{\binom{r-1}{i}\cdot |\mathcal{Q}_{r-1}|} {(\frac{1}{2}-n^{-0.1})\binom{n-1}{r-i-1}} \leq r^{2r}mn^{i+1-r}\text{ for all } 1\leq i\leq r-1.
\end{equation}
	
The following lemma records a property used repeatedly below.

\begin{lem}\label{lem:N-Q-int}
Let $\mathcal{F}$, $\mathcal{G}$ and $\mathcal{H}$ be defined as in the beginning of this section.
Let $1\le t\le r-1$, and let $S\subseteq [n-1]$ be any $(r-t)$-set.
Then $N_{\mathcal{H}}(S)\backslash \mathcal{Q}_t$ is an intersecting $t$-family.
In particular, for $t=1$, we have $|N_{\mathcal{H}}(S)\backslash \mathcal{Q}_1|\leq 1$.
\end{lem}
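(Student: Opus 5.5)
We argue by contradiction: suppose $T_1,T_2\in N_{\mathcal H}(S)\setminus\mathcal Q_t$ are disjoint. We will find two star edges through $n$ that, together with the $\mathcal H$-edges $S\cup T_1$ and $S\cup T_2$, form a generalized $4$-cycle.

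Set $A=S\cup T_1$ and $B=S\cup T_2$. Both are edges of $\mathcal H$, so they avoid $n$; they are distinct because $T_1\ne T_2$. We look for a $(t-1)$-set $R$ and two $r$-sets
\[
C=T_1\cup R\cup\{n\},\qquad D=T_2\cup (S\setminus R)\cup\{n\}
\]
that both lie in $\mathcal G$. The set $R$ plays two roles at once: it must be disjoint from $T_1$ (which it will be, since $R\subseteq S$), and $T_1\cup R$ must lie in $N_{\mathcal G}(n)$.

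For $C$ and $D$ to be edges of $\mathcal G$ we need $T_1\cup R\notin\mathcal Q_{r-1}$ and $T_2\cup(S\setminus R)\notin \mathcal Q_{r-1}$. Taking $R\subseteq S$ does not work directly: then $A\cup B=S\cup T_1\cup T_2$, while $C\cup D$ contains $n$, so the unions cannot match. The correct pairing is a $4$-cycle with pairs $\{A,X\}$ and $\{C,D\}$, where $X$ is also an edge and $A\cap X=\emptyset$. The natural choice is:
\[
\{A,\;T_2\cup U\cup\{n\}\}\quad\text{and}\quad \{B,\;T_1\cup U\cup\{n\}\},
\]
where $U$ is an $(r-t-1)$-set disjoint from $S\cup T_1\cup T_2$. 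Then
\[
A\cup (T_2\cup U\cup\{n\}) \;=\; S\cup T_1\cup T_2\cup U\cup\{n\} \;=\; B\cup(T_1\cup U\cup\{n\}),
\]
and each pair is disjoint, because $T_1\cap T_2=\emptyset$ and $U$ avoids $S\cup T_1\cup T_2$. Since $T_1\ne T_2$, the four sets are distinct: $A\ne B$, and the two star edges differ. So it suffices to find an $(r-t-1)$-set $U\subseteq[n-1]\setminus(S\cup T_1\cup T_2)$ with both $T_1\cup U\in N_{\mathcal G}(n)$ and $T_2\cup U\in N_{\mathcal G}(n)$. Note that $d_{\mathcal G}(T_j)$ counts exactly the $(r-1-t)$-sets $U$ for which $T_j\cup U\cup\{n\}\in\mathcal G$; the meaning of $d_{\mathcal G}(T)$ here is the number of $U$ with $T\cup U\cup\{n\}\in\mathcal G$.

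This is a counting step. Since $T_j\notin\mathcal Q_t$, we have $d_{\mathcal G}(T_j)>(1/2+n^{-0.1})\binom{n-1}{r-t-1}$. When $t=r-1$, $\mathcal Q_{r-1}$ is defined directly and $U=\emptyset$, and the condition just says $T_j\cup\{n\}\in\mathcal G$. When $t\le r-2$, the two families of good $U$ each occupy more than half of $\binom{[n-1]\setminus T_j}{r-t-1}$. By inclusion–exclusion, the number of $U$ good for both is at least $2n^{-0.1}\binom{n-1}{r-t-1}-O(n^{r-t-2})$, where the error term accounts for sets $U$ that meet $T_1\cup T_2\cup S$, at most $O_r(1)\cdot n^{r-t-2}$ of them. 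For large $n$ this is positive, so a suitable $U$ exists, giving a member of $\mathcal C_4^r$ in $\mathcal F$, a contradiction. Hence $N_{\mathcal H}(S)\setminus\mathcal Q_t$ is intersecting.

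For $t=1$, an intersecting family of $1$-sets has at most one member, which gives the final claim. The only delicate point is the bookkeeping: the $n^{-0.1}$ slack in the definition of $\mathcal Q_i$ must beat the $O(n^{r-t-2})$ count of sets $U$ that fail to be disjoint from $S\cup T_1\cup T_2$, which is what makes the inclusion–exclusion go through.
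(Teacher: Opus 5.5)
Your argument is correct and is essentially the paper's proof: your set $U\cup\{n\}$ is exactly the paper's $S'\in N_{\mathcal G}(T_1)\cap N_{\mathcal G}(T_2)$ with $S'\cap S=\emptyset$, found by the same inclusion--exclusion in which the $n^{-0.1}$ slack beats the $O_r(n^{r-t-2})$ sets meeting $S$, and the resulting generalized $4$-cycle $S\cup T_1$, $S\cup T_2$, $S'\cup T_1$, $S'\cup T_2$ is the same one. The abandoned first attempt with the $(t-1)$-set $R$ should simply be deleted from the write-up.
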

\begin{proof}
Suppose for a contradiction that there exist two disjoint $t$-sets $T, T'\in N_{\mathcal{H}}(S)\setminus \mathcal{Q}_t$.

We claim that there exists some $(r-t)$-set $S'\in N_{\mathcal{G}}(T) \cap N_{\mathcal{G}}(T')$ with $S'\cap S = \emptyset$.
For the case $t=r-1$, since $T, T'\notin \mathcal{Q}_{r-1}$, we have $S':=\{n\}\in N_{\mathcal{G}}(T) \cap N_{\mathcal{G}}(T')$,
and clearly $S'\cap S=\emptyset$.
For $1\leq t\leq r-2$, by the definition of $\mathcal{Q}_t$, we have
$d_{\mathcal{G}}(T), d_{\mathcal{G}}(T') \geq \left( \frac{1}{2} + n^{-0.1} \right) \binom{n-1}{r-t-1},$
and hence $|N_{\mathcal{G}}(T) \cap N_{\mathcal{G}}(T')|\geq 2n^{-0.1} \binom{n-1}{r-t-1}$.
Note that any set $S'\in N_{\mathcal{G}}(T) \cap N_{\mathcal{G}}(T')$ has size $r-t$ and contains the fixed vertex $n$.
Thus at most $|S|\cdot \binom{n-1}{r-t-2}<n^{-0.1} \binom{n-1}{r-t-1}$ such $(r-t)$-sets $S'$ intersect $S$.
Thus there exists the desired $S'\in N_{\mathcal{G}}(T) \cap N_{\mathcal{G}}(T')$ with $S'\cap S=\emptyset$.

Using the claim, we see that the four edges $S\cup T$, $S \cup T'$, $S' \cup T$, and $S' \cup T'$ would form a generalized $4$-cycle in $\mathcal{F}=\mathcal{G} \cup \mathcal{H}$, a contradiction.
For the case $t=1$, $N_{\mathcal{H}}(S) \setminus \mathcal{Q}_1$ is a set of vertices, and thus being intersecting implies that its size is at most one.
\end{proof}

Before proving Lemma~\ref{lem:G-H-U-C4}, we record the simple consequence of Lemmas~\ref{lem:one-root} and~\ref{lem:CN-stability} that will supply the exceptional set $U$.

\begin{lem}\label{lem:U-set}
Let $\mathcal{F}$, $\mathcal{G}$ and $\mathcal{H}$ be defined as in the beginning of this section.
Suppose $W$ is a subset of $V(\mathcal{F})$ such that for any two vertices $x,y\in W$,
$\CN_{\mathcal{F}}(x,y)$ is an $(r-1)$-star with common center $n$.
Let $U=[n-1]\backslash W$.
Then for any $(r-1)$-set $S\subseteq [n-1]$, we have $|N_{\mathcal{H}}(S)\backslash U|\leq 1$.
\end{lem}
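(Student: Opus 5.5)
Suppose for contradiction that $N_{\mathcal{H}}(S)\setminus U$ contains two distinct vertices $x,y$. Both lie in $[n-1]\setminus U = W\setminus\{n\}$, so $x,y\in W$. Hence $\CN_{\mathcal{F}}(x,y)$ is an $(r-1)$-star centered at $n$; it may be empty, which the definition of a star allows. The plan is to show that $S$ itself belongs to $\CN_{\mathcal{F}}(x,y)$, which is impossible because $S$ does not contain $n$.

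The membership is immediate from the definitions. Since $x\in N_{\mathcal{H}}(S)$, the set $S\cup\{x\}$ is an edge of $\mathcal{H}\subseteq\mathcal{F}$ with $x\notin S$, so $S\in N_{\mathcal{F}}(x)$. Likewise $S\in N_{\mathcal{F}}(y)$, and therefore $S\in \CN_{\mathcal{F}}(x,y)$. In particular the common neighborhood is nonempty, so every member of it, $S$ included, must contain the center $n$. But $S\subseteq[n-1]$, a contradiction. Hence $|N_{\mathcal{H}}(S)\setminus U|\le 1$.

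The only point needing care is the bookkeeping around the vertex $n$. We have $n\notin U$ by the definition $U=[n-1]\setminus W$. However, $n$ cannot lie in $N_{\mathcal{H}}(S)$ anyway, since $\mathcal{H}\subseteq\binom{[n-1]}{r}$. So every element of $N_{\mathcal{H}}(S)\setminus U$ is a vertex of $[n-1]$ outside $U$, and hence lies in $W$. No real obstacle is expected: the lemma is a direct translation of the common-center conclusion of Lemma~\ref{lem:CN-stability} into the $\mathcal{G}/\mathcal{H}$ decomposition, and no $\mathcal{C}_4^r$-freeness argument is needed here.
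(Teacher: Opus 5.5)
Your proof is correct and follows essentially the same argument as the paper: two vertices of $N_{\mathcal{H}}(S)\setminus U$ must lie in $W$, so $S\in\CN_{\mathcal{F}}(x,y)$ would have to contain the center $n$, contradicting $S\subseteq[n-1]$. Your remark that $N_{\mathcal{H}}(S)\subseteq[n-1]$, because $\mathcal{H}\subseteq\binom{[n-1]}{r}$, makes explicit the step $N_{\mathcal{H}}(S)\setminus U\subseteq W$ that the paper uses without comment.
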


\begin{proof}
Suppose for a contradiction that $|N_{\mathcal{H}}(S)\backslash U|\geq 2$.
Then there exist two vertices $x,y\in N_{\mathcal{H}}(S)\cap W$.
By the hypothesis, $\CN_{\mathcal{F}}(x,y)$ is an $(r-1)$-star with center $n$.
Since $S\in \CN_{\mathcal{F}}(x,y)$, this would force $n\in S\subseteq [n-1]$, which is a contradiction.
\end{proof}

\subsection{The star-deficit lemma}\label{subsec:star-deficit}

The relevant obstruction is a Berge path of length two inside $\mathcal H$: if $\mathcal G$ were a full star, such a path would immediately create a generalized $4$-cycle.
When $\mathcal G$ is missing $m$ star edges, the auxiliary graphs $\mathcal Q_i$ measure how many of these paths can be hidden by missing star edges.
The lemma below counts such paths from above in terms of the exceptional set $U$ and from below under the assumption that $m$ is large, thereby showing that $\mathcal G$ remains close to a full $r$-star.

\begin{lem}\label{lem:G-H-U-C4}
Let $r\geq 3$ and $n$ be sufficiently large.
Let $\mathcal{F}$, $\mathcal{G}$, $\mathcal{H}$, and $m$ be defined as in the beginning of this section.
Suppose that $|\mathcal{H}|\geq \left(1-\frac{1}{2r}\right)m$ and there exists a set $U\subseteq [n-1]$ such that every $(r-1)$-set $S\subseteq [n-1]$ satisfies $|N_{\mathcal{H}}(S)\backslash U|\le 1$.
If $|U|\leq \frac{n}{50r^{4r+3}}$ and $|U|^{r-1}\leq \frac{m}{12r}$,
then $m\leq n\log n$.
\end{lem}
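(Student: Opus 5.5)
We argue by contradiction: assume $m> n\log n$. We count Berge paths of length two in $\mathcal H$, namely pairs of edges $E,E'\in\mathcal H$ with $|E\cap E'|=r-1$. Writing $S=E\cap E'$, such a pair gives two vertices $x,y\in N_{\mathcal H}(S)$.

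\emph{Upper bound.} By the hypothesis on $U$, each $(r-1)$-set $S\subseteq[n-1]$ has at most one neighbour outside $U$. Hence every such pair has $x\in U$ or $y\in U$, and we bound the number of pairs by about $2\sum_S |N_{\mathcal H}(S)\cap U|\,(|N_{\mathcal H}(S)\cap U|+1)$. We split the $(r-1)$-sets $S$ into two groups.
\begin{itemize}
\item If $S\subseteq U$, there are at most $|U|^{r-1}$ such sets, each contributing $O(|U|\cdot n)$ pairs. The hypothesis $|U|^{r-1}\le m/12r$ keeps this group to $O(mn|U|/r)$, or better.
\item If $S\not\subseteq U$, the edge $S\cup\{x\}$ with $x\in U$ is an edge of $\mathcal H$ meeting $U$. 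The number of such incidences is at most $r|\mathcal H_U|$, where $\mathcal H_U$ is the set of edges of $\mathcal H$ meeting $U$, and each contributes $O(|U|)$.
\end{itemize}
So the upper bound is roughly $O(r|U|\cdot|\mathcal H|)$ plus the $S\subseteq U$ term. Since $|U|\le n/(50r^{4r+3})$, this is at most $\frac{n}{40 r^{4r+2}}\,|\mathcal H|$.

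\emph{Lower bound.} Each edge $E\in\mathcal H$ together with a vertex $v\in E$ gives $S=E\setminus\{v\}$. We count pairs through degrees. Summing $\binom{d_{\mathcal H}(S)}{2}$ over $S$ is the natural quantity, but it does not directly exceed the upper bound. Instead we use Lemma~\ref{lem:N-Q-int}: for $t=1$, all but at most one neighbour of $S$ lie in $\mathcal Q_1$. Combining this with the $U$-condition, each $S$ with $d_{\mathcal H}(S)\ge 2$ has at least $d_{\mathcal H}(S)-1$ neighbours in $\mathcal Q_1\cup U$. Iterating Lemma~\ref{lem:N-Q-int} with larger $t$ (edges of $\mathcal H$ whose $t$-subsets avoid $\mathcal Q_t$ form intersecting links) then shows that most edges of $\mathcal H$ must contain a set from some $\mathcal Q_i$, or meet $U$.

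By \eqref{equ:Q-r-1}, the number of $r$-sets containing a member of $\mathcal Q_i$ is at most
\[
\sum_{i}|\mathcal Q_i|\, n^{r-i}\le r^{2r+1} m n.
\]
This estimate alone is too weak. The key is to exploit the condition defining $\mathcal Q_i$: a set $T\in \mathcal Q_i$ misses at least half of its star neighbourhood. This yields the intended contradiction as follows. Many edges of $\mathcal H$ are "hidden" only by missing star edges; a deleted star edge $T\cup\{n\}$ can hide only a bounded number of $\mathcal H$-edges before a generalized $4$-cycle reappears. Together with $|\mathcal H|\ge(1-\frac1{2r})m$, this should force $|\mathcal H|$ to be at most $(1-\frac1{2r})m$ minus a positive fraction. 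The slack in the lower bound comes from the $\frac{1}{2r}$ in the hypothesis and the $12r$ in the $U$-bound.

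\emph{Main obstacle.} Making the lower count sharp is the hard part: showing that each missing star edge (an element of $\mathcal Q_{r-1}$) accounts for fewer than $1-\frac1{2r}$ edges of $\mathcal H$ on average, up to $O(n\log n)$ error. This requires a careful charging scheme. I would try the following first. Each edge $E\in\mathcal H$ has $r$ subsets of size $r-1$. For $E$ not to participate in a $4$-cycle with the star, it needs, for each pair of disjoint edges, a missing $(r-1)$-set among its subsets. Charging $E$ to its missing $(r-1)$-subsets, weighted by $1/r$, and using Lemma~\ref{lem:N-Q-int} to show each missing set receives charge below $1-\frac1{2r}$ except on the $U$-part, should give the bound. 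The $\log n$ slack would absorb the $n^{-0.1}$ error terms in the definition of $\mathcal Q_i$ and the size of $U$.
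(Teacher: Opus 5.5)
Your proposal has a genuine gap: the lower bound, which is the heart of the argument, is never supplied, and the objects you count cannot supply it.

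\textbf{Why your count and charging scheme cannot work.} Pairs of $\mathcal H$-edges meeting in $r-1$ vertices are indeed bounded above by the $U$-hypothesis. As you note, though, you have no lower bound for them, and none follows from $|\mathcal H|$. A family of $r$-sets with all $(r-1)$-degrees at most one can have $\Theta(n^{r-1})$ edges, and in (C1) $\mathcal H$ is a matching.

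Your fallback charging sketch rests on assertions that are false or unsupported:
\begin{itemize}
\item A missing star edge $T\cup\{n\}$ does not hide a bounded number of $\mathcal H$-edges. The hypotheses only give $d_{\mathcal H}(T)\le |U|+1$, and $|U|$ may be linear in $n$.
\item An $\mathcal H$-edge need not contain any missing $(r-1)$-set. In (C1) every matching edge has all its $(r-1)$-subsets in the star.
\item A single $\mathcal H$-edge never forms a generalized $4$-cycle with star edges, since each of the two disjoint pairs must contain an $\mathcal H$-edge.
\end{itemize}

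Only an aggregate version of your charging is provable. Let $\mathcal H'$ be the edges all of whose $(r-1)$-subsets are missing. Peel the coordinates of an ordered copy of $\mathcal H'$ into $U$ one at a time, then apply F\"uredi's bound $3.5\binom{|U|}{r-1}$ to $\mathcal H'[U]$. This gives $|\mathcal H'|\le\frac{r-1}{r}m+2|U|^{r-1}$. With $|\mathcal H|\ge(1-\frac1{2r})m$ and $|U|^{r-1}\le m/12r$, it yields
\[
M:=\sum_{u}|N_{\mathcal H}(u)\setminus\mathcal Q_{r-1}|\ge \frac{m}{3r}.
\]
That is where those two hypotheses enter, but it is only half the argument. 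You must then show $M$ cannot be this large. Since $M$ counts incidences with \emph{present} star edges, charging to missing star edges gives no handle on it.

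\textbf{The missing idea: a quadratic double count at the level of $\mathcal Q_{r-2}$.} Let $\mathcal O$ consist of the triples $(S,\{T,T'\})$ with $|S|=2$, $T,T'\in\mathcal Q_{r-2}$ disjoint, and $S\cup T,\,S\cup T'\in\mathcal H$.

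For the upper bound, fix disjoint $T,T'$. The graph $\CN_{\mathcal H}(T,T')$ is intersecting, hence a triangle or a star with some center $v$. In the star case, $\cd_{\mathcal H}(T,T')\le d_{\mathcal H}(T\cup\{v\})\le|U|+1$. This is the right place for the $U$-hypothesis: it is applied to the $(r-1)$-set $T\cup\{v\}$, not to the intersection of two $\mathcal H$-edges. With $|\mathcal Q_{r-2}|\le r^{2r}m/n$ this gives $|\mathcal O|\le r^{4r}n^{-2}m^2(|U|+3)$.

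For the lower bound, fix $u$. By Lemma~\ref{lem:N-Q-int}, $N_{\mathcal H}(u)\setminus\mathcal Q_{r-1}$ is intersecting, so it is covered by the $r-1$ vertices $v_i$ of any one member. Let $\Delta_{r-3}$ be the maximum $\mathcal H$-degree of a $3$-subset of $[n-1]$; iterating Lemma~\ref{lem:N-Q-int} gives $\Delta_{r-3}\le r^{3r}n^{-2}m+r^{r-3}$. This shows that almost all of the resulting $(r-2)$-sets in $N_{\mathcal H}(uv_i)$ lie in $\mathcal Q_{r-2}$ and are almost pairwise disjoint. Hence
\[
|\mathcal O(u)|\ge\tfrac1{2r}|N_{\mathcal H}(u)\setminus\mathcal Q_{r-1}|^2-\tfrac32r^2\Delta_{r-3}|N_{\mathcal H}(u)\setminus\mathcal Q_{r-1}|.
\]
Cauchy--Schwarz and $M\ge m/3r$ then give $|\mathcal O|\ge m^2/(40r^3n)$. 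Combined with the upper bound, this forces $|U|\ge n/(40r^{4r+3})-3$, a contradiction.

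Note that $m>n\log n$ is used exactly to make $n\Delta_{r-3}=o(m)$ in this lower bound; any $m/n\to\infty$ would do. It is not used to absorb the $n^{-0.1}$ terms in the definition of $\mathcal Q_i$.
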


\begin{proof}
Suppose for a contradiction that $m>n\log n$.
We obtain a contradiction by double-counting the number of Berge paths of length two as follows.
Define
\[\mathcal{O}=\{(S,\{T,T'\}): T,T'\in \mathcal{Q}_{r-2}\mbox{ are disjoint}, |S|=2, \mbox{ and } S\cup T,S\cup T'\in \mathcal{H}\}.\]
We first prove an upper bound for the size of $\mathcal{O}$.

\begin{Claim}\label{clm:O-up-bd}
It holds that \[|\mathcal{O}|\leq r^{4r}n^{-2}m^{2}(|U|+3).\]
\end{Claim}
\begin{proof}
Fix a disjoint pair $\{T,T'\}\subseteq \mathcal{Q}_{r-2}$.
We assert that there are at most $|U|+3$ distinct 2-sets $S$ such that $(S,\{T,T'\})\in \mathcal{O}$, equivalently $\cd_{\mathcal{H}}(T,T')\leq |U|+3$.
To see this, observe that $\mathcal{H}$ is $\mathcal{C}_4^r$-free,
so the graph $\CN_{\mathcal{H}}(T,T')$ is either a triangle or a star.
If $\CN_{\mathcal{H}}(T,T')$ is a triangle, then clearly $\cd_{\mathcal{H}}(T,T')\leq 3$.
Assume now $\CN_{\mathcal{H}}(T,T')$ is a star with center $v$.
Then $\cd_{\mathcal{H}}(T,T') \leq d_{\mathcal{H}}(T \cup \{v\})\leq |U| + 1$,
where the last inequality follows from the hypothesis on $U$.
Thus, using \eqref{equ:Q-r-1} we have
\[|\mathcal{O}|\leq (|U|+3)\binom{|\mathcal{Q}_{r-2}|}{2}
\overset{\eqref{equ:Q-r-1}}{\leq} (|U|+3)\binom{r^{2r}n^{-1}m}{2}\le r^{4r}n^{-2}m^{2}(|U|+3),\]
as claimed.
\end{proof}

Before proceeding to the lower bound, we introduce some new notation.
Define
\[\Delta_0= 1 \text{ and } \Delta_t=\max_{S\in \binom{[n-1]}{r-t}}\{d_{\mathcal{H}}(S)\}\text{ for each } 1\leq t\leq r-1.\]

\begin{Claim}\label{clm:N-Q}
Let $1\leq t\leq r-1$. Then every $(r-t)$-set $S$ in $[n-1]$ satisfies
\[
|N_{\mathcal{H}}(S)\backslash \mathcal{Q}_t|\leq r \Delta_{t-1},
\mbox{ and moreover,~ } 
\Delta_t\leq r^{3r}n^{t+1-r}m+r^t.
\]
\end{Claim}
\begin{proof}
Fix an $(r-t)$-set $S\subseteq [n-1]$.
By Lemma~\ref{lem:N-Q-int}, $N_{\mathcal{H}}(S)\backslash \mathcal{Q}_t$ is an intersecting $t$-family.
If $t=1$, then $|N_{\mathcal{H}}(S)\backslash \mathcal{Q}_1|\leq 1\leq r\Delta_0$.
For $2\leq t\leq r-1$, there is nothing to prove if $N_{\mathcal{H}}(S)\backslash \mathcal{Q}_t=\emptyset$; otherwise, take one $t$-set $\{v_1,\ldots,v_t\}\in N_{\mathcal{H}}(S)\backslash \mathcal{Q}_t$.
Then every $t$-set in $N_{\mathcal{H}}(S)\backslash \mathcal{Q}_t$ contains some $v_i$ where $i\in [t]$,
implying that
\[|N_{\mathcal{H}}(S)\backslash \mathcal{Q}_t|\le \sum_{i\in [t]}|N_{\mathcal{H}}(S\cup \{v_i\})| \le t\Delta_{t-1} \le r \Delta_{t-1}.\]
Thus, $\Delta_t\leq |\mathcal{Q}_t| +r \Delta_{t-1}$ holds for $1\leq t\leq r-1$.
Using this inequality, an induction on $t$ gives
\[\Delta_t\leq \sum_{i=1}^{t} r^{t-i} |\mathcal{Q}_i|+r^{t}.\]
Using \eqref{equ:Q-r-1} that $|\mathcal{Q}_i|\leq r^{2r}mn^{i+1-r}$ for each $i$,
we derive that $\Delta_t\leq r^{3r}n^{t+1-r}m+r^t,$ as desired.
\end{proof}

The next claim supplies the mass needed for the lower bound on $|\mathcal{O}|$.

\begin{Claim}\label{clm:sum-N-Q}
It holds that
		\[\sum_{v\in [n-1]}|N_{\mathcal{H}}(v)\backslash \mathcal{Q}_{r-1}|\geq \frac{m}{3r}.\]
\end{Claim}
\begin{proof}
Let $\mathcal{H}'$ be obtained from $\mathcal{H}$ by deleting all edges $\{v\}\cup S$, where $S\in N_{\mathcal{H}}(v) \setminus \mathcal{Q}_{r-1}$ for every vertex $v\in [n-1]$.
We shall use the following property of $\mathcal{H}'$ that for every edge $T\in \mathcal{H}'$,
all its $(r-1)$-subsets belong to $\mathcal{Q}_{r-1}$. Also, we have
\begin{equation}\label{equ:Hprime}
|\mathcal{H}'|\geq |\mathcal{H}|- \sum_{v\in [n-1]}|N_{\mathcal{H}}(v)\backslash \mathcal{Q}_{r-1}|.
\end{equation}
Next, we define an $r$-partite $r$-graph $\mathcal{L}=\mathcal{L}(V_1,V_2,\ldots,V_r)$ from $\mathcal{H}'$, where the parts satisfy $V_1=V_2=\ldots=V_r=V(\mathcal{H})$, as follows:
for each edge $\{v_1,\ldots,v_r\}\in \mathcal{H}'$, we add all possible permutations $(v_{i_1},\ldots,v_{i_r})$ of it into $\mathcal{L}$.
Then $|\mathcal{L}|=r!\cdot |\mathcal{H}'|$.
Moreover, we define $\mathcal{L}_{r}=\mathcal{L}$, and
for every $i\in [r]$, let $\mathcal{L}_{i-1}$ denote the induced subgraph $\mathcal{L}(V_1,\ldots, V_{i-1}, U_i,\ldots, U_r)$ of $\mathcal{L}$,
where $U_i=\ldots=U_r=U$.
Note that $\mathcal{L}_0\subseteq \mathcal{L}_1\subseteq \ldots \subseteq \mathcal{L}_r=\mathcal{L}$,
where $\mathcal{L}_0=\mathcal{L}(U,\dots,U)$ and thus $|\mathcal{L}_0|=r!\cdot |\mathcal{H}'[U]|$.

We show that $|\mathcal{L}_{i-1}|\geq |\mathcal{L}_{i}|-(r-1)!m$ holds for every $2\leq i\leq r$,
and moreover, $|\mathcal{L}_{0}|\geq |\mathcal{L}_{1}|-|U|^{r-1}$.
To see this, consider any $(r-1)$-set $S=(v_1,\ldots, v_{i-1}, u_{i+1},\ldots, u_r)\in (V_1,\ldots, V_{i-1}, U_{i+1},\ldots, U_r)$ with $d_{\mathcal{L}_i}(S)\geq 1$, where $i\in [r]$.
Let $S^*=\{v_1,\ldots, v_{i-1}, u_{i+1},\ldots, u_r\}$.
Then $d_{\mathcal{H}'}(S^*)=d_{\mathcal{L}}(S)\geq d_{\mathcal{L}_i}(S)\geq 1$.
By the aforementioned property of $\mathcal{H}'$, we deduce that $S^*\in \mathcal{Q}_{r-1}$.
Thus, the number of such $S$ is at most $(r-1)!|\mathcal{Q}_{r-1}|=(r-1)!m$.
On the other hand,
$|N_{\mathcal{L}_{i-1}}(S)\backslash U|\leq |N_{\mathcal{H}'}(S^*)\backslash U|\leq |N_{\mathcal{H}}(S^*)\backslash U|\le 1$,
where the last inequality follows from the hypothesis on $U$.
Thus $\mathcal{L}_{i-1}$ is obtained from $\mathcal{L}_{i}$ by removing at most one edge for each such set $S$.
Thus $|\mathcal{L}_{i-1}|\geq |\mathcal{L}_{i}|-(r-1)!m$ holds for every $2\leq i\leq r$.
For $i=1$, the same argument applies, but the number of such $S$ is at most $|U|^{r-1}$.
So we obtain $|\mathcal{L}_{0}|\geq |\mathcal{L}_{1}|-|U|^{r-1}$.

Combining these inequalities, we obtain
\begin{equation}\label{equ:Hprime-U}
r!\cdot|\mathcal{H}'[U]|=|\mathcal{L}_0|\geq |\mathcal{L}|-(r-1)(r-1)!m-|U|^{r-1}=r!\cdot |\mathcal{H}'|-(r-1)(r-1)!m-|U|^{r-1}.
\end{equation}
Since $\mathcal{H}'[U]$ is a subgraph of $\mathcal{H}$, it is $\mathcal{C}_4^r$-free. By applying F\"uredi's theorem \cite{Fu84}, we obtain the upper bound
$|\mathcal{H}'[U]| \leq 3.5 \binom{|U|}{r-1}.$
This, together with \eqref{equ:Hprime-U}, implies that
\begin{equation}\label{equ:Hprime-2}
3.5r|U|^{r-1}\geq r!\cdot 3.5\binom{|U|}{r-1}\geq r!\cdot |\mathcal{H}'[U]|\geq r!|\mathcal{H}'|-(r-1)(r-1)!m-|U|^{r-1}.
\end{equation}
Since $r\geq 3$,  inequalities \eqref{equ:Hprime} and \eqref{equ:Hprime-2}  further imply that
\[2 |U|^{r-1}+\frac{r-1}{r}m\geq \frac{3.5r+1}{r!}|U|^{r-1}+\frac{r-1}{r}m \overset{\eqref{equ:Hprime-2}}{\geq} |\mathcal{H}'|\overset{\eqref{equ:Hprime}}{\geq} |\mathcal{H}|- \sum_{v\in [n-1]}|N_{\mathcal{H}}(v)\backslash \mathcal{Q}_{r-1}|.\]
Since $|\mathcal{H}|\geq \left(1-\frac1{2r}\right)m$ and $|U|^{r-1}\leq \frac{m}{12r}$,
we obtain
\[\sum_{v\in [n-1]}|N_{\mathcal{H}}(v)\backslash \mathcal{Q}_{r-1}|\geq |\mathcal{H}|-\frac{r-1}{r}m-2|U|^{r-1}\geq \frac{m}{2r}- 2|U|^{r-1}\geq \frac{m}{3r},\]
which proves Claim~\ref{clm:sum-N-Q}.
\end{proof}

Let $u\in [n-1]$ be a vertex.
In the next claim, we use the size of the $(r-1)$-graph $N_{\mathcal{H}}(u)\backslash \mathcal{Q}_{r-1}$ to give a lower bound on the size of the subfamily $\mathcal{O}(u)$ of $\mathcal{O}$.
Here, $\mathcal{O}(u)$ is defined as \[\mathcal{O}(u)=\{(S,\{T,T'\}) \in \mathcal{O}: u\in S\}.\]

\begin{Claim}\label{clm:O-lw-bd}
For every vertex $u\in [n-1]$, it holds that
\[|\mathcal{O}(u)|\geq \frac{1}{2r}|N_{\mathcal{H}}(u)\backslash \mathcal{Q}_{r-1}|^2-\frac{3}{2}r^2\Delta_{r-3}|N_{\mathcal{H}}(u)\backslash \mathcal{Q}_{r-1}|. \]		
\end{Claim}
\begin{proof}
By Lemma~\ref{lem:N-Q-int}, $N_{\mathcal{H}}(u)\backslash \mathcal{Q}_{r-1}$ is an intersecting $(r-1)$-family.
Fix a set $\{v_1,v_2,\ldots,v_{r-1}\}$ in $N_{\mathcal{H}}(u)\backslash \mathcal{Q}_{r-1}$.
Then every $(r-1)$-set in $N_{\mathcal{H}}(u)\backslash \mathcal{Q}_{r-1}$ contains some $v_i$.
For every $i\in [r-1]$, let
\[N^*_{\mathcal{H}}(uv_i)=\left\{T\in N_{\mathcal{H}}(uv_i): T\cup \{v_i\}\in N_{\mathcal{H}}(u)\backslash \mathcal{Q}_{r-1}\right\}\]
be a subset of $N_{\mathcal{H}}(uv_i)$.
Then we have
\begin{equation}\label{equ:N-uvi}
\max_{i\in [r-1]}|N^*_{\mathcal{H}}(u v_i)|\leq |N_{\mathcal{H}}(u)\backslash \mathcal{Q}_{r-1}|\leq \sum_{i\in [r-1]}|N^*_{\mathcal{H}}(u v_i)|.
\end{equation}
We also note that $(\{u,v_i\},\{T,T'\})\in \mathcal{O}(u)$ if and only if $T,T'\in N_{\mathcal{H}}(u v_i)\cap \mathcal{Q}_{r-2}$ form a disjoint pair.
Let $\alpha_i$ be the number of disjoint pairs in $N_{\mathcal{H}}(u v_i)\cap \mathcal{Q}_{r-2}$.
Then $|\mathcal{O}(u)|\geq \sum_{i=1}^{r-1} \alpha_i$.

Next, we estimate $\alpha_i$.
By Claim~\ref{clm:N-Q}, $|N_{\mathcal{H}}(uv_i)\backslash \mathcal{Q}_{r-2}|\leq r\Delta_{r-3}$ and thus
\begin{equation}\label{equ:N-uvi-2}
|N_{\mathcal{H}}(uv_i)\cap \mathcal{Q}_{r-2}|\geq |N_{\mathcal{H}}(u v_i)|- r\Delta_{r-3}\geq |N^*_{\mathcal{H}}(u v_i)|- r\Delta_{r-3}.
\end{equation}
Fix any set $T\in N_{\mathcal{H}}(uv_i)\cap \mathcal{Q}_{r-2}$ with $T=\{w_1,w_2,\ldots,w_{r-2}\}$.
Then the number of sets $T'\in N_{\mathcal{H}}(uv_i)\cap \mathcal{Q}_{r-2}$ with $T'\cap T \neq \emptyset$ is at most $\sum_{j\in [r-2]}|N_{\mathcal{H}}(u v_i w_j)|\leq (r-2)\Delta_{r-3}$.
Hence, $T$ is disjoint with at least $|N_{\mathcal{H}}(uv_i)\cap \mathcal{Q}_{r-2}|-r\Delta_{r-3}$ sets in $N_{\mathcal{H}}(uv_i)\cap \mathcal{Q}_{r-2}$.
Using \eqref{equ:N-uvi} and \eqref{equ:N-uvi-2}, this implies that
\begin{align*}
|\mathcal{O}(u)|\geq \sum_{i=1}^{r-1} \alpha_i &\geq \sum_{i=1}^{r-1}\frac12|N_{\mathcal{H}}(u v_i)\cap \mathcal{Q}_{r-2}|\cdot \big(|N_{\mathcal{H}}(uv_i)\cap \mathcal{Q}_{r-2}|-r\Delta_{r-3}\big)\\
		&\overset{\eqref{equ:N-uvi-2}}{\geq} \sum_{i=1}^{r-1}\frac12\big(|N^*_{\mathcal{H}}(u v_i)|- r\Delta_{r-3}\big)\cdot \big(|N^*_{\mathcal{H}}(u v_i)|-2r\Delta_{r-3}\big)\\
		&= \sum_{i=1}^{r-1}\frac12\big(|N^*_{\mathcal{H}}(uv_i) |^2 - 3r\Delta_{r-3}\cdot |N^*_{\mathcal{H}}(uv_i) |+2r^2\Delta_{r-3}^2\big)\\
 &\geq \frac{1}{2(r-1)}\left(\sum_{i\in [r-1]}|N^*_{\mathcal{H}}(u v_i)|\right)^2-\frac{3r\Delta_{r-3}}{2}\left(\sum_{i\in [r-1]}|N^*_{\mathcal{H}}(u v_i)|\right)\\
 &\overset{\eqref{equ:N-uvi}}{\geq} \frac{1}{2(r-1)}|N_{\mathcal{H}}(u)\backslash \mathcal{Q}_{r-1}|^2-\frac{3(r-1)r\Delta_{r-3}}{2}|N_{\mathcal{H}}(u)\backslash \mathcal{Q}_{r-1}|.
\end{align*}
This proves Claim~\ref{clm:O-lw-bd}.
\end{proof}

Finally, we combine the preceding estimates to obtain a contradiction for $|\mathcal{O}|$.
By Claim~\ref{clm:O-lw-bd}, we see that $|\mathcal{O}|=\frac12\sum_{u\in [n-1]}|\mathcal{O}(u)|$ is at least
\begin{align*}	
&\frac{1}{4r}\cdot\left(\sum_{u\in [n-1]}|N_{\mathcal{H}}(u)\backslash \mathcal{Q}_{r-1}|^2\right)-\frac{3r^2}{4}\cdot\Delta_{r-3}\cdot \left(\sum_{u\in [n-1]}|N_{\mathcal{H}}(u)\backslash \mathcal{Q}_{r-1}|\right)\\
\geq & \frac{n-1}{4r}\cdot\left(\frac{\sum_{u\in [n-1]}|N_{\mathcal{H}}(u)\backslash \mathcal{Q}_{r-1}|}{n-1}\right)^2-\frac{3r^2}{4}\cdot \Delta_{r-3}\cdot\left(\sum_{u\in [n-1]}|N_{\mathcal{H}}(u)\backslash \mathcal{Q}_{r-1}|\right)
\end{align*}
From Claims~\ref{clm:N-Q} and \ref{clm:sum-N-Q},
we have $\Delta_{r-3}\leq r^{3r}n^{-2}m+r^{r-3}$ and $\sum_{u\in [n-1]}|N_{\mathcal{H}}(u)\backslash \mathcal{Q}_{r-1}|\geq \frac{m}{3r}$.
Since we assume $m>n\log n$,
we obtain that for $n \rightarrow \infty$,
$$(n-1)\Delta_{r-3} \le r^{3r}n^{-1}m + n r^{r-3}=o(m),$$
and thus
\begin{equation*}
\frac{\sum_{u\in [n-1]}|N_{\mathcal{H}}(u)\backslash \mathcal{Q}_{r-1}|}{(n-1)\cdot \Delta_{r-3}}\geq \frac{m}{3r(n-1)\cdot \Delta_{r-3}}\to \infty \mbox{ as } n\to \infty.
\end{equation*}
Since $n$ is sufficiently large,
we obtain
\begin{align*}
|\mathcal{O}|\geq \big(1-o_n(1)\big)\cdot \frac{n-1}{4r}\cdot \left(\frac{ \sum_{v\in [n-1]}|N_{\mathcal{H}}(v)\backslash \mathcal{Q}_{r-1}|}{n-1} \right)^2 \geq \big(1-o_n(1)\big)\cdot\frac{n-1}{4r}\frac{m^2}{9r^2(n-1)^2}\geq \frac{m^2}{40r^3n}.
		\end{align*}
Together with Claim~\ref{clm:O-up-bd}, this gives
\[r^{4r}n^{-2}m^{2}(|U|+3)\geq |\mathcal{O}|\geq \frac{m^2}{40r^3n}, \mbox{ and hence } |U|\geq \frac{n}{40r^{4r+3}}-3.\]
This contradicts the condition that $|U|\leq \frac{n}{50r^{4r+3}}$, thus proving Lemma~\ref{lem:G-H-U-C4}.
\end{proof}

We remark that the proof of Lemma~\ref{lem:G-H-U-C4} in fact implies that $m \leq f(n)$ for any function $f(n)$ satisfying $\frac{f(n)}{n} \to \infty$ as $n \to \infty$. Here, for later convenience, we set $f(n)=n\log n$.

\subsection{Proof of Theorem~\ref{thm:stability}}
Combining Lemma~\ref{lem:G-H-U-C4} with Lemma~\ref{lem:one-root} and \ref{lem:CN-stability} that we will establish in later sections, we derive Theorem~\ref{thm:stability}, a stability result for $\mathcal{C}_4^r$. This resolves a conjecture of Mubayi in \cite{Mu07}.

We first apply Lemma~\ref{lem:one-root} to make local neighborhoods stars, then Lemma~\ref{lem:CN-stability} to align their centers on a large set.
Lemma~\ref{lem:U-set} converts this alignment into the exceptional-set hypothesis of Lemma~\ref{lem:G-H-U-C4}, forcing the star deficit to be small.

\begin{proof}[\bf Proof of Theorem~\ref{thm:stability}]
Let $r\geq 4$ be fixed. For every $\delta > 0$, we aim to prove the existence of $\epsilon = \epsilon(\delta) > 0$ and an integer $n_0 = n_0(\delta)$ such that the following holds:
Let $n\geq n_0$ and let $\mathcal{F}^*$ be any $\mathcal{C}_4^r$-free $r$-graph with vertex set $[n]$ and with at least $(1-\epsilon)\binom{n-1}{r-1}$ edges. Then there exists a vertex $u\in [n]$ that belongs to at least $(1-\delta)\binom{n-1}{r-1}$ edges of $\mathcal{F}^*$.

Throughout the rest of the proof, assume $n$ is sufficiently large compared with $r,\delta$, and choose \[\epsilon=\min\left\{\frac{\delta}{2r},\frac{1}{21000r^{4r+4}},\left(\frac{1}{6\cdot 2^{r-1}(r-1)!(420r)^{r-1}}\right)^{1/(r-2)}\right\}.\]
By Lemma~\ref{lem:one-root} (choosing $\delta$ therein as $1/10$), $\mathcal{F}^*$ contains a spanning subgraph $\mathcal{F}$ such that \[|\mathcal{F}|\geq |\mathcal{F}^*| - 4n^{r-1-1/10}\geq \left(1-\frac{\epsilon}2\right)\binom{n-1}{r-1}\] and for any two vertices $x$ and $y$, $\CN_{\mathcal{F}}(x,y)$ is an $(r-1)$-star.
Since $\epsilon/2<1/420r$, by Lemma~\ref{lem:CN-stability},
there exists a vertex say $n$ and a subset $W\subseteq [n]$ such that $|W|\geq (1-420\epsilon r)n$ and for any two vertices $x,y\in W$,
$\CN_{\mathcal{F}}(x,y)$ is an $(r-1)$-star with common center $n$.

Following the initial notation from the beginning of this section,
let $\mathcal{F} = \mathcal{G} \cup \mathcal{H}$, where $\mathcal{G}$ is the maximal $r$-star with center $n$ contained in $\mathcal{F}$
and $\mathcal{H}$ consists of the remaining edges of $\mathcal{F}$.
Let $|\mathcal{G}| = \binom{n-1}{r-1} - m$.
If $m<2\epsilon r \binom{n-1}{r-1}\leq \delta\binom{n-1}{r-1}$,
then $n$ is the desired vertex. Thus we may assume that
\begin{equation}\label{equ:m-stability}
m\geq 2\epsilon r\binom{n-1}{r-1}.
\end{equation}
Let $U=[n-1]\backslash W$.
By Lemma~\ref{lem:U-set}, any $S\in \binom{[n-1]}{r-1}$ satisfies $|N_{\mathcal{H}}(S)\backslash U|\leq 1$.
Moreover,
\[
 |U|\leq n-|W|\leq 420\epsilon rn\leq \frac{n}{50r^{4r+3}},
\]
and
\[
 |U|^{r-1}\leq (420\epsilon rn)^{r-1}
 \leq \frac{\epsilon(n/2)^{r-1}}{6(r-1)!}
 \leq \frac{\epsilon}{6}\binom{n-1}{r-1}
 \leq \frac{m}{12r},
\]
where the last inequality uses \eqref{equ:m-stability}.
Moreover, using \eqref{equ:m-stability} again, we obtain
\[|\mathcal{H}|=|\mathcal{F}|-|\mathcal{G}|\geq m-\frac{\epsilon}{2}\binom{n-1}{r-1}\geq \left(1-\frac{1}{2r}\right)m.\]
Applying Lemma~\ref{lem:G-H-U-C4}, we obtain $m\leq n\log n$, contradicting \eqref{equ:m-stability}, completing the proof.
\end{proof}

\subsection{Proof of Theorem~\ref{thm:extremal}}\label{subsec:pf-extremal}

The stability in Theorem~\ref{thm:stability} shows that an extremal hypergraph must always contain an almost full star. We will use
the $\mathcal Q_i$ hierarchy and the exact Berge-path count to force the star deficit $m$ to equal either $0$ or $1$, 
corresponding to constructions (C1) and (C2) respectively.
This approach proves our main Theorem~\ref{thm:extremal},
determinining the exact Tur\'{a}n number $f_r(n)$ of $\mathcal{C}_4^r$ and characterizing all extremal hypergraphs for any $r \geq 4$.

\begin{proof}[\bf Proof of Theorem~\ref{thm:extremal}.]
Let $r\geq 4$.
We first observe that by the construction (C1), it holds that $f_r(n)\geq \binom{n-1}{r-1}+\left\lfloor \frac{n-1}{r} \right\rfloor$.
Let $\mathcal{F}$ be any $n$-vertex $\mathcal{C}_4^r$-free $r$-graph with vertex set $[n]$ and at least $\binom{n-1}{r-1} + \left\lfloor \frac{n-1}{r} \right\rfloor$ edges.
It remains to show that, for every sufficiently large $n\geq N_0$,
$\mathcal{F}$ must be isomorphic to one of the constructions (C1) or (C2) (and thus $|\mathcal{F}|=\binom{n-1}{r-1} + \left\lfloor \frac{n-1}{r} \right\rfloor$).

Select a positive constant $\delta$ that is sufficiently small relative to $r$, and choose $N_0$ sufficiently large such that $N_0 \geq n_0(\delta)$,
where $n_0(\delta)$ is the constant provided by Theorem~\ref{thm:stability}.
By Theorem~\ref{thm:stability}, there exists a vertex (without loss of generality, say $n$) that belongs to at least $(1 - \delta)\binom{n-1}{r-1}$ edges of $\mathcal{F}$.
Partition $\mathcal{F} = \mathcal{G} \cup \mathcal{H}$ as before,
where $\mathcal{G}$ is the maximal $r$-star with center $n$ contained in $\mathcal{F}$,
and $\mathcal{H}$ consists of the remaining edges of $\mathcal{F}$.
Let $|\mathcal{G}| = \binom{n-1}{r-1} - m$.
Summarizing, we have
\begin{equation}\label{equ:HplusG}
|\mathcal{H}|\geq m+ \left\lfloor \frac{n-1}{r} \right\rfloor,~~~
|\mathcal{G}|\geq (1 - \delta)\binom{n-1}{r-1}, \mbox{~~~and~~~} 0\leq m\leq \delta\binom{n-1}{r-1}.
\end{equation}
A sequence of claims will show that $m$ must be either $0$ or $1$.

In the first claim, we use Lemma~\ref{lem:G-H-U-C4} to prove that $m \leq n \log n$.
Recall the $i$-graphs $\mathcal{Q}_i$ defined in Subsection~\ref{subsec:Qi} for $1\leq i\leq r-1$.

\begin{Claim}\label{clm:mltnlogn}
It holds that $m \leq n \log n$.
\end{Claim}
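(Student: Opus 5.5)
We will prove Claim~\ref{clm:mltnlogn} by verifying the hypotheses of Lemma~\ref{lem:G-H-U-C4} for the partition $\mathcal{F}=\mathcal{G}\cup\mathcal{H}$, with an exceptional set $U$ produced by Lemmas~\ref{lem:one-root}, \ref{lem:CN-stability} and~\ref{lem:U-set}. The condition $|\mathcal{H}|\ge (1-\frac1{2r})m$ is immediate from \eqref{equ:HplusG}, since $|\mathcal{H}|\ge m$.

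\textbf{Step 1: producing $U$.} Apply Lemma~\ref{lem:one-root} (with its parameter $1/10$) to $\mathcal{F}$ to get a spanning subgraph $\mathcal{F}'$ with $|\mathcal{F}'|\ge|\mathcal{F}|-4n^{r-1-1/10}\ge (1-\epsilon)\binom{n-1}{r-1}$, in which every $\CN_{\mathcal{F}'}(x,y)$ is an $(r-1)$-star. Here $\epsilon$ is a small constant chosen as in the proof of Theorem~\ref{thm:stability}. Lemma~\ref{lem:CN-stability} then yields a center $u$ and a set $W$ with $|W|\ge(1-420\epsilon r)n$ such that all $\CN_{\mathcal{F}'}(x,y)$ with $x,y\in W$ are stars centered at $u$.

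\textbf{Step 2: showing $u=n$.} This is the main obstacle. The vertex $n$ lies in all but at most $2\delta\binom{n-1}{r-1}$ of the sets $S\cup\{n\}$ in $\mathcal{F}'$. Take typical $x,y\in W\setminus\{n\}$. Then $\CN_{\mathcal{F}'}(x,y)$ contains many $(r-1)$-sets through $n$, namely $S'\cup\{n\}$ with $S'\cup\{x,n\},S'\cup\{y,n\}\in\mathcal{F}'$. If $u\ne n$, these sets would all have to contain $u$, so they would number only $O(n^{r-3})$, whereas typically there are $\Omega(n^{r-2})$ of them. Averaging over pairs gives $u=n$, provided $\delta,\epsilon$ are small.

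\textbf{Step 3: passing back to $\mathcal{F}$.} Lemma~\ref{lem:U-set} is stated for $\CN_{\mathcal{F}}$, not $\CN_{\mathcal{F}'}$, and deleted edges could spoil the star property. To handle this, rerun the argument of Lemma~\ref{lem:U-set} directly. Suppose $x,y\in W$ and $S\in\CN_{\mathcal{F}}(x,y)$ with $n\notin S$. Since $\CN_{\mathcal{F}'}(x,y)$ is a star centered at $n$, it contains many sets $S''\cup\{n\}$ disjoint from $S$, and this is where the lower bound of Step~2 is needed. Then $S\cup x$, $S\cup y$, $S''n\cup x$, $S''n\cup y$ form a generalized $4$-cycle in $\mathcal{F}$, a contradiction. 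We may also enlarge $U$ to absorb the few vertices $x\in W$ whose pairs $(x,y)$ lack such sets. So with $U=[n-1]\setminus W'$, every $(r-1)$-set $S$ satisfies $|N_{\mathcal{H}}(S)\setminus U|\le1$, and $|U|\le 500\epsilon rn\le n/(50r^{4r+3})$.

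\textbf{Step 4: the remaining hypothesis.} We still need $|U|^{r-1}\le m/(12r)$. If $m\le n\log n$ there is nothing to prove. Otherwise, Lemma~\ref{lem:G-H-U-C4} applies once $|U|$ is small enough; but $m$ may be only slightly larger than $n\log n$ while $|U|$ is linear, so this inequality can fail. I would first try to control $U$ in terms of $m$. A vertex $x\notin U$ has its link almost entirely in $\mathcal{G}$, so the membership of $x$ in $U$ should be charged to missing star edges at $x$: each $x\in U$ accounts for $\Omega(n^{r-2})$ elements of $\mathcal{Q}_{r-1}$. This would give $|U|=O(m n^{2-r})$, hence $|U|^{r-1}\ll m$ whenever $m\ge n\log n$. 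Lemma~\ref{lem:G-H-U-C4} then gives $m\le n\log n$. Establishing this bound on $|U|$ (replacing $U$ by the set of vertices of large $\mathcal{Q}_{r-1}$-degree and rechecking Step~3 for the others) is the delicate point.
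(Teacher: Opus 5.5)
\textbf{There is a genuine gap.} Steps~1--3 give a set $U$ of size of order $\epsilon r n$, linear in $n$. As you note yourself, such a $U$ cannot satisfy $|U|^{r-1}\le m/(12r)$ when $m$ is only slightly above $n\log n$. So the argument rests entirely on Step~4, and you leave that step open.

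You do guess the right exceptional set: vertices of large $\mathcal{Q}_{r-1}$-degree, which is essentially $\mathcal{Q}_1$. You also have its size bound right; that is \eqref{equ:Q-r-1} with $i=1$. But the hypothesis of Lemma~\ref{lem:G-H-U-C4} that actually needs proof is that every $(r-1)$-set $S\subseteq[n-1]$ satisfies $|N_{\mathcal{H}}(S)\setminus U|\le 1$ for this new $U$. Your plan to get it by ``rechecking Step~3 for the others'' cannot work. Step~3 only controls pairs $x,y$ in the set $W$ from Lemma~\ref{lem:CN-stability}, and even then only pairs whose common neighbourhood in $\mathcal{F}'$ is large. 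A vertex of small $\mathcal{Q}_{r-1}$-degree may lie outside $W$, and there you have no information. Likewise, absorbing ``the few vertices'' whose pairs lack large common neighbourhoods is unjustified, since the bad pairs need not be concentrated on few vertices.

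\textbf{The missing idea.} The required property follows from a direct degree argument, with no local-star machinery; this is Lemma~\ref{lem:N-Q-int} with $t=1$. Suppose $x,y\notin\mathcal{Q}_1$ and $S\cup\{x\},S\cup\{y\}\in\mathcal{H}$. Then $d_{\mathcal{G}}(x),d_{\mathcal{G}}(y)>(\tfrac12+n^{-0.1})\binom{n-1}{r-2}$. Every member of these two links contains $n$, so the links share at least $2n^{-0.1}\binom{n-1}{r-2}$ sets. At most $(r-1)\binom{n-2}{r-3}$ of these meet $S$, so some shared set $S'$ is disjoint from $S$. Then $S\cup\{x\}$, $S\cup\{y\}$, $S'\cup\{x\}$, $S'\cup\{y\}$ form a generalized $4$-cycle. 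The threshold must be below one half so that the two $\mathcal{G}$-links are forced to overlap.

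Taking $U=\mathcal{Q}_1$ makes your Steps~1--3 superfluous; this is exactly the paper's proof.

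Note also that the size conditions come from the upper bound $m\le\delta\binom{n-1}{r-1}$ in \eqref{equ:HplusG}, with $\delta$ small in terms of $r$, not from $m\ge n\log n$ as you wrote. This bound gives both $|U|\le r^{2r}n^{2-r}m<r^{2r}\delta n\le n/(50r^{4r+3})$ and $|U|^{r-1}\le r^{2r(r-1)}\delta^{r-2}m\le m/(12r)$.
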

\begin{proof}
Let $U=\mathcal{Q}_1$, which is a subset of $[n-1]$.
We check that $U$ satisfies the conditions in Lemma~\ref{lem:G-H-U-C4}.
By Lemma~\ref{lem:N-Q-int}, we see that every $(r-1)$-set $S\subseteq [n-1]$ satisfies $|N_{\mathcal{H}}(S)\backslash U|\leq 1$.
Using \eqref{equ:Q-r-1} and the fact that $m \leq \delta\binom{n-1}{r-1}$, where $\delta$ is chosen sufficiently small in terms of $r$, we have
\[
|U| = |\mathcal{Q}_1|\overset{\eqref{equ:Q-r-1}}{\leq} r^{2r} n^{2 - r} m < r^{2r} \delta n \leq \frac{n}{50r^{4r + 3}}\mbox{~~and consequently,~~}|U|^{r-1}\leq (r^{2r} n^{2 - r} m)^{r-1}\leq \frac{m}{12r}.
\]
Also $|\mathcal{H}|\geq \left(1-\frac{1}{2r}\right)m$.
Hence, by Lemma~\ref{lem:G-H-U-C4}, we obtain $m\leq n\log n$.
\end{proof}

Using \eqref{equ:Q-r-1} and Claim~\ref{clm:mltnlogn}, for any $1\leq i\leq r-3$ and sufficiently large $n$, we have
\begin{equation}\label{equ:Qieqempty}
|\mathcal{Q}_{i}|\overset{\eqref{equ:Q-r-1}}{\leq} r^{2r}mn^{i+1-r}\leq r^{2r}n^{-1}\log n<1, \mbox{ implying that } \mathcal{Q}_{i}=\emptyset \text{ for every } 1\leq i\leq r-3.
\end{equation}
We will also require the analysis on $\mathcal{Q}_{r-2}$ and $\mathcal{Q}_{r-1}$.
Before proceeding, we record a degree estimate for subsets of $\mathcal{H}$.

\begin{Claim}\label{clm:dT-1}
Let $3 \leq i \leq r-1$. Then every $i$-set $T \subseteq [n-1]$ satisfies $d_{\mathcal{H}}(T) \leq 1$.
\end{Claim}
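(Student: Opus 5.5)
We argue by downward induction on $i$, from $i=r-1$ down to $i=3$. The point is that for $i\ge 3$ the index $t:=r-i$ satisfies $1\le t\le r-3$, so by \eqref{equ:Qieqempty} we have $\mathcal{Q}_t=\emptyset$. Lemma~\ref{lem:N-Q-int}, applied to the $(r-t)$-set $T$, then says that $N_{\mathcal{H}}(T)=N_{\mathcal{H}}(T)\setminus\mathcal{Q}_t$ is an intersecting $t$-family. Intersecting alone does not bound the size, so we combine it with the inductive hypothesis one level up.

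\emph{Base case $i=r-1$.} Here $t=1$ and $\mathcal{Q}_1=\emptyset$ (since $1\le r-3$ because $r\ge4$). The ``in particular'' part of Lemma~\ref{lem:N-Q-int} gives $d_{\mathcal{H}}(T)=|N_{\mathcal{H}}(T)\setminus\mathcal{Q}_1|\le 1$ directly.

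\emph{Inductive step.} Let $3\le i\le r-2$, and assume every $(i+1)$-set in $[n-1]$ has $\mathcal{H}$-degree at most $1$. Fix an $i$-set $T\subseteq[n-1]$ and suppose, for contradiction, that $N_{\mathcal{H}}(T)$ contains two distinct $(r-i)$-sets $A$ and $B$. Since $N_{\mathcal{H}}(T)$ is intersecting, there is a vertex $v\in A\cap B$. Because $\mathcal{H}\subseteq\binom{[n-1]}{r}$, the vertex $v$ lies in $[n-1]\setminus T$, so $T\cup\{v\}$ is an $(i+1)$-subset of $[n-1]$. The sets $A\setminus\{v\}$ and $B\setminus\{v\}$ are two distinct members of $N_{\mathcal{H}}(T\cup\{v\})$. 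Hence $d_{\mathcal{H}}(T\cup\{v\})\ge 2$, which contradicts the inductive hypothesis, since $3< i+1\le r-1$. Therefore $d_{\mathcal{H}}(T)\le1$.

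There is no serious obstacle here. The only care needed is in the index bookkeeping: the vanishing of $\mathcal{Q}_{r-i}$ from \eqref{equ:Qieqempty} is available exactly when $r-i\le r-3$, that is, when $i\ge 3$. This explains the lower limit $i\ge3$ in the claim. It also explains why the base case uses $\mathcal{Q}_1=\emptyset$, which requires $r\ge4$.
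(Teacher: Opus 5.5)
Your proof is correct and rests on the same two ingredients as the paper's: $\mathcal{Q}_t=\emptyset$ for $1\le t\le r-3$, combined with Lemma~\ref{lem:N-Q-int}, makes $N_{\mathcal{H}}(S)$ intersecting for every $S\subseteq[n-1]$ with $3\le |S|\le r-1$. The paper avoids the induction by passing at once to $S=T\cup(A\cap B)$, whose link contains the disjoint sets $A\setminus B$ and $B\setminus A$, whereas you add one common vertex at a time; the two arguments are equivalent.
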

\begin{proof}
Suppose, for a contradiction, that there exists an $i$-set $T$ with $3\leq i \leq r-1$ such that $d_{\mathcal{H}}(T) \geq 2$.
Then there exist two distinct subsets $A, B\in N_{\mathcal{H}}(T)$.
Let $S=(A\cap B)\cup T$.
Then $3\leq |T|\leq |S|\leq r-1$. Moreover, $A\setminus B$ and $B\setminus A$ are disjoint members of $N_{\mathcal{H}}(S)$, so $N_{\mathcal{H}}(S)$ contains two disjoint sets.
Note that $1\leq r-|S|\leq r-3$, so by \eqref{equ:Qieqempty}, we have $\mathcal{Q}_{r-|S|}=\emptyset$.
However, Lemma~\ref{lem:N-Q-int} implies $N_{\mathcal{H}}(S)=N_{\mathcal{H}}(S)\backslash \mathcal{Q}_{r-|S|}$ is an intersecting family, leading to a contradiction. This completes the proof.
\end{proof}

Recall that $|\mathcal{Q}_{r-1}|=m$.
By Claim~\ref{clm:dT-1}, every $(r-1)$-set $T \in \mathcal{Q}_{r-1}$ is contained in at most one edge in $\mathcal{H}$,
that is, at most one of the sets $N_{\mathcal{H}}(v)$ for $v\in [n-1]$.
Hence we have
\begin{equation}\label{equ:rH-m}
\sum_{v\in [n-1]}|N_{\mathcal{H}}(v )\backslash \mathcal{Q}_{r-1}|
\geq \left(\sum_{v\in [n-1]}|N_{\mathcal{H}}(v )|\right)-| \mathcal{Q}_{r-1}|
=r|\mathcal{H}|-m.
\end{equation}

In the remainder of the proof, we consider the following collection of ``edges'' in $\mathcal{H}$.
Define \[\mathcal{\widetilde O}=\{(S,T): |S|=2, ~ T\in \mathcal{Q}_{r-2}, \text{ and } S\cup T\in \mathcal{H}\}.\]

We establish upper and lower bounds on $|\mathcal{\widetilde{O}}|$ in the following two claims.

\begin{Claim}\label{clm:Oprime-up-bd}
It holds that $|\mathcal {\widetilde O}| \leq |\mathcal{Q}_{r-2}|(|\mathcal{Q}_2|+1).$
\end{Claim}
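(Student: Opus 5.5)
The plan is to fix a set $T\in\mathcal{Q}_{r-2}$ and bound the number of $2$-sets $S$ with $S\cup T\in\mathcal{H}$, that is, $d_{\mathcal{H}}(T)$, by $|\mathcal{Q}_2|+1$. Summing over all $T\in\mathcal{Q}_{r-2}$ then gives $|\widetilde{\mathcal{O}}|=\sum_{T\in\mathcal{Q}_{r-2}}d_{\mathcal{H}}(T)\le|\mathcal{Q}_{r-2}|(|\mathcal{Q}_2|+1)$.

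First I will show that the link graph $N_{\mathcal{H}}(T)$, whose edges are $2$-subsets of $[n-1]$, is a matching. Suppose two distinct edges $\{v,a\},\{v,b\}\in N_{\mathcal{H}}(T)$ share a vertex $v$. Then the $(r-1)$-set $T\cup\{v\}$ satisfies $d_{\mathcal{H}}(T\cup\{v\})\ge 2$, since both $a$ and $b$ lie in its link. Because $r\ge 4$, we have $3\le r-1$, so Claim~\ref{clm:dT-1} applies to $T\cup\{v\}$ and gives $d_{\mathcal{H}}(T\cup\{v\})\le 1$, a contradiction. This is the only place where $r\ge4$ is needed.

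Next I split $N_{\mathcal{H}}(T)$ into its edges lying in $\mathcal{Q}_2$ and those not in $\mathcal{Q}_2$. The first group has at most $|\mathcal{Q}_2|$ members. For the second group, apply Lemma~\ref{lem:N-Q-int} with $t=2$ and $S=T$, which is an $(r-2)$-subset of $[n-1]$. It says that $N_{\mathcal{H}}(T)\setminus\mathcal{Q}_2$ is an intersecting family of $2$-sets. An intersecting family inside a matching has at most one member, so the second group contributes at most $1$. Hence $d_{\mathcal{H}}(T)\le|\mathcal{Q}_2|+1$.

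I do not expect a real obstacle here. The only point needing care is to check that the hypotheses of Claim~\ref{clm:dT-1} and Lemma~\ref{lem:N-Q-int} are met: the sets involved are subsets of $[n-1]$ because $\mathcal{H}\subseteq\binom{[n-1]}{r}$, and their sizes lie in the required ranges.
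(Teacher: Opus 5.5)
Your proposal is correct and is essentially the paper's argument. Both proofs apply Lemma~\ref{lem:N-Q-int} with $t=2$ to get that $N_{\mathcal{H}}(T)\setminus\mathcal{Q}_2$ is intersecting, and both use Claim~\ref{clm:dT-1} on $T\cup\{v\}$ (valid since $r-1\ge 3$) to rule out two members sharing a vertex. The only cosmetic difference is that you show the whole link $N_{\mathcal{H}}(T)$ is a matching first, whereas the paper applies the same contradiction directly to the non-$\mathcal{Q}_2$ part.
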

\begin{proof}
For each $T \in \mathcal{Q}_{r-2}$, Lemma~\ref{lem:N-Q-int} implies that $N_{\mathcal{H}}(T) \setminus \mathcal{Q}_2$ is an intersecting 2-family.
If $N_{\mathcal{H}}(T) \setminus \mathcal{Q}_2$ contains two sets, say $\{a,b\}$ and $\{a,c\}$,
then $d_{\mathcal{H}}(T\cup \{a\}) \geq 2$,
contradicting Claim~\ref{clm:dT-1}.
Thus, $|N_{\mathcal{H}}(T)| \leq |\mathcal{Q}_2| + 1$.
In total, we have
$|\mathcal{\widetilde{O}}| \leq |\mathcal{Q}_{r-2}| (|\mathcal{Q}_2| + 1)$, as claimed.
\end{proof}

\begin{Claim}\label{clm:Oprime-lw-bd}
It holds that $2|\mathcal{\widetilde O}|\geq r|\mathcal{H}|-m-n+1.$
\end{Claim}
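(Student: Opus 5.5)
We want $2|\widetilde{\mathcal O}| \ge r|\mathcal H| - m - n + 1$. The plan is to count, for each vertex $u\in[n-1]$, the $(r-1)$-sets in $N_{\mathcal H}(u)\setminus\mathcal Q_{r-1}$. For each of them we exhibit, essentially uniquely, a pair $(S,T)\in\widetilde{\mathcal O}$ with $u\in S$. We then sum over $u$, compare with \eqref{equ:rH-m}, and lose at most one per vertex, which accounts for the $-(n-1)$ term.

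\textbf{Step 1: structure of $N_{\mathcal H}(u)\setminus\mathcal Q_{r-1}$.} Fix $u$ and write $\mathcal N_u=N_{\mathcal H}(u)\setminus\mathcal Q_{r-1}$. By Lemma~\ref{lem:N-Q-int} this is an intersecting $(r-1)$-family. By Claim~\ref{clm:dT-1}, any two members share at most $r-4$ vertices. Indeed, two members $A,B$ with $|A\cap B|\ge 2$ would give $d_{\mathcal H}(\{u\}\cup(A\cap B))\ge 2$ with $3\le|\{u\}\cup(A\cap B)|\le r-1$, which is impossible. So any two distinct members of $\mathcal N_u$ meet in exactly one vertex.

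\textbf{Step 2: producing $\widetilde{\mathcal O}$ elements.} For $A\in\mathcal N_u$ and a vertex $v\in A$, set $T=A\setminus\{v\}$. Then $(\{u,v\},T)\in\widetilde{\mathcal O}$ exactly when $T\in\mathcal Q_{r-2}$. We would like to show that, for all but at most one $A\in\mathcal N_u$, some choice of $v\in A$ gives $T\in\mathcal Q_{r-2}$. The key step is that $T\notin\mathcal Q_{r-2}$ means $d_{\mathcal G}(T)$ is large, so the edges $S\cup T$ lie in $\mathcal H$ while $T$ has many $\mathcal G$-neighbours. Concretely, apply Lemma~\ref{lem:N-Q-int} with $t=r-2$ and the $2$-set $S=\{u,v\}$. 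It says that $N_{\mathcal H}(\{u,v\})\setminus\mathcal Q_{r-2}$ is intersecting, and by Claim~\ref{clm:dT-1} two of its members cannot share a vertex (that would give an $3$-set of $\mathcal H$-degree at least $2$). So it has size at most $1$.

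\textbf{Step 3: the per-vertex loss.} Suppose two distinct $A,B\in\mathcal N_u$ both fail, meaning every $T=A\setminus\{v\}$ with $v\in A$, and likewise for $B$, lies outside $\mathcal Q_{r-2}$. Since $A\cap B=\{w\}$, taking $v=w$ gives two distinct sets $A\setminus\{w\}$ and $B\setminus\{w\}$ in $N_{\mathcal H}(\{u,w\})\setminus\mathcal Q_{r-2}$, contradicting Step 2. Hence at most one member of $\mathcal N_u$ produces no element. Each remaining $A$ produces the pair $(\{u,v\},A\setminus\{v\})$, and distinct $A$ give distinct pairs because $A$ is recovered as $S\cup T$ minus $u$.

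\textbf{Step 4: summing.} This gives $|\{(S,T)\in\widetilde{\mathcal O}: u\in S\}|\ge|\mathcal N_u|-1$. Each element of $\widetilde{\mathcal O}$ is counted at most twice (once for each vertex of $S$), so $2|\widetilde{\mathcal O}|\ge\sum_u(|\mathcal N_u|-1)\ge r|\mathcal H|-m-(n-1)$ by \eqref{equ:rH-m}.

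The main obstacle is Step 3: verifying that the "failure" of a set really forces two members of a single $N_{\mathcal H}(\{u,w\})\setminus\mathcal Q_{r-2}$. If that is too weak, the fallback is to use $\mathcal Q_{r-3}=\emptyset$ from \eqref{equ:Qieqempty} to bound the failures directly.
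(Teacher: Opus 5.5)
Your proof is correct and follows essentially the same route as the paper: at most one member of $N_{\mathcal H}(u)\setminus\mathcal Q_{r-1}$ fails to yield an element of $\widetilde{\mathcal O}$, because two failures $A,B$ would meet in a single vertex $w$ (by Claim~\ref{clm:dT-1}), and then $A\setminus\{w\}$ and $B\setminus\{w\}$ would be two disjoint members of $N_{\mathcal H}(\{u,w\})\setminus\mathcal Q_{r-2}$, contradicting Lemma~\ref{lem:N-Q-int}; summing over $u$ and using \eqref{equ:rH-m} then finishes. The only blemishes are harmless: the stray phrase ``share at most $r-4$ vertices'' in Step~1 should read ``at most one vertex'', which is what your next sentence actually proves. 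Also, the extra appeal to Claim~\ref{clm:dT-1} in Step~2 is unnecessary, since $A\setminus\{w\}$ and $B\setminus\{w\}$ are already disjoint.
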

\begin{proof}
For a vertex $v\in [n-1]$, set
\[\mathcal{\widetilde O}(v)=\{(S,T) \in \mathcal{\widetilde O} : v\in S\}.\]

We will show that, with at most one exception, each $W\in N_{\mathcal{H}}(v) \setminus \mathcal{Q}_{r-1}$ contains a vertex $w$ such that $(\{v, w\}, W\setminus \{w\}) \in \mathcal{\widetilde{O}}(v)$.
Assume that there exist two exceptions $W_1,W_2\in N_{\mathcal{H}}(v)\backslash \mathcal{Q}_{r-1}$.
By Lemma~\ref{lem:N-Q-int}, $N_{\mathcal{H}}(v)\backslash \mathcal{Q}_{r-1}$ is an intersecting family.
If $|W_1\cap W_2|\ge 2$, then the degree $d_{\mathcal{H}}((W_1\cap W_2)\cup \{v\})\ge 2$ with $3\le |(W_1\cap W_2)\cup \{v\}|\le r-1$,
which contradicts Claim~\ref{clm:dT-1}.
Consequently, we have $W_1\cap W_2 =\{w\}$ for some vertex $w$.
By Lemma~\ref{lem:N-Q-int}, $N_{\mathcal{H}}(\{v,w\})\backslash \mathcal{Q}_{r-2}$ is an intersecting family.
Since $W_1\backslash \{w\}, W_2\backslash \{w\}\in N_{\mathcal{H}}(\{v,w\})$ are disjoint,
one of them must be contained in $\mathcal{Q}_{r-2}$, say $W_1\backslash \{w\}$.
It follows that $(\{v,w\},W_1\backslash \{w\})\in \mathcal{\widetilde O}(v)$, which is a contradiction.

Note that the pairs $(\{v, w\}, W \setminus \{w\}) \in \mathcal{\widetilde{O}}(v)$ corresponding to distinct $W \in N_{\mathcal{H}}(v) \setminus \mathcal{Q}_{r-1}$ are clearly distinct.
Hence
$|\mathcal{\widetilde O}(v)|\ge |N_{\mathcal{H}}(v)\backslash \mathcal{Q}_{r-1}|-1$.
Summing up, we have
\[2|\mathcal{\widetilde O}|=\sum_{v\in [n-1]}|\mathcal{\widetilde O}(v)|
\geq \sum_{v\in [n-1]}(|N_{\mathcal{H}}(v)\backslash \mathcal{Q}_{r-1}|-1)
\geq r|\mathcal{H}|-m-(n-1),\]
where the last inequality uses \eqref{equ:rH-m}. This proves the claim.
\end{proof}

Combining Claims~\ref{clm:Oprime-up-bd} and \ref{clm:Oprime-lw-bd} and applying \eqref{equ:HplusG}, we obtain
\begin{equation}\label{equ:main-thm-last}
2|\mathcal{Q}_{r-2}|(|\mathcal{Q}_2|+1)\geq 2|\mathcal{\widetilde O}|\geq r|\mathcal{H}|-m-n+1\overset{\eqref{equ:HplusG}}{\geq} (r-1)m + r\left\lfloor \frac{n-1}{r} \right\rfloor-n+1.
\end{equation}
Note that $r\geq 4$ and by Claim~\ref{clm:mltnlogn}, $m\leq n\log n$.
If $m\geq n^{0.5}$, then by \eqref{equ:Q-r-1},
$|\mathcal{Q}_2|\leq r^{2r}n^{3-r}m\leq n^{0.2}$ and $|\mathcal{Q}_{r-2}|\leq r^{2r}n^{-1}m\leq n^{0.2}$.
Hence, we obtain $2|\mathcal{Q}_{r-2}|(|\mathcal{Q}_2|+1)< n^{0.5}<(r-1)m + r\left\lfloor \frac{n-1}{r} \right\rfloor -n+1$,
contradicting \eqref{equ:main-thm-last}.
Therefore, we may assume that $0\leq m< n^{0.5}$.
It follows from \eqref{equ:Q-r-1} that $|\mathcal{Q}_{r-2}|\le r^{2r}n^{-1}m<1$ and thus $\mathcal{Q}_{r-2}=\emptyset$.
Then, \eqref{equ:main-thm-last} becomes that
\begin{equation}\label{equ:main-thm-last2}
0\geq r|\mathcal{H}|-m-n+1\geq (r-1)m + r\left\lfloor \frac{n-1}{r} \right\rfloor-n+1\geq (r-1)m + (n-r) -n+1=(r-1)(m-1).
\end{equation}

Thus $m\in \{0,1\}$.
We analyze the cases $m = 0$ and $m = 1$ separately.
First, consider the case when $m = 0$. In this case, $\mathcal{G}$ is a full $r$-star with center $n$, and \eqref{equ:HplusG} implies $|\mathcal{H}| \geq \left\lfloor \frac{n-1}{r} \right\rfloor $, whereas \eqref{equ:main-thm-last2} yields $|\mathcal{H}| \leq \frac{n-1}{r}$.
Thus, we conclude that $|\mathcal{H}|=\left\lfloor \frac{n-1}{r} \right\rfloor$.
Note that $\mathcal{H}\subseteq \binom{[n-1]}{r}$.
If $\mathcal{H}$ contains two distinct sets $A,B$ with $A\cap B\neq \emptyset$, then the fact that $\mathcal{G}$ is a full $r$-star would enforce a generalized $4$-cycle in $\mathcal{F}=\mathcal{G}\cup \mathcal{H}$, a contradiction.
Therefore, $\mathcal{H}$ must consist of $\left\lfloor \frac{n-1}{r} \right\rfloor$ disjoint sets in $[n-1]$.
Consequently, $\mathcal{F}$ is isomorphic to construction (C1).

It remains to treat the case $m = 1$.
Here, $\mathcal{G}$ is a full $r$-star missing one edge, which we may assume is $\{n - r + 1, \ldots, n\}$.
All inequalities in \eqref{equ:main-thm-last2} must hold as equalities, which implies $r|\mathcal{H}| = n$ and thus $r \mid n$.
It follows that $|\mathcal{H}| = \frac{n}{r} = \left\lfloor \frac{n - 1}{r} \right\rfloor + 1$.
Since $\mathcal{H} \subseteq \binom{[n - 1]}{r}$, there must exist a pair of intersecting edges, say $S, S' \in \mathcal{H}$.
Let $A = S \cap S'$ be non-empty.
If $|A| = 1$, set $B = \emptyset$;
otherwise $|A| \geq 2$, then for sufficiently large $n$, there exist many disjoint subsets $B \subseteq [n - 1] \setminus (S \cup S')$ with $|B| = |A| - 1$.
Define $T_B = \{n\} \cup B \cup (S \setminus A)$ and $T'_B = \{n\} \cup B \cup (S' \setminus A)$.
Then $T_B$, $T'_B$, $S$, and $S'$ form a generalized $4$-cycle.
Since $\mathcal{F} = \mathcal{G} \cup \mathcal{H}$ is $\mathcal{C}_4^r$-free, this forces, for every choice of $B$, one of $T_B$ or $T'_B$ (say $T_B$) to be the missing edge $\{n - r + 1, \ldots, n\}$ of $\mathcal{G}$.
This further implies that $B$ must be uniquely determined, i.e., $B = \emptyset$.
Therefore, $A = \{w\}$ and $S = \{n - r + 1, \ldots, n - 1, w\}$ for some vertex $w \in [n - r]$.
The above argument also shows that $S$ and $S'$ form the unique pair of intersecting edges in $\mathcal{H}$.
Since $r \mid n$ and $|\mathcal{H}| = \left\lfloor \frac{n - 1}{r} \right\rfloor + 1$,
we conclude that $\mathcal{H}$ consists of $\left\lfloor \frac{n - 1}{r} \right\rfloor$ disjoint $r$-sets in $[n - r]$ and the set $S = \{n - r + 1, \ldots, n - 1, w\}$ for some vertex $w \in [n - r]$.
Thus, $\mathcal{G} \cup \mathcal{H}$ is isomorphic to construction (C2).
\end{proof}

\section{Local starification}\label{sec:intersecting-families}
In this section, we prove Lemma~\ref{lem:one-root}. 
Let $r\geq 4$ and $\fF$ be any $\mathcal{C}_4^r$-free $r$-graph with vertex-set $[n]$.
Our goal is to show that, by deleting a set $\mathcal{R}$ of $n^{\,r-1-\delta}$ edges for some $\delta > 0$, 
any two vertices $x$ and $y$ have the property that 
the $(r-1)$-graph $\CN_{\fF \setminus \mathcal{R}}(x,y)$ is a star.

Before proceeding, we provide a detailed proof overview.
A key starting point is that, since $\fF$ is $\mathcal{C}_4^r$-free, the common neighborhood $\CN_{\fF}(x,y)$ of any pair of vertices $x$ and $y$ is an intersecting family in $\binom{[n]}{r-1}$. 
So it suffices for us to reduce the covering number of every intersecting family $\CN_{\fF}(x,y)$ to one (i.e., to a star) by deleting a small number of edges. 
To do so, we choose a minimal subgraph $\gG\subseteq\fF$ using the potential function that will be defined in \eqref{equ:g-function}, which rewards pairs of vertices whose common neighborhoods have small covering number.
This potential function rules out covering number at least three (see Claim~\ref{clm:P3}) and shows that pairs of vertices with covering number two are sparse (see Claim~\ref{clm:No-P2}).
After deleting all edges arising from the common neighborhoods of these sparse pairs, every common neighborhood of a pair of vertices in the remaining subgraph is a star.

With the above proof strategy in mind, we first state and prove several results on intersecting families in Subsection~\ref{Subsec:intersecting-families}, and then present the proof of Lemma~\ref{lem:one-root} in Subsection~\ref{Subsec:one-root}.

\subsection{Intersecting-family tools}\label{Subsec:intersecting-families}
We first state two classical results on intersecting families.

\begin{thm}[Erd\H{o}s-Ko-Rado Theorem \cite{EKR61}]\label{thm:EKR}
	Let $n\ge 2k$ and let $\fF\subseteq \binom{[n]}{k}$ be an intersecting family. Then $|\fF|\le \binom{n-1}{k-1}.$
\end{thm}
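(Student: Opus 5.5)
The statement is the Erd\H{o}s--Ko--Rado theorem, and I will prove it with Katona's cyclic permutation argument. This keeps the proof self-contained and uses nothing from the earlier sections. The plan is to double count pairs $(\sigma, A)$, where $\sigma$ is a cyclic ordering of $[n]$ and $A\in\fF$ is an \emph{arc} of $\sigma$, meaning a set of $k$ elements that are consecutive in $\sigma$.

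The first step is the local claim: for a fixed cyclic order $\sigma$, at most $k$ of the $n$ arcs of length $k$ can belong to an intersecting family, provided $n\ge 2k$. To see this, suppose some arc $A=(a_1,\dots,a_k)$ lies in $\fF$. Every other arc meeting $A$ either starts strictly inside $A$ or ends strictly inside $A$. This gives $2(k-1)$ candidate arcs, which we pair up: the arc ending at $a_i$ is paired with the arc starting at $a_{i+1}$, for $1\le i\le k-1$. Since $n\ge 2k$, the two arcs in each pair are disjoint, so at most one of them lies in $\fF$. Together with $A$ itself, this gives at most $1+(k-1)=k$ arcs of $\sigma$ in $\fF$.

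The second step is the global count. There are $(n-1)!$ cyclic orders, so the number of pairs is at most $k(n-1)!$. On the other hand, a fixed $k$-set is an arc in exactly $k!(n-k)!$ cyclic orders: arrange the set as a block, glue it to the remaining $n-k$ elements, and arrange everything cyclically. Hence $|\fF|\,k!\,(n-k)!\le k\,(n-1)!$, which rearranges to $|\fF|\le \binom{n-1}{k-1}$.

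The main obstacle is the local claim, specifically checking that paired arcs are genuinely disjoint when $n\ge 2k$. The arc ending at $a_i$ and the arc starting at $a_{i+1}$ together cover $2k$ consecutive positions, so they are disjoint exactly when $n\ge 2k$. We also need the two arcs in each pair to be distinct, which follows from the same inequality. Everything else is routine arithmetic.
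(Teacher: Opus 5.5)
Your proof is correct. It is Katona's cyclic-permutation argument, and both steps check out.

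\emph{Local bound.} Fix a cyclic order. Any arc other than $A$ that meets $A$ must either start at one of $a_2,\dots,a_k$ or end at one of $a_1,\dots,a_{k-1}$. The arc ending at $a_i$ and the arc starting at $a_{i+1}$ occupy $2k$ consecutive positions, so they are disjoint once $n\ge 2k$. Hence at most $k$ arcs of any cyclic order lie in $\fF$.

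\emph{Global count.} The double count $|\fF|\,k!\,(n-k)!\le k\,(n-1)!$ then rearranges to $|\fF|\le\binom{n-1}{k-1}$.

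One small point: distinctness of the two arcs in a pair is automatic from their disjointness. You also never need the $2(k-1)$ candidate arcs to be distinct, since each pair contributes at most one member of $\fF$ either way.

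The paper does not prove this statement itself. It quotes the Erd\H{o}s--Ko--Rado theorem as a classical black box and uses only the numerical bound, once, in the proof of Lemma~\ref{lem:counting-C4-pairs}. So there is no competing argument in the paper to compare with.

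Compared with the original 1961 proof via shifting, your route is shorter and fully self-contained, and that is all the paper's application needs. The finer structural facts the paper also relies on, namely the Hilton--Milner bound and Frankl's covering-number bounds, are cited separately. They do not follow from this counting argument alone.
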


\begin{thm}[Hilton-Milner Theorem \cite{HM67}] \label{thm:HM}
	Let $n>2k$ and let $\fF\subseteq \binom{[n]}{k}$ be an intersecting family which is not a star.
	Then $|\fF|\le \binom{n-1}{k-1}- \binom{n-k-1}{k-1}+1.$
\end{thm}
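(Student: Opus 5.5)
We prove the Hilton--Milner bound by splitting $\fF$ at a single vertex and reducing everything to one inequality for cross-intersecting families; we do not need shifting. Fix any vertex $x$ lying in some member of $\fF$. Let $\aA=\{F\setminus\{x\}: x\in F\in\fF\}$ and $\mathcal{B}=\{F\in \fF: x\notin F\}$, both living on the ground set $Y=[n]\setminus\{x\}$ with $|Y|=n-1$. Then $\aA\subseteq\binom{Y}{k-1}$ and $\mathcal{B}\subseteq\binom{Y}{k}$. The family $\aA$ is nonempty by the choice of $x$, and $\mathcal{B}$ is nonempty because $\fF$ is not a star. Since $\fF$ is intersecting, every $A\in\aA$ meets every $B\in\mathcal{B}$, so $\aA,\mathcal{B}$ are cross-intersecting. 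Hence it suffices to prove the following lemma.

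\emph{Lemma.} If $\aA\subseteq\binom{Y}{k-1}$ and $\mathcal{B}\subseteq\binom{Y}{k}$ are cross-intersecting, $|Y|=n-1$, $n>2k$ and $\mathcal{B}\neq\emptyset$, then
\[
|\aA|+|\mathcal{B}|\le \binom{n-1}{k-1}-\binom{n-k-1}{k-1}+1 .
\]

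We prove the lemma by complementation. Put $p=n-1-k$ and $\mathcal{B}^c=\{Y\setminus B: B\in\mathcal{B}\}\subseteq\binom{Y}{p}$. A $(k-1)$-set $A$ is disjoint from some $B\in\mathcal{B}$ exactly when $A\subseteq Y\setminus B$, that is, when $A$ lies in the $(k-1)$-shadow $\partial_{k-1}\mathcal{B}^c$. So $\aA$ avoids this shadow, and
\[
|\aA|\le\binom{n-1}{k-1}-|\partial_{k-1}\mathcal{B}^c| .
\]
The lemma therefore reduces to the shadow inequality
\[
|\partial_{k-1}\mathcal{C}|\ \ge\ |\mathcal{C}|-1+\binom{p}{k-1}\quad\text{for every nonempty }\mathcal{C}\subseteq\binom{Y}{p}. \tag{$\ast$}
\]
Since $n>2k$ we have $p\ge k>k-1$, and the two levels satisfy $p+(k-1)=n-2<|Y|$. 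For $|\mathcal{C}|=1$, $(\ast)$ holds with equality, because a single $p$-set has exactly $\binom{p}{k-1}$ subsets of size $k-1$. The claim is that adding each further $p$-set increases the $(k-1)$-shadow by at least one.

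The main obstacle is proving $(\ast)$ for all sizes $|\mathcal{C}|$, not only for small ones. First try Kruskal--Katona in Lov\'asz's form. Write $|\mathcal{C}|=\binom{y}{p}$ with real $y\ge p$; this gives $|\partial_{k-1}\mathcal{C}|\ge\binom{y}{k-1}$. It then remains to check that $g(y)=\binom{y}{k-1}-\binom{y}{p}$ satisfies $g(y)\ge\binom{p}{k-1}-1$ for $p\le y\le n-1$. At $y=p$ this is an equality. At the top end, $y=n-1$, symmetry gives $\binom{n-1}{k-1}=\binom{n-1}{p+1}$. The key case to check is the full family $\mathcal{C}=\binom{Y}{p}$, where the inequality reads $\binom{n-1}{k-1}-\binom{n-1}{k}\ge\binom{p}{k-1}-1$. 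This is false for $n>2k$, because the left side is negative. So $(\ast)$ cannot hold for arbitrary $\mathcal{C}$. We must use the fact that $\aA$ and $\mathcal{B}$ may both be taken maximal, so that the trade-off is governed by $\aA$ being nonempty.

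The repair is to exploit the symmetry between the two families. If $\aA$ is empty, the sum is just $|\mathcal{B}|\le\binom{n-1}{k}$, which is not the target. In our application, however, $\aA\ne\emptyset$ always holds. Then, symmetrically, $\mathcal{B}$ avoids the $k$-shadow of $\aA^c=\{Y\setminus A: A\in\aA\}\subseteq\binom{Y}{n-k}$.

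So I would prove the lemma with both $\aA,\mathcal{B}\neq\emptyset$, by induction on $|\aA|+|\mathcal{B}|$ via a ``move'' argument. If $|\mathcal{B}|\ge 2$, then either deleting one member of $\mathcal{B}$ frees at least one new $(k-1)$-set for $\aA$, or deleting one member of $\aA$ frees at least one new $k$-set for $\mathcal{B}$. In either case the sum $|\aA|+|\mathcal{B}|$ can only be pushed toward the extremal configuration: $|\mathcal{B}|=1$ and $\aA$ consisting of all $(k-1)$-sets meeting the single member of $\mathcal{B}$. That configuration gives exactly $\binom{n-1}{k-1}-\binom{n-k-1}{k-1}+1$.

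The step I expect to be delicate is justifying that such a freeing move always exists. Here I would use the local LYM inequality between levels $k-1$ and $k$ on $n-1>2k-1$ points, applied to the pair of families, to show that one side's deletion gains at least as much as it loses. Once the lemma is established, substituting $\aA,\mathcal{B}$ gives $|\fF|=|\aA|+|\mathcal{B}|\le\binom{n-1}{k-1}-\binom{n-k-1}{k-1}+1$, which is the Hilton--Milner bound.
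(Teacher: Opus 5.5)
The paper gives no proof of this theorem. It quotes it from Hilton and Milner and uses it as a black box, so your argument has to stand on its own. It has a genuine gap: the reduction to your Lemma throws away the non-star hypothesis, and the Lemma is false even when $\aA\neq\emptyset$.

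Fix $a\in Y$ and take $\aA=\{A\in\binom{Y}{k-1}: a\in A\}$ and $\mathcal{B}=\{B\in\binom{Y}{k}: a\in B\}$. These are nonempty and cross-intersecting, yet
\[
|\aA|+|\mathcal{B}|=\binom{n-2}{k-2}+\binom{n-2}{k-1}=\binom{n-1}{k-1}.
\]
This exceeds your bound by $\binom{n-k-1}{k-1}-1\ge k-1\ge 1$ for $k\ge 2$ (for $k=1$ the theorem is vacuous). Already for $k=2$, $n=5$, the pair $\aA=\{\{a\}\}$, $\mathcal{B}=\{ab,ac,ad\}$ gives $4>3$.

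This pair is exactly what your split produces from the full star centred at $a$, with $x$ any other vertex. The condition $\mathcal{B}\neq\emptyset$ only records that $x$ is not a centre, not that no vertex is. In your complemented language the same pair is $\mathcal{C}=\binom{Y\setminus\{a\}}{p}$. Its $(k-1)$-shadow is $\binom{Y\setminus\{a\}}{k-1}\neq\binom{Y}{k-1}$, so $(\ast)$ fails here too, for a family that is compatible with $\aA\neq\emptyset$. The failure you found at $\mathcal{C}=\binom{Y}{p}$ was not the only one.

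Moreover, this pair is maximal. Since $n>2k$, no $(k-1)$-set avoiding $a$ meets every member of $\mathcal{B}$, and no $k$-set avoiding $a$ meets every member of $\aA$. So neither passing to maximal families nor any ``freeing move'' or local LYM estimate can push the sum down to the claimed value. The inductive step you flagged as delicate cannot be carried out, because its conclusion is false.

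What is missing is a hypothesis that survives the split and encodes non-triviality: $\mathcal{B}$ is itself intersecting, and no $a\in Y$ lies in every member of $\aA\cup\mathcal{B}$. With these added, your Lemma is equivalent to the Hilton--Milner theorem itself, so the reduction alone makes no progress; the real work must exploit this extra structure.

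One standard way to retain it is to left-compress $\fF$ by shifts while keeping it non-trivial, and then split at $x=1$. The case in which a shift $S_{ij}$ would produce a star must be handled separately; there every member of $\fF$ meets $\{i,j\}$. After compression, invariance under the shifts $S_{1j}$ gives $B\setminus\{j\}\in\aA$ for every $B\in\mathcal{B}$ and $j\in B$. Combined with cross-intersection, this forces $\mathcal{B}$ to be $2$-intersecting, which excludes the star configuration above. It is this structure, not a generic cross-intersecting inequality, that the final count (via Kruskal--Katona or an injection) has to use.
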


The following result, due to Frankl~\cite{Fr80}, provides upper bounds on the size of intersecting families with covering number at least three (see Theorem~7.4 in the survey of Frankl and Tokushige~\cite{FT16}). 
Here, the $o(1)$ term denotes a function that tends to $0$ as $n\to\infty$ while $k$ is fixed.
	
\begin{thm}[Frankl \cite{Fr80}]\label{lem:covering-number-3}
 Let $n>k\geq 3$ and let $\fF\subseteq \binom{[n]}{k}$ be an intersecting family with covering number at least three. 
 Then, for $k=3$, we have $|\fF|\leq k^k$, and for $k\ge 4$, 
 \[|\fF|\le (k^2-k+1+o(1))\cdot \binom{n-3}{k-3}.\]
\end{thm}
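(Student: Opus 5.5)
Frankl's theorem (Theorem~\ref{lem:covering-number-3}) is a classical result that the paper cites rather than proves, so the plan below is how I would reprove it. The strategy is the standard kernel or branching argument for intersecting families with covering number $\tau(\fF)\ge 3$. Fix an intersecting $\fF\subseteq\binom{[n]}{k}$ with $\tau(\fF)\ge 3$. The key observation is that for any set $S$ with $|S|<\tau(\fF)$, some member $F\in\fF$ avoids $S$. Since $\fF$ is intersecting, every member of $\fF$ containing $S$ must meet $F$. This allows a branching count.

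\textbf{Step 1 (branching).} Pick $F_1\in\fF$. Every member of $\fF$ meets $F_1$ in some vertex $x_1$, giving at most $k$ choices. Since $\tau\ge 2$, there is $F_2\in\fF$ avoiding $x_1$. Every member containing $x_1$ meets $F_2$ in some $x_2$, giving at most $k$ choices. Since $\tau\ge 3$, there is $F_3$ avoiding $\{x_1,x_2\}$. Every member containing $\{x_1,x_2\}$ meets $F_3$ in some $x_3$, again at most $k$ choices. Hence every member of $\fF$ contains one of at most $k^3$ triples $\{x_1,x_2,x_3\}$, and
\[
|\fF|\le k^3\binom{n-3}{k-3}.
\]
For $k=3$ each such triple is itself a member, so $|\fF|\le 3^3=k^k$. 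More generally, continuing the branching while $\tau$ allows and stopping when sets are full yields at most $k^k$ members in that case.

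\textbf{Step 2 (sharpening to $k^2-k+1$).} Call a triple $T$ \emph{heavy} if $d_{\fF}(T)\ge c\binom{n-3}{k-4}$ for a large constant $c=c(k)$. The non-heavy triples among the $k^3$ branches contribute only $O(n^{k-4})=o\bigl(\binom{n-3}{k-3}\bigr)$ in total. Let $\mathcal{T}$ be the family of heavy triples. Two heavy triples $T,T'$ must intersect: otherwise their links are large families of $(k-3)$-sets, and one can choose disjoint members $A\in N(T)$ and $B\in N(T')$ with $A\cap T'=\emptyset=B\cap T$, which produces disjoint sets $T\cup A$ and $T'\cup B$ in $\fF$, a contradiction. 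Moreover, $\tau(\mathcal{T})\ge 3$ essentially: the members not containing a heavy triple number $o\bigl(\binom{n-3}{k-3}\bigr)$, but $\tau(\fF)\ge3$ forces $\tau(\mathcal{T})\ge 3$ after a similar largeness argument, using for each pair $\{a,b\}$ a member avoiding it together with the heavy structure. So $\mathcal{T}$ is an intersecting $3$-graph with covering number $3$, and $|\fF|\le(|\mathcal{T}|+o(1))\binom{n-3}{k-3}$.

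\textbf{Step 3 (main obstacle).} The remaining task is to bound $|\mathcal{T}|$. The naive branching bound gives $|\mathcal{T}|\le 27$. To get $k^2-k+1$, I would run the argument at the level of kernels, using $\ell$-sets for larger $\ell$ rather than triples. Let $\mathcal{K}$ be the minimal sets $K$ with $|K|\le k$ that are "heavy" in the sense that any set meeting all of $\fF$ must in effect meet $K$. The bound then follows from the fact that the $3$-sets of the kernel form a family whose count, weighted against the $k$-uniform structure, is at most $k^2-k+1$; the extremal configuration is the projective-plane-like family of lines of size $k$ in which each pair of lines meets. Concretely, I would show that the heavy triples must be contained in the $3$-shadow of an intersecting family of $k$-sets of a specific form, via Frankl's "$\tau=3$ implies a structure of lines through points" argument, and count the result. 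I expect this exact constant to be the hard part. For the application in this paper, any $O_k\bigl(\binom{n-3}{k-3}\bigr)$ bound from Step 1 suffices, so I would first secure that bound and treat the sharp constant as a refinement.
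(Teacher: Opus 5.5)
Your Step~1 is correct and complete. The three-level branching gives $|\fF|\le 27$ for $k=3$ and $|\fF|\le k^3\binom{n-3}{k-3}$ in general. You are also right that this weaker bound is all the paper needs: the theorem is only cited, and it is used only in Claim~\ref{clm:P3} with $k=r-1$, where $O(n^{r-4})<n^{r-3-\delta}$ suffices.

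The statement, however, asserts the constant $k^2-k+1$, and here there is a genuine gap. The claim in Step~2 that the heavy triples satisfy $\tau(\mathcal T)\ge 3$ is false, and it fails exactly in the extremal case. Take a vertex $x$, a $(k-1)$-set $S$ and a $k$-set $B$ with $S\cap B=\emptyset$ and $x\notin S\cup B$. Fix distinct $u,v\in B$, let $\mathcal M=\{S\cup\{u\},S\cup\{v\},B\}$, and let $\fF=\mathcal M\cup\{F\in\binom{[n]}{k}: x\in F \text{ and } F\cap M\neq\emptyset \text{ for all } M\in\mathcal M\}$. This family is intersecting with $\tau(\fF)=3$. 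Yet for $k\ge 4$ and $c$ large, its heavy triples are exactly the sets $\{x\}\cup P$ with $P$ a $2$-cover of $\mathcal M$, namely $P=\{s,t\}$ with $s\in S$, $t\in B$, or $P=\{u,v\}$. So $\mathcal T$ is a star with $k(k-1)+1$ members, and $|\fF|=(k^2-k+1-o(1))\binom{n-3}{k-3}$. The condition $\tau(\fF)\ge 3$ is carried entirely by the three light members of $\mathcal M$, and no largeness argument can transfer it to $\mathcal T$.

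Your own Step~1 already signals the problem. An intersecting triple system with covering number $3$ has at most $27$ members (in fact at most $10$). So Step~2 would yield a bound $(27+o(1))\binom{n-3}{k-3}$ with a constant independent of $k$, which this example beats for every $k\ge 6$.

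What does hold is weaker. Each heavy triple meets every member of $\fF$, light ones included; otherwise some superset of it in $\fF$ would be disjoint from that member. And $\mathcal T$ is intersecting. So $\mathcal T$ is a pairwise intersecting family of $3$-element covers of $\fF$, and the real content of Frankl's theorem is that such a family has at most $k^2-k+1$ members. The dependence on $k$ can only enter through the size-$k$ members of $\fF$ that every $T\in\mathcal T$ must meet. For example, if $\mathcal T$ is a star at $x$, the sets $T\setminus\{x\}$ are distinct $2$-covers of $\{F\in\fF: x\notin F\}$, an intersecting family with covering number at least $2$, and one must bound how many there are. The cases $\tau(\mathcal T)\in\{2,3\}$ need separate treatment.

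Step~3 does none of this. It names kernels and a ``projective-plane-like'' family but contains no argument, and the constant $k^2-k+1$ is realized by the star-type example above rather than by a system of lines. As written, the proposal proves only the $O_k\bigl(\binom{n-3}{k-3}\bigr)$ bound, not the stated theorem.
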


We prove two additional lemmas on intersecting families that will be needed in our proof. 
The first lemma concerns intersecting families with covering number two.

\begin{lem}\label{lem:cover2-abc}
Let $n>2k$, $k\ge 3$ and let $\fF \subseteq \binom{[n]}{k}$ be an intersecting family with covering number two. 
If no element is contained in all but at most $k^k \binom{n}{k-3}$ members of $\fF$, 
then there exist three elements such that every set in $\fF$ contains at least two of them.
\end{lem}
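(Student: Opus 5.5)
Fix a cover $\{a,b\}$ of $\fF$; one exists because the covering number is two. Split $\fF$ into $\fF_{ab}$ (sets containing both $a$ and $b$), $\fF_a$ (sets containing $a$ but not $b$) and $\fF_b$ (sets containing $b$ but not $a$). Every member lies in one of these. Since the covering number is not one, $\fF_a$ and $\fF_b$ are both nonempty.

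By hypothesis, $a$ misses more than $k^k\binom{n}{k-3}$ members of $\fF$, so $|\fF_b|>k^k\binom{n}{k-3}$. Symmetrically, $|\fF_a|>k^k\binom{n}{k-3}$. The plan is to show that, inside the large intersecting family $\fF_a\cup\fF_b$, all cross-intersections run through a single third element $c$.

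\emph{Key step.} Every $A\in\fF_a$ meets every $B\in\fF_b$, and $A\setminus\{a\}$, $B\setminus\{b\}$ are $(k-1)$-sets avoiding $\{a,b\}$. Let $\mathcal{A}'=\{A\setminus\{a\}:A\in\fF_a\}$ and $\mathcal{B}'=\{B\setminus\{b\}:B\in\fF_b\}$. These are cross-intersecting $(k-1)$-uniform families on $[n]\setminus\{a,b\}$, each of size greater than $k^k\binom{n}{k-3}$. I would first show that $\mathcal{A}'$ has a cover of size one.

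Suppose instead that $\mathcal{A}'$ has no single-element cover. Every $B'\in\mathcal{B}'$ is a cover of $\mathcal{A}'$ of size $k-1$. A standard branching argument shows that the number of $(k-1)$-sets covering a $(k-1)$-uniform family with no $1$-cover is at most $(k-1)^{k-1}\binom{n}{k-3}$. The argument goes as follows. Pick $A_1\in\mathcal{A}'$; each cover contains some $x_1\in A_1$. Since $\{x_1\}$ is not a cover, there is $A_2\in\mathcal{A}'$ avoiding $x_1$, and the cover contains some $x_2\in A_2$. This gives at most $(k-1)^2$ choices of a $2$-set inside the cover, and hence at most $(k-1)^2\binom{n}{k-3}$ covers. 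That is at most $k^k\binom{n}{k-3}$, contradicting $|\mathcal{B}'|>k^k\binom{n}{k-3}$. So $\mathcal{A}'$ has a cover $\{c\}$, that is, every member of $\fF_a$ contains $c$. By symmetry every member of $\fF_b$ contains some $c'$.

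\emph{Identifying $c=c'$.} Suppose $c\neq c'$. The number of members of $\fF_b$ containing $c$ is at most $\binom{n}{k-2}$, which is not small enough to conclude directly. Instead, use the branching count again. $\mathcal{A}'$ is cross-intersecting with $\mathcal{B}'':=\{B'\in\mathcal{B}':c\notin B'\}$. If $\mathcal{B}''$ is nonempty, each $A'$ contains $c$ and meets a fixed $B_0''$ avoiding $c$. This forces $A'$ to contain a $2$-set $\{c,y\}$ with $y\in B_0''$. If moreover $\mathcal{B}''$ has no common element $y$, a second branching bounds $|\mathcal{A}'|\le k^2\binom{n}{k-3}$, a contradiction. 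I expect to need to iterate this case analysis carefully, and this is the main obstacle. The contradiction should come from the threshold $k^k\binom{n}{k-3}$, which exceeds $k^2\binom{n}{k-3}$, while the configuration $c\ne c'$ would make $\{a,c\}$ or similar pairs behave consistently.

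The desired conclusion is then: every member of $\fF_a$ contains $\{a,c\}$, and every member of $\fF_b$ contains $\{b,c\}$ (or the analogous $c$ obtained from the case analysis). Members of $\fF_{ab}$ already contain $a$ and $b$. Hence every set in $\fF$ contains at least two of $a,b,c$.

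If $c\ne c'$ persists, I would instead aim to show that $\fF_a$ is concentrated on $\{a,c,c'\}$, which gives the triple $\{a,c,c'\}$ or some other triple; the key point is that the three elements need not include both $a$ and $b$. This flexibility in choosing the triple is the part I expect to require the most care.
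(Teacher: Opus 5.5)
\textbf{There is a genuine gap in the second half.} The first half is sound, and it is slicker than the paper's route. The paper first shows that $\{S\setminus\{a\}:S\in\fF(a)\}$ is intersecting: two disjoint members would confine $\fF(b)$ to at most $(k-1)^2\binom{n}{k-3}$ sets. It then invokes Hilton--Milner to get a star. Your single branching count over the covers of $\mathcal{A}'$ produces the common element $c$ directly, with no Hilton--Milner.

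The gap is that you never prove every member of $\fF_b$ contains $c$. You label this ``the main obstacle'' and propose a second branching under an extra hypothesis. You then fall back on an unsupported plan to use some other triple such as $\{a,c,c'\}$. As written, the argument does not reach the conclusion.

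\textbf{The missing step.} It is one line, and you had already written its first half. Suppose some $B_0\in\fF_b$ avoids $c$. Then every $A\in\fF_a$ contains $a$ and $c$, and meets $B_0$ in some $y\in B_0\setminus\{b\}$; note $y\neq a,c$, because $a,c\notin B_0$ and $b\notin A$. So $A\supseteq\{a,c,y\}$ with at most $k-1$ choices of $y$. This gives $|\fF_a|\le (k-1)\binom{n-3}{k-3}<k^k\binom{n}{k-3}$, contradicting your lower bound on $|\fF_a|$.

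You stopped right after observing that each $A'$ contains some $\{c,y\}$ with $y\in B_0''$, without noticing that this alone already kills the case $\mathcal{B}''\neq\emptyset$. No iteration or case analysis is needed. Hence every member of $\fF_b$ contains $c$, and $\{a,b,c\}$ is the required triple. In particular $c=c'$ automatically, since two common elements of $\fF_b$ would cap $|\fF_b|$ at $\binom{n-3}{k-3}$. So no other triple ever arises.

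\textbf{How the paper closes this step.} It greedily picks $k$ members of $\fF(a)$ that are pairwise disjoint outside $\{a,c\}$. A set $T\in\fF(b)$ missing $c$ would then need $k$ distinct elements in the $(k-1)$-set $T\setminus\{b\}$.

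Your remark that at most $\binom{n}{k-2}$ members of $\fF_b$ contain $c$ is a red herring. The right move is to let a single bad member of $\fF_b$ bound $\fF_a$, exactly as in your key step.
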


\begin{proof}
Let $\{a,b\}$ be a cover of $\fF$. 
Define 
\[\fF(a)=\{S\in \fF: a\in S,~ b\notin S\} \text{ ~and~ } \fF(b)=\{T\in \fF: b\in T,~ a\notin T\}.\]
If $|\fF(a)| < k^k \binom{n}{k-3}$, then $b$ would be contained in all but at most $k^k \binom{n}{k-3}$ members of $\fF$, contradicting our assumption. 
Hence, $|\fF(a)|\ge k^k\binom{n}{k-3}$ and similarly, we have $|\fF(b)|\ge k^k\binom{n}{k-3}$.

We claim that there exists an element $c\in [n]\setminus \{a,b\}$ such that every set in $\fF(a)$ contains $c$.
Let $\overline{\fF}(a)=\{S\setminus \{a\}: S\in \fF(a)\}$.
It is equivalent to showing that $\overline{\fF}(a)$ is a star.
First, suppose that $\overline{\fF}(a)$ has two disjoint $(k-1)$-sets $S_1$ and $S_2$. 
Since $\fF$ is intersecting, 
every $k$-set $T \in \fF(b)$ satisfies $T\cap S_1\neq \emptyset$ and $T\cap S_2\neq \emptyset$. 
Thus $|\fF(b)|\le |S_1|\cdot |S_2|\cdot \binom{n}{k-3}<k^k\binom{n}{k-3}$, which contradicts the property established in the previous paragraph.
Therefore, $\overline{\fF}(a)$ is an intersecting family in $\binom{[n]}{k-1}$. 
If $\overline{\fF}(a)$ is not a star, then by Theorem~\ref{thm:HM}, we obtain
\[|\fF(a)|=|\overline{\fF}(a)|\le \binom{n-1}{k-2}- \binom{n-k}{k-2}+1<k^k\binom{n}{k-3},\] 
which yields another contradiction. This proves the claim.

Since $|\fF(a)|\geq k^k \binom{n}{k-3}$ and each set in $\fF(a)$ contains $a$ and $c$, 
we greedily choose $k$ sets $S_1, \dots,S_k \in \fF(a)$ such that $S_i\setminus \{a,c\}$ are pairwise disjoint for all $1\leq i \ne j\leq k$. 
For each $k$-set $T\in \fF(b)$, since $(T\setminus \{b\})\cap S_i\neq \emptyset$ for $i\in [k]$, it forces $c\in T$. 
Note that every set in $\fF\setminus (\fF(a)\cup \fF(b))$ contains both $a$ and $b$. 
Consequently, every set in $\fF$ contains at least two elements in $\{a,b,c\}$.
\end{proof}

The last lemma in this subsection is a straightforward consequence of the Erd\H{o}s-Ko-Rado theorem.

\begin{lem}\label{lem:counting-C4-pairs}
Let $k\ge 2$ and $0\le \delta<1/2$. For sufficiently large $n$, 
if $\fF$ is an $n$-vertex $k$-graph with at least $n^{k-\delta}$ edges, then there are at least $0.2n^{2-2\delta}$ pairs of vertices $\{u,v\}$ such that $\CN_{\fF}(u,v)$ is not intersecting.
\end{lem}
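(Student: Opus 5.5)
The plan is to double count pairs $(\{u,v\},\{S,T\})$ where $S,T$ are disjoint $(k-1)$-sets lying in $\CN_{\fF}(u,v)$, equivalently generalized $4$-cycle–like configurations $S\cup\{u\},S\cup\{v\},T\cup\{u\},T\cup\{v\}$. We fix a pair of disjoint $(k-1)$-sets $\{S,T\}$ and look at the set $X_{S,T}=N_{\fF}(S)\cap N_{\fF}(T)$ of vertices $w$ with $S\cup\{w\},T\cup\{w\}\in\fF$. Every pair $u,v\in X_{S,T}$ has $S,T\in\CN_{\fF}(u,v)$, so $\CN_{\fF}(u,v)$ is not intersecting. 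It therefore suffices to show that the union over all disjoint pairs $\{S,T\}$ of the sets $\binom{X_{S,T}}{2}$ contains at least $0.2n^{2-2\delta}$ distinct vertex pairs.

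The counting goes in the reverse direction, from vertices to links. For each vertex $w$ the link $N_{\fF}(w)$ is a $(k-1)$-graph on $n-1$ vertices. By the Erd\H{o}s--Ko--Rado theorem (Theorem~\ref{thm:EKR}), if $|N_{\fF}(w)|>\binom{n-2}{k-2}$ then the link contains a disjoint pair. More quantitatively, a family with $d$ members contains many disjoint pairs. Since the members meeting a fixed $(k-1)$-set number at most $(k-1)\binom{n}{k-2}$, at least $\frac12 d\,(d-(k-1)\binom{n}{k-2})$ disjoint pairs exist. Summing over $w$ gives
\[
\sum_{\{S,T\}}|X_{S,T}|=\sum_w \#\{\text{disjoint pairs in }N_{\fF}(w)\}\gtrsim \tfrac12\sum_w d(w)^2 \ge \frac{(k|\fF|)^2}{2n}\ge \frac{k^2}{2}n^{2k-1-2\delta}.
\]
This uses $d(w)$ on average $kn^{k-1-\delta}\gg n^{k-2}$, which holds because $\delta<1/2<1$; the vertices of small degree contribute negligibly.

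To pass from this sum to distinct vertex pairs, I would use convexity. The number of pairs $(\{u,v\},\{S,T\})$ equals $\sum\binom{|X_{S,T}|}{2}$. There are at most $N=\binom{n}{k-1}^2/2$ disjoint pairs $\{S,T\}$, so Jensen gives $\sum\binom{|X|}{2}\ge N\binom{\bar X}{2}$, where $\bar X\approx k^2 n^{2k-1-2\delta}/(n^{2k-2}/((k-1)!)^2)$ is of order $n^{1-2\delta}\to\infty$. On the other side, a single vertex pair $\{u,v\}$ can host at most $\binom{\cd(u,v)}{2}\le\binom{n}{k-1}^2/2$ configurations. Dividing then yields at least $\binom{\bar X}{2}\approx c_k n^{2-4\delta}$ distinct pairs.

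The step I expect to be the main obstacle is recovering the exponent $2-2\delta$ rather than $2-4\delta$, since the Jensen step above is too lossy and the upper bound per pair must be sharpened. For the sharper route, I would instead fix, for each $w$ of large degree, one disjoint pair $S_w,T_w$ in its link, and consider the auxiliary graph on $[n]$ in which $u\sim v$ when $\CN_{\fF}(u,v)$ is non-intersecting. The aim is to show that this graph has minimum degree about $n^{1-2\delta}\cdot$const on many vertices. A cleaner alternative is to apply the counting inside each link: a vertex $u$ of degree $d(u)\ge n^{k-1-\delta}$ has its link nonintersecting with the link of $v$ in a suitable sense, via Erd\H{o}s--Ko--Rado applied to $N(u)\cap N(v)$. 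By Cauchy--Schwarz, $\sum_{u,v}\cd(u,v)\ge n\,(\bar d)^2$. Pairs with $\cd(u,v)>\binom{n-2}{k-2}$ are non-intersecting by Theorem~\ref{thm:EKR}, and every pair has $\cd(u,v)\le\binom{n}{k-1}$. So if $P$ denotes the number of non-intersecting pairs, then
\[
n^{2k-1-2\delta}\lesssim P\binom{n}{k-1}+n^2\binom{n-2}{k-2},
\]
and the second term is $O(n^{k})=o(n^{2k-1-2\delta})$ because $\delta<1/2$ and $k\ge2$. This gives $P\gtrsim n^{2k-1-2\delta}/n^{k-1}\cdot (k-1)!$, which is of order $n^{k-2\delta}$, more than enough. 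Tracking the constants carefully should give the stated $0.2$; this Cauchy--Schwarz plus EKR route is the proof I would actually write.
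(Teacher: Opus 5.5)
Your final route is the paper's route: pairs with $\cd_{\fF}(u,v)>\binom{n-2}{k-2}$ are non-intersecting by Theorem~\ref{thm:EKR}, and you average $\sum\cd$ against the maximum codegree. However, the key inequality you feed into it is false. The quantity $\sum_{\{u,v\}}\cd_{\fF}(u,v)$ counts pairs of edges meeting in $k-1$ vertices, so it equals $\sum_{T\in\binom{V(\fF)}{k-1}}\binom{d_{\fF}(T)}{2}$. Cauchy--Schwarz over vertex degrees, $n\bar d^{\,2}$, instead bounds $\sum_w\binom{d_{\fF}(w)}{2}$, which counts pairs of edges sharing a single vertex and is far larger. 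For $k\ge 3$ your claimed bound $\sum\cd\gtrsim n^{2k-1-2\delta}$ exceeds the trivial upper bound $\binom n2\binom{n-2}{k-1}=O(n^{k+1})$; the complete $k$-graph is already a counterexample. Your resulting conclusion $P\gtrsim n^{k-2\delta}$ also exceeds $\binom n2$, which should have flagged the error. The ``more than enough'' room you found is entirely an artifact of this step.

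The fix is to apply convexity over the $\binom{n}{k-1}$ sets $T$, using $\sum_T d_{\fF}(T)=k|\fF|$:
\[
\sum_{\{u,v\}}\cd_{\fF}(u,v)\ \ge\ \binom{n}{k-1}\binom{k|\fF|/\binom{n}{k-1}}{2}\ \ge\ \frac{|\fF|^2}{2\binom{n}{k-1}}\ \ge\ \frac12\, n^{k+1-2\delta}.
\]
Since every $\cd_{\fF}(u,v)\le\binom{n-2}{k-1}$, this gives
\[
P\ \ge\ \frac{\frac12 n^{k+1-2\delta}-\binom n2\binom{n-2}{k-2}}{\binom{n-2}{k-1}}\ \ge\ \frac{(k-1)!}{2}\,n^{2-2\delta}-O(n).
\]
The exponent $2-2\delta$ is therefore attained with no slack. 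The hypothesis $\delta<1/2$ is used exactly here, to make the $O(n)$ contribution of the below-threshold pairs negligible, so that $P\ge 0.2n^{2-2\delta}$ for large $n$. With this correction your argument coincides with the paper's proof. Your earlier Jensen route over pairs $\{S,T\}$ can simply be dropped.
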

\begin{proof}
Let $u,v\in V(\fF)$.
By Theorem~\ref{thm:EKR}, if $\cd_{\fF}(u,v)> \binom{n-2}{k-2}\ge \binom{n-3}{k-2}$, then $\CN_{\fF}(u,v)$ is not intersecting.
Thus, it suffices to count pairs $\{u,v\}$ with $\cd_{\fF}(u,v)> \binom{n-2}{k-2}$. 
As $|\fF|\geq n^{k-\delta}$,
we have
\[
\sum_{\{u,v\}\in \binom{V(\fF)}{2}} \cd_{\fF}(u,v)
= \sum_{T\in \binom{V(\fF)}{k-1} } \binom{d_{\fF} (T)}{2} 
\ge \binom{n}{k-1}\binom{ k|\fF| /\binom{n}{k-1} }{2} \ge \frac 12 |\fF|^2/\binom{n}{k-1} \ge \frac 12 n^{k+1-2\delta}.
\]
Since $\max_{\{u,v\}} \cd_{\fF}(u,v)\leq \binom{n-2}{k-1}$, the number of pairs $\{u,v\}$ with $\cd_{\fF}(u,v)> \binom{n-2}{k-2}$ is at least 
\[
\frac{\frac{1}{2} n^{\,k+1-2\delta} - \binom{n}{2}\binom{n-2}{k-2}}{\binom{n-2}{k-1}} \geq 0.2 n^{\,2-2\delta}.
\] 
This completes the proof.
\end{proof}
	
\subsection{Proof of Lemma~\ref{lem:one-root}}\label{Subsec:one-root}
Before presenting the proof of Lemma~\ref{lem:one-root}, we introduce some notation.
Throughout the rest of this section,
let $r\geq 4$ and let $\fF$ be an $n$-vertex $\mathcal{C}_4^r$-free $r$-graph.
For a subgraph $\gG \subseteq \fF$, define 
	\[\mathcal P (\gG)=\left\{\{x,y\}\in \binom{V(\gG)}{2}: \cd_{\gG}(x,y)\ge 1\right\} 
	\text{ and } 
	\mathcal P_0 (\gG)=\left\{\{x,y\}\in \binom{V(\gG)}{2}: \cd_{\gG}(x,y)=0\right\}.\]
For any vertices $x$ and $y$, let $\fR_{\gG}(x,y)$ denote a minimum vertex cover of $\CN_{\gG}(x,y)$. 
Note that $\CN_{\gG}(x,y)$ may have several minimum vertex covers, and we fix one arbitrarily.
We refer to the elements of $\fR_{\gG}(x,y)$ as the {\bf roots} of $\{x,y\}$.
According to the sizes of the minimum covers considered, we partition $\mathcal{P}(\gG)$ into the following three disjoint classes:
	\[\mathcal P_1(\gG)=\{\{x,y\}\in \mathcal{P}(\gG): |\fR_{\gG}(x,y)|= 1 \},\]
	\[\mathcal P_2(\gG)=\{\{x,y\}\in \mathcal{P}(\gG): |\fR_{\gG}(x,y)|= 2 \},\]
	\[\mathcal P_3(\gG)=\{\{x,y\}\in \mathcal{P}(\gG): |\fR_{\gG}(x,y)|\ge 3 \}.\]
For any subgraph \(\gG \subseteq \fF\), we define the auxiliary function (where \(\delta \in (0,1/6)\) is a fixed constant)
\begin{equation}\label{equ:g-function}
   g(\gG):= |\fF|- 3n^{r-1-\delta}+\left(|\mathcal P_1(\gG)|+2|\mathcal P_2(\gG)|+3|\mathcal P_3(\gG)| \right)\cdot n^{r-3-\delta}. 
\end{equation}

\begin{proof}[\bf Proof of Lemma~\ref{lem:one-root}] 
Consider any $n$-vertex $\mathcal C_4^r$-free $r$-graph $\fF$, where $n$ is sufficiently large compared with $r\geq 4$ and $\delta\in (0,1/6)$. 
Then $g(\fF)\le |\fF|- 3n^{r-1-\delta}+ 3\binom{n}{2} n^{r-3-\delta}<|\fF|$.
Among all spanning subgraphs \(\gG\) of \(\fF\), choose \(\gG\) to be one with the fewest edges subject to the condition 
\[ |\gG| - g(\gG) \ge 0.\]
Consequently, we have
\begin{equation} \label{equ:mathcal-G}
 |\gG|\geq g(\gG)\geq |\fF|- 3n^{r-1-\delta}.
\end{equation}

For the remainder of the proof, we focus on this subgraph $\gG$ and show that $\mathcal P_3(\gG)=\emptyset$ and $|\mathcal P_2(\gG)|=O(n^{1+5\delta})$. 
These bounds allow us to delete at most $O(n^{r-2+5\delta})$ edges from $\gG$ to eliminate all pairs in $\mathcal P_2(\gG)$, yielding the desired subgraph of $\fF$. 
We proceed with a sequence of claims.

\begin{Claim}\label{clm:many-root-edges}
For any subgraph $\gG'\subseteq \gG$, if there exist $0\leq i<j\leq 3$ with $\mathcal{P}_i(\gG')\cap \mathcal{P}_j(\gG)\neq \emptyset$, then $|\gG\setminus \gG'|\ge n^{r-3-\delta}$.
\end{Claim}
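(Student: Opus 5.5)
Fix $\gG'\subseteq\gG$ and a pair $\{x,y\}\in\mathcal P_i(\gG')\cap\mathcal P_j(\gG)$ with $i<j$. Since $\gG'\subseteq\gG$, we have $\CN_{\gG'}(x,y)\subseteq\CN_{\gG}(x,y)$, so any cover of $\CN_{\gG}(x,y)$ also covers $\CN_{\gG'}(x,y)$. Passing to $\gG'$ dropped the covering number from $j$ to $i<j$. The plan is to show that such a drop forces the deletion of many sets from the common neighbourhood, and hence many edges of $\gG$.

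First I would use the covering-number drop to find sets that were removed. Let $R$ be a minimum cover of $\CN_{\gG'}(x,y)$, with $|R|=i\le j-1\le 2$; if $i=0$, take $R=\emptyset$. Since $R$ does not cover $\CN_{\gG}(x,y)$, some $S_0\in\CN_{\gG}(x,y)\setminus\CN_{\gG'}(x,y)$ avoids $R$. A single such set is not enough, however, because we need $n^{r-3-\delta}$ deleted edges. I would therefore look instead at all covers of size $i$. For every $i$-set $R$ (of which there are at most $\binom{n}{2}$), the family of members of $\CN_{\gG}(x,y)$ avoiding $R$ is nonempty. The members of $\CN_{\gG'}(x,y)$ all meet some fixed $R$, so every member of $\CN_{\gG}(x,y)$ avoiding that $R$ has been deleted.

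The task is then to show that this avoiding family is large, of size at least about $n^{r-3}$, which would dominate $n^{r-3-\delta}$. Here I would use that $\CN_{\gG}(x,y)$ is intersecting with covering number $j>i$, and split into cases.

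\textbf{Case $i=0$.} $\CN_{\gG'}(x,y)$ is empty, so every member of $\CN_{\gG}(x,y)$ was deleted. This alone is not large unless $\cd_{\gG}(x,y)$ is large, so this naive route fails when $\cd_{\gG}$ is small.

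A different route is needed, and I expect this to be the main obstacle. The natural fix is the minimality of $\gG$ combined with the potential function $g$. Since $\gG$ is edge-minimal subject to $|\gG|\ge g(\gG)$, deleting one edge $e$ must give $|\gG|-1<g(\gG-e)$. So the root contributions are sensitive to single deletions: each edge deletion changes $\sum_t t|\mathcal P_t|$ only by bounded amounts tied to how many pairs lose covering number. I would then compare $g(\gG')$ with $g(\gG)$. Each pair whose covering number drops lowers the weighted sum by at least $1$, costing $n^{r-3-\delta}$ in $g$. Since $g(\gG')\le g(\gG)$ because the weights are monotone in covering number, I would consider removing the edges in $\gG\setminus\gG'$ one by one. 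Each removal strictly decreases $|\cdot|-g(\cdot)$ by minimality. Summing along the chain, the total loss of weighted pairs times $n^{r-3-\delta}$ is bounded by the number of deleted edges. So at least one drop gives $|\gG\setminus\gG'|\ge n^{r-3-\delta}$.

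To make this rigorous, I would argue by contradiction with the subgraph $\gG''=\gG\setminus$ (the deleted edges) chosen cleverly. Concretely, I would take $\gG''=\gG'\cup(\gG\cap\cdots)$ so that $\gG''\subsetneq\gG$ still satisfies $|\gG''|\ge g(\gG'')$. This needs monotonicity of $\mathcal P_t$ membership, which holds since covering number is monotone under taking subfamilies. I would check this inequality step carefully, as it is where the claimed threshold $n^{r-3-\delta}$ enters.
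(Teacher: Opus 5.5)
Your first route, showing that the members of $\CN_{\gG}(x,y)$ avoiding some small set must be numerous, cannot work, and you are right to abandon it. The claim is not a property of intersecting families at all: in a general graph, a pair whose common neighborhood is a single $(r-1)$-set drops to class $0$ after one deletion. The claim is purely a consequence of how $\gG$ was chosen.

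You then identify the correct mechanism. The weight $\min\{\tau,3\}$ is monotone in the covering number $\tau$, so $g(\gG')\le g(\gG)$, and a drop of $\{x,y\}$ from class $j$ to class $i<j$ costs at least $n^{r-3-\delta}$ in $g$. However, the deduction you draw from this is not valid. You assert that each single-edge removal along a chain from $\gG$ to $\gG'$ strictly decreases $|\cdot|-g(\cdot)$ ``by minimality.'' That is false. Minimality of $\gG$ only says that every spanning subgraph $K\subsetneq\gG$ has $|K|<g(K)$; it gives no monotonicity along a chain. Worse, at any step of the chain where some pair changes class, $|\cdot|-g(\cdot)$ actually \emph{increases}, by at least $n^{r-3-\delta}-1$, and such a step must occur because $\{x,y\}$ drops. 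Your final paragraph, meant to make this rigorous, leaves the auxiliary graph $\gG''=\gG'\cup(\gG\cap\cdots)$ unspecified. So the decisive inequality is never actually established.

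The missing step is simply to apply minimality to $\gG'$ itself; no chain and no auxiliary graph are needed. Write $w(\cdot)=|\mathcal P_1(\cdot)|+2|\mathcal P_2(\cdot)|+3|\mathcal P_3(\cdot)|$.
\begin{itemize}
\item Since $\{x,y\}$ changes class, $\gG'\neq\gG$, so the choice of $\gG$ gives $|\gG'|<g(\gG')$.
\item Monotonicity of covering numbers under passing to subfamilies gives $w(\gG')\le w(\gG)-1$, hence $g(\gG')\le g(\gG)-n^{r-3-\delta}$.
\item Combining these with $|\gG|\ge g(\gG)$ yields $|\gG\setminus\gG'|=|\gG|-|\gG'|>g(\gG)-g(\gG')\ge n^{r-3-\delta}$.
\end{itemize}
This is exactly the paper's argument, which phrases it by contradiction. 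Assuming $|\gG\setminus\gG'|<n^{r-3-\delta}$ forces $0\le g(\gG)-g(\gG')<n^{r-3-\delta}$. Then $w(\gG)=w(\gG')$, so every class $\mathcal P_j$ is unchanged, contradicting the assumed drop.
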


\begin{proof}
Since $\gG'\subseteq \gG$, we have $|\fR_{\gG'}(x,y)| \le |\fR_{\gG}(x,y)|$ for every pair $\{x,y\}$. 
Thus, for each \(i \in [3]\),
\begin{equation}\label{equ:P_j-sum}
\bigcup_{j=i}^3 \mathcal P_j(\gG') \subseteq \bigcup_{j=i}^3 \mathcal P_j(\gG).
\end{equation}
Consequently, $|\mathcal P_1(\gG')|+2|\mathcal P_2(\gG')|+3|\mathcal P_3(\gG')|\leq |\mathcal P_1(\gG)|+2|\mathcal P_2(\gG)|+3|\mathcal P_3(\gG)|$ and thus $g(\gG')\leq g(\gG)$.
 
Assume now that $|\gG|-|\gG'|< n^{r-3-\delta}$. 
By the minimality of $\gG$, we have $ |\gG|-g(\gG)\ge 0 >|\gG'| -g(\gG').$ 
It follows that $0\leq g(\gG )-g(\gG')<|\gG|-| \gG'|< n^{r-3-\delta},$ which forces 
\[|\mathcal P_1(\gG')|+2|\mathcal P_2(\gG')|+3|\mathcal P_3(\gG')|= |\mathcal P_1(\gG)|+2|\mathcal P_2(\gG)|+3|\mathcal P_3(\gG)|.\] 
This further shows that \eqref{equ:P_j-sum} is an equality for all $1\leq i\leq 3$, and consequently 
$\mathcal P_j(\gG')=\mathcal P_j(\gG)$ for all $1\leq j\leq 3$,
contradicting the assumption that $\mathcal{P}_\ell(\gG')\cap \mathcal{P}_j(\gG)\neq \emptyset$ for some $0\leq \ell<j\leq 3$.
\end{proof}

\begin{Claim}\label{clm:P3}
We have \(\mathcal P_3(\gG) = \emptyset\). 
Moreover, for each pair \(\{x,y\} \in \mathcal P_2(\gG)\), there exist three vertices such that every set in \(\CN_{\gG}(x,y)\) contains at least two of them, and thus
\( \cd_{\gG}(x,y) \le 3 n^{\,r-3}. \)
\end{Claim}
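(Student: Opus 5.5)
We argue by contradiction using Claim~\ref{clm:many-root-edges}: if some pair $\{x,y\}$ has a large covering number, we delete a set of fewer than $n^{r-3-\delta}$ edges of $\gG$ that strictly lowers the covering number of $\CN(x,y)$. This gives a subgraph $\gG'$ with $\mathcal P_i(\gG')\cap\mathcal P_j(\gG)\neq\emptyset$ for some $i<j$, which the claim forbids.

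\textbf{Step 1: $\mathcal P_3(\gG)=\emptyset$.} Suppose $\{x,y\}\in\mathcal P_3(\gG)$. Since $\gG\subseteq\fF$ is $\mathcal C_4^r$-free, $\CN_{\gG}(x,y)$ is an intersecting $(r-1)$-family (two disjoint members $S,S'$ would give the generalized $4$-cycle $S\cup\{x\},S'\cup\{y\},S\cup\{y\},S'\cup\{x\}$). Its covering number is at least three, so Theorem~\ref{lem:covering-number-3} with $k=r-1\ge3$ gives $\cd_{\gG}(x,y)\le (k^2-k+1+o(1))\binom{n-3}{k-3}+k^k=O(n^{r-4})$. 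Let $\gG'$ be $\gG$ with all edges $S\cup\{x\}$, $S\in\CN_{\gG}(x,y)$, deleted. This removes $O(n^{r-4})<n^{r-3-\delta}$ edges (using $\delta<1/6<1$). Now $\cd_{\gG'}(x,y)=0$, so $\{x,y\}\in\mathcal P_0(\gG')\cap\mathcal P_3(\gG)$, contradicting Claim~\ref{clm:many-root-edges}.

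\textbf{Step 2: structure of $\mathcal P_2$.} Let $\{x,y\}\in\mathcal P_2(\gG)$ and $\aA=\CN_{\gG}(x,y)$, an intersecting $(r-1)$-family with covering number two. If some element $v$ lies in all but at most $k^k\binom{n}{k-3}=O(n^{r-4})$ members of $\aA$, delete the edges $S\cup\{x\}$ for the members $S\in\aA$ not containing $v$. Again fewer than $n^{r-3-\delta}$ edges are removed, and $\CN_{\gG'}(x,y)$ becomes a star centered at $v$, so it lies in $\mathcal P_1$ (or $\mathcal P_0$ if empty). This contradicts Claim~\ref{clm:many-root-edges}. Otherwise Lemma~\ref{lem:cover2-abc} (with $n>2k$) supplies three elements $a,b,c$ such that every member of $\aA$ contains at least two of them. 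Each member then contains one of the pairs $\{a,b\},\{a,c\},\{b,c\}$, so $\cd_{\gG}(x,y)\le 3\binom{n}{r-3}\le 3n^{r-3}$.

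The main obstacle is only bookkeeping. One must check that the deleted edge counts are $O(n^{r-4})$ and hence below $n^{r-3-\delta}$ for large $n$ when $r\ge4$; for $r=4$ this means $k=3$, where Frankl gives the constant bound $k^k$ and $\binom{n}{0}=1$. One must also check that deleting edges through $x$ only shrinks $\CN(x,y)$ as intended, so the new covering number is genuinely at most the claimed value.
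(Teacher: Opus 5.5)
Your proof is correct and follows the paper's argument essentially step for step. In both cases you delete fewer than $n^{r-3-\delta}$ edges of the form $S\cup\{x\}$ with $S\in\CN_{\gG}(x,y)$ to contradict Claim~\ref{clm:many-root-edges}: all of $\CN_{\gG}(x,y)$ (via Frankl's bound) when the covering number is at least three, or the $O(n^{r-4})$ members avoiding a near-universal element when it is two. Otherwise you invoke Lemma~\ref{lem:cover2-abc} to obtain the three degenerate roots and the bound $3\binom{n}{r-3}\le 3n^{r-3}$, exactly as the paper does.
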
	

\begin{proof}
Suppose $\{x,y\}\in \mathcal{P}_3(\gG)$. Since $\gG$ is $\mathcal C^r_4$-free, $\CN_{\gG}(x,y)\subseteq \binom{V(\gG)}{r-1}$ is an intersecting family with covering number at least 3. 
As $r\geq 4$, by Theorem~\ref{lem:covering-number-3}, $\cd_{\gG}(x,y)\le ((r-1)^2-(r-1)+1+o(1))\binom{n-3}{(r-1)-3}+27< n^{r-3-\delta}$. 
Deleting $\cd_{\gG}(x,y)< n^{r-3-\delta}$ edges produces a subgraph $\gG'\subseteq \gG$ with $\cd_{\gG'}(x,y)=0$. 
Then \(\mathcal{P}_0(\gG') \cap \mathcal{P}_3(\gG) \neq \emptyset\), 
contradicting Claim~\ref{clm:many-root-edges}. 
This proves \(\mathcal P_3(\gG) = \emptyset\).

Consider $\{x,y\}\in \mathcal{P}_2(\gG)$. 
Then $\CN_{\gG}(x,y)\subseteq \binom{V(\gG)}{r-1}$ is an intersecting family with covering number two.
We assert that no element lies in all but $(r-1)^{ r-1 }\binom{n}{(r-1)-3}<n^{r-3-\delta}$ members of $\CN_{\gG}(x,y)$. 
Otherwise, deleting those exceptional edges would produce a subgraph $\gG'\subseteq \gG$ in which $\CN_{\gG'}(x,y)$ is a star, equivalently $|\fR_{\gG'}(x,y)|=1$, or empty. Thus either \(\mathcal{P}_1(\gG') \cap \mathcal{P}_2(\gG) \neq \emptyset\) or \(\mathcal{P}_0(\gG') \cap \mathcal{P}_2(\gG) \neq \emptyset\), both contradicting Claim~\ref{clm:many-root-edges}.
By this assertion and Lemma~\ref{lem:cover2-abc}, there exist three vertices such that each $(r-1)$-set in $\CN_{\gG}(x,y)$ contains at least two of them. 
Hence, $\cd_{\gG}(x,y)\le 3\binom{n}{(r-1)-2}< 3n^{r-3}$.
\end{proof}

For each $(x,y)\in \mathcal P_2(\gG)$, we call the three vertices identified in Claim~\ref{clm:P3} the {\it degenerate roots} of $\{x,y\}$.
By Claim~\ref{clm:P3}, we have $\mathcal{P}(\gG)=\mathcal{P}_1(\gG)\cup \mathcal{P}_2(\gG)$, with the following properties:
\begin{itemize}
	\item Each pair $\{x,y\}\in \mathcal P_1(\gG)$ has a unique chosen root, and every set in $\CN_{\gG}(x,y)$ contains the root.
	\item Each pair $\{x,y\}\in \mathcal P_2(\gG)$ has three degenerate roots, and every set in $\CN_{\gG}(x,y)$ contains at least two of them. 
\end{itemize}
Call a vertex $v$ {\bf heavy} if it is a root or degenerate root of at least $0.1 n^{2-2\delta}$ pairs $\{x,y\}$.
Then, the number of heavy vertices is at most $3\binom{n}{2}/0.1 n^{2-2\delta}<15 n^{2\delta}$.
		
\begin{Claim}\label{clm:two-heavy}
If the degenerate roots of \(\{x,y\} \in \mathcal P_2(\gG)\) are \(a,b\) and \(c\), then at least two are heavy.
\end{Claim}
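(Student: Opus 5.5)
Suppose, for a contradiction, that at most one of the degenerate roots of $\{x,y\}\in\mathcal P_2(\gG)$ is heavy; without loss of generality $a$ and $b$ are both non-heavy. The plan is to delete edges of $\gG$ so that at least $n^{r-3-\delta}$ pairs drop in potential, and to check that this deletion costs far less than the gain in $g$. This would contradict the minimal choice of $\gG$, exactly as in Claim~\ref{clm:many-root-edges}.

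First I analyze the structure of $\CN_{\gG}(x,y)$. Every set in it contains at least two of $a,b,c$, so each set contains $\{a,b\}$, $\{a,c\}$ or $\{b,c\}$. Let $\mathcal A$ be the sets containing $a$ but not $c$; each of these contains $b$. The sets avoiding $c$ all contain $\{a,b\}$. By the proof of Lemma~\ref{lem:cover2-abc}, combined with the assertion in Claim~\ref{clm:P3}, the sets not containing $a$ number at least $(r-1)^{r-1}\binom{n}{r-4}$, and similarly for $b$. Hence both $\{b,c\}$-type and $\{a,c\}$-type sets are plentiful, of order $n^{r-4}$ at least.

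Next I would exploit non-heaviness. Take a set $S=\{a,c\}\cup S_0\in\CN_{\gG}(x,y)$ with $b\notin S$. Then the edges $S\cup\{x\}$ and $S\cup\{y\}$ give edges $\{x,c\}\cup S_0\cup\{a\}$. Now consider pairs $\{x,z\}$ where $z$ ranges over the sets of the form $\{a\}\cup S_0$-shifts: for the pair $\{a,y\}$ style relations, the set $(S\setminus\{a\})\cup\{x\}$ lies in $\CN_{\gG}(a,y)$. As $S_0$ varies, the pairs $\{a,y\}$, $\{a,x\}$, $\{b,x\}$, $\{b,y\}$, and more generally pairs $\{a,w\}$ and $\{b,w\}$ for $w\in S_0$, all acquire common neighborhoods containing many sets through $c$ and $x$ (or $y$). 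The aim is to show that $a$ (or $b$) is a root or degenerate root of at least $0.1n^{2-2\delta}$ pairs, contradicting non-heaviness. The intended mechanism: every pair $\{a,w\}$ whose common neighborhood is nonempty is in $\mathcal P_1\cup\mathcal P_2$, and its root set must hit sets such as $\{c,x\}\cup\ldots$ and $\{c,y\}\cup\ldots$. The problem is that $a$ is a vertex of the pair here, not a root, so this naive route fails and must be redirected.

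The better route, which I would try first, is a deletion argument. Delete all edges $T\cup\{x\}$ and $T\cup\{y\}$ with $T\in\CN_{\gG}(x,y)$ and $c\notin T$, that is, $T\supseteq\{a,b\}$. There are at most $2n^{r-3}$ such edges. After the deletion, $\CN(x,y)$ becomes a star at $c$, which lowers the potential by one. The cost $2n^{r-3}$ exceeds the gain $n^{r-3-\delta}$, however, so the deletion has to be amortized. Since $a$ and $b$ are not heavy, the pairs $\{x',y'\}$ having $\{a,b\}$ among their degenerate roots number fewer than $0.1n^{2-2\delta}$. I would instead group all pairs $\{x',y'\}$ sharing the same non-heavy configuration, and count the edges containing a fixed pair $\{a,b\}$: there are at most $n^{r-2}$ of them. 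Deleting all edges containing $\{a,b\}$ with $a,b$ both non-heavy, summed over such pairs, costs at most $\binom n2 n^{r-2}$, which is too large. The amortization is the main obstacle. I expect the resolution to be a double count: by non-heaviness, each such pair $\{a,b\}$ serves fewer than $0.1n^{2-2\delta}$ pairs $\{x',y'\}$, while each such $\{x',y'\}$ has of order $n^{r-3}$ edges through $\{a,b\}$. Comparing this with the at most $n^{r-2}$ edges containing $\{a,b\}$, and using Lemma~\ref{lem:counting-C4-pairs} on the link of $\{a,b\}$ (an $(r-2)$-graph) to obtain many non-intersecting common neighborhoods, which $\mathcal C_4^r$-freeness forbids, yields the contradiction.
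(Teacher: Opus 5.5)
Your proposal has a genuine gap. It never produces the pairs that would make $a$ or $b$ heavy, and the mechanism you guess at the end is not the right one.

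The deletion in your third paragraph is the correct starting point, but you read Claim~\ref{clm:many-root-edges} in the wrong direction. Remove the edges $\{x,a,b\}\cup S$ with $S\in\CN_{\gG}(xab,yab)$ (one side suffices). Then $\CN(x,y)$ becomes a star centered at $c$, so $\{x,y\}$ leaves $\mathcal P_2$. Claim~\ref{clm:many-root-edges} therefore \emph{forces} this deletion to be expensive, which gives $\cd_{\gG}(xab,yab)\ge n^{r-3-\delta}$. This is not an obstacle to be amortized. No contradiction with the minimality of $\gG$ is needed beyond that claim: the lower bound is the input from which heaviness is proved directly.

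The missing step is to convert this density into at least $0.2n^{2-2\delta}$ pairs $\{u,v\}$ for which $a$ or $b$ is a root or degenerate root.

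For $r\ge5$, apply Lemma~\ref{lem:counting-C4-pairs} to the $(r-3)$-graph $\CN_{\gG}(xab,yab)$. Do not apply it to the link of $\{a,b\}$: that link is only known to contain about $n^{r-3-\delta}$ sets, far too few for the lemma at uniformity $r-2$. The lemma gives $0.2n^{2-2\delta}$ pairs $\{u,v\}$ with disjoint $(r-4)$-sets $S,T$ such that $uS,uT,vS,vT\in\CN_{\gG}(xab,yab)$. Then $\{x,a,b\}\cup S$ and $\{y,a,b\}\cup T$ both lie in $\CN_{\gG}(u,v)$ and meet exactly in $\{a,b\}$. By Claim~\ref{clm:P3}, every member of $\CN_{\gG}(u,v)$ contains the root of $\{u,v\}$ or two of its three degenerate roots. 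Hence one of $a,b$ is a root or degenerate root of $\{u,v\}$, and averaging over these pairs makes $a$ or $b$ heavy.

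For $r=4$, $\CN_{\gG}(xab,yab)$ is a set of at least $n^{1-\delta}$ vertices. Any two of them, $u,v$, satisfy $xab,yab\in\CN_{\gG}(u,v)$, and the same conclusion follows.

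Contrary to your last sentence, these non-intersecting configurations are \emph{not} forbidden by $\mathcal C_4^r$-freeness: two members of $\CN_{\gG}(u,v)$ meeting in $\{a,b\}$ form no generalized $4$-cycle. Their only role is to pin the root structure of $\{u,v\}$ inside $\{a,b\}$, and that is exactly what the non-heaviness of both $a$ and $b$ rules out.
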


\begin{proof}
It suffices to show that any pair in $\{a,b,c\}$ has at least one heavy vertex. 
By symmetry, we consider the pair $\{a,b\}$.
If we remove all edges of the form $xabS$ with $S\in \CN_{\gG}(xab,yab)$, the remaining common neighborhood of $x,y$ is a star centered at $c$. By Claim~\ref{clm:many-root-edges}, $\cd_{\gG}(xab,yab )\ge n^{r-3-\delta}$. 
			
In the case of $r=4$, $\CN_{\gG}(xab,yab)$ is a set of vertices. 
For any $u,v\in \CN_{\gG}(xab,yab)$, we have $xab,yab\in \CN_{\gG}(u,v)$. 
If $\{u,v\}\in \mathcal{P}_1(\gG)$, then the unique root of $\{u,v\}$ is either $a$ or $b$; 
if $\{u,v\}\in \mathcal{P}_2(\gG)$, the set $xab$ contains two of the three degenerate roots of $\{u,v\}$. 
In both cases, one of $a$ and $b$ must be either the root or a degenerate root of $\{u,v\}$. 
There are at least $\binom{\cd_{\gG}(xab,yab )}{2}\ge \binom{n^{1-\delta}}{2}\ge 0.2 n^{2-2\delta}$ such pairs $\{u,v\}$. 
Hence, at least one vertex in $\{a,b\}$ is heavy. 
			
It remains to treat the case $r\ge 5$. 
Then $\CN_{\gG}(xab,yab)$ is an $(r-3)$-graph with at least $n^{(r-3)-\delta}$ edges. 
Lemma~\ref{lem:counting-C4-pairs} gives at least $0.2n^{2-2\delta}$ pairs $\{u,v\}$ such that for each pair, there exist two disjoint $(r-4)$-sets $S,T $ with $uS,uT,vS,vT\in \CN_{\gG}(xab,yab)$. 
Then $xabS, yabT \in \CN_{\gG}(u,v)$. 
Since $S\cap T=\emptyset$, these two common-neighbor sets intersect only in $\{a,b\}$. Hence, since each edge contains either the unique root or two degenerate roots of $\{u,v\}$, 
we deduce that one of $a, b$ must be the root or a degenerate root of $\{u,v\}$. 
Hence, with at least $0.2n^{2-2\delta}$ such pairs $\{u,v\}$, at least one vertex in $\{a,b\}$ is heavy, 
proving the claim. 
\end{proof}

\begin{Claim}\label{clm:No-P2}
We have $|\mathcal P_2(\gG)|< 100n^{1+5\delta}.$
\end{Claim}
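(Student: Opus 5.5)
The plan is to combine Claim~\ref{clm:two-heavy} with a counting argument localized at pairs of heavy vertices. By Claim~\ref{clm:two-heavy}, every $\{x,y\}\in\mathcal P_2(\gG)$ has two heavy degenerate roots, say $a,b$. Since there are fewer than $15n^{2\delta}$ heavy vertices, the pair $\{a,b\}$ ranges over fewer than $\binom{15n^{2\delta}}{2}<113n^{4\delta}$ possibilities. It therefore suffices to show the following: for each fixed pair $\{a,b\}$ of heavy vertices, the number of pairs $\{x,y\}\in\mathcal P_2(\gG)$ having $a,b$ among their degenerate roots is at most roughly $0.8\,n^{1+\delta}$. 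The constant can be adjusted, since only the exponent $1+5\delta$ and a crude constant are needed.

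\textbf{Step 1: a large common neighbourhood.} Fix heavy $a,b$ and let $L=N_{\gG}(\{a,b\})$, an $(r-2)$-graph on $[n]\setminus\{a,b\}$. As in the proof of Claim~\ref{clm:two-heavy}, deleting the edges $xabS$ with $S\in\CN_{\gG}(xab,yab)$ turns $\CN_{\gG}(x,y)$ into a star centred at $c$. Claim~\ref{clm:many-root-edges} then forces
\[
\cd_{\gG}(xab,yab)=\cd_L(x,y)\ge n^{r-3-\delta}
\]
for every relevant pair $\{x,y\}$. So every such pair has a large common neighbourhood in $L$.

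\textbf{Step 2: double counting.} I would double count the triples $(\{x,y\},S)$ with $S\in\CN_L(x,y)$. On one side this number equals $\sum_{S}\binom{d_{\gG}(S\cup\{a,b\})}{2}$, summed over $(r-3)$-sets $S$. On the other side it is at least $n^{r-3-\delta}$ times the number of relevant pairs. Hence the task reduces to proving
\[
\sum_{S}\binom{d_{\gG}(S\cup\{a,b\})}{2}=O\bigl(n^{r-2}\bigr).
\]

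\textbf{Step 3 (the main obstacle): bounding the codegrees.} The trivial bound $d\le n$ is far too weak, so I need to show that $(r-1)$-sets $abS$ of large degree are rare or contribute little. The idea is the following. If $w_1,w_2$ both lie in $N_{\gG}(abS)$, then $abS\in\CN_{\gG}(w_1,w_2)$. Hence the root of $\{w_1,w_2\}$ (or two of its degenerate roots) lies in $abS$. Now suppose a large set $Y$ of vertices has $abS$ in its common link. Then all $\binom{|Y|}{2}$ pairs inside $Y$ have their roots inside the fixed $(r-1)$-set $abS$. Combining this with the $\mathcal C_4^r$-freeness of $\gG$ should force these pairs to have tiny codegree: two disjoint $(r-1)$-sets in $\CN_{\gG}(w_1,w_2)$ avoiding each other are forbidden, and the sets must meet $abS$ in a root. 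The aim is to show $\sum_{w\in Y}d_{\gG}(w)$ is then much smaller than typical, or to delete such edges cheaply. My first attempt would be a Claim~\ref{clm:many-root-edges}-style argument: if $\sum_S\binom{d(abS)}{2}$ were large, then removing the few edges through high-degree sets $abS$ would move some pair out of $\mathcal P_2(\gG)$ into $\mathcal P_1$ or $\mathcal P_0$ at a cost below $n^{r-3-\delta}$, contradicting the minimality of $\gG$. Failing that, I would bound $\sum_S d(abS)^2$ by $|L|\cdot\max_S d(abS)$ with $|L|\le\binom{n}{r-2}$, and aim to prove $\max_S d(abS)=O(n^{\delta})$ outside a negligible family of $S$. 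That would give the per-pair bound $O(n^{1+2\delta})$, which after multiplying by $n^{4\delta}$ still needs a slightly sharper estimate, so the precise exponent bookkeeping in this step is where I expect the real difficulty.
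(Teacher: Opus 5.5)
Your proposal has a genuine gap in Steps~2--3. The inequality you reduce to, $\sum_S\binom{d_{\gG}(S\cup\{a,b\})}{2}=O(n^{r-2})$, is not true for heavy pairs in general, so no codegree analysis can establish it. Your double count runs over \emph{all} pairs $\{x,y\}$ with $\cd_L(x,y)>0$, not only over pairs of $\mathcal P_2(\gG)$ with degenerate roots $a,b$.

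Heavy vertices are exactly those that serve as roots of many pairs. Suppose $a$ is the centre of a star with $\Theta(n^{r-1})$ edges of $\gG$; nothing in the setting excludes this, and it is the expected picture in the dense case. Then $d_{\gG}(\{a,b\})=\Theta(n^{r-2})$, so the average of $d_{\gG}(S\cup\{a,b\})$ is $\Theta(n)$. Convexity then gives $\sum_S\binom{d_{\gG}(S\cup\{a,b\})}{2}=\Theta(n^{r-1})$. Here $\Theta(n^2)$ pairs of $\mathcal P_1(\gG)$ rooted at $a$ each contribute $\Theta(n^{r-3})$, which swamps the pairs you want to count.

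Your fallback, that $\max_S d_{\gG}(S\cup\{a,b\})=O(n^{\delta})$ outside a negligible family, fails in the same situation, since there $d_{\gG}(S\cup\{a,b\})=\Theta(n)$ for most $S$. Your Claim~\ref{clm:many-root-edges}-style idea, moving a single pair out of $\mathcal P_2(\gG)$ at cost below $n^{r-3-\delta}$, is precisely what that claim forbids; it only reproduces Step~1.

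The missing idea is to apply the minimality of $\gG$ to one deletion that moves \emph{many} pairs at once. For this you must fix an endpoint $x$ in addition to the heavy pair $\{a,b\}$.

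\begin{itemize}
\item Delete all edges of $\gG$ containing $\{x,a,b\}$, and call the result $\gG'$. This costs at most $\binom{n}{r-3}\le n^{r-3}$ edges, independently of $y$.
\item Take any $y$ with $\{x,y\}\in\mathcal P_2(\gG)$ and degenerate roots $\{a,b,c_y\}$. Every surviving member of $\CN_{\gG'}(x,y)$ must contain $c_y$, so $\{x,y\}$ leaves $\mathcal P_2$.
\item By \eqref{equ:P_j-sum} and $\mathcal P_3(\gG)=\emptyset$, $g$ drops by at least $n^{r-3-\delta}$ for each such $y$.
\item If there were more than $n^{\delta}$ such $y$, then $g(\gG)-g(\gG')>n^{r-3}\ge|\gG|-|\gG'|$. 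This contradicts the minimality of $\gG$.
\end{itemize}

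Hence each $x$ and each heavy pair $\{a,b\}$ account for at most $n^{\delta}$ pairs of $\mathcal P_2(\gG)$. Summing over the $n$ choices of $x$ and the fewer than $113n^{4\delta}$ heavy pairs (using Claim~\ref{clm:two-heavy}) gives $|\mathcal P_2(\gG)|<100n^{1+5\delta}$. This is the paper's argument, phrased there via averaging. It delivers exactly your per-heavy-pair target $O(n^{1+\delta})$, but through the potential function $g$ rather than any codegree estimate.
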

\begin{proof}
Suppose, for a contradiction, that $|\mathcal P_2(\gG)|\ge 100n^{1+5\delta} $. 
By averaging, there is a vertex $x$ appearing in at least $m=200n^{5\delta}$ pairs in $\mathcal{P}_2(\gG)$. 
Denote these pairs by $\{x,y_i\}\in \mathcal P_2(\gG)$ for $i\in [m]$.
Recall that in total there are at most $15 n^{2\delta}$ heavy vertices, and
by Claim~\ref{clm:two-heavy}, each $\{x,y_i\}\in \mathcal P_2(\gG)$ has at least two heavy degenerate roots. 
By averaging, there exist at least $m':= m/ \binom{15 n^{2\delta}}{2}> n^{\delta}$ pairs $\{x,y'_i\}\in \mathcal{P}_2(\gG)$ with $i\in [m']$, sharing the same two heavy degenerate roots $a$ and $b$ and another degenerat root $c_i$. 
After deleting all edges containing $xab$ in $\gG$, which is at most $n^{r-3}$ edges, we get a new graph $\gG'$. 
Then for each $i\in [m']$, $\CN_{\gG'}(x,y'_i)$ is a star (possibly empty) whose edge contains $c_i$, and hence $\{x,y'_i\}\notin \mathcal P_2(\gG')$.
Thus, we have $|\mathcal P_2(\gG)|-|\mathcal P_2(\gG')|\ge m'> n^{\delta}$. 
By \eqref{equ:P_j-sum} and the fact $\mathcal{P}_3(\gG')=\mathcal{P}_3(\gG)=\emptyset$, we obtain 
\[(|\mathcal P_1(\gG)|+2|\mathcal P_2(\gG)|)-(|\mathcal P_1(\gG')|+2|\mathcal P_2(\gG')|)\ge |\mathcal P_2(\gG)|-|\mathcal P_2(\gG')| > n^{\delta},\] and
\[g(\gG)-g(\gG')> n^{\delta}\cdot n^{r-3-\delta}=n^{r-3} \ge |\gG|-| \gG' |.\] 
It follows that $|\gG'|-g(\gG') \ge |\gG| -g(\gG)\ge 0,$ 
contradicting the minimality of $\gG$. 
\end{proof}
		
We are now ready to complete the proof of Lemma~\ref{lem:one-root}.
Let $\widetilde{\fF}$ denote the subgraph obtained from $\gG$ by deleting all edges of the form $xS$ for $S\in \CN_{\gG}(x,y)$ for each pair $\{x,y\}\in \mathcal P_2(\gG)$.

We show that $\widetilde{\fF}$ is the desired subgraph of $\fF$. 
By Claim~\ref{clm:P3}, $\cd_{\gG}(x,y)\le 3n^{r-3}$ for each $\{x,y\}\in \mathcal P_2(\gG)$.
By Claim~\ref{clm:No-P2}, $|\mathcal P_2(\gG)|< 100n^{1+5\delta}$.
Thus $|\gG\setminus \widetilde{\fF}|\le |\mathcal P_2(\gG)|\cdot 3n^{r-3}\le 300n^{r-2+5\delta}$. 
Hence, we have (since $\delta\in (0,1/6)$ and $n$ is sufficiently large)
\[|\widetilde{\fF}|\ge |\gG|-300n^{r-2+5\delta}\overset{\eqref{equ:mathcal-G}}{\ge} |\fF|- 3n^{r-1-\delta}-300n^{r-2+5\delta}\geq |\fF|- 4n^{r-1-\delta}.\] 		
Moreover, $\mathcal P_2(\widetilde{\fF})=\emptyset$.
Since $\mathcal P_3(\widetilde{\fF})\subseteq \mathcal P_3(\gG )$, 
Claim~\ref{clm:P3} shows $\mathcal P_3(\widetilde{\fF})=\emptyset$. 
Therefore, for any vertices $x$ and $y$, $\CN_{\widetilde{\fF}}(x,y)$ is a star.
This proves Lemma~\ref{lem:one-root}.
\end{proof}

\section{Proof of Lemma~\ref{lem:CN-stability}} \label{sec:stability-com}
The goal of this section is to prove Lemma~\ref{lem:CN-stability}, assuming an abstract deletion lemma, Lemma~\ref{lem:3-graph-deletion}, whose proof is postponed to Section~\ref{sec:remove-diag}.
Throughout this section, let $r\geq 4$, let $\epsilon\in (0,1/420r)$, and let $n\gg r,\epsilon$ be sufficiently large. We fix an $n$-vertex $\mathcal C_4^r$-free $r$-graph $\gG$ such that $|\gG|\ge (1-\epsilon)\binom{n-1}{r-1}$ and
\begin{equation}\label{prop:root-function}
\text{for any vertices \(x\) and \(y\), \(\CN_{\gG}(x,y)\) is an \((r-1)\)-star with center \(\fr(x,y)\).}\footnote{If \(\CN_{\gG}(x,y)=\emptyset\), we denote \(\fr(x,y)\) by an arbitrary vertex.}
\end{equation}

Thus each pair $(x,y)$ has a local center $\fr(x,y)$, and the task is
to show that these local centers agree on almost all pairs from a large
vertex set. The proof proceeds through four steps. First, we encode
every $r$-edge of $\gG$ as a triple $(A,\{x,y\})$ with $|A|=r-2$,
thereby passing to a bipartite $3$-graph $\hH$. Second, we decompose
the part $\aA=\binom{[n]}{r-2}$ into subsets $\aA_i$, each of size at most $n$.
Third, we apply Lemma~\ref{lem:3-graph-deletion} to the subgraph of
$\hH$ induced by each $\aA_i$, which ensures that the common
neighborhood of most vertex pairs is a compatible subfamily in
$\aA_i$. Finally, we use an edge-coloring argument to extract a large
set of vertices such that the local center of each pair lies in the
common intersection of a small compatible subfamily; see
Lemma~\ref{lem:cluster-S-set-W}. This eventually leads to a single
common vertex as the local center for all pairs in a large vertex
subset, which is precisely what is needed for Lemma~\ref{lem:CN-stability}.

\subsection{Good bipartite 3-graphs and the deletion lemma}\label{subsec:bipartite-3-graphs} 
In this subsection, we first introduce an abstract framework -- the setting of good bipartite $3$-graphs from Definition~\ref{Prop:3graph-relation} -- which, at first sight, appears unrelated to our study. We will explain later (see Lemma~\ref{clm:H-properties}) why this framework captures the essential information about $\gG$ needed in the upcoming proofs.
We then state a deletion lemma (Lemma~\ref{lem:3-graph-deletion}) in the language of this framework, which will serve as a key ingredient in the proof of Lemma~\ref{lem:CN-stability}.

A 3-graph $\fF$ on the vertex set $X \cup Y$ is called a {\bf bipartite 3-graph}, denoted by $\fF(X,Y)$, if every edge $e \in \fF$ satisfies $|e\cap X|=1$ and $|e\cap Y|=2$, i.e., $e\in X\times \binom{Y}{2}$. 
We consider the set $X$ equipped with a symmetric binary relation $\leftrightarrow$. 
Under this relation, any two elements $x, x' \in X$ are either \textit{compatible}, denoted by $x \leftrightarrow x'$, or \textit{incompatible}, denoted by $x \nleftrightarrow x'$.
A subset $X' \subseteq X$ is \textit{compatible} if all its elements are pairwise compatible.

Before presenting the framework in full detail,
we briefly explain how bipartite 3-graphs come into play in our proofs.
We transform the $r$-graph $\gG$ into a bipartite 3-graph $\hH=\hH(X,Y)$, where $X=\binom{V(\gG)}{r-2}$ and $Y=V(\gG)$; see Definition~\ref{dfn:bipartite-3-graph-H}.
Each edge of $\hH$ is a triple in $X\times \binom{Y}{2}=\binom{V(\gG)}{r-2}\times \binom{V(\gG)}{2}$, and corresponds to a unique edge of $\gG$.
The compatibility relation on $X$ is defined as follows:
two $(r-2)$-sets $A, A'\in X$ are compatible (written $A\leftrightarrow A'$) if and only if $A\cap A'\neq \emptyset$.
Accordingly, a subset $X'\subseteq X$ is called \emph{compatible} if its members are pairwise compatible; equivalently, viewing $X'$ as a family of $(r-2)$-sets, $X'$ is an intersecting family.
We also call such $X'$ a \emph{compatible $(\leftrightarrow)$ family}.
Before proceeding, we highlight an important fact, which motivates much of the notation in the framework developed below.
Since $\gG$ is $\mathcal C_4^r$-free,
\begin{equation}\label{equ:A-fact}
   \text{for any $A, A'\in X$ with $A\nleftrightarrow A'$, the graph $\CN_{\hH}(A,A')$ is either a star or a triangle in $Y$.}\footnote{When we view such $A,A'$ as two $(r-2)$-subsets of $V(\gG)$, the graph $\CN_{\gG}(A,A')$ coincides with $\CN_{\hH}(A,A')$, and we will use the two interchangeably.} 
\end{equation}

We now return to the setting of a bipartite 3-graph $\fF(X,Y)$.
For two vertices $x,x'\in X$, the situation is particularly favorable when the common neighborhood $\CN_{\fF}(x,x')$ is a star; we therefore keep track of the pairs for which this fails.
In view of this and \eqref{equ:A-fact}, we introduce the following two sets:
\[
\alpha(\fF)=\left\{
\{x,x'\}\in \binom{X}{2}: x\leftrightarrow x' \text{ and } \CN_{\fF}(x,x') \text{ is not a star}
\right\},
\]
and
\[
\beta(\fF)=\left\{
\{x,x'\}\in \binom{X}{2}: x\nleftrightarrow x' \text{ and } \CN_{\fF}(x,x') \text{ is a triangle}
\right\}.
\]
For $\{x,x'\}\in \beta(\fF)$, we call the triangle $\CN_{\fF}(x,x')$ a \emph{$\beta(\fF)$-triangle}.  

With this preparation, we are now ready to formally define the framework.
\begin{dfn}[Good bipartite $3$-graphs]\label{Prop:3graph-relation}
A bipartite $3$-graph $\fF=\fF(X,Y)$ equipped with a compatibility relation $\leftrightarrow$ on $X$ is called \textit{good} if each of the following properties holds.
 \begin{enumerate}
			\item[(1).] For $x\nleftrightarrow x' $ in $X$, $\CN_{\fF}(x,x')$ is either a star (possibly empty), or a triangle. 
 \item[(2).] All $\beta(\fF)$-triangles are pairwise edge disjoint.
			\item[(3).] For each pair of $u,v\in Y$ with $\CN_{\fF}(u,v)\neq \emptyset$, there exists a vertex $w\in Y$ such that if $y\in Y\setminus \{u,v,w\}$, then $\CN_{\fF}(uy,vy)$ is a compatible subset of $X$. 		
	\end{enumerate}
\end{dfn}

Note that when $\fF$ is good, $\alpha(\fF)$ and $\beta(\fF)$ record precisely the compatible and the incompatible non-star pairs, respectively.
We will also use repeatedly the simple observation that every subgraph of a good bipartite $3$-graph, equipped with the same compatibility relation, remains good. Indeed, passing to a subgraph only deletes edges from the relevant common neighborhoods, so the three conditions above are preserved.

\medskip

The following lemma plays a central role in the proof of Lemma~\ref{lem:CN-stability}. Its proof is given in Section~\ref{sec:remove-diag}, and it is the only result from that section needed here.
For a $3$-graph $\fF$ and two vertices $a,b\in V(\fF)$, we write $N_\fF(ab)$ for the link $N_\fF(\{a,b\})$ of the pair $\{a,b\}$ in $\fF$.
Roughly speaking, the lemma says that for every good bipartite $3$-graph $\fF(X,Y)$, one can delete a few edges so that in the remaining $3$-graph, the link of every pair $\{y,y'\}$ of $Y$ is a compatible subset of $X$, unless $\{y,y'\}$ is itself an edge of a $\beta(\fF)$-triangle.
 
\begin{lem}\label{lem:3-graph-deletion}
		Let $\fF= \fF(X,Y)$ be a good bipartite 3-graph equipped with a compatibility relation $\leftrightarrow$ on $X$.
		If each vertex in $X$ is compatible with at most $O_r(1)$ vertices, then there exists an edge set $\rR\subseteq \fF$ of size
			\begin{equation}\label{equ:rR}
			|\rR|\le \binom{|X|}{2}+\binom{|Y|}{2}+ O_r \left((| X|+|Y|)(|\alpha(\fF)|+1) \right)
			\end{equation}           
		such that for any $y,y' \in Y$, exactly one of the following alternatives holds:
 \begin{itemize}
 \item $N_{\fF\setminus \rR}(y y')$ is a compatible subset of $X$;
 \item $N_{\fF\setminus \rR}(y y')=\{x,x'\}$ for two incompatible ($\nleftrightarrow$) vertices $x,x'$, where $\{x,x'\}\in \beta(\fF)$.
 \end{itemize}
 In the second alternative, $yy'$ is necessarily an edge of the corresponding $\beta(\fF)$-triangle $\CN_{\fF}(x,x')$.
 Moreover, when $abc$ is a $\beta(\fF)$-triangle, one of the following holds:
		\begin{itemize}
			\item At least one of $N_{\fF \setminus \rR}(ab),$ $N_{\fF \setminus \rR}(bc)$ and $N_{\fF \setminus \rR}(ca)$ has size at most one, or
			\item At least two of $N_{\fF \setminus \rR}(ab)$, $N_{\fF \setminus \rR}(bc)$ and $N_{\fF \setminus \rR}(ca)$ are compatible subsets of $X$. 
		\end{itemize}
	\end{lem}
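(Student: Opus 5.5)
The plan is to work link by link: for each $x\in X$ consider the link graph $L_x=N_{\fF}(x)\subseteq\binom{Y}{2}$, and delete edges so that any two incompatible vertices $x\nleftrightarrow x'$ no longer share an edge $yy'$, except in a controlled way inside $\beta(\fF)$-triangles. Indeed, $N_{\fF\setminus\rR}(yy')$ is compatible precisely when no two incompatible $x,x'$ both contain $yy'$ in their remaining links. The budget $\binom{|X|}{2}+\binom{|Y|}{2}$ suggests a F\"uredi-type accounting: we may afford one deleted edge per pair of $X$ and one per pair of $Y$, plus an error term $O_r(|X|+|Y|)$ per non-star compatible pair in $\alpha(\fF)$.

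First, for each incompatible pair $\{x,x'\}$ whose common neighborhood $\CN_{\fF}(x,x')$ is a star (property (1)), say with center $c$, we resolve the overlap by deleting, on one side, the common edges $cz$. This is the graph-deletion lemma of F\"uredi and Pikhurko--Verstra\"ete: orient the choice so that the total cost is charged to pairs of $Y$. Each pair $\{y,y'\}$ should be charged at most once, with the overflow charged to pairs of $X$. Concretely, I would process the pairs $yy'$ and, for each one, keep the edges $(x,yy')$ for a maximal compatible subfamily of $N(yy')$, deleting the rest. I would then show via property (3) that a deleted triple $(x,yy')$ can be charged to $(x,\{\text{the center}\})$-type configurations. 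Because each $x$ is compatible with only $O_r(1)$ others, a non-compatible $N(yy')$ forces star overlaps whose centers are $y$ or $y'$. A pair $yy'$ that receives many incompatible $x$'s then creates, for the shared center $y$, common neighborhoods $\CN(yz,y'z)$ that are non-compatible for many $z$. Property (3) allows only one exceptional $w$, so this bounds the number of such $z$ and gives the $\binom{|Y|}{2}$ term. The $\binom{|X|}{2}$ term comes from charging each remaining conflict to the incompatible pair $\{x,x'\}$ itself, using that a star overlap is witnessed once per pair.

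Second, pairs in $\alpha(\fF)$ (compatible, with non-star common neighborhood) break the clean star structure. For each such pair I would simply delete all edges at $x$ incident to a bounded-size set of vertices; each such deletion costs $O(|Y|)$, and the bounded compatibility degree yields the $O_r((|X|+|Y|)(|\alpha(\fF)|+1))$ term. Third, the $\beta(\fF)$-triangles $abc=\CN(x,x')$ are pairwise edge-disjoint by property (2). On each one I would delete a single edge from $x$ or $x'$ on at most one or two triangle sides, leaving the exceptional alternative $N(yy')=\{x,x'\}$ only on sides where the triangle edge survives in both links. To arrange the final dichotomy (some side of size $\le1$, or two sides compatible), I would delete $(x,ab)$ and $(x',bc)$ when needed. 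By edge-disjointness this costs at most one triple per pair of $Y$, which fits within $\binom{|Y|}{2}$.

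The main obstacle is the first step: choosing, simultaneously over all incompatible pairs, which side's edges to delete so that the total is at most $\binom{|X|}{2}+\binom{|Y|}{2}$ rather than a constant multiple. I would first try a potential or minimal-counterexample argument like that in the proof of Lemma~\ref{lem:one-root}. Each triple is charged to the pair $\{x,x'\}$ or to $\{y,y'\}$, and property (3) is used to show that no pair is charged twice, up to the $\alpha$ error.
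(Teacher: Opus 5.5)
There is a genuine gap, and it sits exactly at what you call the ``main obstacle''. The way you sketch around it does not work.

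Your concrete rule (for each $yy'$, keep a maximum compatible subfamily of $N_{\fF}(yy')$) is already the cheapest way to make every $Y$-pair neighbourhood compatible, because distinct $Y$-pairs involve disjoint sets of triples. So, up to the $\beta$-exceptions, the whole content of the lemma is the upper bound on the cost of this rule, and the proposal never proves it.

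The charging via property~(3) fails. If incompatible $x_1,x_2\in N_{\fF}(yy')$ have $\CN_{\fF}(x_1,x_2)$ a star centred at $y$, then every common $Y$-edge of $x_1,x_2$ contains $y$. Hence $x_1,x_2$ never lie together in $\CN_{\fF}(yz,y'z)$ for $z\neq y$, so the configuration you want (3) to bound cannot occur. Property~(3) bounds nothing by itself. It only says that a $Y$-pair $\{u,v\}$ can have a non-compatible $\CN_{\fF}(uy,vy)$ in at most one link, namely $y=w$.

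Step~3 also pays for the $\beta$-triangles with ``one triple per pair of $Y$'' after Step~1 has already spent the $\binom{|Y|}{2}$ budget. The leading term $\binom{|X|}{2}+\binom{|Y|}{2}$ is sharp for the application, since it is exactly what yields $|\gG|\le(1+o(1))\binom{n}{r-1}$, so spending that budget twice is fatal.

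Finally, the $X$-links $L_x$ are graphs on $Y$. An incompatible pair $\{x,x'\}$ is not a diagonal of any of them, so the F\"uredi/Pikhurko--Verstra\"ete machinery has nothing to act on there.

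What is missing is the following mechanism. Work in the $Y$-links $\fF_y$, which are bipartite graphs between $X$ and $Y$ in which $\{x,x'\}$ appears as a diagonal or half-diagonal. You then need two ingredients.
\begin{itemize}
\item[(a)] A Pikhurko--Verstra\"ete-type recursive deletion lemma that drives each incompatible target $X$-pair to codegree $0$ while letting compatible diagonals survive. Its cost is $|\pP|+|\mathcal T(H)|$, and it maintains the invariant that every non-target diagonal stays compatible.
\item[(b)] An assignment of every pair of $\dD(\fF)\cup\eE_X(\fF)$ to a single $Y$-link. An incompatible star pair goes to the star centre, a $\beta$-pair goes to the least vertex $a$ of its triangle $abc$, and a $Y$-diagonal goes to the vertex $w$ of property~(3). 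This last choice is exactly how (3) is used: it verifies the invariant in every other link.
\end{itemize}

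The total cost is then $|\dD(\fF)\cup\eE_X(\fF)|\le\binom{|X|}{2}+\binom{|Y|}{2}$ plus the chain term. The chain term is $O_r(|X|+|Y||\alpha(\fF)|)$, because a compatible chain recorded in two links forces a pair of $\alpha(\fF)$.

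With this accounting, $\beta$-triangles cost nothing extra. The $\beta$-pair is processed in the $a$-link and paid for by target pairs there, such as $\{x,x'\}$ itself and the $Y$-diagonal $\{b,c\}$, which (3) assigns to $a$. Afterwards the $\beta$-pair survives only on $bc$.

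For the ``moreover'' dichotomy you additionally need two things. First, kernel preparation: remove mixed kernels at cost $O(|X||\alpha(\fF)|)$, truncate kernels to size at most three, and remove compatible bridges, so that a surviving non-compatible neighbourhood is exactly $\{x,x'\}$. Second, the final edge exchange for three-vertex kernels.
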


\subsection{The projection from $\gG$ to $\hH(\aA,[n])$}\label{subsec:transform}
 
We now verify that the abstract deletion lemma applies to the bipartite $3$-graph naturally associated with $\gG$.
This projection is the bridge between local centers in the original $r$-graph and compatible families in the $X$-part of a bipartite $3$-graph.

\begin{dfn}[Projection and compatibility]\label{dfn:bipartite-3-graph-H}	
	Denote $\aA=\binom{[n]}{r-2}$.
	Let $\hH=\hH(\aA,[n])$ be the bipartite $3$-graph whose edges are the triples $Auv$, with $A\in\aA$ and $u,v\in[n]$, such that
\[
        A\cap\{u,v\}=\emptyset
        \qquad\text{and}\qquad
        A\cup\{u,v\}\in\gG .
\]
Here $A$ is regarded as a vertex in the $\aA$-part of $\hH$, while $u$ and $v$ are vertices in the $[n]$-part.
	For any two sets $A,A'\in \aA$, if $A\cap A'\neq\emptyset$, we say they are compatible, denoted by $A\leftrightarrow A'$; otherwise, they are incompatible, denoted by $A\nleftrightarrow A'$. 
\end{dfn}
Since every edge of $\gG$ contains $\binom{r}{2}$ pairs $\{u,v\}$, we have
\begin{equation}\label{equ:HG}
    |\hH|=\binom{r}{2}|\gG|.
\end{equation}

For the remainder of this section, we fix the bipartite 3-graph $\hH=\hH(\aA,[n])$ and the compatible/incompatible (equivalent to intersecting/non-intersecting respectively) relation on $\aA$.

\begin{lem}\label{clm:H-properties}
The bipartite 3-graph $\hH=\hH(\aA,[n])$ is good with respect to the fixed compatible/incompatible relation on $\aA$, meaning that it satisfies Definition~\ref{Prop:3graph-relation}.
Moreover, if $abc$ is a $\beta(\hH)$-triangle, then $\fr(a,b)=c$, $\fr(b,c)=a$ and $\fr(c,a)=b$.
\end{lem}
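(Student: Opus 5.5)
We verify the three conditions of Definition~\ref{Prop:3graph-relation} directly from the $\mathcal C_4^r$-freeness of $\gG$ and from \eqref{prop:root-function}. We prove the statement about $\beta(\hH)$-triangles before condition (2), because (2) follows from it.

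\emph{Condition (1).} Fix $A\nleftrightarrow A'$ in $\aA$, i.e.\ $A\cap A'=\emptyset$. An edge $uv$ of the graph $\CN_{\hH}(A,A')$ satisfies $\{u,v\}\cap(A\cup A')=\emptyset$ and $A\cup\{u,v\},A'\cup\{u,v\}\in\gG$. Suppose this graph had two disjoint edges $uv$ and $u'v'$. Then the four $r$-sets
\[
A\cup\{u,v\},\quad A'\cup\{u',v'\},\quad A\cup\{u',v'\},\quad A'\cup\{u,v\}
\]
would be distinct. The first two are disjoint, the last two are disjoint, and both pairs have the same union $A\cup A'\cup\{u,v,u',v'\}$. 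This is a generalized $4$-cycle in $\gG$, which is impossible. Hence $\CN_{\hH}(A,A')$ is an intersecting graph, and so it is a star (possibly empty) or a triangle. This is exactly \eqref{equ:A-fact}.

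\emph{The root statement.} Let $abc=\CN_{\hH}(A,A')$ be a $\beta(\hH)$-triangle, so $A\cap A'=\emptyset$. The triples $Aac$ and $Abc$ lie in $\hH$, so $A\cup\{c\}\in\CN_{\gG}(a,b)$. In the same way $A'\cup\{c\}\in\CN_{\gG}(a,b)$. These two $(r-1)$-sets intersect only in $c$, since $A\cap A'=\emptyset$ and $c\notin A\cup A'$. By \eqref{prop:root-function}, $\CN_{\gG}(a,b)$ is a nonempty star with center $\fr(a,b)$, so the center must be $c$, i.e.\ $\fr(a,b)=c$. By symmetry $\fr(b,c)=a$ and $\fr(c,a)=b$.

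\emph{Condition (2).} Suppose two $\beta(\hH)$-triangles $abc$ and $abc'$ share the edge $ab$. By the root statement, $c=\fr(a,b)=c'$. Here the center is uniquely determined, because $\CN_{\gG}(a,b)$ is nonempty and contains two members meeting only in that center. Thus the two triangles coincide, and distinct $\beta(\hH)$-triangles are edge-disjoint. The one subtle point is that two different incompatible pairs $\{A,A'\}$ may give the same triangle. We read (2) as a statement about triangles as graphs, so this is allowed. This is the step I would double-check against how (2) is used later.

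\emph{Condition (3).} Take $u,v\in[n]$ with $\CN_{\hH}(u,v)\neq\emptyset$, and set $w:=\fr(u,v)$. Let $y\notin\{u,v,w\}$ and $A\in\CN_{\hH}(uy,vy)$. Then $A\cup\{u,y\}\in\gG$ and $A\cup\{v,y\}\in\gG$, so $A\cup\{y\}\in\CN_{\gG}(u,v)$. This is a star centered at $w$, and $y\neq w$, so $w\in A$. Hence every member of $\CN_{\hH}(uy,vy)$ contains $w$. These members are therefore pairwise intersecting, i.e.\ $\CN_{\hH}(uy,vy)$ is a compatible subset of $\aA$.

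None of the steps is a serious obstacle. The only care needed is the distinctness and disjointness bookkeeping in the $4$-cycle of condition (1), and the uniqueness of the center in condition (2).
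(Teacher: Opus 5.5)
Your proof is correct and follows the paper's argument step for step. Condition (1) comes from $\mathcal C_4^r$-freeness, and the two members $A\cup\{c\}$ and $A'\cup\{c\}$ of $\CN_{\gG}(a,b)$ force $\fr(a,b)=c$. Edge-disjointness in (2) is then read off from this root statement, and (3) follows from $A\cup\{y\}\in\CN_{\gG}(u,v)$ with $y\neq\fr(u,v)$. Your reading of (2) is exactly the paper's "identifying identical triangles", and your explicit distinctness and disjointness check for the four sets in (1) spells out a step the paper leaves implicit.
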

	\begin{proof}
		Suppose $A\nleftrightarrow A' $ in $ \aA$ with non-empty $\CN_{\hH}(A, A')$. If $\CN_{\hH}(A, A')$ contains two disjoint sets, this would imply the existence of a $\mathcal C_4^r$ in the original $r$-graph $\gG$, which is a contradiction. Hence, the $2$-graph $\CN_{\hH}(A, A')$ is intersecting, which is either a star or a triangle. This confirms Definition~\ref{Prop:3graph-relation} (1).

 Suppose $abc$ is a $\beta(\hH)$-triangle with $\CN_{\hH}(A, A')=\{ab,bc,ca\}$ for some $A\nleftrightarrow A' $ in $ \aA$. Then in the $r$-graph $\gG$, we have $A\cup \{c\}, A'\cup \{c\} \in \CN_{\gG}(a, b)$. By \eqref{prop:root-function}, $\CN_{\gG}(a, b)$ is a star centered at $\fr(a,b)$, so the disjointness of $A$ and $A'$ immediately gives $\fr(a,b)=c$. Similarly, $\fr(b,c)=a$ and $\fr(c,a)=b$. 
	 This also proves the edge-disjointness assertion. Indeed, if two $\beta(\hH)$-triangles share the $Y$-edge $ab$, then the third vertex of each triangle must be the same vertex $\fr(a,b)$. Thus the two triangles have the same three $Y$-edges. Identifying identical triangles, distinct $\beta(\hH)$-triangles are therefore pairwise edge-disjoint. This establishes Definition~\ref{Prop:3graph-relation} (2).
 
		Assume $\CN_{\hH}(u,v)\neq \emptyset$ for $u,v\in [n]$. Set $w=\fr(u,v)$, and choose any $y\in [n]\setminus \{u,v,w\}$.
		By \eqref{prop:root-function}, every set in $\CN_{\gG}(u,v)$ contains $w$. Hence every set in $\CN_{\gG}(uy,vy)$ contains $w$.
 Therefore, $\CN_{\gG}(uy,vy)= \CN_{\hH}(uy,vy)$ is an intersecting family, and hence Definition~\ref{Prop:3graph-relation} (3) holds.
	\end{proof}

\subsection{Balanced decomposition of $\aA$}\label{subsec:balanced-decomposition}
The deletion lemma requires bounded compatibility degrees within the part $X$.  
This condition fails for $X=\aA=\binom{[n]}{r-2}$.  
By Baranyai's theorem, however, $\aA$ can be partitioned into subfamilies, each a union of roughly $r-2$ matchings, within which the compatibility degree is bounded.

For a subfamily $\mathcal B\subseteq \aA$, let $\hH[\mathcal B,[n]]$ denote the subgraph of $\hH$ consisting of all edges $Auv\in \hH$ with $A\in \mathcal B$. Thus, once $\aA$ is partitioned, these subgraphs are edge-disjoint and together cover $\hH$.

Next, we will prove the following lemma in this subsection.

\begin{lem}\label{lem:partition-H}
There exists a partition
 $\aA=\bigsqcup_{i\in[m]}\aA_i$ with $m=(1+o(1))\binom{n}{r-2}/n,$
such that each $\aA_i$ is a union of at most $r-2$ matchings. Moreover, for each $i\in[m]$ there is a subgraph $\hH_i\subseteq \hH[\aA_i,[n]]$ such that
\[
        \sum_{i\in[m]}|\hH_i|\ge |\hH|-O_r(n^{r-2})
\]
and
\[
        \sum_{i\in[m]}|\alpha(\hH_i)|=O_r(m)=O_r(n^{r-3}).
\]
\end{lem}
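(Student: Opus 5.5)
The plan is to build the partition from Baranyai's theorem and to randomize it, so that on average very few compatible pairs inside a single part are ``bad''. The remaining bad pairs will then be handled by a cheap deletion.

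\emph{Step 1: the partition.} First suppose $(r-2)\mid n$. By Baranyai's theorem, $\aA=\binom{[n]}{r-2}$ decomposes into $\binom{n}{r-2}\big/\tfrac{n}{r-2}$ perfect matchings, each consisting of $n/(r-2)$ sets. Group these matchings into blocks of $r-2$ to obtain the parts $\aA_i$. Each part has exactly $n$ members, and
\[
m=\frac{\binom{n}{r-2}}{n}\bigl(1+o(1)\bigr).
\]
If $(r-2)\nmid n$, apply Baranyai to a slightly larger ground set $[n']$ with $n'\ge n$ divisible by $r-2$, and restrict every matching to $\binom{[n]}{r-2}$. This leaves near-matchings with the same properties, and only lower-order changes to $m$. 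Within any single matching, a fixed $(r-2)$-set meets at most $r-2$ members. Hence every $A\in\aA_i$ is compatible with at most $(r-2)^2=O_r(1)$ members of $\aA_i$, which is the degree hypothesis needed later for Lemma~\ref{lem:3-graph-deletion}. To gain randomness, apply a uniformly random permutation $\pi$ of $[n]$ to a fixed Baranyai decomposition, and also group the matchings into blocks at random.

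\emph{Step 2: the subgraphs $\hH_i$.} Start from $\hH_i:=\hH[\aA_i,[n]]$. The good-graph properties of Lemma~\ref{clm:H-properties} pass to these subgraphs. For each compatible pair $\{A,A'\}\subseteq\aA_i$ whose common neighbourhood $\CN_{\hH}(A,A')$ is not a star, I have two options:
\begin{itemize}
\item keep it as a member of $\alpha(\hH_i)$;
\item if $\CN_{\hH}(A,A')$ is ``almost a star'', meaning a star plus $O_r(1)$ extra edges, delete those $O_r(1)$ extra edges (each such edge is $A u v$ or $A'uv$), which turns it into a star.
\end{itemize}
Each part contains only $O_r(n)$ compatible pairs, so the second option costs $O_r(n)$ deletions per part, that is $O_r(nm)=O_r(n^{r-2})$ in total. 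This is exactly the permitted loss. The remaining bad pairs are those whose common neighbourhood is far from a star, and it suffices to show that their expected number summed over all parts is $O_r(m)=O_r(n^{r-3})$. A fixed compatible pair $\{A,A'\}$ of $\aA$ lands in a common part with probability about $n\big/\binom{n}{r-2}$, up to a factor depending on $|A\cap A'|$. So it suffices to show that the number of compatible pairs in all of $\aA$ whose common neighbourhood is far from a star is $O_r(n^{2r-6})$, a factor $n$ below the trivial count $n^{2r-5}$. Once this holds, fix an outcome $\pi$ attaining at most the expected value.

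\emph{Step 3: counting far-from-star compatible pairs (main obstacle).} Suppose $\CN_{\hH}(A,A')$ contains two disjoint edges $uv$ and $u'v'$. Then both $A\cup\{x\}$ and $A'\cup\{x\}$ lie in $\CN_{\gG}(\cdot,\cdot)$ for suitable pairs; for instance $A\cup\{v\},A'\cup\{v\}\in N_{\gG}(u)$. The star property \eqref{prop:root-function} forces the roots $\fr(u,v)$, $\fr(u',v')$ and related roots to lie in $A\cap A'$ or in $\{u,v,u',v'\}$. I expect to show that many disjoint edges in $\CN_{\hH}(A,A')$ pin down a root $w\in A\cap A'$ that is shared by $\Omega(n)$ pairs. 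Double counting over the triples consisting of a root $w$, the pair $A,A'$ with $w\in A\cap A'$, and the witnessing pairs $uv$ would then cost a factor $n$ against the trivial count $n^{2r-5}$. I am least sure of this step. The first thing I would try is splitting by $k=|A\cap A'|$: when $k\ge 2$ the trivial count is already $n^{2r-6}$, so only the case $k=1$ needs the star/root argument.
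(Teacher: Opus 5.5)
The Baranyai set-up is fine, and randomizing the grouping of the matchings is a useful extra that the paper does not use. It makes the probability that two sets from distinct matchings share a block $O_r(n^{3-r})$, independently of $|A\cap A'|$. So compatible pairs with $|A\cap A'|\ge 2$ (only $O_r(n^{2r-6})$ of them in $\aA$) contribute $O_r(n^{r-3})$ to $\alpha$ in expectation. With the permutation alone this shortcut would fail, since a pair with $|A\cap A'|=j$ may then land in a common block with probability of order $n^{j+2-r}$.

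The gap is Step~3, which is the whole content of the lemma, and the mechanism you sketch does not work. From $A\cup\{u,v\},A'\cup\{u,v\}\in\gG$ you only get $A\cup\{v\},A'\cup\{v\}\in N_{\gG}(u)$: two sets in the link of a single vertex. This says nothing about $\CN_{\gG}(u,v)$ or $\fr(u,v)$. The star property \eqref{prop:root-function} bears on $\CN_{\hH}(A,A')$ only when $|A\setminus A'|=1$. Then, with $A\setminus A'=\{x\}$ and $A'\setminus A=\{y\}$, $\CN_{\hH}(A,A')$ is the link of $A\cap A'$ in the star $\CN_{\gG}(x,y)$. In your remaining case $|A\cap A'|=\{z\}$ with $r\ge 5$, the sets $S=A\setminus A'$ and $S'=A'\setminus A$ have $r-3\ge 2$ elements. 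Here $\CN_{\hH}(A,A')$ is the link of $z$ in the $3$-uniform family $\CN_{\gG}(S,S')$. That family is intersecting, since two disjoint members would give a member of $\mathcal C_4^r$, but nothing forces it to be a star.

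The missing ingredient is a stability statement for intersecting families applied to $\CN_{\gG}(S,S')$. The paper uses Frankl's theorem: some vertex $w=\fr(S,S')$ lies in all but at most $\binom{n-3}{r-k-2}\le n^{r-k-2}$ members, where $k=|S|$. A pair is then genuinely bad only when $w\in A\cap A'$. For $|A\cap A'|=1$ this means $z=w$, so there is one bad $z$ per $(S,S')$ and $O_r(n^{2r-6})$ bad pairs, which is exactly your target count.

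Your ``star plus $O_r(1)$ extra edges'' threshold is not supported by Frankl's bound, though. It only gives $|\Gamma|\le n$ for the set $\Gamma$ of members avoiding $w$. Hence the number of $z\ne w$ whose link contains more than $C$ members of $\Gamma$ is bounded only by $3n/C$, not by $O(1)$.

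The paper avoids thresholds altogether. Whenever $w\notin A\cap A'$, it deletes all non-star edges, however many. It then uses the random permutation, averaging $|\Gamma|$ over the random position of $A\cap A'$, to bound the expected deletion cost by $O_r(1)$ per compatible pair, hence $O_r(n^{r-2})$ in total. Only pairs with $w\in A\cap A'$, which occur with probability $O_r(1/n)$, are left in $\alpha$.

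In your framework you need one of two things. Either use this expectation argument for the deletions, or carry out a separate covering-number analysis of $3$-uniform intersecting families showing that all but $O(1)$ links are stars plus $O(1)$ edges. Neither is present in the proposal.
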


 	We shall use Baranyai's theorem to decompose the set $\aA=\binom{[n]}{r-2}$.
 	\begin{thm}[Baranyai's Theorem \cite{Ba75}]\label{thm:Baranyai}
		For any positive integers $k$ and $n$ such that $k$ divides $n$, the complete $k$-graph can be decomposed into $\binom{n}{k}\frac{k}{n} = \binom{n-1}{k-1}$ perfect matchings.
	\end{thm}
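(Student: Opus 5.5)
We prove Baranyai's theorem by the classical integral-flow argument, building all the matchings simultaneously one vertex at a time. Set $a=n/k$ and $M=\binom{n-1}{k-1}$, so that $Ma=\binom{n}{k}$. The plan is to prove a stronger statement by induction on $j\in\{0,1,\dots,n\}$.

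\emph{Inductive statement $(\star_j)$.} There are $M$ ordered lists $\pP_1,\dots,\pP_M$. Each $\pP_i$ consists of $a$ slots $P_{i,1},\dots,P_{i,a}$, which are pairwise disjoint subsets of $[j]$ (empty slots allowed) whose union is $[j]$. Moreover, for every $S\subseteq[j]$, the number of pairs $(i,\ell)$ with $P_{i,\ell}=S$ equals $\binom{n-j}{k-|S|}$, with the convention that the binomial is $0$ when $k-|S|<0$.

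\emph{Base case $j=0$.} Every slot is empty, and the empty set occurs $Ma=\binom{n}{k}$ times, as required.

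\emph{Endpoint $j=n$.} Here $\binom{0}{k-|S|}$ equals $1$ if $|S|=k$ and $0$ otherwise. So every slot is a $k$-set, and every $k$-subset of $[n]$ occurs exactly once. Each $\pP_i$ is then a perfect matching, and we obtain the desired decomposition into $M$ perfect matchings.

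\emph{Inductive step $j\to j+1$.} We must choose, for each $i$, exactly one slot of $\pP_i$ to receive the new element $j+1$. Build a network:
\begin{itemize}
\item a source $s$ with an arc of capacity $1$ to each list-node $i$;
\item from list-node $i$, an arc of capacity $1$ to each slot-node $(i,\ell)$;
\item from slot-node $(i,\ell)$, an arc to the type-node $S=P_{i,\ell}$;
\item from each type-node $S$, an arc to the sink with capacity $c_S=\binom{n-j-1}{k-|S|-1}$.
\end{itemize}

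We exhibit a fractional flow of value $M$. From list $i$, send $(k-|P_{i,\ell}|)/(n-j)$ to slot $(i,\ell)$. This is legitimate because the slots partition $[j]$, so
\[
\sum_\ell (k-|P_{i,\ell}|)=ka-j=n-j,
\]
and each list-node therefore receives exactly $1$. The total flow into type-node $S$ is
\[
\binom{n-j}{k-|S|}\frac{k-|S|}{n-j}=\binom{n-j-1}{k-|S|-1}=c_S,
\]
so every arc is within capacity and the flow is maximum.

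By the integrality theorem for network flows, there is also an integral maximum flow of value $M$. In it, each list $i$ sends its unit to exactly one slot, and each type $S$ is chosen exactly $c_S$ times. We add $j+1$ to the chosen slots.

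\emph{Checking $(\star_{j+1})$.} The slots of each list remain disjoint and now cover $[j+1]$. The new set $S\cup\{j+1\}$ occurs $c_S=\binom{n-(j+1)}{k-|S\cup\{j+1\}|}$ times. An unchanged $S\subseteq[j]$ occurs
\[
\binom{n-j}{k-|S|}-\binom{n-j-1}{k-|S|-1}=\binom{n-j-1}{k-|S|}
\]
times, by Pascal's rule. These are exactly the counts required by $(\star_{j+1})$.

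\emph{Main obstacle.} The crux is the inductive step. One must choose the invariant so that the fractional flow is exactly feasible and saturating, and then invoke integrality of max-flow. Once the invariant $(\star_j)$ is fixed, the remaining verifications are binomial identities. The only care needed is to treat slots (not distinct sets) as nodes, since several empty slots can coexist in one list.
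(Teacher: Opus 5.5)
Your proof is correct. It is the standard integral-flow (matrix-rounding) proof going back to Baranyai, with the usual invariant that each $S\subseteq[j]$ occurs $\binom{n-j}{k-|S|}$ times. The paper does not prove this statement; it only cites it from \cite{Ba75}, so there is no competing route to compare.

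The details you leave implicit are routine:
\begin{itemize}
\item $(\star_j)$ forces $|P_{i,\ell}|\le k$, since larger sets occur $0$ times. So the values $(k-|P_{i,\ell}|)/(n-j)$ are nonnegative, and because they sum to $1$ over each list, each is at most $1$.
\item The slot-to-type arcs need integer (or infinite) capacity for the integrality theorem to apply.
\item The fractional flow shows $\sum_S c_S=M$. This is what forces every type-to-sink arc to be saturated by the integral flow as well.
\end{itemize}
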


	\begin{thm}[Frankl, Theorem 5 in \cite{Fr20}]\label{lem:Frankl}
		Let $n\ge 72k$ and let $\fF\subseteq \binom{[n]}{k}$ be an intersecting family. Then there is an element contained in all but $\binom{n-3}{k-2}$ members of $\fF$.
	\end{thm}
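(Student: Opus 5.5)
We split according to the covering number $\tau(\fF)$ of the intersecting family $\fF\subseteq\binom{[n]}{k}$; we may assume $k\ge 2$. If $\tau(\fF)=1$, some element lies in every member of $\fF$ and there is nothing to prove. In the other two cases we exhibit an element $x$ and bound the number of members avoiding $x$, writing $\fF(\bar x)=\{F\in\fF: x\notin F\}$ for this subfamily.

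\textbf{Case $\tau(\fF)=2$.} Fix a cover $\{a,b\}$ and set
\[
\mathcal G=\{F\setminus\{a\}: F\in\fF,\ a\in F,\ b\notin F\},\qquad
\mathcal H=\{F\setminus\{b\}: F\in\fF,\ b\in F,\ a\notin F\}.
\]
Both are families of $(k-1)$-subsets of $[n]\setminus\{a,b\}$, a ground set of size $m=n-2$. Since $\fF$ is intersecting and members of $\mathcal G$, $\mathcal H$ come from sets meeting only in $[n]\setminus\{a,b\}$, the pair $\mathcal G,\mathcal H$ is cross-intersecting. Every member of $\fF$ contains $a$ or $b$, so the members avoiding $a$ are exactly those counted by $\mathcal H$, and those avoiding $b$ are counted by $\mathcal G$. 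For cross-intersecting $(k-1)$-uniform families on $m\ge 2(k-1)$ points, the Pyber / Matsumoto--Tokushige product bound gives
\[
|\mathcal G|\,|\mathcal H|\le \binom{m-1}{k-2}^2 .
\]
Hence $\min(|\mathcal G|,|\mathcal H|)\le\binom{n-3}{k-2}$. If $|\mathcal H|$ is the smaller one we take $x=a$, since $a$ then lies in all but $|\mathcal H|\le\binom{n-3}{k-2}$ members; otherwise we take $x=b$. This is exactly the claimed bound, and it explains the binomial $\binom{n-3}{k-2}$ in the statement.

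\textbf{Case $\tau(\fF)\ge 3$.} Here we show that $|\fF|$ itself is at most $\binom{n-3}{k-2}$, so any element works. Fix $B\in\fF$; every member meets $B$, giving $k$ choices of an element $u\in B$. Since $\tau\ge 3$, for any pair $u,v$ there is a member $C$ avoiding both, and a set containing $u,v$ must also meet $C$. Iterating this branching over three levels gives the crude bound
\[
|\fF|\le k^3\binom{n-3}{k-3}.
\]
We want to compare this with
\[
\binom{n-3}{k-2}=\binom{n-3}{k-3}\cdot\frac{n-k}{k-2}.
\]
The crude bound suffices only for $n\gtrsim k^4$, which is far from $n\ge 72k$.

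\textbf{Main obstacle.} The $\tau\ge3$ case under the weak hypothesis $n\ge72k$ is where the work lies. The branching must be sharpened so that the constant becomes linear in $k$ rather than $k^3$. My first attempt would be the following.
\begin{itemize}
\item Pick $x$ of maximum degree. Every member of $\fF(\bar x)$ must meet every member of $\fF(x)$, and $\fF(x)$ is large because $d(x)\ge |\fF|/k$.
\item Use the branching only on $\fF(\bar x)$, bounding it by about $k^2\binom{n-3}{k-3}$ via a transversal of size $2$ that is forced at the second level.
\item Save the remaining factor of $k$ by averaging over the many choices of $C$ that avoid a given pair. This is where $n\ge72k$, rather than a polynomial in $k$, should enter.
\end{itemize}
If this fails, the fallback is to invoke a Frankl-type bound for $\tau\ge 3$ with explicit error terms in place of Theorem~\ref{lem:covering-number-3}, whose $o(1)$ term is not controlled.
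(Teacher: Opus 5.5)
The paper gives no proof of this statement; it quotes it from Frankl's 2020 paper. So I am judging your proposal against the statement itself.

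Your treatment of $\tau(\mathcal{F})\le 2$ is correct. $\mathcal{G}$ and $\mathcal{H}$ are cross-intersecting $(k-1)$-uniform families on $n-2\ge 2(k-1)$ points, and the Pyber/Matsumoto--Tokushige product bound gives exactly $\min\{|\mathcal{G}|,|\mathcal{H}|\}\le\binom{n-3}{k-2}$.

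The genuine gap is the case $\tau(\mathcal{F})\ge 3$, and sharper constants cannot close it. The inequality you aim for there, $|\mathcal{F}|\le\binom{n-3}{k-2}$, is false when $n$ is linear in $k$. Take a point $x$ and $k$-sets $A,B,C\subseteq[n]\setminus\{x\}$ that pairwise intersect but have $A\cap B\cap C=\emptyset$, and put
\[
\mathcal{F}=\{A,B,C\}\cup\Bigl\{F\in\tbinom{[n]}{k}:\ x\in F,\ F\cap A\neq\emptyset,\ F\cap B\neq\emptyset,\ F\cap C\neq\emptyset\Bigr\}.
\]
This family is intersecting, and for $k\ge 4$ it has $\tau(\mathcal{F})=3$. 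Indeed, a $2$-cover through $x$ would need a point of $A\cap B\cap C$, and every pair avoiding $x$ is missed by some member through $x$. At $n=72k$ we have $|\mathcal{F}|\ge\binom{n-1}{k-1}-3\binom{n-1-k}{k-1}\ge\bigl(1-3e^{-(k-1)/72}\bigr)\binom{n-1}{k-1}$. On the other hand, $\binom{n-3}{k-2}=\frac{(k-1)(n-k)}{(n-1)(n-2)}\binom{n-1}{k-1}<\frac{1}{72}\binom{n-1}{k-1}$. So for $k\ge 131$ we get $|\mathcal{F}|>36\binom{n-3}{k-2}$, even though $x$ misses only three members.

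Hence neither a refined branching count nor your fallback (an explicit size bound for families with $\tau\ge 3$) can handle this case throughout $n\ge 72k$. Even the intermediate target in your bullets is too weak. Since $\binom{n-3}{k-2}=\frac{n-k}{k-2}\binom{n-3}{k-3}$ is only about $71\binom{n-3}{k-3}$ at $n=72k$, a bound of the form $O(k)\binom{n-3}{k-3}$ would give the theorem only for $n$ of order at least $k^2$.

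What is missing is an argument that bounds the diversity $|\mathcal{F}(\bar x)|$ of a well-chosen element (e.g.\ one of maximum degree) directly, for every intersecting family. Splitting by covering number and bounding $|\mathcal{F}|$ cannot work. In essence, one has to play the cross-intersecting pair $\{F\setminus\{x\}: x\in F\in\mathcal{F}\}$ and $\mathcal{F}(\bar x)$ against the maximality of the degree of $x$, using estimates whose strength grows with $n/k$. This is the kind of argument Frankl's proof supplies.

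As written, your proof is complete only when $k^3\binom{n-3}{k-3}\le\binom{n-3}{k-2}$, i.e.\ $n\ge k^3(k-2)+k$. That weaker version would in fact suffice for the paper's only use of the theorem (in the proof of Lemma~\ref{lem:partition-H}, with uniformity at most $r$ and $n$ large). It does not, however, prove the statement as given.
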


Let us outline the proof. First, Baranyai's theorem decomposes $\aA$ into matchings, which we group into blocks $\aA_i$ of size about $n$. We then apply a random permutation $\phi$ to the ground set and consider the subgraph $\fF_i=\hH[\phi(\aA_i),[n]]$. For each compatible pair in  $\phi(\aA_i)$, we either delete the few edges preventing its common neighborhood from being a star, or record the pair in a small exceptional set $\alpha_i$. The probabilistic method gives a choice of $\phi$ for which the total number of deleted edges and the total size of all exceptional sets are small. The resulting cleaned subgraphs $\hH_i\subseteq \fF_i$ then satisfy $\alpha(\hH_i)\subseteq \alpha_i$.

	\begin{proof}[Proof of Lemma~\ref{lem:partition-H}]
		Take $n'=(r-2) \lceil \frac{n}{r-2}\rceil$. By Theorem~\ref{thm:Baranyai}, decompose $\binom{[n']}{r-2}$ into $\binom{n'}{r-2}\frac{r-2}{n'}$ perfect matchings. Restricting these matchings to $[n]$ partitions $\aA = \binom{[n]}{r-2}$ into matchings.
		Grouping these matchings into blocks of size at most $r-2$, we obtain a partition $\aA=\bigsqcup_{i\in [m]} \aA_i$, where $m= \lceil\binom{n'}{r-2}\frac{1}{n'}\rceil=(1 + o(1))\binom{n}{r-2}/n$ and each $\aA_i$ is a union of at most $r-2$ matchings. 
	Next, choose a uniformly random permutation $\phi: [n] \to [n]$. For each $i$, let $\fF_i=\hH[\phi(\aA_i),[n]]$. The graphs $\fF_i$ are edge-disjoint and together cover $\hH$.

		Recall that the root function $\fr$ is already defined on pairs of vertices in \eqref{prop:root-function}. For $k=1$, this is the root function we use: if $S=\{x\}$ and $S'=\{y\}$, then $\fr(S,S'):=\fr(x,y)$.
For each $2\le k\le r-3$ and each pair of disjoint $k$-sets $S,S'\subseteq [n]$ with $\CN_{\gG}(S,S')\neq \emptyset$, define
			\[\text{$\fr(S,S')$ to be a vertex achieving the maximum degree in } \CN_{\gG}(S,S').\]
For $1\le k\le r-3$, denote by 
			\[\Gamma_{\gG}(S,S')=\{T\in \CN_{\gG}(S,S') : \fr(S,S')\notin T \}\]
 the collection of non-star edges.
		Thus, if we delete all edges $\{S\cup T: T\in \Gamma_{\gG}(S,S')\}$, the remaining common neighbors of $(S,S')$ form a star centered at $\fr(S,S')$. For $k=1$, the assumption \eqref{prop:root-function} gives $\Gamma_{\gG}(S,S')=\emptyset$. For $2\le k\le r-3$, since $\gG$ is $\mathcal C_4^r$-free, $\CN_{\gG}(S,S')$ is an intersecting family, and Theorem~\ref{lem:Frankl} gives
		\begin{equation}\label{equ:gamma-SSprime}
			|\Gamma_{\gG}(S,S')| \leq n^{r-k-2} \text{ for disjoint } k\text{-sets } S, S' \text{ with } 1\le k\le r-3.
		\end{equation}
	Next, consider each $\fF_i$ for $i\in [m]$. For any intersecting/compatible sets $A\leftrightarrow A' $ in $ \phi(\aA_i)$, if $\CN_{\fF_i}(A, A')= \emptyset$, define $\Gamma_{i}(A, A') =\emptyset$. 
 Otherwise, $\fr(A \setminus A', A' \setminus A)$ and $\Gamma_{\gG}(A \setminus A', A' \setminus A)$ exist. Define 
		\[ \Gamma_{i}(A, A') = \{ T \in \CN_{\fF_i}(A, A') : \fr(A \setminus A', A' \setminus A) \notin T\cup (A \cap A') \}. \]
 Note that for each $T\in \Gamma_{i}(A, A')$, the set $T\cup (A \cap A')\in \Gamma_{\gG}(A \setminus A', A' \setminus A)$.
	If we delete all edges $\{A\cup T$ : $T\in \Gamma_{i}(A, A')\}$, the remaining common neighbors of $A, A'$ form a star centered at $\fr(A \setminus A', A' \setminus A)$. 
 When $\fr(A \setminus A', A' \setminus A) \in A \cap A'$, we have $\Gamma_{i}(A, A') =\emptyset$. 
	Then we include these pairs to $\alpha_i$: 
		\[ \alpha_i =\left \{ 	\{A, A'\} \in \binom{\phi(\aA_i)}{2}: A \leftrightarrow A', \CN_{\fF_i}(A, A')\neq \emptyset \text{ and } \fr(A \setminus A', A' \setminus A) \in A \cap A' 	\right\}. \]

Fix a compatible pair $A, A' \in \aA_i$. Let $k = |A \setminus A'| = |A' \setminus A|$ and $j = |A \cap A'| = r-2-k$. Since $A\leftrightarrow A'$, we have $1\le k\le r-3$.
We compute the expectation conditioned on the event $E_{S,S'}$ that $\phi(A \setminus A') = S$ and $\phi(A' \setminus A) = S'$.
Let $V' = [n] \setminus (S \cup S')$, so $|V'| = n-2k$. The set $J = \phi(A \cap A')$ is a uniformly random $j$-subset of $V'$.
An edge $T \in \Gamma_{i}(\phi(A), \phi(A'))$ is a $2$-set in $V'$ such that $R = J \cup T$ is a set in $\Gamma_{\gG}(S,S')$. Note that $|R| = j+2 = r-k$.
By linearity of expectation, we sum over all $T \in \binom{V'}{2}$.
\begin{align*}
\mathbb{E}\left[ \left| \Gamma_{i}(\phi(A),\phi(A')) \right| \mid E_{S,S'} \right]
&= \sum_{T \in \binom{V'}{2}} \mathbb{P}\left( J \cup T \in \Gamma_{\gG}(S,S') \right)
\end{align*}
For a fixed $T$, $J$ must be chosen from the $j$-subsets of $V' \setminus T$. The total number of such $J$'s is $\binom{n-2k-2}{j}$. 
Therefore, the expectation is:
\begin{align*}
\mathbb{E}\left[ \left| \Gamma_{i}(\phi(A),\phi(A')) \right| \mid E_{S,S'} \right]
&= \sum_{T \in \binom{V'}{2}} \frac{|\{ R \in \Gamma_{\gG}(S,S') : T \subseteq R \}|}{\binom{n-2k-2}{j}} \\
&= \frac{1}{\binom{n-2k-2}{j}} \sum_{T \in \binom{V'}{2}} \sum_{R \in \Gamma_{\gG}(S,S')} \mathbb{I}(T \subseteq R) \\
&= \frac{1}{\binom{n-2k-2}{j}} \sum_{R \in \Gamma_{\gG}(S,S')} \sum_{T \in \binom{R}{2}} 1 = \frac{1}{\binom{n-2k-2}{j}} \sum_{R \in \Gamma_{\gG}(S,S')} \binom{|R|}{2} \\
&= \frac{1}{\binom{n-2k-2}{r-k-2}} \sum_{R \in \Gamma_{\gG}(S,S')} \binom{r-k}{2} \qquad (\text{since } |R|=r-k, j=r-k-2) \\
&= \frac{\binom{r-k}{2} \cdot |\Gamma_{\gG}(S,S')|}{\binom{n-2k-2}{r-k-2}} 
\overset{\eqref{equ:gamma-SSprime}}{\leq} \frac{O_r(1) \cdot n^{r-k-2}}{\Theta(n^{r-k-2})} = O_r(1).
\end{align*}
Since this bound is independent of $S, S'$, by taking the expectation over all $S, S'$, we have
\[\mathbb{E}\left[ \left| \Gamma_{i}(\phi(A),\phi(A')) \right| \right] = O_r(1).\]

 For a fixed $i\in [m]$, $\aA_i$ is a union of at most $r-2$ matchings, so $d_{\aA_i}(v)\le r-2$ for each $v\in [n]$. Therefore, the number of compatible pairs in $\aA_i$ is at most $\sum_{v \in [n]} \binom{d_{\aA_i}(v)}{2} \le n \binom{r-2}{2} = O_r(n)$.
By linearity of expectation,		
			\[\mathbb{E} \left[
			\sum_{i\in [m]}		\sum_{ A\leftrightarrow A'\in \aA_i}	\left|\Gamma_{i}(\phi(A),\phi(A')) \right| \right ]
			=
			\sum_{i\in [m]}\sum_{ A\leftrightarrow A'\in \aA_i} 
			\mathbb{E}\left[	\left|\Gamma_{i}(\phi(A),\phi(A'))\right|	\right]
			= O_r(mn)=O_r(n^{r-2}). \]

Next, we estimate the expected size of $\sum_{i \in [m]} |\alpha_i|$ by a similar approach as above. 
Fix a compatible pair $A, A' \in \aA_i$. Let $S, S'$ be disjoint sets of size $k = |A \setminus A'|$. Again $1\le k\le r-3$.
\begin{align*}
\mathbb{P}\left( \{\phi(A), \phi(A') \}\in \alpha_i \mid \phi(A \setminus A')=S, \phi(A' \setminus A)=S' \right)
&= \mathbb{P}\left( \fr(S, S') \in \phi(A\cap A') \right) = O_r(1/n).
\end{align*}
By taking the expectation over $S, S'$, we get
\[\mathbb{P}\left( \{\phi(A), \phi(A')\}\in \alpha_i \right) = O_r(1/n).\]
Hence, by linearity of expectation,
\begin{align*}
\mathbb{E}\left[ \sum_{i \in [m]} \left|\alpha_i \right| \right]
&= \sum_{i\in [m]} \sum_{ A\leftrightarrow A'\in \aA_i} \mathbb{P}\left(\{\phi(A), \phi(A')\}\in \alpha_i \right) 
\le \sum_{i \in [m]} O_r(n) \cdot O_r(1/n) = O_r(m).
\end{align*}

Let $X = \sum_{i\in [m]} \sum_{ A\leftrightarrow A'\in \aA_i} |\Gamma_{i}(\phi(A),\phi(A'))|$ and $Y = \sum_{i \in [m]} |\alpha_i|$. We have shown $\mathbb{E}[X] = O_r(n^{r-2})$ and $\mathbb{E}[Y] = O_r(m)$.
By the probabilistic method (i.e., using Markov's inequality and a union bound), there must exist at least one permutation $\phi$ for which $X = O_r(n^{r-2})$ and $Y = O_r(m)$.
We fix such a permutation $\phi$. For each $i$, in the 3-graph $\fF_i=\hH[\phi(\aA_i),[n]]$, delete all edges $B\cup T$ with $T\in \Gamma_i(B,B')$, over all compatible pairs $B,B'\in \phi(\aA_i)$.
Let $\hH_i\subseteq \fF_i$ denote the resulting subgraph.
For every compatible pair $B, B' \in \phi(\aA_i)$ with $\{B,B'\} \notin \alpha_i $, $\CN_{\hH_i}( B,B' )$ is a star by construction.
In other words, if $\CN_{\hH_i}(B,B')$ is not a star for two compatible sets $B, B' \in \phi(\aA_i)$, then $\{B,B'\} \in \alpha_i$.
It follows that $\alpha (\hH_i) \subseteq \alpha_i$.
Finally, relabel the blocks $\phi(\aA_i)$ as $\aA_i$; then $\fF_i=\hH[\aA_i,[n]]$ and $\hH_i\subseteq \fF_i$, as in the statement of the lemma.
The total number of edges in the subgraphs $\hH_i$ is
\[\sum_{i \in [m]} |\hH_i|
\ge
|\hH| - X
= |\hH| - O_r(n^{r-2} ).\]
And the total size of the $\alpha$-sets is
\[\sum_{i \in [m]} \left| \alpha(\hH_i) \right|
\le
\sum_{i\in [m]} \left| \alpha_i \right|
= Y = O_r(m)=O_r(n^{r-3} ).\]
This completes the proof of Lemma~\ref{lem:partition-H}.
\end{proof}

\subsection{Applying the deletion lemma}\label{subsec:stability-com-3}
In this subsection, we apply the deletion lemma (Lemma~\ref{lem:3-graph-deletion}) to the $3$-graphs $\hH_i$ obtained from Lemma~\ref{lem:partition-H}, and establish an intermediate lemma (Lemma~\ref{lem:cluster-S-set-W}) concerning the local center structure, which serves as a stepping stone toward Lemma~\ref{lem:CN-stability}.
In fact, the asymptotic bound $|\gG|\le (1+o(1))\binom{n}{r-1}$ follows quite straightforwardly from inequality \eqref{equ:rR} of Lemma~\ref{lem:3-graph-deletion} alone, which we present in the next paragraph.

We show that the deletion lemma alone is strong enough for deriving the asymptotic upper bound of $\gG$.  
Let $\hH_i$ be the $3$-graphs obtained from Lemma~\ref{lem:partition-H}. 
Applying Lemma~\ref{lem:3-graph-deletion} to $\hH_i$'s gives sets $\rR_i$ such that $|\rR_i| \le \binom{|\mathcal{A}_i|}{2}+\binom{n}{2}+O_r((|\mathcal{A}_i|+n)(\alpha(\mathcal{H}_i)+1))=2\binom{n}{2}+O_r(2n(\alpha(\mathcal{H}_i)+1))$, thus
$\sum_{i\in[m]} |\rR_i|\le 2m\binom{n}{2}+O_r(n^{r-2}).$
Moreover, in each $\hH_i\backslash \rR_i$, every pair of vertices of $[n]$ has at most $r-2$ neighbors, which implies that $|\hH_i\backslash \rR_i|\le (r-2)\binom{n}{2}$.  Since $m=(1+o(1))\binom n{r-2}/n$, summing over $i\in[m]$ gives
\[
        \binom r2 |\gG|\overset{\eqref{equ:HG}}{=} |\hH|
        \le m(r-2)\binom{n}{2}+2m\binom{n}{2}+O_r(n^{r-2})
        =(1+o(1))\binom r2 \binom{n}{r-1},
\]
and hence
\begin{equation}\label{equ:upbd-G}
        |\gG|\le (1+o(1))\binom{n}{r-1}.
\end{equation}

However, for both stability and the exact result, the above number of edges alone is not enough. We shall exploit the finer structural information about the residual subgraphs $\hH_i\setminus\rR_i$ supplied by Lemma~\ref{lem:3-graph-deletion}.
In a dense residual subgraph, almost every $Y$-pair has the maximum possible degree $r-2$.  Lemma~\ref{lem:3-graph-deletion} implies that every $Y$-pair either has a compatible neighborhood or forms an edge of a $\beta(\hH_i)$-triangle.
For a maximum-degree $Y$-pair, the latter case can occur only when $r=4$.  
We color maximum-degree $Y$-pairs by compatible families that are contained in their neighborhood.  
The local-star property in the original hypergraph then forbids certain multicolored configurations, forcing a large monochromatic $K_{2,t}$. This monochromatic configuration is the source of a fixed compatible subfamily required in Lemma~\ref{lem:cluster-S-set-W}.

We shall use the following shorthand.  For any nonempty family $\mathcal S$ of sets, write
\[
        \bigcap \mathcal S := \bigcap_{T\in\mathcal S} T .
\]
For an integer $s$, an $s$-compatible family means a compatible family of size $s$.

\begin{lem}\label{lem:cluster-S-set-W}
There exists an index set $I\subseteq [m]$ with $|I|\ge m/4$ such that, for each $i\in I$, there exist an $(r-2)$-compatible subfamily $\mathcal S\subseteq \aA_i$ and a vertex set $W \subseteq [n]$ with $|W|\ge (1-210 \epsilon r)n$ such that
$\fr(x,y)\in \bigcap \mathcal S$ for all distinct $x,y\in W$.
\end{lem}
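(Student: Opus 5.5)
We apply Lemma~\ref{lem:3-graph-deletion} to each $\hH_i$ from Lemma~\ref{lem:partition-H}; its hypotheses hold because $\hH_i$ is a subgraph of the good $\hH$ (Lemma~\ref{clm:H-properties}) and each $A\in\aA_i$ meets at most $(r-2)^2$ other members of $\aA_i$, since $\aA_i$ is a union of at most $r-2$ matchings. This yields $\rR_i$, and we set $\hH_i'=\hH_i\setminus\rR_i$.

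\textbf{Step 1: a positive fraction of indices are nearly extremal.} We first show that for at least $m/4$ indices $i$, all but at most $C\epsilon r\binom n2$ pairs $\{y,y'\}$ (for an absolute constant $C$) satisfy $|N_{\hH_i'}(yy')|=r-2$. This is the averaging already used for \eqref{equ:upbd-G}. By Lemma~\ref{lem:3-graph-deletion}, every $N_{\hH_i'}(yy')$ has size at most $r-2$: a compatible subset of the $(r-2)$-uniform family $\aA_i$ has at most $r-2$ members, one through each vertex of a fixed member. Hence $|\hH_i'|\le(r-2)\binom n2-\mathrm{def}_i$, where $\mathrm{def}_i$ is the total deficiency. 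Summing, and using $|\hH|=\binom r2|\gG|\ge(1-\epsilon)\binom r2\binom{n-1}{r-1}$, $\sum_i|\rR_i|\le 2m\binom n2+O(n^{r-2})$ and $m(r-2)\binom n2+2m\binom n2=(1+o(1))\binom r2\binom n{r-1}$, gives $\sum_i\mathrm{def}_i\le(1+o(1))\epsilon\, r^2 m\binom n2$. Markov then bounds the number of indices with large deficiency; the losses are $\binom{|\aA_i|}{2}$ and the $O(\epsilon)$ from $|\gG|$, which fixes constants like $210$. We will tune $C$ so that Markov leaves $m/4$ good indices.

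\textbf{Step 2: color the full pairs.} Fix a good $i$ and call a pair $yy'$ \emph{full} if $|N_{\hH_i'}(yy')|=r-2$. For $r\ge5$, the $\beta$ alternative of Lemma~\ref{lem:3-graph-deletion} (neighborhood of size $2$) is impossible for full pairs, so $N(yy')$ is itself an $(r-2)$-compatible family, and we color $yy'$ by it. For $r=4$, full pairs in the $\beta$ case are edges of edge-disjoint $\beta$-triangles, and by the ``moreover'' part at most one edge per triangle can be a full $\beta$-pair. We discard these, or handle them using $\fr(a,b)=c$ from Lemma~\ref{clm:H-properties}. The key constraint comes from the local star property: if $Ayy'\in\hH_i'$ then $A\cup\{y'\}\in\CN_\gG(y,z)$ whenever $Ayz$ is also an edge. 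So for two full pairs $yz,y'z$ sharing $z$, every common member of the two colors forces $\fr$ into it. Following the paper's overview, we use Kővári–Sós–Turán/dependent random choice on the graph of full pairs, which has density $1-O(\epsilon r)$, together with the bounded number of possible colors per vertex, to find a large monochromatic $K_{2,t}$ with color $\mathcal S$. This $\mathcal S$ is $(r-2)$-compatible, so $\bigcap\mathcal S$ is a single vertex $u$ (a maximal intersecting $(r-2)$-family of size $r-2$ in a matching union has a common point, or else we argue directly).

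\textbf{Step 3: propagate.} We let $W$ be the set of vertices $x$ for which most pairs $xz$ are full with color containing members of $\mathcal S$. For $x,y\in W$ we pick $z$ with $xz,yz$ full and sharing some $A\in\mathcal S$; then $A\cup\{z\}\in\CN_\gG(x,y)$. Using several different $z$'s with different witnesses $A\in\mathcal S$ (the sets $A\setminus\{u\}$ being disjoint by the matching structure), we force $\fr(x,y)$ to lie in every $A$, i.e.\ $\fr(x,y)=u\in\bigcap\mathcal S$. The bound $|W|\ge(1-210\epsilon r)n$ follows from the Step 1 density by counting. The main obstacle is Step 2: we must show that the colors are globally consistent rather than merely locally, ruling out a multicolored configuration in which different full pairs use different compatible families. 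We would first try showing that two full pairs $yz,yz'$ with distinct colors force $\fr(z,z')$ into two disjoint sets, which is a contradiction.
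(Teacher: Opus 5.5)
Your Step 1 and the idea of translating a monochromatic configuration back to $\gG$ are fine. Step 2, however, has a genuine gap, and it is where the content of the lemma lies.

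\textbf{Step 2 as written does not work.} The ``bounded number of possible colours per vertex'' is not justified by anything you set up, and nothing prevents it from failing. For instance, the pairs $uz$ through the centre $u$ of the extremal star may keep different compatible neighbourhoods after the deletion lemma, and each of these forms a star-shaped colour class centred at $u$. Your fallback contradiction does not exist either. Two full pairs $yz,yz'$ with distinct colours only give $T\cup\{y\}\in\CN_{\gG}(z,z')$ for common members $T$, which is perfectly consistent. The real obstruction needs two vertices $v_1,v_2$ and distinct $w_1,w_2$ such that each cherry $v_1w_\ell,v_2w_\ell$ is monochromatic, with colours $c_1\ne c_2$. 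If moreover $\fr(v_1,v_2)\notin\{w_1,w_2\}$, then $\fr(v_1,v_2)$ lies in all $\ge r-1$ members of $\mathcal S_{c_1}\cup\mathcal S_{c_2}$, which is impossible in a union of $r-2$ matchings. Even this constraint, together with density and $O_r(n)$ colours, does not produce a linear monochromatic $K_{2,t}$. Colour the pairs of a projective plane of order $q\approx\sqrt n$ by the line through them. All monochromatic cherries over $v_1,v_2$ then share one colour, yet every monochromatic $K_{2,t}$ has $t<q$. So Kővári–Sós–Turán or dependent random choice cannot finish the job; you need a global input that bounds the relevant colours.

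\textbf{The missing input is $\alpha(\hH_i)$.} The paper supplies it as follows.
\begin{itemize}
\item In Step 1, keep only indices with $|\alpha(\hH_i)|=O_r(1)$. Markov's inequality applied to $\sum_i|\alpha(\hH_i)|=O_r(m)$ from Lemma~\ref{lem:partition-H} (the random permutation there exists for this purpose) shows this costs few indices.
\item Split the colours $\mathcal S_j$ according to whether the colour class $\CN_{\hH_i\setminus\rR_i}(\mathcal S_j)$ is a star. If it is not, every pair in $\mathcal S_j$ lies in $\alpha(\hH_i)$, so there are only $O_r(1)$ such colours.
\item The star-shaped classes carry only $O_r(n^{3/2})$ pairs. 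The centre–leaf incidence graph between colours and vertices is $K_{3,2}$-free, because three distinct colours with monochromatic cherries over the same $v_1,v_2$ are impossible (at most one of the three $w_\ell$ can equal $\fr(v_1,v_2)$). Zarankiewicz's bound then applies.
\item Delete the $O_r(1)$ vertices $B=\bigcup\bigcap\mathcal S_j$, taken over the non-star colours. This guarantees $\fr(v_1,v_2)\notin\{w_1,w_2\}$ in the configuration above.
\item There are only $O_r(1)$ ordered pairs of distinct non-star colours, and each is realised by at most one column. Hence every $K_{2,t}$ in the remaining graph is monochromatic up to $O_r(1)$ columns. Two vertices of near-full degree then give the required $K_{2,t}$ with $t\ge(1-5\epsilon r)n$ for $r\ge5$.
\item Use $v,v'$ themselves as witnesses: $T\cup\{v\},T\cup\{v'\}\in\CN_{\gG}(w,w')$ forces $\fr(w,w')\in T$. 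This replaces your Step 3, whose set $W$ presupposes an unproved global monochromaticity.
\end{itemize}

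\textbf{The case $r=4$.} You cannot simply discard full $\beta$-pairs. The second alternative of the ``moreover'' clause allows one per edge-disjoint $\beta$-triangle, which a priori is a constant fraction of all pairs, and the first alternative allows two. The paper instead:
\begin{itemize}
\item bounds the degree of the full $\beta$-pair graph by $n/2$;
\item passes to the set $V$ of high-degree vertices;
\item shows no $\beta$-triangle $abc$ lies inside $V$, since a large monochromatic $K_{2,t}$ on $\{a,b\}$ would force $c=\fr(a,b)\in B$.
\end{itemize}

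A minor point: $\bigcap\mathcal S$ need not be a single vertex, but the lemma does not require this.
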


\begin{proof}[Proof of Lemma~\ref{lem:cluster-S-set-W}]
We separate the proof into a few natural stages.  First we find many indices for which the residual subgraph $\hH_i\setminus\rR_i$ is dense.  For a fixed such index, we color the maximum-degree pairs, record the forbidden configurations forced by the local-star property, and then extract a large monochromatic $K_{2,t}$.  The cases $r\ge5$ and $r=4$ are treated at the end, the latter requiring one extra argument for the two-element incompatible traces allowed by Lemma~\ref{lem:3-graph-deletion}.

We begin by identifying many indices for which the residual subgraphs are dense.  The following claim is the only point at which we use the global density assumption on $\gG$.

\begin{Claim}\label{clm:dense-pieces}
There is an index set $I\subseteq [m]$ with $|I|\ge m/4$ such that, for every $i\in I$, there is a set $\rR_i\subseteq \hH_i$ satisfying
\(|\hH_i\setminus \rR_i|\ge (r-2-2\epsilon r)\binom{n}{2}\), \(|\alpha(\hH_i)|=O_r(1),\)
and every pair $vv'\in \binom{[n]}2$ satisfies
\(d_{\hH_i\setminus \rR_i}(vv')\le r-2.\)
\end{Claim}

\begin{proof}
For each $i$, the family $\aA_i$ is a union of at most $r-2$ matchings.  Hence $|\aA_i|\le n$, every member of $\aA_i$ is compatible with $O_r(1)$ other members of $\aA_i$, and $\hH_i$ is good by Lemma~\ref{clm:H-properties}.  Applying Lemma~\ref{lem:3-graph-deletion} to $\hH_i=\hH_i(\aA_i,[n])$ gives a set $\rR_i\subseteq \hH_i$ with
\[
|\rR_i|
\le \binom{|\aA_i|}{2}+\binom{n}{2}+O_r((|\aA_i|+n)(|\alpha(\hH_i)|+1))
\le 2\binom{n}{2}+O_r(n\cdot |\alpha(\hH_i)|+n).
\]
By Lemma~\ref{lem:partition-H}, we have
\(\sum_i |\hH_i|=|\hH|-O_r(n^{r-2})=\binom r2|\gG|-O_r(n^{r-2})\) and \(\sum_i |\alpha(\hH_i)|=O_r(m).\)
Therefore
\[
\sum_i |\rR_i|\le 2m\binom{n}{2}+O_r(n^{r-2}).
\]
The structural conclusion of Lemma~\ref{lem:3-graph-deletion} says that, in $\hH_i\setminus\rR_i$, every $Y$-pair-neighborhood is either compatible or has size at most two.  Since any compatible subfamily of $\aA_i$ has size at most $r-2$, we have
\(d_{\hH_i\setminus \rR_i}(vv')\le r-2\) for all \(vv'\in\binom{[n]}2.\)
Consequently, $|\hH_i\setminus \rR_i|\le (r-2)\binom n2$ for every $i$.

Combining the preceding estimates with $|\gG|\ge (1-\epsilon)\binom{n-1}{r-1}$ yields
\[
\sum_i |\hH_i\setminus \rR_i|
\ge (r-2-\epsilon r)\cdot m\binom{n}{2}-O_r(n^{r-2}).
\]
Since $|\hH_i\setminus \rR_i|\le (r-2)\binom n2$ for every $i$, there are at least $m/3$ indices $i$ satisfying
\(|\hH_i\setminus \rR_i|\ge (r-2-2\epsilon r)\binom{n}{2}.\)
Also, because $\sum_i |\alpha(\hH_i)|=O_r(m)$, all but at most $m/12$ indices have $|\alpha(\hH_i)|=O_r(1)$ after increasing the implicit constant if necessary.  The intersection of these two sets of indices has size at least $m/4$.
\end{proof}

We now fix an index $i\in I$ from Claim~\ref{clm:dense-pieces} and define the coloring used below.  Write
\[
        \hH_i^*:=\hH_i\setminus \rR_i
\]
and define the graph of maximum-degree $Y$-pairs by
\[
        G=\left\{vv'\in \binom{[n]}{2}: d_{\hH_i^*}(vv')= r-2 \right\}.
\]
The density and degree bound from Claim~\ref{clm:dense-pieces} imply
\[
        |G|\ge (1-2\epsilon r)\binom{n}{2}.
\]
By Lemma~\ref{lem:3-graph-deletion}, the neighborhood of every $Y$-pair in $\hH_i^*$ is either compatible, or consists of two incompatible vertices coming from a $\beta(\hH_i)$-triangle.  Therefore, when $r\ge5$, every edge of $G$ has a compatible neighborhood of size $r-2$; when $r=4$, there may also be maximum-degree pairs whose neighborhood consists of two incompatible vertices.  We separate these pairs below.

Let $\{\mathcal S_j\}_{j\in J}$ be the collection of all $(r-2)$-compatible subfamilies of $\aA_i$.  If a pair $vv'$ belongs to $\CN_{\hH_i^*}(\mathcal S_j)$, then $N_{\hH_i^*}(vv')$ contains the $r-2$ members of $\mathcal S_j$.  By the degree bound it follows that $vv'\in G$ and $N_{\hH_i^*}(vv')=\mathcal S_j$.  Hence the graphs $\CN_{\hH_i^*}(\mathcal S_j)$ are pairwise edge-disjoint.  Moreover, for $r\ge5$ they partition $G$.

Since $\aA_i$ is a union of at most $r-2$ matchings, there are only $O_r(n)$ compatible families of size $r-2$.  Thus $|J|=O_r(n)$.  Split the colors into two classes.  Let $J_1$ consist of those $j\in J$ for which $\CN_{\hH_i^*}(\mathcal S_j)$ is a star, and put $J_2=J\setminus J_1$.  If $j\in J_2$, then for every pair $A,A'\in\mathcal S_j$, the common neighborhood $\CN_{\hH_i}(A,A')$ is not a star; hence $\{A,A'\}\in \alpha(\hH_i)$.  Because $|\alpha(\hH_i)|=O_r(1)$ and each member of $\aA_i$ is compatible with only $O_r(1)$ other members of $\aA_i$, each exceptional pair extends to only $O_r(1)$ possible $(r-2)$-compatible families. 
Therefore, we have \(|J_1|=O_r(n)\) and \(|J_2|=O_r(1).\)
Define the two colored parts
\[
        G_1=\bigcup_{j\in J_1}\CN_{\hH_i^*}(\mathcal S_j), \quad \mbox{and} \quad
        G_2=\bigcup_{j\in J_2}\CN_{\hH_i^*}(\mathcal S_j).
\]
For $r\ge5$ we have
\[
        |G_1|+|G_2|=|G|\ge (1-2\epsilon r)\binom{n}{2}.
\]

It remains, for the coloring setup, to separate the extra maximum-degree pairs that can occur when $r=4$.  In this case a maximum-degree pair may have a two-element incompatible neighborhood.  Let
\[
        G_0=G\setminus (G_1\cup G_2).
\]
For every $ab\in G_0$, Lemma~\ref{lem:3-graph-deletion} gives a vertex $c$ such that $abc$ is a $\beta(\hH_i)$-triangle and $ab$ is an edge of that triangle.  The final assertion of Lemma~\ref{lem:3-graph-deletion} says that, for every such triangle, either one of the three corresponding $Y$-pair-neighborhoods has size at most one, or at most one of the three has a two-element incompatible neighborhood.

Let $G'_0\subseteq G_0$ be the set of edges arising from the second alternative.  Then each relevant $\beta$-triangle contributes exactly one edge to $G'_0$.  Since the $\beta(\hH_i)$-triangles are pairwise edge-disjoint by Definition~\ref{Prop:3graph-relation}, their contribution has bounded degree: if an edge $vu\in G'_0$ comes from a triangle $vuw$, then the same triangle also uses the edge $vw$, and these companion edges are distinct for distinct triangles through $v$.  Hence
\begin{equation}\label{equ:dGprime0}
        d_{G'_0}(v)\le n/2 \qquad\text{for every }v\in[n].
\end{equation}
Now the number of $Y$-pairs with remaining degree at most one is at most
$\binom n2-|G|\le 2\epsilon r\binom n2$.  A $\beta$-triangle in the first alternative contains at least one such low-degree $Y$-pair and contributes at most two edges to $G_0$; edge-disjointness prevents one low-degree pair from being charged more than once.  Therefore the first alternative contributes at most $4\epsilon r\binom n2$ edges to $G_0$.  We conclude that, when $r=4$,
\[
        |G'_0|+|G_1|+|G_2|
        \ge (1-6\epsilon r)\binom{n}{2}.
\]

For later use, color all relevant pairs as follows.  An edge in $\CN_{\hH_i^*}(\mathcal S_j)$ receives color $j$, and an edge of $G_0$ receives color $0$.

We next record the forbidden configurations for this coloring.  The first one says that the star-colors cannot create too many edges.

\begin{Claim}\label{clm:rainbow-K23}
There do not exist distinct vertices $v_1,v_2,w_1,w_2,w_3$ such that, for each $\ell\in[3]$, the two edges $v_1w_\ell$ and $v_2w_\ell$ both lie in $G_1$ and have the same color $c_\ell$, where $c_1,c_2,c_3\in J_1$ are pairwise distinct.
\end{Claim}

\begin{proof}
Suppose such a configuration exists.  Fix $\ell\in[3]$.  Since the two edges $v_1w_\ell$ and $v_2w_\ell$ have color $c_\ell$, for every $T\in\mathcal S_{c_\ell}$ the two $r$-sets $T\cup\{v_1,w_\ell\}$ and $T\cup\{v_2,w_\ell\}$ are edges of the original $r$-graph $\gG$.  Thus $\{w_\ell\}\cup T\in\CN_{\gG}(v_1,v_2)$ for every $T\in\mathcal S_{c_\ell}$.  Since $\CN_{\gG}(v_1,v_2)$ is a star with center $\fr(v_1,v_2)$,
\[
        \fr(v_1,v_2)\in \{w_\ell\}\cup \bigcap \mathcal S_{c_\ell}
        \qquad\text{for every }\ell\in[3].
\]
The vertices $w_1,w_2,w_3$ are distinct, so the center is different from at least two of them.  Hence for two distinct indices $\ell,\ell'$ we have
\[
        \fr(v_1,v_2)
        \in \bigcap \mathcal S_{c_\ell}\cap \bigcap \mathcal S_{c_{\ell'}}
        =\bigcap(\mathcal S_{c_\ell}\cup\mathcal S_{c_{\ell'}}).
\]
Because $c_\ell\ne c_{\ell'}$, the two compatible families are distinct, and their union contains at least $r-1$ distinct members of $\aA_i$.  Thus one vertex lies in at least $r-1$ members of $\aA_i$, impossible because $\aA_i$ is a union of at most $r-2$ matchings.  This contradiction proves the claim.
\end{proof}

\begin{Claim}\label{clm:G1-sparse}
$|G_1|=O_r(n^{3/2})$.
\end{Claim}

\begin{proof}
Construct a bipartite graph $F$ with parts $J_1$ and $[n]$ as follows.  If $\CN_{\hH_i^*}(\mathcal S_j)$ is a star with leaf set $L_j\subseteq[n]$, add the edges $jv$ for all $v\in L_j$.  Then $|F|=|G_1|$.  A copy of $K_{3,2}$ in $F$ with the part of size three in $J_1$ would give exactly the forbidden configuration in Claim~\ref{clm:rainbow-K23}.  Hence $F$ is $K_{3,2}$-free.  Since $|J_1|=O_r(n)$, the Zarankiewicz bound gives
\(|G_1|=|F|\le Z(O_r(n),n;3,2)=O_r(n^{3/2}).\)
\end{proof}

The non-star compatible colors are few.  Remove their common intersections by setting
\[
        B=\bigcup_{j\in J_2}\bigcap \mathcal S_j.
\]
Since $|J_2|=O_r(1)$ and $|\bigcap\mathcal S_j|\le r-2$, we have $|B|=O_r(1)$.  Let
\(G_2':=G_2[[n]\setminus B].\)
Then $|G_2'|\ge |G_2|-O_r(n)$.

\begin{Claim}\label{clm:rainbow-K22}
The graph $G_2'$ contains no $C_4$ on vertices $v_1,v_2,w_1,w_2$ such that
\[
        c(v_1w_1)=c(v_2w_1)=c_1, \quad \mbox{and} \quad
        c(v_1w_2)=c(v_2w_2)=c_2,
\]
where $c_1,c_2\in J_2$ and $c_1\ne c_2$.
\end{Claim}

\begin{proof}
Suppose such a $C_4$ exists.  For $j=1,2$ and every $T\in\mathcal S_{c_j}$, the set $\{w_j\}\cup T$ lies in $\CN_{\gG}(v_1,v_2)$.  Therefore, we have
\(\fr(v_1,v_2)\in \{w_j\}\cup \bigcap\mathcal S_{c_j}\) for each $j\in \{1,2\}$.
All four vertices of the $C_4$ lie outside $B$, while both $\bigcap\mathcal S_{c_1}$ and $\bigcap\mathcal S_{c_2}$ are contained in $B$.  Hence the center is neither $w_1$ nor $w_2$, and so
\(\fr(v_1,v_2)\in \bigcap\mathcal S_{c_1}\cap \bigcap\mathcal S_{c_2}.\)
As in the proof of Claim~\ref{clm:rainbow-K23}, this would put one vertex in at least $r-1$ distinct members of $\aA_i$, contradicting the fact that $\aA_i$ is a union of at most $r-2$ matchings.
\end{proof}

\begin{Claim}\label{clm:K2t}
If $G_2'$ contains a $K_{2,t}$, then it contains a monochromatic $K_{2,t'}$ with $t'=t-O_r(1)$.
\end{Claim}

\begin{proof}
Let the two-vertex part be $\{v,v'\}$ and the other part be $\{w_1,\ldots,w_t\}$.  For each $k\in[t]$, write
\[
        (c_k,c'_k)=(c(w_kv),c(w_kv')).
\]
There are only $|J_2|(|J_2|-1)=O_r(1)$ ordered pairs $(c,c')$ with $c\ne c'$ and $c,c'\in J_2$.  Moreover, each such ordered pair occurs for at most one $w_k$; otherwise Claim~\ref{clm:rainbow-K22} would give a forbidden $C_4$.  Hence all but $O_r(1)$ of the vertices $w_k$ satisfy $c_k=c'_k$.

Among the remaining vertices, two different diagonal colors cannot occur simultaneously, again by Claim~\ref{clm:rainbow-K22}. Hence all but $O_r(1)$ columns have the same color, yielding the required monochromatic $K_{2,t'}$.
\end{proof}

We first handle the case $r\ge5$.  In this case there are no incompatible maximum-degree pairs, so $G=G_1\cup G_2$.  By Claim~\ref{clm:G1-sparse},
\[
|G_2'|
\ge |G_2|-O_r(n)
=|G|-|G_1|-O_r(n)
\ge (1-2\epsilon r)\binom{n}{2}-O_r(n^{3/2}).
\]
Thus there exist two vertices $v,v'$ of degree at least
$(1-2\epsilon r)n-O_r(n^{1/2})$ in $G_2'$,
since otherwise all but at most one vertex would have smaller degree, contradicting the above lower bound on $|G_2'|$. 
Consequently,
\[
        |\CN_{G_2'}(v,v')|
        \ge d_{G_2'}(v)+d_{G_2'}(v')-n
        \ge (1-4\epsilon r)n-O_r(n^{1/2}).
\]
Then, Claim~\ref{clm:K2t} gives a set $W\subseteq\CN_{G_2'}(v,v')$ of size at least $(1-5\epsilon r)n$ such that all edges between $\{v,v'\}$ and $W$ have the same color, say $j\in J_2$.

We now translate this monochromatic bipartite graph back to $\gG$.  For any distinct $w,w'\in W$ and any $T\in\mathcal S_j$, the two edges $T\cup\{v,w\}$ and $T\cup\{v,w'\}$ of $\gG$ show that $T\cup\{v\}\in\CN_{\gG}(w,w')$.  The analogous two edges with $v'$ show that $T\cup\{v'\}\in\CN_{\gG}(w,w')$.  Since $T$ is disjoint from $v$ and $v'$, the star center $\fr(w,w')$ must lie in $T$.  As this holds for every $T\in\mathcal S_j$, we have
\(\fr(w,w')\in \bigcap \mathcal S_j\) for all distinct \(w,w'\in W.\)
Thus $\mathcal S_j$ and $W$ satisfy the conclusion of the lemma.

It remains to treat $r=4$.  The only new issue is the graph $G'_0$ coming from $\beta$-triangles.  Work inside $[n]\setminus B$ and set
\(G_0''=G_0'[[n]\setminus B].\)
Then $|G_0''|\ge |G_0'|-O_r(n)$, and the estimates above give
\begin{equation}\label{equ:G0primeplusG2prime}
|G_0''|+|G_2'|
\ge (1-6\epsilon r)\binom{n}{2}-|G_1|-O_r(n)
\ge (1-6\epsilon r)\binom{n}{2}-O_r(n^{3/2}).
\end{equation}
Let $V\subseteq[n]\setminus B$ be the set of vertices $v$ satisfying
\(d_{G_0''\cup G_2'}(v)\ge 7n/8.\)
From \eqref{equ:G0primeplusG2prime},
we see \(|V|\ge (1-48\epsilon r)n-O_r(n^{1/2}).\)
Indeed, each vertex outside $V$ has missing degree at least $n/8$ in $G_0''\cup G_2'$, whereas the total missing degree is at most $12\epsilon r\binom n2+O_r(n^{3/2})$.  For every $v\in V$, \eqref{equ:dGprime0} gives
\[
        d_{G_2'}(v)
        \ge d_{G_0''\cup G_2'}(v)-d_{G_0''}(v)
        \ge 7n/8-n/2=3n/8.
\]

We next remove the possibility that a whole $\beta$-triangle lies inside $V$.  Suppose $a,b,c\in V$ form a $\beta(\hH_i)$-triangle.  Since $d_{G_2'}(a)+d_{G_2'}(b)+d_{G_2'}(c)\ge 9n/8$, some pair among $a,b,c$ has at least $n/24$ common neighbors in $G_2'$.  To see this, let $q(z)$ be the number of neighbors of $z$ in the set $\{a,b,c\}$ within $G_2'$.  Then
\[
\sum_{\{x,y\}\in\binom{\{a,b,c\}}2}|\CN_{G_2'}(x,y)|
=\sum_{z\in[n]}\binom{q(z)}2
\ge \sum_{z\in[n]} q(z)-n
\ge n/8.
\]
Thus one of the three pairs has at least $n/24$ common neighbors; assume $|\CN_{G_2'}(a,b)|\ge n/24$.  By Claim~\ref{clm:K2t}, all but $O_r(1)$ of these common neighbors yield a monochromatic complete bipartite graph between $\{a,b\}$ and a large subset, with some color $j\in J_2$.  The translation argument from the previous paragraph then gives $\fr(a,b)\in\bigcap\mathcal S_j\subseteq B$.  But Lemma~\ref{clm:H-properties} gives $\fr(a,b)=c$ for the $\beta$-triangle $abc$, while $c\in V\subseteq[n]\setminus B$, a contradiction.

Thus every $\beta(\hH_i)$-triangle contributing to $G_0''$ has at least one vertex outside $V$.  Since these triangles are edge-disjoint, a fixed vertex $v\in V$ can be involved in at most $n-|V|$ such triangles, up to the harmless factor two coming from the two $Y$-edges from $v$ in each triangle.  Therefore
\[
        d_{G_0''}(v)\le 2(n-|V|)
        \le 96\epsilon r n+O_r(n^{1/2})
        \qquad \text{for } v\in V.
\]
Using \eqref{equ:G0primeplusG2prime} once more, we can choose two vertices $v,v'\in V$ such that
\[
        d_{G_0''\cup G_2'}(v),\ d_{G_0''\cup G_2'}(v')
        \ge (1-7\epsilon r)n.
\]
Indeed, if at most one vertex of $V$ had such degree, then the missing-degree sum over $V$ would be larger than allowed by \eqref{equ:G0primeplusG2prime}, using $|V|\ge(1-48\epsilon r)n-O_r(n^{1/2})$ and $\epsilon<1/(420r)$.
For these two vertices,
\[
        d_{G_2'}(v),\ d_{G_2'}(v')
        \ge (1-103\epsilon r)n-O_r(n^{1/2}),
\]
and hence
\[
        |\CN_{G_2'}(v,v')|
        \ge (1-207\epsilon r)n-O_r(n^{1/2}).
\]
Applying Claim~\ref{clm:K2t} and then the same translation argument as in the case $r\ge5$, we obtain an $(r-2)$-compatible family $\mathcal S$ and a set $W\subseteq\CN_{G_2'}(v,v')$ with
$|W|\ge (1-210\epsilon r)n$
such that $\fr(w,w')\in\bigcap\mathcal S$ for all distinct $w,w'\in W$.  
This finishes the proof of Lemma~\ref{lem:cluster-S-set-W}.
\end{proof}

\subsection{Completion of the proof of Lemma~\ref{lem:CN-stability}}\label{subsec:proof-CN-stability}

We now combine the preceding results to prove Lemma~\ref{lem:CN-stability}.
Recall that Lemma~\ref{lem:cluster-S-set-W} yields at least a quarter of the indices $i\in[m]$, each with a large vertex set $W_i$ and a compatible family $\mathcal S_i$ such that $\fr(x,y)\in\bigcap\mathcal S_i$ for every pair $\{x,y\}\subseteq W_i$.

\begin{proof}[Proof of Lemma~\ref{lem:CN-stability}]
For each $i \in I$, let $\mathcal S_i\subseteq \aA_i$ be the $(r-2)$-compatible family and $W_i$ be the vertex set obtained from Lemma~\ref{lem:cluster-S-set-W}. Note that $|I|\geq m/4$. 

Fix an $i\in I$. We aim to find another $k\in I$ such that $| (\bigcap \mathcal S_k) \cap (\bigcap \mathcal S_i)|\le 1$.
Note that $|\bigcap \mathcal S_i|\le r-3$.\footnote{The family $\mathcal S_i$ has $r-2$ distinct $(r-2)$-sets, so their common intersection cannot have size $r-2$.}
Consequently, there exist at most $\binom{r-3}{2}\binom{n}{r-4}$ distinct sets in $\binom{[n]}{r-2}$ that intersect $\bigcap \mathcal S_i$ at two or more vertices.
Since $\{\aA_j\}_{j\in [m]}$ are disjoint, all families $\mathcal S_j$ for $j \in I$ are also disjoint.
Thus, there are at most
$\binom{r-3}{2}\binom{n}{r-4}\ll |I|$
distinct $\mathcal S_j$ such that $|\bigcap \mathcal S_j\cap (\bigcap \mathcal S_i)|\ge 2$.
In other words, there exists some $k\in I$ such that $| (\bigcap \mathcal S_k) \cap (\bigcap \mathcal S_i)|\le 1$. 

Now let $W=W_i\cap W_k$. Since $|W_i|, |W_k|\ge (1-210 \epsilon r)n$, we have $|W|\ge (1-420\epsilon r)n$. For any $x,y\in W $, we have $\fr(x,y)\in (\bigcap \mathcal S_k) \cap (\bigcap \mathcal S_i)$ . Thus $(\bigcap \mathcal S_k) \cap (\bigcap \mathcal S_i)$ is not empty, and $\fr(x,y)=u$. Consequently, for any two vertices $x,y\in W$,
$\CN_{\gG}(x,y)$ is an $(r-1)$-star centered at $u$. 
This proves Lemma~\ref{lem:CN-stability}.
\end{proof}

\section{Proof of the deletion lemma}\label{sec:remove-diag}
This section is devoted to proving Lemma~\ref{lem:3-graph-deletion}.  
Our starting point is the graph-deletion lemma of F\"uredi~\cite{Fu84} and Pikhurko and Verstra\"ete~\cite{PiVe09}, which states that every bipartite graph $G$ can be made $C_4$-free after deleting at most $|\dD(G)|$ edges.  
Here $\dD(G)$ denotes the set of {\it diagonals} of $G$, i.e., pairs of vertices with at least two common neighbors.

Now let $\fF=\fF(X,Y)$ be a bipartite $3$-graph. Let $\dD(\fF)=\cup_{v\in V(\fF)} \dD(\fF_v)$, where $\fF_v$ denotes the link graph of $v$.
Note that for each $y\in Y$, its link graph $\fF_y$ is an ordinary bipartite graph with parts $X$ and $Y$.  
Applying the graph-deletion lemma independently to all these links $\fF_y$ would delete at most $\sum_{y\in Y}|\dD(\fF_y)|$ edges.  This bound can be much larger than $|\dD(\fF)|$, since a diagonal may be counted in multiple link graphs.

To avoid this repeated counting, we do not require every link graph to be $C_4$-free.  Our deletion lemma deletes edges to eliminate incompatible diagonals, while compatible diagonals may remain.  In each link graph, the number of deleted edges is at most the number of incompatible diagonals under consideration, plus an error term.  We process the links in a suitable order so that each pair is counted as an incompatible diagonal at most once.  Hence the total number of deleted edges is at most $|\dD(\fF)|$ plus an error term, which will be bounded in terms of $\alpha(\fF)$.

The rest of this section is organized as follows.  In Subsection~\ref{subsec:remove-diag-1}, we prove the graph-deletion lemma for incompatible pairs.  In Subsection~\ref{subsec:triangle-taming}, we delete edges so that every surviving $\beta$-triangle $abc$ corresponds to a unique incompatible pair, which can then be assigned to one of the link graphs $\fF_a$, $\fF_b$, and $\fF_c$.  In Subsection~\ref{subsec:y-link-iteration}, we apply the graph-deletion lemma successively to the $Y$-links and bound the total number of deleted edges.  Finally, in Subsection~\ref{subsec:final-assembly}, we combine the preceding estimates and complete the proof of Lemma~\ref{lem:3-graph-deletion} with a final edge exchange.

Throughout this section we use the notation of Subsection~\ref{subsec:bipartite-3-graphs} and fix arbitrary linear orders on $X$ and $Y$, used only to choose canonical representatives and order the $Y$-links in the deletion process.

\subsection{A graph-deletion lemma with compatibility}\label{subsec:remove-diag-1}

In this subsection we prove a graph-deletion lemma (Lemma~\ref{lem:graph-remove-diagonals}) for a bipartite graph $H=H(X,Y)$ equipped with a compatibility relation $\leftrightarrow$ on $X$. The lemma deletes prescribed incompatible pairs in $X$ while preserving all unprescribed diagonals, with the extra cost charged to short compatible chains in $X$. 
Our argument is a modification of the diagonal-deletion procedure of Pikhurko and Verstra\"ete~\cite{PiVe09}, adapted to the compatible/incompatible relation on $X$; we include the details for completeness.

We now formally define the sets of diagonals. 
For a simple graph $H$, define
		\[\dD(H)=\{\{u,v\}: \cd_H(u,v)\ge 2 \},\]
		as the set of all {\it diagonals} of $H$. 
		Note that each diagonal is contained in at least one $C_4$. 
		A pair $\{u,v\}$ is a {\it half-diagonal} if $\cd_H(u,v)=1$. Let
			\[\eE(H)=\{\{u,v\}: \cd_H(u,v)=1 \}.\]
		For later use, we also extend this notation to a $3$-graph $\hH$ through its link graphs.
		Let 
		\begin{equation*}
			\dD(\hH)=\bigcup_{v\in V(\hH)}\dD(\hH_v) 
			\text{ \;\; and \;\; }
			\eE(\hH)=\big(\bigcup_{v\in V(\hH)}\eE(\hH_v)\big) \setminus \dD(\hH).
		\end{equation*}
		We call the elements of $\dD(\hH)$ and $\eE(\hH)$ diagonals and half-diagonals, respectively.
Specifically, when $\hH = \hH(X,Y)$ is a bipartite 3-graph, we denote by $\dD_X(\hH)$ and $\eE_X(\hH)$ the set of diagonals and half-diagonals whose vertices both lie in $X$. Similarly, $\dD_Y(\hH)$ denotes the set of diagonals whose vertices both lie in $Y$.

Let $H=H(X,Y)$ be a bipartite graph, where $X$ is equipped with a symmetric compatible/incompatible relation. 
Under this relation, every two vertices $x, x' \in X$ are either compatible ($x \leftrightarrow x'$) or incompatible ($x \nleftrightarrow x'$). 
With this relation established, we classify the diagonals in the $Y$-part as follows.

\begin{dfn}
 A diagonal $\{y,y'\}\in \dD_Y(H)$ is compatible if vertices in its common neighborhood $\CN_{H}(y,y')$ are pairwise compatible.
\end{dfn}
Note that compatibility of a $Y$-diagonal is a property of its current common neighborhood; a diagonal that is not compatible in $H$ may become compatible in a subgraph $H'\subseteq H$.

\begin{dfn}\label{dfn:T-H}
	Define $\mathcal T(H)$ to be the set of ordered triples of distinct vertices of $X$ given by
		\[
		\mathcal T(H)=\{(x_1,x_2,x_3): \{x_1,x_2\}, \{x_2,x_3\}\in \dD_X(H),\ x_1\leftrightarrow x_2,\ x_2\leftrightarrow x_3,\text{ and } \cd_H(x_1,x_2,x_3)>0\}.
		\]
	Here the triples are ordered: for instance, $(x_1,x_2,x_3)$ and $(x_3,x_2,x_1)$ are counted separately when $x_1\ne x_3$.
\end{dfn}
\begin{lem}\label{lem:graph-remove-diagonals}
	Let $H=H(X,Y)$ be a bipartite graph with a compatible/incompatible relation on $X$. Let $\pP\subseteq \dD(H)\cup \eE(H)$ be a set of pairs such that every diagonal in $\dD(H)\setminus \pP$ is compatible. Then there exists an edge set $R\subseteq E(H)$ of size $|R|\le |\pP|+ |\mathcal T(H)|$ such that
\[
 \cd_{H\setminus R}(x,x')=0
 \qquad\text{for every incompatible pair }\{x,x'\}\in \pP\cap \binom X2 .
\]
\end{lem}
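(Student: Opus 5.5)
We follow the Pikhurko--Verstra\"ete diagonal-deletion scheme, but keep only the incompatible pairs in $\pP$ in play. Let $\pP_X^{\nleftrightarrow}=\{\{x,x'\}\in \pP\cap\binom X2: x\nleftrightarrow x'\}$. Each such pair has $\cd_H(x,x')\ge 1$, since $\pP\subseteq \dD(H)\cup\eE(H)$. The plan is to charge every deleted edge either to a distinct pair of $\pP$ or to a distinct element of $\mathcal T(H)$.

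\textbf{Step 1: one edge per pair via a greedy order.} For each vertex $y\in Y$, its neighbourhood $N_H(y)\subseteq X$ splits into classes of the graph on $N_H(y)$ whose edges are the incompatible pairs of $\pP$ inside $N_H(y)$. To make $\cd_{H\setminus R}(x,x')=0$ for all prescribed pairs, it suffices that at each $y$ the surviving neighbourhood contains no prescribed incompatible pair. The natural move is to keep at $y$ an independent set of this conflict graph and delete the rest.

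Following Pikhurko--Verstra\"ete, we fix a linear order and orient each prescribed pair. Then, for every $y$ and every $x\in N_H(y)$ that has a prescribed incompatible partner $x'\in N_H(y)$ with $x'$ preceding $x$, we delete the edge $xy$.

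\textbf{Step 2: counting.} We inject the deleted edges into $\pP\cup\mathcal T(H)$. Fix $x$ and look at the set $Y_x$ of vertices $y$ at which $xy$ is deleted. For each such $y$ choose the witness $x'_y$, the first prescribed incompatible partner of $x$ in $N_H(y)$.

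If the witnesses $x'_y$ are distinct over $y\in Y_x$, each deleted edge $xy$ is charged to the pair $\{x,x'_y\}\in\pP$. Since each pair is oriented, its lower end is never charged, so each pair is charged at most once.

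If two vertices $y\ne y''$ share the same witness $x'$, then $\{x,x'\}$ has at least two common neighbours and is therefore a diagonal. The goal is then to find a compatible chain $x_1\leftrightarrow x_2\leftrightarrow x_3$ of diagonals with a common neighbour, i.e.\ an element of $\mathcal T(H)$, and to charge the surplus edge to it.

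This is where the hypothesis enters. At the vertex $y$, the relevant $Y$-diagonals $\{y,y''\}$ are either in $\pP$ or compatible. When $\{y,y''\}$ is not in $\pP$, all of $\CN_H(y,y'')$ is pairwise compatible. But $x$ and $x'$ both lie in this common neighbourhood and are incompatible, a contradiction. Hence repeated witnesses force the $Y$-pair $\{y,y''\}$ itself into $\pP$, and we charge the edge to that $Y$-pair. The plan is to choose the order on $Y$ so that each $Y$-pair is charged once.

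\textbf{Main obstacle.} The main difficulty is making the charging injective when several structures overlap. This happens when a vertex $y$ sees a long chain of $X$-vertices linked by compatible diagonals, so a deleted edge cannot be paired with a unique pair of $\pP$.

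The plan for this case is to pick, at each $y$, the kept set as the class of the minimum vertex. Each excess edge $x_3y$ then has a path $x_1,x_2,x_3$ in which consecutive pairs are compatible diagonals and all three share the common neighbour $y$. We map this edge to the ordered triple $(x_1,x_2,x_3)\in\mathcal T(H)$, and we need to verify that the triple together with $x_3$ recovers $y$ up to the already-counted pairs.

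I expect the bookkeeping that guarantees $|R|\le|\pP|+|\mathcal T(H)|$, and not a constant multiple of it, to be the hardest part. If it fails, I would fall back to a bound like $2|\pP|+|\mathcal T(H)|$, which still suffices for the $O_r$ error terms in Lemma~\ref{lem:3-graph-deletion}.
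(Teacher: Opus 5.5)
There is a genuine gap. The order-based rule of Step~1 does not achieve $|R|\le|\pP|+|\mathcal T(H)|$, so the obstacle you flag is real and is not just bookkeeping.

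\textbf{A counterexample to Step~1.} Take $H=K_{3,2}$ with $X=\{x_1,x_2,x_3\}$ and $Y=\{y_1,y_2\}$, where $x_1\nleftrightarrow x_2\nleftrightarrow x_3$ and $x_1\leftrightarrow x_3$.
\begin{itemize}
\item The non-compatible diagonals are $\{x_1,x_2\}$, $\{x_2,x_3\}$ and $\{y_1,y_2\}$, so $\pP$ may consist of exactly these three pairs.
\item $\mathcal T(H)=\emptyset$, since no vertex has two compatible partners. The lemma therefore promises $|R|\le 3$.
\item With the order $x_1<x_2<x_3$, your rule deletes $x_2y_j$ (witness $x_1$) and $x_3y_j$ (witness $x_2$) for $j=1,2$. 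That is four edges, although deleting $x_2y_1$ and $x_2y_2$ suffices.
\end{itemize}
Choosing the order cleverly does not rescue the rule. Place such gadgets on the $G$ triples of a Steiner triple system, with randomly chosen centres. Each gadget consists of two private $Y$-vertices joined to a triple $\{p,c,q\}$, with $c\nleftrightarrow p$, $c\nleftrightarrow q$ and $p\leftrightarrow q$. Then $|\pP|=3G$ and $\mathcal T(H)=\emptyset$. A gadget costs $2$ edges under your rule if its centre is last in the order, and $4$ otherwise. A union bound over orders shows that every order puts fewer than $G/2$ centres last, so your rule deletes more than $3G$ edges.

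\textbf{Step~2 is not injective.} Even where the totals happen to be fine, the charging fails. Consider $K_{a,b}$ with $X$ pairwise incompatible and $a>b/2+1$. Your scheme sends $(a-1)(b-1)$ surplus edges to only $\binom b2$ $Y$-pairs, and the $X$-pairs $\{x_i,x_j\}$ with $i,j\ge 2$, which could absorb them, are never used.

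\textbf{The fallback $2|\pP|+|\mathcal T(H)|$ does not suffice.} Here $|\pP|$ is the main term, not an error term. Through Lemma~\ref{clm:Hn} and Lemma~\ref{lem:3-graph-deletion} it becomes $\binom{|X|}2+\binom{|Y|}2$. The constant $1$ there is exactly what gives $\sum_i|\rR_i|\le 2m\binom n2+O_r(n^{r-2})$ in Claim~\ref{clm:dense-pieces}. With a factor $2$ one only gets $|\gG|\le(1+2/r+o(1))\binom{n}{r-1}$, and the stability argument collapses.

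\textbf{What is missing.} You need the paper's dynamic, edge-centred Pikhurko--Verstra\"ete process, in place of a static rule applied link by link.
\begin{itemize}
\item At each step, take a current incompatible target pair $\{x,x'\}$ and a common neighbour $y$.
\item Delete $yu$ for every current target partner $u$ of $x$ with $yu\in E$. Delete $xv$ for every current target partner $v$ of $y$ with $xv\in E$.
\item Retire exactly these target pairs. In particular, each $Y$-pair of $\pP$ pays for one deleted edge $xv$.
\item Maintain the invariant that every non-target diagonal stays compatible. This forces any surviving common neighbour $w$ of $x$ and a retired partner $u$ to satisfy that $\{y,w\}$ is a compatible non-target diagonal. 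Hence the codegree of $\{x,u\}$ becomes $0$ whenever $x\nleftrightarrow u$.
\item The only deletions not paid for by a retired pair are the edges $yu''$. Here $u''$ is a non-target, hence compatible, partner of $x$ at $y$ that is incompatible with another such partner. These edges are charged injectively to $\mathcal T(H)$.
\end{itemize}
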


The proof is given by a recursive deletion process. At step $s$, the set $R_s$ records the deleted edges and $\pP_s\subseteq \pP$ records the target pairs that remain in the bookkeeping. The algorithm actively processes only incompatible pairs in $\binom X2$; target pairs on the $Y$-side are retained only to control the diagonal invariant. The set $\mathcal L_s=\dD(H\setminus R_s)\setminus \pP_s$ contains the other diagonals that still exist. The key invariant is that every pair in $\mathcal L_s$ is compatible in the current graph; this is what prevents the process from creating new incompatible obstructions outside the prescribed target set. The auxiliary set $\mathcal T_s$ records the only extra deletions not charged directly to a removed target pair.

We now formally describe the recursive deletion process. Set
 $ R_0=\emptyset$, $\pP_0=\pP .$
Suppose that $R_{s-1}$ and $\pP_{s-1}$ have been defined. If $\pP_{s-1}$ contains no incompatible pair in $\binom X2$, the process stops. Otherwise choose an incompatible pair $\{x,x'\}\in \pP_{s-1}\cap \binom X2$ and put
\[
 \mathcal L_{s-1}=\dD(H\setminus R_{s-1})\setminus \pP_{s-1}.
\]
We distinguish the following three cases, according to the codegree of $\{x,x'\}$ in $H\setminus R_{s-1}$.

\emph{Case 1.} If $\cd_{H\setminus R_{s-1}}(x,x')=0$, define
\[
 R_s=R_{s-1},\qquad
 \pP_s=\pP_{s-1}\setminus \{\{x,x'\}\},\qquad
 \mathcal T_s=\emptyset .
\]

\emph{Case 2.} If $\cd_{H\setminus R_{s-1}}(x,x')=1$, let $y$ be the unique common neighbor of $x$ and $x'$. Define
\[
 R_s=R_{s-1}\cup\{xy\},\qquad
 \pP_s=\pP_{s-1}\setminus \{\{x,x'\}\},\qquad
 \mathcal T_s=\emptyset .
\]

\emph{Case 3.} Suppose that $\cd_{H\setminus R_{s-1}}(x,x')\ge 2$. Choose a common neighbor $y\in \CN_{H\setminus R_{s-1}}(x,x')$, and write $H_s=H\setminus R_{s-1}$. The notation and the corresponding deletions are displayed below.

\begin{center}
\vspace{-0.45\baselineskip}
\begin{minipage}[t]{0.58\textwidth}
\vspace{0pt}\hspace{15pt}
\[\begin{aligned}[t]
&U = \{u: \{x,u\}\in\pP_{s-1},\ yu\in E(H_s)\},\\
&U' = \{u': \{x,u'\}\in\mathcal L_{s-1},\ yu'\in E(H_s)\},\\
&V = \{v: \{y,v\}\in\pP_{s-1},\ xv\in E(H_s)\},\\
&V' = \{v': \{y,v'\}\in\mathcal L_{s-1},\ xv'\in E(H_s)\},\\ 
&\mathcal T_s = \{(u_1',x,u_2'): u_1',u_2'\in U',\ u_1'\nleftrightarrow u_2'\},\\
 &U''= \{u''\in U': u'' \text{ appears in a triple of }\mathcal T_s\}.
\end{aligned}\]
\vspace{0.35\baselineskip}

\noindent Set
\[
\begin{aligned}
R_s=R_{s-1}&\cup\{xv:v\in V\}\cup\{yu:u\in U\cup U''\},\\
\pP_s=\pP_{s-1}&\setminus
\big(\{\{x,u\}:u\in U\}\cup\{\{y,v\}:v\in V\}\big).
\end{aligned}
\]
\end{minipage}\hfill
\begin{minipage}[t]{0.36\textwidth}
\vspace{0pt} \hspace{-20pt}
\centering
\begin{tikzpicture}[scale=1.2,
 x=0.58cm,y=0.69cm,
 every node/.style={font=\scriptsize},
 point/.style={fill=black,circle,inner sep=1.15pt},
 kept/.style={line width=0.45pt},
 deleted/.style={dashed,line width=0.55pt},
 rep/.style={font=\scriptsize,fill=white,inner sep=0.45pt},
]
 \coordinate (x) at (0,0);
 \coordinate (y) at (5.10,0);
 \coordinate (u) at (0.12,4.05);
 \coordinate (up) at (0.12,2.58);
 \coordinate (upp) at (0.12,1.74);
 \coordinate (v) at (4.98,4.05);
 \coordinate (vp) at (4.98,2.30);

 \draw[fill=gray!6] (0.30,4.05) ellipse (0.82 and 0.48);
 \draw[fill=gray!6] (0.30,2.30) ellipse (0.82 and 0.86);
 \draw[fill=white] (0.30,1.74) ellipse (0.52 and 0.33);
 \draw[fill=gray!6] (4.80,4.05) ellipse (0.82 and 0.48);
 \draw[fill=gray!6] (4.80,2.30) ellipse (0.82 and 0.48);

 \node at (-0.70,4.42) {$U$};
 \node at (-0.70,3.02) {$U'$};
 \node at (-0.72,1.38) {$U''$};
 \node at (5.80,4.42) {$V$};
 \node at (5.80,2.67) {$V'$};

 \draw[kept] (x) -- (y);
 \draw[deleted] (x) -- (v);
 \draw[kept] (x) -- (vp);
 \draw[deleted] (y) -- (u);
 \draw[deleted] (y) -- (upp);
 \draw[kept] (y) -- (up);

 \draw[deleted] (1.72,4.55) -- (2.32,4.55);
 \node[anchor=west,font=\tiny,fill=white,inner sep=0.3pt] at (2.42,4.55) {deleted};
 \draw[kept] (1.72,4.25) -- (2.32,4.25);
 \node[anchor=west,font=\tiny,fill=white,inner sep=0.3pt] at (2.42,4.25) {kept};

 \node[point,label=below:$x$] at (x) {};
 \node[point,label=below:$y$] at (y) {};
 \node[point] at (u) {};
 \node[rep,anchor=west] at (0.34,4.07) {$u$};
 \node[point] at (up) {};
 \node[rep,anchor=west] at (0.34,2.60) {$u'$};
 \node[point] at (upp) {};
 \node[rep,anchor=west] at (0.34,1.76) {$u''$};
 \node[point] at (v) {};
 \node[rep,anchor=east] at (4.76,4.07) {$v$};
 \node[point] at (vp) {};
 \node[rep,anchor=east] at (4.76,2.32) {$v'$};

 \node[font=\tiny] at (0,-0.62) {$X$-side};
 \node[font=\tiny] at (5.10,-0.62) {$Y$-side};
\end{tikzpicture}
\captionof{figure}{Case~3}
\label{fig:enhanced-case3}
\end{minipage}
\vspace{-0.35\baselineskip}
\end{center}

The verification of this process is divided into four invariants: the cost bound, the description of new common neighborhoods, the compatibility of all non-target diagonals, and the fact that every processed incompatible $X$-target pair has codegree zero afterwards.
		
\begin{Claim}\label{clm:R}
For each $s\in [t]$, we have $|R_{s}\setminus R_{s-1}|\le |\pP_{s-1}\setminus \pP_{s}|+ |\mathcal T_{s}|$.
\end{Claim}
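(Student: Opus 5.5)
The claim is a per-step bookkeeping statement, so I would verify it separately in each of the three cases of the recursive process. Cases 1 and 2 are immediate. In Case 1, $R_s=R_{s-1}$ while $\pP_s=\pP_{s-1}\setminus\{\{x,x'\}\}$, so the left side is $0$. In Case 2 exactly one edge $xy$ is added and exactly one pair $\{x,x'\}\in\pP_{s-1}$ is removed, so $|R_s\setminus R_{s-1}|\le 1=|\pP_{s-1}\setminus\pP_s|$. In both cases $\mathcal T_s=\emptyset$.

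In Case 3 I would bound the three groups of new edges in turn.

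First, the edges $\{xv: v\in V\}$. These are distinct for distinct $v$. Each $v\in V$ has $\{y,v\}\in\pP_{s-1}$, and this pair is removed in forming $\pP_s$. This gives an injection $v\mapsto\{y,v\}$ into $\pP_{s-1}\setminus\pP_s$.

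Second, the edges $\{yu:u\in U\}$. Each $u\in U$ has $\{x,u\}\in\pP_{s-1}$, and this pair is also removed. This gives an injection $u\mapsto\{x,u\}$.

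These two images are disjoint. Every pair $\{x,u\}$ contains $x$ and every pair $\{y,v\}$ contains $y$. A coincidence $\{x,u\}=\{y,v\}$ would force $\{x,u\}=\{x,y\}$, and that is not a target pair: $\pP\subseteq\dD(H)\cup\eE(H)$ consists of pairs inside a single part of the bipartition, whereas $x\in X$ and $y\in Y$. By the same token $u\ne y$ and $v\ne x$, so the deleted edges are genuine. Hence
\[
|\{xv:v\in V\}|+|\{yu:u\in U\}|\le|\pP_{s-1}\setminus\pP_s|.
\]
Some of these edges may already lie in $R_{s-1}$; I would simply note that this only helps. In fact they do not, since all of them lie in $H_s=H\setminus R_{s-1}$ by definition of $U$ and $V$.

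Third, the edges $\{yu'':u''\in U''\}$, with $U''\setminus U$ the only new contribution. Each $u''\in U''$ appears in some triple of $\mathcal T_s$. Every triple $(u_1',x,u_2')\in\mathcal T_s$ has $u_1'\nleftrightarrow u_2'$, so $u_1'\ne u_2'$ and the triple involves exactly two elements of $U''$. Because the triples are ordered, both $(u_1',x,u_2')$ and $(u_2',x,u_1')$ belong to $\mathcal T_s$. So I can assign to each $u''\in U''$ the triple in which it occupies the first coordinate, with a fixed choice of partner. This gives an injection $U''\to\mathcal T_s$, and therefore $|U''|\le|\mathcal T_s|$.

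Summing the three contributions gives $|R_s\setminus R_{s-1}|\le|\pP_{s-1}\setminus\pP_s|+|\mathcal T_s|$.

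The only delicate point is the disjointness and injectivity check in Case 3. The two things that need care are that edges $yu$ for $u\in U\cap U''$ are not double counted (they are one edge in a set union, so the bound only improves), and that the ordered-triple convention in Definition~\ref{dfn:T-H} supplies the factor needed to make $U''\to\mathcal T_s$ injective. I do not expect either to be a genuine obstacle.
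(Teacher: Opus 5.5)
Your proof is correct and follows the paper's argument. Cases 1 and 2 are immediate. In Case 3 you charge the edges $xv$ and $yu$ to the removed pairs $\{y,v\}$ and $\{x,u\}$, and you get $|U''|\le|\mathcal T_s|$ from the ordered-triple convention. Your check that the two families of removed pairs are disjoint ($\{x,y\}$ has codegree zero in a bipartite graph, so it is not a target pair) supplies a detail the paper leaves implicit when it writes $|\pP_{s-1}\setminus\pP_s|=|U|+|V|$.
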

\begin{proof}
This is immediate for Cases 1 and 2. In Case 3, $|R_{s}\setminus R_{s-1}| = |V| + |U \cup U''| = |V| + |U| + |U''|$. Since each triple in $\mathcal T_s$ involves two vertices from $U'$ and $(u_1', x, u_2')$, $(u_2', x, u_1')$ are different triples, we have $|U''| \le |\mathcal T_s|$. The number of pairs removed from $\pP$ is $|\pP_{s-1}\setminus \pP_{s}| = |U| + |V|$. Combining these bounds gives the desired inequality.
\end{proof}
		
\begin{Claim}\label{clm:Case-3-CN}
In Case 3, for each $u \in U\cup U'$ and $v \in V\cup V'$, we have:
\begin{enumerate}
 \item[(1)] $\CN_{H\setminus R_{s-1}}(x,u)\subseteq \{y\}\cup V\cup V'$ and $\CN_{H\setminus R_{s-1}}(y,v)\subseteq \{x\}\cup U\cup U'$.
 \item[(2)] $\CN_{H\setminus R_{s}}(x,u)\subseteq \{y\}\cup V'$ and $\CN_{H\setminus R_{s}}(y,v)\subseteq \{x\}\cup U'\setminus U''$.
\end{enumerate}
\end{Claim}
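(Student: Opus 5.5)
The plan is a direct verification from the definitions of $U,U',V,V'$ and of $R_s$, with no recursion or counting involved. Throughout, write $H_s=H\setminus R_{s-1}$. Recall that in Case~3 the vertex $y$ is a common neighbour of $x$ and $x'$ in $H_s$, so $xy\in E(H_s)$. Also, every $u\in U\cup U'$ satisfies $yu\in E(H_s)$ and $u\neq x$, since $\{x,u\}$ is a pair. Likewise every $v\in V\cup V'$ satisfies $xv\in E(H_s)$ and $v\neq y$.

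For part (1), take $u\in U\cup U'$ and any $w\in \CN_{H_s}(x,u)$ with $w\ne y$; note $w\in Y$.
\begin{itemize}
\item Both $x$ and $u$ are adjacent in $H_s$ to $y$ and to $w$, and $x\neq u$. Hence $\cd_{H_s}(y,w)\ge 2$, so $\{y,w\}\in\dD(H_s)$.
\item If $\{y,w\}\in\pP_{s-1}$, then, since $xw\in E(H_s)$, the definition of $V$ gives $w\in V$.
\item Otherwise $\{y,w\}\in \dD(H\setminus R_{s-1})\setminus\pP_{s-1}=\mathcal L_{s-1}$, and together with $xw\in E(H_s)$ this gives $w\in V'$.
\end{itemize}
So $\CN_{H_s}(x,u)\subseteq\{y\}\cup V\cup V'$. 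The second inclusion is the mirror image. Take $v\in V\cup V'$ and $z\in \CN_{H_s}(y,v)$ with $z\ne x$. Then $y$ and $v$ are two distinct common neighbours of $x$ and $z$, so $\{x,z\}$ is a diagonal of $H_s$. As $yz\in E(H_s)$, it lies in $U$ if $\{x,z\}\in\pP_{s-1}$, and in $U'$ if $\{x,z\}\in\mathcal L_{s-1}$.

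For part (2), use $H\setminus R_s\subseteq H_s$, which makes common neighbourhoods only shrink, and then read off which edges were removed.
\begin{itemize}
\item $R_s$ contains every edge $xv$ with $v\in V$. So no vertex of $V$ is adjacent to $x$ in $H\setminus R_s$, and part (1) yields $\CN_{H\setminus R_s}(x,u)\subseteq\{y\}\cup V'$.
\item $R_s$ contains every edge $yu$ with $u\in U\cup U''$. So no vertex of $U\cup U''$ remains adjacent to $y$, and part (1) yields $\CN_{H\setminus R_s}(y,v)\subseteq\{x\}\cup (U'\setminus U'')$.
\end{itemize}
Here $\{x\}\cup U'\setminus U''$ is read as $\{x\}\cup(U'\setminus U'')$, which is the only sensible reading since $x\notin U'$.

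There is no real obstacle in this argument. The one point needing care is in part (1): before invoking the definitions of $U,U',V,V'$, one must check that the relevant pair is genuinely a diagonal of the current graph $H_s$, not of the original $H$. This is exactly what the two distinct common neighbours exhibited above guarantee. The dichotomy $\pP_{s-1}$ versus $\mathcal L_{s-1}$ is then exhaustive, because $\mathcal L_{s-1}$ is by definition the set of current diagonals not in $\pP_{s-1}$.
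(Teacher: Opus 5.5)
Your proof is correct and follows essentially the same route as the paper. In part (1) you use the $4$-cycle $x\,y\,u\,w$ (resp.\ $y\,x\,v\,z$) in $H\setminus R_{s-1}$ to show that $\{y,w\}$ (resp.\ $\{x,z\}$) is a current diagonal, and then place it in $\pP_{s-1}$ or $\mathcal L_{s-1}$; in part (2) you read off the conclusion from the edges added to $R_s$. You also spell out the distinctness checks and the exhaustiveness of the $\pP_{s-1}$ versus $\mathcal L_{s-1}$ dichotomy, which the paper leaves implicit.
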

\begin{proof}
Suppose, for a contradiction to item (1), that there exists $w \in \CN_{H\setminus R_{s-1}}(x,u)$ with $w\notin \{y\}\cup V\cup V'$. Then $xyuw$ spans a $C_4$ in $H\setminus R_{s-1}$, implying $\{y,w\}\in \dD(H\setminus R_{s-1})$. By definition, $w$ must be in $V \cup V'$, a contradiction. The same argument gives $\CN_{H\setminus R_{s-1}}(y,v )\subseteq \{x\}\cup U\cup U'$, for $v \in V\cup V'$. 
Item (2) follows directly from item (1) and the definition of $R_s$, which removes all edges $\{xv_i\}_{v_i\in V}$ and $\{yu_i\}_{u_i\in U\cup U''}$.
\end{proof}

\begin{Claim} \label{clm:L}
For each $s\in [t]$, every diagonal in $\mathcal L_{s}$ is compatible in $H\setminus R_{s}$.
\end{Claim}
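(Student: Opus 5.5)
The claim is proved by induction on $s$, using that $H\setminus R_s\subseteq H\setminus R_{s-1}$ and that $\pP_s\subseteq\pP_{s-1}$. Compatibility is defined only for diagonals in $\dD_Y$, so the invariant concerns the $Y$-pairs of $\mathcal L_s$; for $X$-pairs there is nothing to check.

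\emph{Base case.} We have $\mathcal L_0=\dD(H)\setminus\pP$. Every such diagonal is compatible in $H$ by the hypothesis of Lemma~\ref{lem:graph-remove-diagonals}.

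\emph{Inductive step.} Let $\{a,b\}\in\mathcal L_s$ be a $Y$-diagonal, so $\cd_{H\setminus R_s}(a,b)\ge 2$ and $\{a,b\}\notin\pP_s$. Since $H\setminus R_s\subseteq H\setminus R_{s-1}$, the pair $\{a,b\}$ is also a diagonal of $H\setminus R_{s-1}$. There are two possibilities.

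\emph{(i) $\{a,b\}\notin\pP_{s-1}$.} Then $\{a,b\}\in\mathcal L_{s-1}$, so by induction $\CN_{H\setminus R_{s-1}}(a,b)$ is pairwise compatible. Deleting edges only shrinks common neighborhoods, so $\CN_{H\setminus R_s}(a,b)\subseteq \CN_{H\setminus R_{s-1}}(a,b)$ is still pairwise compatible.

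\emph{(ii) $\{a,b\}\in\pP_{s-1}\setminus\pP_s$.} Only Case~3 removes a $Y$-pair from $\pP$ (Cases~1 and~2 remove only the $X$-pair $\{x,x'\}$). So $\{a,b\}=\{y,v\}$ for some $v\in V$.
\begin{itemize}
\item By Claim~\ref{clm:Case-3-CN}(2), $\CN_{H\setminus R_s}(y,v)\subseteq\{x\}\cup (U'\setminus U'')$.
\item The edge $xv$ lies in $R_s$ because $v\in V$, so $x\notin \CN_{H\setminus R_s}(y,v)$. Hence $\CN_{H\setminus R_s}(y,v)\subseteq U'\setminus U''$.
\item By definition of $\mathcal T_s$, any two incompatible $u_1',u_2'\in U'$ produce a triple $(u_1',x,u_2')\in\mathcal T_s$, which places both in $U''$. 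Therefore $U'\setminus U''$ is pairwise compatible, and so is its subset $\CN_{H\setminus R_s}(y,v)$.
\end{itemize}
(The $X$-pairs removed from $\pP$, namely $\{x,u\}$ with $u\in U$, need no check. In fact $\CN_{H\setminus R_s}(x,u)\subseteq V'$ by Claim~\ref{clm:Case-3-CN}(2), since $yu$ was deleted.)

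\emph{No new diagonals.} The passage from $R_{s-1}$ to $R_s$ only deletes edges, so no diagonal of $H\setminus R_s$ can arise that was not already a diagonal of $H\setminus R_{s-1}$. Cases (i) and (ii) therefore exhaust $\mathcal L_s$, which completes the induction.

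The main point is case (ii): the deletion of $yu''$ for every $u''\in U''$ is exactly what guarantees that the surviving common neighborhood of each pair $\{y,v\}$ with $v\in V$ is pairwise compatible. The rest is bookkeeping.
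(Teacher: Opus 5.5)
Your argument for the $Y$-diagonals is correct and matches the paper's (you are even a bit more careful in noting that $x\notin\CN_{H\setminus R_s}(y,v)$ because $xv\in R_s$). The gap is your opening assertion that for $X$-pairs there is nothing to check. In this lemma, an $X$-pair $\{x,x'\}$ counts as compatible exactly when $x\leftrightarrow x'$. Both the hypothesis of Lemma~\ref{lem:graph-remove-diagonals} and Claim~\ref{clm:L} cover $X$-diagonals; the proof of Lemma~\ref{clm:Hn} explicitly verifies the hypothesis for $X$-pairs (``If $x\leftrightarrow x'$, then $q$ is compatible''). This half of the invariant is genuinely used. The proof of Claim~\ref{clm:T} applies Claim~\ref{clm:L} to the $X$-pairs $\{u_1,x\},\{u_2,x\}\in\mathcal L_{s-1}$ to get $u_1\leftrightarrow x\leftrightarrow u_2$. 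That is precisely what gives $\mathcal T_s\subseteq\mathcal T(H)$, and hence the cost bound $|R|\le|\pP|+|\mathcal T(H)|$. The $Y$-half alone does not supply this. If $\{x,u_1\}$ has common neighbours $y,w$, then $\{y,w\}$ is a diagonal, but it may still lie in $\pP_{s-1}$, and then it says nothing about $x$ and $u_1$.

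The repair is short, and you already have the key ingredient. $X$-diagonals in $\mathcal L_s\cap\mathcal L_{s-1}$ remain compatible pairs because the relation on $X$ never changes. The base case for $X$-pairs is the hypothesis of the lemma, read as above. Cases~1 and~2 release no diagonal, since the removed pair has codegree $0$ in $H\setminus R_s$. Now take a newly released pair $\{x,u\}$ with $u\in U$ that is a diagonal of $H\setminus R_s$, and let $w$ be any common neighbour. You already observed that $w\in V'$, since $yu\in R_s$. Hence $\{y,w\}\in\mathcal L_{s-1}$, and by the inductive hypothesis for $Y$-diagonals it is a compatible diagonal of $H\setminus R_{s-1}$. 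Its common neighbourhood there contains both $x$ and $u$, so $x\leftrightarrow u$. The induction must therefore carry both halves of the invariant together, with the $X$-half at step $s$ depending on the $Y$-half at step $s-1$. This is exactly how the paper's proof treats the pair $\{x,u\}$.
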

\begin{proof}
We proceed by induction on $s$. The base case $s=0$ holds since, by assumption, every diagonal in $\dD(H)\setminus \pP$ is compatible. Assume the claim is true for $s-1$. Each diagonal in $\mathcal L_s \cap \mathcal L_{s-1}$ remains compatible in the subgraph $H\setminus R_s$. Passing to a subgraph cannot create a new diagonal. Thus a pair can enter $\mathcal L_s$ only because it has just been removed from the target set in Case 3; consequently $\mathcal L_s \setminus \mathcal L_{s-1} \subseteq \pP_{s-1}\setminus \pP_s$.

Suppose $\{x,u\}\in \mathcal L_{s} \setminus \mathcal L_{s-1}$ with $u\in U$. Any common neighbor $w \in \CN_{H\setminus R_s}(x,u)$ must be different from $y$. By Claim~\ref{clm:Case-3-CN}, $w \in V'$. Thus, $\{y,w\} \in \mathcal L_{s-1}$, which is a compatible diagonal in $H\setminus R_{s-1}$ by the inductive hypothesis. Since $x,u \in \CN_{H\setminus R_{s-1}}(y,w)$, we must have $x \leftrightarrow u$. This logic applies to any pair of vertices in $\CN_{H\setminus R_s}(x,u)$. Thus, $\{x,u\}$ is compatible in $H\setminus R_s$.

 Suppose $\{y,v\}\in \mathcal L_{s} \setminus \mathcal L_{s-1}$ with $v\in V$. Any two common neighbors $w_1, w_2 \in \CN_{H\setminus R_s}(y,v)$ must belong to $U' \setminus U''$ by Claim~\ref{clm:Case-3-CN}. By the definition of $U''$, $(w_1, x, w_2) \notin \mathcal T_s$, which implies $w_1 \leftrightarrow w_2$. Therefore, $\{y,v\}$ is compatible in $H\setminus R_s$.
 This completes the proof of the claim. 
\end{proof}

\begin{Claim}\label{clm:T}
No ordered triple is recorded in two different families $\mathcal T_s$.  Consequently $\sum_{s=1}^t|\mathcal T_s| \le |\mathcal T(H)|$.
\end{Claim}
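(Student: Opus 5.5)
We argue that every triple recorded in some $\mathcal T_s$ is an element of $\mathcal T(H)$. We then show that, once a triple $(u_1',x,u_2')$ has been recorded at step $s$, the configuration that produced it is destroyed, so it can never be recorded again at a later step $s'>s$. Since the families $\mathcal T_s$ are then pairwise disjoint subsets of $\mathcal T(H)$, the bound $\sum_s|\mathcal T_s|\le|\mathcal T(H)|$ follows immediately.

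First, membership. Take $(u_1',x,u_2')\in\mathcal T_s$. By definition of $U'$, both $\{x,u_1'\}$ and $\{x,u_2'\}$ lie in $\mathcal L_{s-1}$, so they are diagonals of $H\setminus R_{s-1}\subseteq H$, hence diagonals of $H$. By Claim~\ref{clm:L} applied at step $s-1$, each of them is compatible in $H\setminus R_{s-1}$. Moreover $y$ is adjacent in $H_s$ to $x,u_1',u_2'$. The pair $\{x,u_j'\}$ is a diagonal, so it has at least two common neighbours; these lie among $\{y\}\cup V\cup V'$ and all lie in $\CN_{H\setminus R_{s-1}}(y,w)$ for a second neighbour $w$. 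By compatibility of the diagonal $\{y,w\}$ (whether it is in $\mathcal L_{s-1}$ or not), we want to conclude $x\leftrightarrow u_j'$.

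The cleanest route is to read off $x\leftrightarrow u_j'$ from the fact that $\{y,w\}\in\mathcal L_{s-1}$ when $w\in V'$, as in the proof of Claim~\ref{clm:L}. I expect this to be the fiddly point: one must also handle the case $w\in V$, where $\{y,w\}$ is still a target. If this fails, I would instead interpret the compatibility conditions $x_1\leftrightarrow x_2$ in $\mathcal T(H)$ loosely as the diagonal being compatible, and check that the intended definition matches. Finally, $\cd_H(u_1',x,u_2')\ge1$ is witnessed by $y$. So the triple lies in $\mathcal T(H)$.

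Second, disjointness. This is the main step. At step $s$, every $u''\in U''$ loses the edge $yu''$, and $\mathcal T_s$ is recorded with middle vertex $x$. Suppose the same ordered triple $(u_1',x,u_2')$ reappears at a later step $s'$. Then $x$ must again be the first vertex of the processed incompatible target pair, say $\{x,x''\}$, with common neighbour $y'$, and $u_1',u_2'$ must lie in $U'_{s'}$. I would show that at step $s$ the pairs $\{x,u_j'\}$ become non-diagonals, or at least never again have codegree-witness configurations producing $U'$ membership for both. The natural argument is as follows. After step $s$, $\CN_{H\setminus R_s}(x,u_j')\subseteq\{y\}\cup V'$ by Claim~\ref{clm:Case-3-CN}, and $yu_j'$ has been deleted. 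Hence $y$ is no longer a common neighbour, and $\CN(x,u_j')\subseteq V'$. Any $y'$ chosen later with $u_j'\in U'_{s'}$ must therefore lie in $V'$.

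The obstacle is to rule out this situation. I would try to derive that $u_1'\nleftrightarrow u_2'$ together with $y'\in V'$ contradicts compatibility of the diagonal $\{y,y'\}\in\mathcal L_{s-1}$ (which persists by Claim~\ref{clm:L}). Indeed, both $u_1'$ and $u_2'$ would be common neighbours of $y$ and $y'$ in the graph at step $s-1$, forcing $u_1'\leftrightarrow u_2'$, which is a contradiction. If that does not close the argument, the fallback is to charge each recorded triple to the deleted edges $yu_1',yu_2'$ and use that each edge is deleted only once.
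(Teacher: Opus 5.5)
Your proposal is correct and is essentially the paper's argument: membership in $\mathcal T(H)$ comes from Claim~\ref{clm:L} together with $y$ as a common neighbour, and disjointness comes from the fact that any later witness $y'$ for $x,u_j'$ must lie in $V'$, so the compatible diagonal $\{y,y'\}\in\mathcal L_{s-1}$ would contain the incompatible vertices $u_1',u_2'$ in its common neighbourhood (the paper phrases this as $u_1',x,u_2'$ having no common neighbour in $H\setminus R_s$). The only muddle is in the membership step: for an $X$-pair, being a compatible diagonal simply means $x\leftrightarrow u_j'$ (this is how Claim~\ref{clm:L} is proved and used), so Claim~\ref{clm:L} gives $u_1'\leftrightarrow x\leftrightarrow u_2'$ directly, and both the detour through $\{y,w\}$ (which would indeed fail for $w\in V$) and the charging fallback are unnecessary.
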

\begin{proof}
First, observe that if $(u_1,x,u_2)\in \mathcal T_s$, then $\CN_{H\setminus R_s}(u_1,x,u_2)=\emptyset$.
Suppose for contradiction that there is a common neighbor $w\in \CN_{H\setminus R_s}(u_1,x,u_2)$. At step $s$, $w\ne y$ since $yu_1$ and $yu_2$ were added to $R_s$. By Claim~\ref{clm:Case-3-CN}, $w\in V'$. Thus, $\{y,w\}\in \mathcal L_{s-1}$ is compatible in $H\setminus R_{s-1}$ by Claim~\ref{clm:L}. As $u_1,u_2\in \CN_{H\setminus R_{s-1}}(y,w)$, it must be that $u_1\leftrightarrow u_2$, contradicting the definition of $\mathcal T_s$. Therefore an ordered triple recorded at step $s$ has no common neighbor after that step and cannot be recorded again later.

For each $(u_1,x,u_2)\in \mathcal T_s$, the pairs $\{u_1,x\}$ and $\{u_2,x\}$ belong to $\mathcal L_{s-1}$ and are diagonals. Claim~\ref{clm:L} shows that they are compatible, so $u_1\leftrightarrow x\leftrightarrow u_2$. Moreover, the chosen vertex $y$ is a common neighbor of $u_1,x,u_2$ in $H\setminus R_{s-1}$. Hence
$\mathcal T_s\subseteq \mathcal T(H\setminus R_{s-1})\subseteq \mathcal T(H)$.
Since no ordered triple occurs in two different $\mathcal T_s$, we get
$\sum_{s=1}^t|\mathcal T_s|\le |\mathcal T(H)|$.
\end{proof}

\begin{Claim} \label{clm:cdeq0}
If $\{x,u\}\in (\pP_{s-1}\setminus \pP_{s})\cap \binom X2$ is incompatible, then $\cd_{H\setminus R_s}(x,u)=0$.
\end{Claim}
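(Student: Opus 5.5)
We argue by the case in which the pair $\{x,u\}$ leaves the target set at step $s$. Pairs leave $\pP$ only in Cases 1, 2 and 3, so we treat each in turn.

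\emph{Cases 1 and 2.} Here the removed pair is the chosen pair $\{x,x'\}$ itself. In Case 1 we have $\cd_{H\setminus R_{s-1}}(x,x')=0$, and this persists in the subgraph $H\setminus R_s$. In Case 2 the unique common neighbor $y$ is destroyed by deleting the edge $xy$. No other common neighbor can appear, since $R_s\supseteq R_{s-1}$, so the codegree becomes $0$.

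\emph{Case 3.} The pairs removed from $\pP$ are $\{x,u\}$ with $u\in U$ and $\{y,v\}$ with $v\in V$. Since $y\in Y$, only the former lie in $\binom X2$. The chosen pair $\{x,x'\}$ itself belongs to this family, because $x'\in U$: indeed $\{x,x'\}\in\pP_{s-1}$ and $yx'\in E(H_s)$.

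Now fix $u\in U$ with $x\nleftrightarrow u$. The edge $yu$ lies in $R_s$, so $y\notin\CN_{H\setminus R_s}(x,u)$. By Claim~\ref{clm:Case-3-CN}(2), any remaining common neighbor $w$ must lie in $V'$. Suppose such a $w$ exists. Then $\{y,w\}\in\mathcal L_{s-1}$, and by Claim~\ref{clm:L} (applied at step $s-1$) the diagonal $\{y,w\}$ is compatible in $H\setminus R_{s-1}$. Both $x$ and $u$ are common neighbors of $y$ and $w$ in $H\setminus R_{s-1}$: for $x$ this holds because $xy,xw\in E(H_s)$; for $u$ because $yu\in E(H_s)$ and $uw\in E(H\setminus R_s)\subseteq E(H_s)$. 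Compatibility of the diagonal therefore forces $x\leftrightarrow u$, which contradicts our assumption. Hence $\CN_{H\setminus R_s}(x,u)=\emptyset$.

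The only delicate point is the induction on $s$ implicit in Claim~\ref{clm:L}. Claim~\ref{clm:L} at index $s-1$ already covers $\mathcal L_{s-1}$, so the argument requires no new induction. The remaining work is to check that the case split is exhaustive, i.e.\ that every pair in $(\pP_{s-1}\setminus\pP_s)\cap\binom X2$ arises from one of the three cases above.
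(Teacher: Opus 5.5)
Your proof is correct and follows the paper's proof: the paper likewise treats only Case 3 (Cases 1 and 2 are immediate, as you check), uses Claim~\ref{clm:Case-3-CN}(2) together with $yu\in R_s$ to put any surviving common neighbor $w$ in $V'$, and then gets a contradiction from the compatibility of $\{y,w\}\in\mathcal L_{s-1}$ supplied by Claim~\ref{clm:L}. The exhaustiveness check you leave open at the end is automatic: each step falls into exactly one of Cases 1--3 according to $\cd_{H\setminus R_{s-1}}(x,x')$, and each case lists $\pP_{s-1}\setminus\pP_s$ explicitly.
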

\begin{proof}
We only need to consider Case 3. If $\{x,u\} \in (\pP_{s-1}\setminus \pP_{s})\cap \binom X2$ is incompatible, then $u \in U$. Suppose there is a common neighbor $w \in \CN_{H\setminus R_s}(x,u)$. Then $w \neq y$. By Claim~\ref{clm:Case-3-CN}, $w \in V'$, which implies $\{y,w\} \in \mathcal L_{s-1}$. Claim~\ref{clm:L} shows that $\{y,w\}$ is a compatible diagonal in $H\setminus R_{s-1}$. However, this   contradicts the fact that $x, u \in \CN_{H\setminus R_{s-1}}(y,w)$ and $x \nleftrightarrow u$.
\end{proof}

\begin{proof} [Proof of Lemma~\ref{lem:graph-remove-diagonals}]
Run the recursive deletion process until $\pP_t$ contains no incompatible pairs in $\binom X2$. Let $R=R_t$. By Claims~\ref{clm:R} and~\ref{clm:T},
\[|R|=\sum_{s=1}^t|R_{s}\setminus R_{s-1}| \le \sum_{s=1}^t (|\pP_{s-1}\setminus \pP_{s}| + |\mathcal T_{s}|) = |\pP\setminus \pP_{t}| + \sum_{s=1}^t|\mathcal T_s| \le |\pP| + |\mathcal T(H)|.\]
Consider an incompatible pair $\{x,x'\}\in \pP\cap \binom X2$. There must be some $s\in [t]$ such that $\{x,x'\}\in \pP_{s-1}\setminus \pP_{s}$. By Claim~\ref{clm:cdeq0}, $\cd_{H\setminus R_s}(x,x')=0$. Since $R_s \subseteq R_t=R$, we have $\cd_{H\setminus R}(x,x')=0$, as required.
\end{proof}

\subsection{Preparing the $\beta$-triangles}\label{subsec:triangle-taming}
We now start the proof of Lemma~\ref{lem:3-graph-deletion}.  Let $\fF=\fF(X,Y)$ be a good bipartite $3$-graph satisfying the hypotheses of the lemma. 
Let $\beta=\beta(\fF)$.
The $\beta$-triangles are exceptional configurations which we treat separately.
We record them in a form that can be carried through the link-deletion process without being conflated with the remaining incompatible pairs processed later.
If $abc$ is a $\beta$-triangle in a subgraph $\gG\subseteq \fF$, define its \emph{kernel} by
\[
 \kK_{\gG}(abc)=\CN_{\gG}(ab,bc,ca).
\]
Thus $\kK_{\gG}(abc)$ is the set of $X$-vertices adjacent to all three $Y$-edges of the triangle.

\begin{lem}\label{clm:tilde-H}
There exists a subgraph $\widetilde{\fF}\subseteq \fF$ such that
\begin{equation}\label{equ:F-tildeF}
 |\fF|-|\widetilde{\fF}|
 \le |\dD(\fF)\cup\eE_X(\fF)|
      -|\dD(\widetilde{\fF})\cup\eE_X(\widetilde{\fF})|
      +O_r\bigl(|X|(|\alpha(\fF)|+1)\bigr),
\end{equation}
and every $\beta(\widetilde{\fF})$-triangle $abc$ satisfies the following two properties:
\begin{enumerate}[label=(\roman*)]
 \item $\kK_{\widetilde{\fF}}(abc)$ consists of either two or three pairwise incompatible vertices;
 \item if $x,x''\in \kK_{\widetilde{\fF}}(abc)$, then there is no $x'\in X$ such that $x\leftrightarrow x'$ and $x'\leftrightarrow x''$.
\end{enumerate}
\end{lem}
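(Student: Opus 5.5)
We build $\widetilde{\fF}$ by a greedy deletion process run triangle by triangle. Along the way we track the potential
\[
\Phi(\gG)=|\dD(\gG)\cup\eE_X(\gG)|.
\]
This quantity can only decrease when edges are deleted, because passing to a subgraph creates no new diagonals, and a half-diagonal either survives, disappears, or was already counted as a diagonal. The goal is to charge each deleted edge to a unit drop of $\Phi$. The exceptions are a bounded number of deletions per exceptional configuration, and these are controlled by $|\alpha(\fF)|$ together with the bounded compatibility degree.

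First fix a $\beta$-triangle $abc$ in the current graph $\gG$, with kernel $K=\kK_{\gG}(abc)$. By property~(2) of Definition~\ref{Prop:3graph-relation}, triangles are edge-disjoint, so the operations below on different triangles touch disjoint $Y$-pairs. Since $\{x,x'\}\in\beta$ for some incompatible pair, $|K|\ge 2$ must be arranged, and pairs in $K$ should be incompatible.

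Suppose $x,x'\in K$ are compatible. Then $\CN(x,x')\supseteq\{ab,bc,ca\}$ is not a star, so $\{x,x'\}\in\alpha(\fF)$. We delete the $O(1)$ edges $xab,xbc,xca$ for one of them. The number of such compatible pairs inside kernels is at most $|\alpha(\fF)|$, and each vertex lies in boundedly many, so this costs $O_r(|\alpha(\fF)|)$ edges in total. After these deletions, if $K$ shrinks below two, the triangle is no longer a $\beta$-triangle and nothing more is required.

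Also $|K|\le 3$ must be shown. Four pairwise incompatible kernel vertices $x_1,\dots,x_4$ would give, inside the $X$–link graphs of $a$, $b$, $c$, many diagonals (for instance $\{x_i,x_j\}\in\dD(\fF_a)$ with common neighbours $b,c$). Deleting all but three kernel vertices' triangle edges removes at most $3(|K|-3)$ edges. Meanwhile it destroys $Y$-diagonals $\{a,b\}$ among the $x_i$ that are uniquely witnessed by this triangle, which is the unit drop of $\Phi$ we charge to. Edge-disjointness of triangles is what makes these diagonals private to $abc$.

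For property~(ii), consider $x,x''\in K$ with a common compatible neighbour $x'$. Because each vertex is compatible with only $O_r(1)$ others, there are $O_r(1)$ such $x'$ per $x$. We delete $x$'s three triangle edges at a cost of 3. We charge this either to a destroyed diagonal/half-diagonal $\{x,x''\}\in\dD\cup\eE_X$ (which dies when $x$ leaves the kernel, provided the triangle was its only witness), or otherwise to an $O_r(1)$ budget per vertex $x\in X$. The latter gives the $O_r(|X|)$ term. For each $x$, we must show that only $O_r(1)$ triangles are cleaned at $x$ without a $\Phi$-drop. I expect this charging to be the main obstacle. To handle it, I would use that a compatible path $x\leftrightarrow x'\leftrightarrow x''$ with $\{x,x''\}$ in two kernels forces $\{x,x''\}$ to be a diagonal in $\fF_a$ for two distinct triangles. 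I would then argue, via the goodness property~(3) together with the bounded compatibility degree, that such repeats are $O_r(|\alpha(\fF)|+1)$ per vertex.

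Finally, we iterate until no triangle violates (i) or (ii). Since deletions only shrink kernels and $\beta$-triangles of the subgraph are $\beta$-triangles of $\fF$, the process terminates. Summing the charges gives \eqref{equ:F-tildeF}.
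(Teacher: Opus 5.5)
Your three stages (break compatible pairs inside kernels, truncate kernels to three vertices, destroy compatible chains) match the paper's. However, the step you flag as the main obstacle is left unproved, and the tool you suggest for it cannot close it.

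The missing observation follows directly from goodness. Once the kernel is pairwise incompatible, $x\nleftrightarrow x''$. By property~(1) of Definition~\ref{Prop:3graph-relation}, $\CN_{\fF}(x,x'')$ is then a star or a triangle, and since it contains $ab,bc,ca$ it equals the triangle $abc$. So the scenario ``$\{x,x''\}$ in two kernels'' never occurs, and an ordered chain $x\leftrightarrow x'\leftrightarrow x''$ determines at most one $\beta$-triangle. Bounded compatibility degree gives only $O_r(|X|)$ ordered chains. Hence at most $O_r(|X|)$ triangles need cleaning, each at a cost of $O(1)$ edges since kernels have size at most three. This goes straight into the error term, and no charging to $\Phi$ is needed. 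Property~(3) concerns the families $\CN_{\fF}(uy,vy)$ attached to $Y$-pairs and says nothing about how many kernels contain a given $X$-pair. So your planned route cannot produce the per-vertex bound you need.

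The same uniqueness fact is what your other two stages need, and both are currently off.

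\emph{First stage.} ``At most $|\alpha(\fF)|$ compatible pairs'' bounds pairs, not (triangle, pair) incidences. One pair $\{x',x''\}\in\alpha(\fF)$ can sit in many kernels: take kernels $\{x',x'',x_j\}$ on vertex-disjoint triangles $T_j$, with each $x_j$ incompatible with everything. This graph is good, yet your procedure deletes about $3|X|$ edges while $|\alpha(\fF)|=1$, so your claimed $O_r(|\alpha(\fF)|)$ bound is false. The right count fixes a mixed configuration: $\{x',x''\}\in\alpha(\fF)$ together with $x\nleftrightarrow x'$ in the same kernel. The triangle is then forced to be $\CN_{\fF}(x,x')$. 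This gives $O(|X||\alpha(\fF)|)$ configurations and deletions, which is exactly what \eqref{equ:F-tildeF} allows.

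\emph{Truncation stage.} The pairs that vanish are the incompatible $X$-pairs $\{z,x_i\}$ with $z$ discarded: their common neighbourhood was exactly $abc$ and becomes empty. It is not the $Y$-pair $\{a,b\}$ that vanishes; that pair stays a diagonal of $\fF_c$ through the three retained kernel vertices. There are $\binom{k}{2}-3$ vanishing pairs against $3(k-3)$ deleted edges, and $(k-3)(k-4)\ge 0$ closes the count. That these pairs belong to $abc$ alone again comes from property~(1), not from edge-disjointness of the triangles.
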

\begin{proof}
We modify the kernels in three steps. See Figure~\ref{fig:three-reductions}. Since subgraphs of a good graph remain good, every $\beta$-triangle that remains after deletions is a surviving old $\beta$-triangle.

\begin{figure}[htbp]
\centering
\begin{tikzpicture}[x=1cm,y=1cm]
  \node[paneltitle] at (0,3.65) {(a) Remove a mixed kernel};
  \node[xvertex] (aX1) at (-1.35,2.55) {$x$};
  \node[xvertex] (aX2) at (0,2.55) {$x'$};
  \node[xvertex] (aX3) at (1.35,2.55) {$x''$};
  \node[pairvertex] (aP1) at (-1.35,0.35) {$ab$};
  \node[pairvertex] (aP2) at (0,0.35) {$bc$};
  \node[pairvertex] (aP3) at (1.35,0.35) {$ac$};
  \foreach \i in {1,2,3}{\foreach \j in {1,2,3}{
    \draw[deleted] (aX\i)--(aP\j);
  }}
  \draw[incompat,bend left=27] (aX1) to
    node[note,fill=white,inner sep=1pt,yshift=4pt] {$\nleftrightarrow$} (aX2);
  \draw[compat,bend left=27] (aX2) to
    node[note,fill=white,inner sep=1pt,yshift=4pt] {$\leftrightarrow$} (aX3);

  \node[paneltitle] at (5.35,3.65) {(b) Truncate $|K|>3$};
  \node[xvertex] (bX1) at (3.75,2.55) {$x_1$};
  \node[xvertex] (bX2) at (4.80,2.55) {$x_2$};
  \node[xvertex] (bX3) at (5.85,2.55) {$x_3$};
  \node[xvertex,draw=neutralstroke!55,fill=black!1,text=black!60]
    (bX4) at (7.00,2.55) {$z$};
  \node[note] at (7.75,2.55) {$\cdots$};
  \node[pairvertex] (bP1) at (4.15,0.35) {$ab$};
  \node[pairvertex] (bP2) at (5.45,0.35) {$bc$};
  \node[pairvertex] (bP3) at (6.75,0.35) {$ac$};
  \foreach \i in {1,2,3}{\foreach \j in {1,2,3}{
    \draw[live] (bX\i)--(bP\j);
  }}
  \foreach \j in {1,2,3}{\draw[deleted] (bX4)--(bP\j);}
  \node[note] at (5.35,3.25) {all pairs in $K$ are incompatible};

  \node[paneltitle] at (11.25,3.65) {(c) Remove a compatible bridge};
  \node[xvertex] (cX1) at (9.65,2.25) {$x$};
  \node[xvertex] (cZ)  at (11.25,3.05) {$z$};
  \node[xvertex] (cX2) at (12.85,2.25) {$x''$};
  \node[pairvertex] (cP1) at (9.90,0.35) {$ab$};
  \node[pairvertex] (cP2) at (11.25,0.35) {$bc$};
  \node[pairvertex] (cP3) at (12.60,0.35) {$ac$};
  \foreach \j in {1,2,3}{
    \draw[deleted] (cX1)--(cP\j);
    \draw[deleted] (cX2)--(cP\j);
  }
  \draw[compat] (cX1)--node[note,fill=white,inner sep=1pt,yshift=6pt]
    {$\leftrightarrow$}(cZ);
  \draw[compat] (cZ)--node[note,fill=white,inner sep=1pt,yshift=6pt]
    {$\leftrightarrow$}(cX2);
  \draw[incompat,bend right=23] (cX1) to
    node[note,fill=white,inner sep=1pt,yshift=4pt] {$\nleftrightarrow$} (cX2);
\draw[live] (3.2,-0.7) -- (3.95,-0.7);
  \node[note,anchor=west] at (4.10,-0.7) {retained};
  
  \draw[deleted] (6.8,-0.7) -- (7.55,-0.7);
  \node[note,anchor=west] at (7.70,-0.7) {deleted};
\end{tikzpicture}
\caption{The three reductions in Lemma~\ref{clm:tilde-H}.}
\label{fig:three-reductions}
\end{figure}

First, remove mixed compatible--incompatible configurations inside kernels.  By such a configuration we mean a quadruple $(abc;x,x',x'')$, where $abc$ is a $\beta(\fF)$-triangle, $x,x',x''\in\kK_{\fF}(abc)$, $x\nleftrightarrow x'$, and $x'\leftrightarrow x''$.  Then $\{x',x''\}\in\alpha(\fF)$, because $abc\subseteq \CN_{\fF}(x',x'')$ and this common neighborhood is not a star.  Conversely, fix a pair $\{x',x''\}\in\alpha(\fF)$ and a vertex $x\in X$.  There is at most one $\beta$-triangle giving a mixed configuration with these vertices: if it exists, then it is contained in $\CN_{\fF}(x,x')$, which, by goodness and $x\nleftrightarrow x'$, is exactly that triangle.  Hence the number of such configurations is $O(|X||\alpha(\fF)|)$.  For each configuration, delete all edges between $\{x,x',x''\}$ and the three pairs $ab,bc,ca$.  This deletes $O(|X||\alpha(\fF)|)$ edges.  After these deletions, every remaining $\beta$-triangle has a pairwise incompatible kernel. Indeed, a $\beta$-triangle has an incompatible pair in its kernel. If the same kernel contained a compatible pair, then, together with one endpoint of an incompatible pair, these vertices would give a mixed compatible--incompatible configuration of the kind removed above.

Second, truncate large kernels. If a remaining $\beta$-triangle $abc$ has kernel $K$ of size $k>3$, keep three vertices of $K$, and for every remaining vertex $z\in K$, delete the three edges $zab,zbc,zca$. This leaves a kernel of size three. We charge these deletions to the decrease of $|\dD(\cdot)\cup\eE_X(\cdot)|$. Indeed, all pairs in $K$ are incompatible. After the truncation, the only pairs of $K$ that remain diagonal are the three pairs among the retained vertices. Thus at least $\binom{k}{2}-3$ diagonal pairs disappear, whereas only $3(k-3)$ edges are deleted, and $3(k-3)\le \binom{k}{2}-3$. The charges are disjoint for distinct $\beta$-triangles, since the same incompatible pair cannot lie in the kernels of two different $\beta$-triangles; otherwise its common neighborhood would contain two distinct triangles, contradicting goodness. After this step, every remaining $\beta$-triangle has a kernel of size two or three, and this kernel is pairwise incompatible.

Finally, remove compatible chains through the remaining kernels. Suppose that a remaining $\beta$-triangle $abc$ has $x,x''$ in its kernel and that $x\leftrightarrow x'\leftrightarrow x''$ for some $x'\in X$. Since each vertex of $X$ has $O_r(1)$ compatible neighbors, there are $O_r(|X|)$ ordered chains $x\leftrightarrow x'\leftrightarrow x''$. For a fixed chain there is at most one relevant $\beta$-triangle, because $x\nleftrightarrow x''$ and goodness determines the triangle as $\CN_{\fF}(x,x'')$. For each such triangle $abc$, delete all edges between its kernel and the three pairs $ab,bc,ca$. Let $\widetilde{\fF}$ be the final graph. This deletes only $O_r(|X|)$ edges, since every remaining kernel has size two or three.

The first and third steps delete $O_r(|X|(|\alpha(\fF)|+1))$ edges in total, while the second step is charged to the decrease of $|\dD(\cdot)\cup\eE_X(\cdot)|$. This gives \eqref{equ:F-tildeF}. The construction also gives properties (i) and (ii).
\end{proof}

Fix arbitrary linear orders on $X$ and $Y$.  The order on $X$ determines which pair is retained from each three-vertex kernel, while the order on $Y$ distinguishes the pair $ab$ in each triangle $abc$ with $a<b<c$ and determines the order in which the $Y$-links are processed in Subsection~\ref{subsec:y-link-iteration}.

After Lemma~\ref{clm:tilde-H}, every surviving $\beta$-triangle $abc$ satisfies $|\kK_{\widetilde{\fF}}(abc)|\in\{2,3\}$.  The case $|\kK_{\widetilde{\fF}}(abc)|=2$ is already in the form allowed by Lemma~\ref{lem:3-graph-deletion}.  We next prepare the triangles with a three-vertex kernel: one kernel pair is kept as a surviving $\beta$-pair, and the other two kernel pairs are recorded as protected half-diagonals.

\begin{lem}\label{lem:typeII-normalization}
There exists a subgraph $\fF^0\subseteq \widetilde{\fF}$ and a set $\eE_0\subseteq \eE_X(\fF^0)$ such that the following hold.
\begin{enumerate}[label=(\roman*)]
 \item   
 $|\widetilde{\fF}|-|\fF^0|
 \le |\dD(\widetilde{\fF})\cup \eE_X(\widetilde{\fF})|-|\dD(\fF^0)\cup\eE_X(\fF^0)|+|\eE_0|. $
 \item  As sets of incompatible pairs in $\binom X2$, $\beta(\widetilde{\fF})=\beta(\fF^0)\sqcup \eE_0.$
 
 \item Every $\beta(\fF^0)$-triangle   $abc$ satisfies $|\kK_{\fF^0}(abc)|=2$. 
\end{enumerate}
Moreover, suppose that $abc$ is a $\beta(\widetilde{\fF})$-triangle with $a<b<c$ and $\kK_{\widetilde{\fF}}(abc)=\{x,x',x''\}$, where $x<x'<x''$(see Figure~\ref{fig:normalization-kernel3}).  Then $\{x,x'\}$ is the surviving $\beta(\fF^0)$-pair, while
 $     \{x,x''\},\{x',x''\}\in\eE_0.$
The vertex $x''$ is left adjacent to exactly one of the three $Y$-edges $ab,bc,ca$: it is adjacent only to $ac$ if $N_{\widetilde{\fF}}(ab)$ contains no compatible pair, and adjacent only to $ab$ otherwise.
\end{lem}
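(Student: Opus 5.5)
The construction is explicit. For every $\beta(\widetilde{\fF})$-triangle $abc$ (with $a<b<c$) whose kernel is a three-vertex set $\{x,x',x''\}$, where $x<x'<x''$, we delete two of the three edges joining $x''$ to $ab,bc,ca$. If $N_{\widetilde{\fF}}(ab)$ contains no compatible pair, we keep $x''ac$ and delete $x''ab,x''bc$. Otherwise we keep $x''ab$ and delete $x''bc,x''ca$. Call the result $\fF^0$, and put into $\eE_0$ the two pairs $\{x,x''\},\{x',x''\}$ from each such triangle. Triangles with two-vertex kernels are untouched. The deletions for different triangles are disjoint, since triangles are edge-disjoint by goodness (2). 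The deletions are also local: the removed edges lie on the $Y$-pairs of a single triangle, and these pairs belong to no other triangle.

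First we check (ii) and (iii). In $\widetilde{\fF}$, each incompatible $X$-pair $\{x_1,x_2\}$ whose common neighbourhood is a triangle lies in the kernel of exactly that triangle. It is a $\beta$-pair, and by Lemma~\ref{clm:tilde-H}(i) all kernel pairs are incompatible. After the deletion, $\CN_{\fF^0}(x,x')$ is still the full triangle $abc$, so $\{x,x'\}\in\beta(\fF^0)$. For the pairs $\{x,x''\}$ and $\{x',x''\}$, the common neighbourhood has lost two triangle edges. Any other common neighbour $pq$ would have been present in $\widetilde{\fF}$, but goodness (1) says $\CN_{\widetilde{\fF}}(x,x'')$ is exactly the triangle. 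Hence the new codegree is exactly $1$, so these pairs are half-diagonals, not $\beta$-pairs. This gives $\eE_0\subseteq\eE_X(\fF^0)$. Since we only pass to a subgraph, no new $\beta$-pairs appear, and we obtain the disjoint union $\beta(\widetilde{\fF})=\beta(\fF^0)\sqcup\eE_0$. The kernel of the triangle is now $\{x,x'\}$, which gives (iii); two-vertex kernels are unchanged.

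The main step is the counting inequality (i). Each processed triangle costs exactly $2$ deleted edges and contributes $2$ to $|\eE_0|$. So it suffices to show that $\dD\cup\eE_X$ does not grow, i.e.\ $|\dD(\fF^0)\cup\eE_X(\fF^0)|\le|\dD(\widetilde{\fF})\cup\eE_X(\widetilde{\fF})|$. A diagonal in the subgraph was already a diagonal. An $X$-half-diagonal in $\fF^0$ was either an $X$-half-diagonal or an $X$-diagonal in $\widetilde{\fF}$. The only subtlety is a pair that lies in $\eE(\cdot)$ through some $Y$-link while being excluded from $\eE$ because it is in $\dD$; but the union $\dD\cup\eE_X$ counts each pair once in either case. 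The delicate point is $Y$-pairs. A $Y$-pair $\{p,q\}$ could be a diagonal in some $X$-link of $\fF^0$ only if it was one before, so the $Y$-side is also monotone. Hence the union only shrinks, and (i) holds with room to spare: the deleted edges are charged to $|\eE_0|$ alone.

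Finally, the explicit description in the ``moreover'' part holds by construction. The choice between $ab$ and $ac$ is exactly the rule used above. Its purpose, a later edge exchange, plays no role in proving the present statement.
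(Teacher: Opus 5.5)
Your proposal is correct and follows the paper's proof essentially step for step. Both delete two of the three edges at $x''$ for each three-kernel triangle, using the same rule based on whether $N_{\widetilde{\fF}}(ab)$ contains a compatible pair. Both check that $\{x,x'\}$ keeps the full triangle while the two pairs through $x''$ drop to codegree one. Both count $|\widetilde{\fF}|-|\fF^0|=|\eE_0|$ and use that $\dD\cup\eE_X$ can only shrink when passing to a subgraph. Your use of goodness (1) to show that $\CN_{\widetilde{\fF}}(x,x'')$ is exactly the triangle, so that the new codegree is exactly one, makes explicit a detail the paper leaves implicit.
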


 \begin{figure}[htbp]
\centering
\begin{tikzpicture}[x=1cm,y=1cm]

  \node[xvertex] (tX1) at (-1.35,1.8) {$x$};
  \node[xvertex] (tX2) at (0,1.8) {$x'$};
  \node[xvertex] (tX3) at (1.35,1.8) {$x''$};
  
  \node[pairvertex] (tP1) at (-1.35,-0.2) {$ab$};
  \node[pairvertex] (tP2) at (0,-0.2) {$bc$};
  \node[pairvertex] (tP3) at (1.35,-0.2) {$ac$};
  
  \foreach \i in {1,2,3}{\foreach \j in {1,2,3}{
    \draw[live] (tX\i)--(tP\j);
  }}

  \node[xvertex] (lX1) at (-7.35,1.8) {$x$};
  \node[xvertex] (lX2) at (-6.00,1.8) {$x'$};
  \node[xvertex] (lX3) at (-4.65,1.8) {$x''$};
  
  \node[pairvertex] (lP1) at (-7.35,-0.2) {$ab$};
  \node[pairvertex] (lP2) at (-6.00,-0.2) {$bc$};
  \node[pairvertex] (lP3) at (-4.65,-0.2) {$ac$};
  
  \foreach \i in {1,2}{\foreach \j in {1,2,3}{
    \draw[live] (lX\i)--(lP\j);
  }}
  \draw[deleted] (lX3)--(lP1);
  \draw[deleted] (lX3)--(lP2);
  \draw[live] (lX3)--(lP3);
  
  \node[note,text width=4.2cm, align=center] at (-6,-1)
    {$N_{\widetilde{\fF}}(ab)$ contains no compatible pair};

  \node[xvertex] (rX1) at (4.65,1.8) {$x$};
  \node[xvertex] (rX2) at (6.00,1.8) {$x'$};
  \node[xvertex] (rX3) at (7.35,1.8) {$x''$};
  
  \node[pairvertex] (rP1) at (4.65,-0.2) {$ab$};
  \node[pairvertex] (rP2) at (6.00,-0.2) {$bc$};
  \node[pairvertex] (rP3) at (7.35,-0.2) {$ac$};
  
  \foreach \i in {1,2}{\foreach \j in {1,2,3}{
    \draw[live] (rX\i)--(rP\j);
  }}
  \draw[live] (rX3)--(rP1);
  \draw[deleted] (rX3)--(rP2);
  \draw[deleted] (rX3)--(rP3);
  
  \node[note,text width=4.2cm, align=center] at (6,-1)
    {$N_{\widetilde{\fF}}(ab)$ contains a compatible pair};

  \draw[arrow] (-2.2,0.8) -- (-3.8,0.8);
  \draw[arrow] (2.2,0.8) -- (3.8,0.8);
 
  \draw[live] (-1.9,-1.6)--(-1.25,-1.6);
  \node[note,anchor=west] at (-1,-1.6) {retained};
  \draw[deleted] (0.75,-1.6)--(1.45,-1.6);
  \node[note,anchor=west] at (1.55,-1.6) {deleted};

\end{tikzpicture}
\caption{Normalization of a $\beta$-triangle with
$\kK_{\widetilde{\fF}}(abc)=\{x,x',x''\}$, where $a<b<c$ and
$x<x'<x''$.}
\label{fig:normalization-kernel3}
\end{figure}

\begin{proof}
We treat the surviving $\beta(\widetilde{\fF})$-triangles with kernel size three.  Triangles with kernel size two are left unchanged.  List the three-kernel triangles as $a_ib_ic_i$, $i\in I$, with $a_i<b_i<c_i$, and write $\kK_{\widetilde{\fF}}(a_ib_ic_i)=\{x_i,x_i',x_i''\}$, where $x_i<x_i'<x_i''$.  For each such triangle, put $\{x_i,x_i''\}$ and $\{x_i',x_i''\}$ into $\eE_0$.  We then delete two of the three edges from $x_i''$ to the $Y$-edges of the triangle.  If $N_{\widetilde{\fF}}(a_ib_i)$ contains no compatible pair, delete $x_i''a_ib_i$ and $x_i''b_ic_i$, so that $x_i''$ remains adjacent only to $a_ic_i$.  Otherwise delete $x_i''a_ic_i$ and $x_i''b_ic_i$, so that $x_i''$ remains adjacent only to $a_ib_i$.  Let $\fF^0$ be the resulting subgraph.

The operations are independent for different triangles, because $\beta$-triangles have pairwise edge-disjoint $Y$-triangles.  In a processed three-kernel triangle, the pair $\{x_i,x_i'\}$ remains adjacent to all three $Y$-edges, and hence remains the unique $\beta(\fF^0)$-pair supported on that triangle.  The two pairs involving $x_i''$ are adjacent to exactly one $Y$-edge, so they are half-diagonals in $\fF^0$ and belong to $\eE_0$.  Triangles with kernel size two were not changed.  This proves the structural assertions and the recorded form for three-kernel triangles.

Exactly two edges are deleted for each processed triangle, and exactly two pairs are added to $\eE_0$.  Hence $|\widetilde{\fF}|-|\fF^0|=|\eE_0|$.  Since passing to a subgraph cannot create a new diagonal or half-diagonal, $\dD(\fF^0)\cup\eE_X(\fF^0)\subseteq \dD(\widetilde{\fF})\cup\eE_X(\widetilde{\fF})$, and (i) follows.
\end{proof}

\subsection{Deleting edges from the $Y$-links}\label{subsec:y-link-iteration}

We now delete the remaining incompatible pairs not protected by $\eE_0$, one $Y$-link at a time.  The protected half-diagonals in $\eE_0$ are not processed: they are the one-edge remnants created in Lemma~\ref{lem:typeII-normalization} and will be used only in the final edge exchange.

Let $\pP=(\dD(\fF^0)\cup \eE_X(\fF^0))\setminus \eE_0$.  We assign every pair in $\pP$ to one vertex of $Y$.  Define $g:\pP\to Y$ as follows.
\begin{itemize}
 \item If $\{x,x'\}\in \eE_X(\fF^0)\setminus\eE_0$, write $\CN_{\fF^0}(x,x')=\{yy'\}$ with $y<y'$, and set $g(x,x')=y$.
 \item If $\{x,x'\}\in \dD_X(\fF^0)$ is incompatible, then $\CN_{\fF^0}(x,x')$ is either a star or a triangle.  In the star case, let $g(x,x')$ be the center of the star.  In the triangle case, write the triangle as $abc$ with $a<b<c$, and set $g(x,x')=a$.
 \item If $\{x,x'\}\in \dD_X(\fF^0)$ is compatible, set $g(x,x')=\min\{y\in Y:\{x,x'\}\in \dD(\fF^0_y)\}$.
 \item If $\{u,v\}\in \dD_Y(\fF^0)$, let $g(u,v)$ be the vertex $w\in Y$ given by Definition~\ref{Prop:3graph-relation}(3).  Thus, for every $y\in Y\setminus\{u,v,w\}$, the family $\CN_{\fF}(uy,vy)$ is compatible.
\end{itemize}
Here the minimum is taken with respect to the fixed order on $Y$.  Write $Y=\{y_1,\ldots,y_n\}$ in this order, and set $\pP_i=\{p\in \pP:g(p)=y_i\}$ for $i\in[n]$.  Then the sets $\pP_i$ are pairwise disjoint and
\begin{equation}\label{equ-sum-Pi}
 \sum_{i\in[n]}|\pP_i|
 = |\dD(\fF^0)\cup\eE_X(\fF^0)|-|\eE_0|.
\end{equation}

\begin{lem}\label{clm:Hn}
There exists a subgraph $\fF^n\subseteq \fF^0$ such that
\begin{equation}\label{equ:link-cost}
 |\fF^0|-|\fF^n|
 \le |\dD(\fF^0)\cup \eE_X(\fF^0)|-|\eE_0|
 +O_r\bigl(|Y||\alpha(\fF)|+|X|\bigr),
\end{equation}
and, for every incompatible pair $x\nleftrightarrow x'$ in $X$,
\begin{equation}\label{equ:cd-xxprime}
 \cd_{\fF^n}(x,x')=
 \begin{cases}
 0 \text{ or } 1, & \{x,x'\}\in \eE_0\cup\beta(\fF^0)=\beta(\widetilde{\fF}),\\
 0, & \text{otherwise.}
 \end{cases}
\end{equation}
Moreover, if $abc$ is a $\beta(\fF^0)$-triangle with $a<b<c$ and $\kK_{\fF^0}(abc)=\{x,x'\}$, then (see Figure~\ref{fig:ordered-link})
\begin{equation}\label{equ:CNxxprime-ab}
 \CN_{\fF^n}(x,x')\subseteq\{bc\}.
\end{equation}
\end{lem}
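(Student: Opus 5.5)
The plan is to process the $Y$-links one at a time in the fixed order $y_1<\dots<y_n$, and to apply Lemma~\ref{lem:graph-remove-diagonals} to each link. Set $\fF^{(0)}=\fF^0$. At step $i$ we consider the link graph $H_i=\fF^{(i-1)}_{y_i}$, a bipartite graph with parts $X$ and $Y\setminus\{y_i\}$. We take as target set the pairs of $\pP_i$ that are still diagonals or half-diagonals of $H_i$. To this set we add the $Y$-pairs $\{u,v\}$ whose link at $y_i$ is an incompatible diagonal. We then delete the resulting edge set $R_i$, viewed as triples containing $y_i$, to obtain $\fF^{(i)}$, and finally put $\fF^n=\fF^{(n)}$.

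The first thing to check is the hypothesis of Lemma~\ref{lem:graph-remove-diagonals}: every diagonal of $H_i$ outside the target set is compatible. We split by type.
\begin{itemize}
\item[(a)] For an $X$-diagonal $\{x,x'\}$ that is compatible there is nothing to prove.
\item[(b)] For an incompatible $X$-diagonal $\{x,x'\}$ of $H_i$, the pair $\{x,x'\}$ lies in $\dD_X(\fF^0)$, and $\CN(x,x')$ is a star or triangle. In the star case $y_i$ must be its center, so $g(x,x')=y_i$. In the triangle case $y_i\in\{a,b,c\}$. If $y_i=a$ the pair is a target. If $y_i\in\{b,c\}$, then the link at $a$ was processed earlier, and the pair was either killed there or survived only as a protected $\beta$-pair.
\item[(c)] For a $Y$-diagonal $\{u,v\}$ of $H_i$, Definition~\ref{Prop:3graph-relation}(3) makes $\CN(uy_i,vy_i)$ compatible unless $y_i=g(u,v)$.
\end{itemize}
This is exactly why the assignment $g$ was designed as it was. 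The total cost is then $\sum_i(|\pP_i|+|\mathcal T(H_i)|)$. The first part is controlled by \eqref{equ-sum-Pi}.

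Next we bound $\sum_i|\mathcal T(H_i)|$, which I expect to be the main quantitative step. A triple $(x_1,x_2,x_3)$ in $\mathcal T(H_i)$ is a compatible chain with a common neighbour in the link of $y_i$, with $\{x_1,x_2\}$ and $\{x_2,x_3\}$ diagonals in $H_i$. Because compatibility degrees are $O_r(1)$, there are only $O_r(|X|)$ such chains in total. The concern is a single chain counted at many $y_i$. I would split according to whether $\{x_1,x_2\}$ (or $\{x_2,x_3\}$) lies in $\alpha(\fF)$.
\begin{itemize}
\item[(a)] If $\{x_1,x_2\}\notin\alpha(\fF)$, then $\CN_{\fF}(x_1,x_2)$ is a star with center $z$. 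For $\{x_1,x_2\}$ to be a diagonal in the link at $y_i$ we need $y_i=z$. So the chain is counted at most once, giving $O_r(|X|)$ in total.
\item[(b)] If $\{x_1,x_2\}\in\alpha(\fF)$, there are at most $|Y|$ choices of $y_i$, and each such pair extends to $O_r(1)$ chains. This gives $O_r(|Y||\alpha(\fF)|)$.
\end{itemize}
Together these yield \eqref{equ:link-cost}.

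Finally we verify \eqref{equ:cd-xxprime} and \eqref{equ:CNxxprime-ab}. Take an incompatible pair $\{x,x'\}$ with $\cd_{\fF^0}(x,x')\geq 1$ that is not protected.
\begin{itemize}
\item[(a)] If its common neighbourhood is a star, then all its common edges lie in the link of the center $g(x,x')$. There the pair is a target, so Lemma~\ref{lem:graph-remove-diagonals} drives its codegree to zero.
\item[(b)] If it is a half-diagonal, the same argument applies at $g$.
\item[(c)] If it is a $\beta(\fF^0)$-pair with triangle $abc$ and $a<b<c$, we process it at $y_i=a$. This deletes its common neighbours $b$ and $c$ inside the link of $a$, that is, the edges through $ab$ and $ac$. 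Only $bc$ can remain, which gives \eqref{equ:CNxxprime-ab} and codegree at most one.
\item[(d)] For protected pairs in $\eE_0$ the codegree is already at most one and can only decrease.
\end{itemize}
A subtle point is that we must ensure the processing at $a$ is legitimate, so that $a$ is where the triangle pair is charged. We must also ensure that later steps create no new incompatible diagonals; this holds because deletions only shrink common neighbourhoods.
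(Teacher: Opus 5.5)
Your proposal is correct and follows the paper's proof essentially step for step. You process the $Y$-links in increasing order with target set $\pP_i\cap(\dD(H_i)\cup\eE(H_i))$, verify the hypothesis of Lemma~\ref{lem:graph-remove-diagonals} through the assignment $g$ and the ordering $a<b<c$, and bound $\sum_i|\mathcal T(H_i)|$ by splitting chains according to whether the first pair lies in $\alpha(\fF)$; the paper phrases this last step as ``a chain recorded in two different links forces a non-star common neighbourhood, hence an $\alpha(\fF)$-pair.''

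Two small cleanups. First, the incompatible $Y$-diagonals you add to the target set already lie in $\pP_i$ by your own check (c), so they add no cost. Second, in the triangle case with $y_i\in\{b,c\}$, the reason is not that the pair is ``protected.'' After the $a$-link is processed, only $bc$ can remain as a common edge, so the pair has codegree at most one in the $b$- and $c$-links and is therefore not a diagonal there.
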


\begin{figure}[htbp]
\centering
\begin{tikzpicture}[scale=0.8,  x=1cm,y=1cm]
 
  \node[paneltitle] at (3.00,4.25) {(a) Process the $a$-link};
  \node[xvertex] (Lx) at (1.20,3.10) {$x$};
  \node[xvertex] (Lxp) at (1.20,1.10) {$x'$};
  \node[yvertex] (Lb) at (4.80,3.10) {$b$};
  \node[yvertex] (Lc) at (4.80,1.10) {$c$};
  \draw[live] (Lx)--(Lb);
  \draw[live] (Lx)--(Lc);
  \draw[deleted] (Lxp)--(Lb);
  \draw[deleted] (Lxp)--(Lc);
  \node[note,text width=4.6cm] at (3.00,0.02)
    {remove $\CN_{\fF_a}(x,x')$};

  \draw[arrow] (5.70,2.10)--(7.45,2.10);
  \node[note] at (6.58,2.62) {back};

  \node[paneltitle] at (10.50,4.25) {(b) Residual edges};
  \node[xvertex] (Rx) at (8.20,3.10) {$x$};
  \node[xvertex] (Rxp) at (8.20,1.10) {$x'$};
  \node[pairvertex] (Rab) at (12.80,3.50) {$ab$};
  \node[pairvertex] (Rac) at (12.80,2.10) {$ac$};
  \node[pairvertex] (Rbc) at (12.80,0.70) {$bc$};
  \draw[live] (Rx)--(Rab);
  \draw[deleted] (Rxp)--(Rab);
  \draw[live] (Rx)--(Rac);
  \draw[deleted] (Rxp)--(Rac);
  \draw[live] (Rx)--(Rbc);
  \draw[live] (Rxp)--(Rbc);
  \node[note,text width=5.2cm] at (10.50,-0.20)
    {$\CN_{\fF^n}(x,x')\subseteq\{bc\}$};
\end{tikzpicture}
\caption{Processing $Y$-links}
\label{fig:ordered-link}
\end{figure}

\begin{proof}
We process the $Y$-links in the order $y_1,\ldots,y_n$.  Suppose that $\fF^{s-1}$ has been constructed, and let $\fF^{s-1}_s$ be the $y_s$-link graph of $\fF^{s-1}$.  In this link we apply Lemma~\ref{lem:graph-remove-diagonals} with target set
\[
 \pP'_s=\pP_s\cap\bigl(\dD(\fF^{s-1}_s)\cup\eE(\fF^{s-1}_s)\bigr).
\]
Here and below, saying that an incompatible $X$-pair is processed in the $y_s$-link means that Lemma~\ref{lem:graph-remove-diagonals} deletes edges from this link until the pair has codegree zero there.

We first verify the hypothesis of the one-link lemma.  Let $q$ be a diagonal of $\fF^{s-1}_s$ with $q\notin\pP'_s$.  We must show that $q$ is compatible.

Assume first that $q=\{u,v\}\subseteq Y$.  Since $q$ is a diagonal in the $y_s$-link, we have $y_s\notin\{u,v\}$.  Also $q\in \dD_Y(\fF^0)\subseteq\pP$.  As $q\notin\pP'_s$ and $q$ is a current diagonal, it follows that $q\notin\pP_s$, and hence $g(q)\ne y_s$.  By the definition of $g$ for $Y$-diagonals, $\CN_{\fF}(uy_s,vy_s)$ is compatible.  The current common neighborhood $\CN_{\fF^{s-1}}(uy_s,vy_s)$ is a subfamily of it, and is therefore compatible.

Now assume that $q=\{x,x'\}\subseteq X$.  If $x\leftrightarrow x'$, then $q$ is compatible, so suppose $x\nleftrightarrow x'$.  Since $q$ is a diagonal in the current link, it is an incompatible diagonal of $\fF^0$.  It is not in $\eE_0$, because pairs in $\eE_0$ are half-diagonals in $\fF^0$ and deleting edges cannot turn a half-diagonal into a diagonal.  Thus $q\in\pP$.  As above, $q\notin\pP'_s$ implies $q\notin\pP_s$, so $g(q)\ne y_s$.

If $\CN_{\fF^0}(x,x')$ is a star, then the presence of two common neighbors in the $y_s$-link forces $y_s$ to be the center of that star; hence $g(q)=y_s$, a contradiction.  Therefore $\{x,x'\}\in\beta(\fF^0)$.  Write $\CN_{\fF^0}(x,x')=\{ab,bc,ca\}$ with $a<b<c$.  Then $g(q)=a$.  Since $g(q)\ne y_s$, we have $y_s\in\{b,c\}$.  The $a$-link was processed earlier, and the one-link lemma made the codegree of $q$ equal to zero in that link.  Equivalently, all common $Y$-edges for $x$ and $x'$ that contain $a$ were deleted at that stage.  The only edge of the triangle not containing $a$ is $bc$, so in the later $b$- or $c$-link the pair $q$ has codegree at most one.  It therefore cannot be a diagonal there, a contradiction.  Hence every non-target diagonal in the $y_s$-link is compatible, and Lemma~\ref{lem:graph-remove-diagonals} applies.

Let $R_s\subseteq E(\fF^{s-1}_s)$ be the edge set returned by Lemma~\ref{lem:graph-remove-diagonals}.  Then
\[
 |R_s|\le |\pP'_s|+|\mathcal T(\fF^{s-1}_s)|
       \le |\pP_s|+|\mathcal T(\fF^{s-1}_s)|.
\]
Define $\fF^s=\fF^{s-1}\setminus\{xyy_s:xy\in R_s\}$.  For each incompatible pair in $\pP'_s\cap\binom X2$, its codegree in the $y_s$-link of $\fF^s$ is zero.

We next record the structural consequences.  Let $abc$ be a $\beta(\fF^0)$-triangle with $a<b<c$ and kernel $\{x,x'\}$.  The pair $\{x,x'\}$ belongs to $\pP_a$, so it is processed in the $a$-link.  Hence no common $Y$-edge containing $a$ remains for $x$ and $x'$, and the only possible remaining edge of $\CN_{\fF^n}(x,x')$ is $bc$.  This proves \eqref{equ:CNxxprime-ab}.  Every incompatible pair with positive codegree in $\fF^0$, except for the protected pairs in $\eE_0$, belongs to $\pP$ and is processed in its assigned link.  Pairs in $\eE_0$ have codegree one in $\fF^0$ and are never processed.  This proves \eqref{equ:cd-xxprime}.

It remains to sum the deletion cost.  By \eqref{equ-sum-Pi},
\[
 |\fF^0|-|\fF^n|
 \le |\dD(\fF^0)\cup \eE_X(\fF^0)|-|\eE_0|
 +\sum_{s=1}^n |\mathcal T(\fF^{s-1}_s)|.
\]
Set $\mathcal T_s^0=\mathcal T(\fF^0_{y_s})$.  Since $\fF^{s-1}\subseteq\fF^0$, it suffices to bound $\sum_s|\mathcal T_s^0|$.  There are $O_r(|X|)$ ordered compatible chains $x\leftrightarrow x'\leftrightarrow x''$, because each vertex of $X$ has $O_r(1)$ compatible neighbors.  Fix one such chain.  If it is recorded in the $y$-link and in the $y'$-link with $y\ne y'$, then the compatible pair $\{x,x'\}$ has common $Y$-edges through two different vertices.  If $\CN_{\fF^0}(x,x')$ were a star, then all these common $Y$-edges would contain the same center; hence the chain could be recorded only in the link of that center.  Thus $\CN_{\fF^0}(x,x')$ is not a star.  Since a subfamily of a star is still a star, this implies $\{x,x'\}\in\alpha(\fF)$.  For each pair in $\alpha(\fF)$ there are only $O_r(1)$ choices for the third vertex of the chain.  Thus chains appearing in at least two links contribute $O_r(|Y||\alpha(\fF)|)$, while chains appearing in exactly one link contribute $O_r(|X|)$.  Therefore
\[
 \sum_{s=1}^n |\mathcal T(\fF^{s-1}_s)|
 \le \sum_{s=1}^n |\mathcal T_s^0|
 =O_r\bigl(|Y||\alpha(\fF)|+|X|\bigr),
\]
which proves \eqref{equ:link-cost}.
\end{proof}

\subsection{Completion of the proof}\label{subsec:final-assembly}

\begin{proof}[Proof of Lemma~\ref{lem:3-graph-deletion}]
Let $\rR^n=\fF\setminus\fF^n$.  We have three things to verify: the required size bound for the final deletion set, the asserted structure of every $Y$-pair neighborhood, and the additional conclusion for the $\beta(\fF)$-triangles.  The first two follow from the preceding construction, while the last one requires a final local adjustment for the remaining three-kernel triangles.

We first estimate the size of the deletion set already constructed.  Combining Lemmas~\ref{clm:tilde-H}, \ref{lem:typeII-normalization}, and \ref{clm:Hn}, we obtain
\begin{align}
|\rR^n|
&=(|\fF|-|\widetilde{\fF}|)+(|\widetilde{\fF}|-|\fF^0|)+(|\fF^0|-|\fF^n|) \notag\\
&\le |\dD(\fF)\cup\eE_X(\fF)|
 +O_r\bigl((|X|+|Y|)(|\alpha(\fF)|+1)\bigr) \notag\\
&\le \binom{|X|}{2}+\binom{|Y|}{2}
 +O_r\bigl((|X|+|Y|)(|\alpha(\fF)|+1)\bigr). \label{equ:final-size-bound}
\end{align}

Next we record the structural information about $Y$-pair neighborhoods in $\fF^n$.  Let $y,y'\in Y$, and suppose that $N_{\fF^n}(yy')$ is not compatible.  Choose incompatible vertices $x,x'\in N_{\fF^n}(yy')$.  By \eqref{equ:cd-xxprime}, the pair $\{x,x'\}$ lies in $\beta(\widetilde{\fF})\subseteq\beta(\fF)$, and $yy'$ is an edge of the corresponding $\beta(\widetilde{\fF})$-triangle, say $abc$.  We claim that
\begin{equation}\label{equ:Yn-structure}
        N_{\fF^n}(yy')\subseteq \kK_{\widetilde{\fF}}(abc).
\end{equation}
Indeed, take $v\in N_{\fF^n}(yy')$.  If $v$ is incompatible with one of $x,x'$, then \eqref{equ:cd-xxprime} and the edge-disjointness of the $\beta$-triangles force $v$ to lie in the same kernel.  If $v$ is compatible with both $x$ and $x'$, then Lemma~\ref{clm:tilde-H}(ii) gives a compatible chain through the incompatible kernel pair $\{x,x'\}$, which is impossible.  This proves \eqref{equ:Yn-structure}.

We now check the additional conclusion for surviving $\beta(\widetilde{\fF})$-triangles.  First let $abc$ be such a triangle with $a<b<c$ and $\kK_{\widetilde{\fF}}(abc)=\{x,x'\}$.  This triangle is unchanged by Lemma~\ref{lem:typeII-normalization}, so $\{x,x'\}$ is the kernel of a $\beta(\fF^0)$-triangle.  By \eqref{equ:CNxxprime-ab}, the pair $\{x,x'\}$ can remain only on the edge $bc$.  Therefore neither $N_{\fF^n}(ab)$ nor $N_{\fF^n}(ac)$ contains an incompatible pair.  By \eqref{equ:Yn-structure}, both neighborhoods are compatible.  Thus the second alternative in the moreover conclusion of Lemma~\ref{lem:3-graph-deletion} already holds for $abc$.

It remains to consider a surviving $\beta(\widetilde{\fF})$-triangle $abc$ with $a<b<c$ and $\kK_{\widetilde{\fF}}(abc)=\{x,x',x''\}$, where $x<x'<x''$.  By Lemma~\ref{lem:typeII-normalization}, the pair $\{x,x'\}$ is the surviving $\beta(\fF^0)$-pair, while $\{x,x''\}$ and $\{x',x''\}$ are the protected half-diagonals in $\eE_0$.  There are two cases, according to the recorded form in Lemma~\ref{lem:typeII-normalization}.

Suppose first that $N_{\widetilde{\fF}}(ab)$ contains no compatible pair.  Then $x''$ is left adjacent only to $ac$ among $ab,bc,ca$.  Also, by \eqref{equ:CNxxprime-ab}, the pair $\{x,x'\}$ can remain only on $bc$.  Hence $N_{\fF^n}(ab)$ contains no incompatible pair.  Since $N_{\fF^n}(ab)\subseteq N_{\widetilde{\fF}}(ab)$, and $N_{\widetilde{\fF}}(ab)$ contains no compatible pair, we must have $|N_{\fF^n}(ab)|\le 1$.  This gives the first alternative in the moreover conclusion.

Now suppose that $N_{\widetilde{\fF}}(ab)$ contains a compatible pair.  Then $x''$ is left adjacent only to $ab$ among $ab,bc,ca$.  Again \eqref{equ:CNxxprime-ab} shows that the surviving pair $\{x,x'\}$ can remain only on $bc$.  Therefore $N_{\fF^n}(ac)$ contains no incompatible pair, and hence it is compatible by \eqref{equ:Yn-structure}.

If the moreover conclusion already holds for this triangle, no further change is needed.  Otherwise all three neighborhoods have size at least two and at most one of them is compatible.  Since $N_{\fF^n}(ac)$ is compatible, the only possible obstruction is
\[
        N_{\fF^n}(bc)=\{x,x'\},
        \qquad
        N_{\fF^n}(ab)\in
        \bigl\{\{x,x''\},\{x',x''\}\bigr\}.
\]
We call such a triangle unresolved.

We repair the unresolved triangles one at a time.  List them as $a_ib_ic_i$, $i\in I$, with $a_i<b_i<c_i$ (see Figure~\ref{fig:local-repair}).   For each $i$, choose a compatible two-set $U_i\subseteq N_{\widetilde{\fF}}(a_ib_i)$ with $|U_i|=2$, which exists by the definition of the unresolved case, and put $V_i=N_{\fF^n}(a_ib_i)$.  Thus $V_i$ is an incompatible two-set.  Define
\[
 \rR=
 \left(\rR^n\cup\bigcup_{i\in I}\{va_ib_i:v\in V_i\setminus U_i\}\right)
 \setminus
 \bigcup_{i\in I}\{ua_ib_i:u\in U_i\setminus V_i\}.
\]

\begin{figure}[htbp]
\centering
\begin{tikzpicture}[x=1cm,y=1cm]
 
  \node[paneltitle] at (2.55,4.55) {(a) Before repair: $N_{\fF^n}(a_ib_i)=\{v_1,v_2\}$};
  \node[pairvertex] (abL) at (2.55,2.05) {$a_ib_i$};
  \node[xvertex] (v1L) at (0.25,3.25) {$v_1$};
  \node[xvertex] (v2L) at (0.25,0.85) {$v_2$};
  \node[xvertex] (u1L) at (4.85,3.25) {$u_1$};
  \node[xvertex] (u2L) at (4.85,0.85) {$u_2$};
  \draw[live] (v1L)--(abL);
  \draw[live] (v2L)--(abL);
  \draw[available] (u1L)--(abL);
  \draw[available] (u2L)--(abL);
  \draw[incompat,bend left=32] (v1L) to
    node[note,fill=white,inner sep=1pt,left] {$\nleftrightarrow$} (v2L);
  \draw[compat,bend left=32] (u2L) to
    node[note,fill=white,inner sep=1pt,xshift=8pt] {$\leftrightarrow$} (u1L);
  
  \draw[arrow] (6.10,2.05)--(7.65,2.05);
  \node[note,text width=2.8cm] at (6.88,2.30)
    {exchange};
 
  \node[paneltitle] at (11.20,4.55)
    {(b) After repair: $N_{\fF\setminus\mathcal R}(a_ib_i)=\{u_1,u_2\}$};
  \node[pairvertex] (abR) at (11.20,2.05) {$a_ib_i$};
  \node[xvertex] (v1R) at (8.90,3.25) {$v_1$};
  \node[xvertex] (v2R) at (8.90,0.85) {$v_2$};
  \node[xvertex] (u1R) at (13.50,3.25) {$u_1$};
  \node[xvertex] (u2R) at (13.50,0.85) {$u_2$};
  \draw[available] (v1R)--(abR);
  \draw[available] (v2R)--(abR);
  \draw[live] (u1R)--(abR);
  \draw[live] (u2R)--(abR);
  \draw[incompat,bend left=32] (v1R) to
    node[note,fill=white,inner sep=1pt,left] {$\nleftrightarrow$} (v2R);
  \draw[compat,bend left=32] (u2R) to
    node[note,fill=white,inner sep=1pt,xshift=8pt] {$\leftrightarrow$} (u1R);
\end{tikzpicture}
\caption{The local exchange at an unresolved triangle $a_ib_ic_i$.  }
\label{fig:local-repair}
\end{figure}
This replaces the incompatible neighborhood $V_i$ on $a_ib_i$ by the compatible two-set $U_i$.  For $u\in U_i\setminus V_i$, the edge $ua_ib_i$ belongs to $\widetilde{\fF}$ but not to $\fF^n$, and hence it is indeed an element of the previous deletion set $\rR^n$ that may be removed from that deletion set.  Since $|U_i\setminus V_i|=|V_i\setminus U_i|$, the size of the deletion set is unchanged, so $|\rR|=|\rR^n|$.  The replacements are independent because unresolved triangles come from distinct $\beta$-triangles, and the corresponding $Y$-triangles are pairwise edge-disjoint.  After the replacement, $N_{\fF\setminus\rR}(a_ib_i)=U_i$, which is compatible, while $N_{\fF\setminus\rR}(a_ic_i)$ was already compatible.  Hence every unresolved triangle now satisfies the second alternative in the moreover conclusion.

It remains only to verify that the two conclusions of Lemma~\ref{lem:3-graph-deletion} hold for the final graph $\fF\setminus\rR$.  For a $Y$-pair not used in the local adjustment, the neighborhood is the same as in $\fF^n$; for an adjusted pair, the neighborhood is compatible.  By \eqref{equ:Yn-structure}, any non-compatible $Y$-neighborhood is contained in the kernel of a surviving $\beta(\widetilde{\fF})$-triangle.  The preceding case analysis, together with the local adjustment, rules out the possibility that such a non-compatible neighborhood contains all three vertices of a three-kernel triangle: after the adjustment, the only non-compatible neighborhoods left inside surviving kernels are the two-sets corresponding to surviving $\beta(\fF^0)$-pairs.  Hence every non-compatible $Y$-neighborhood in $\fF\setminus\rR$ is a two-set $\{x,x'\}$ with $\{x,x'\}\in\beta(\widetilde{\fF})\subseteq\beta(\fF)$, and the corresponding $Y$-edge is an edge of that $\beta(\fF)$-triangle.  This proves the first conclusion of the lemma.

The moreover conclusion has already been verified for every surviving $\beta(\widetilde{\fF})$-triangle: two-kernel triangles satisfy it before the adjustment, and three-kernel triangles either satisfy it before the adjustment or are fixed by the adjustment.  Finally, consider an original $\beta(\fF)$-triangle that does not survive as a $\beta(\widetilde{\fF})$-triangle.  None of its three $Y$-edges lies in a surviving $\beta(\widetilde{\fF})$-triangle, by edge-disjointness.  Hence the three corresponding neighborhoods in $\fF^n$ are compatible by \eqref{equ:Yn-structure}, and the local adjustment does not affect these $Y$-edges.  Thus the moreover conclusion also holds for these triangles.

Since $|\rR|=|\rR^n|$, the bound \eqref{equ:final-size-bound} gives the required estimate.  This completes the proof of Lemma~\ref{lem:3-graph-deletion}.
\end{proof}

\section{Concluding remarks}
We conclude by suggesting two directions for future work that arise naturally from our proofs.

The first, and perhaps most natural, question concerns the remaining case $r=3$ of Conjecture~\ref{conj:EF}. Several steps of the present argument rely on $r\ge4$ in an essential way.  
For example, the proof of Lemma~\ref{lem:one-root} requires treating common neighborhoods of two vertices as $(r-1)$-graphs of uniformity at least three; this mechanism breaks down when $r=3$. It would be interesting to see whether the ideas developed here can
provide insight into a useful stability theorem in the $3$-uniform case.

The second direction concerns forbidding individual members of $\mathcal C_4^r$. For every $r\geq 4$, the family contains at least two non-isomorphic $r$-graphs, and it is natural to ask whether forbidding a single member yields the same asymptotic---or even exact---extremal number as forbidding all generalized $4$-cycles. It would already be very interesting to determine for $r=4$, where $\mathcal C_4^4$ contains two non-isomorphic $4$-graphs, whether these two forbidden subgraphs have the same extremal behaviour.

\bigskip

{\noindent \bf Acknowledgments.} This work was initiated and carried out during the postdoctoral appointment of the third author at the National University of Singapore. The third author would like to express his gratitude to the National University of Singapore for its hospitality.

No AI tools have been used in any way in the generation of the mathematical ideas of this paper. AI tools were used only for proofreading.

	\bibliographystyle{unsrt}

\end{document}